\theoremstyle{plain}
\newtheorem{theorem}{Theorem}[section]
\newtheorem{corollary}[theorem]{Corollary}
\newtheorem{lemma}[theorem]{Lemma}
\newtheorem{proposition}[theorem]{Proposition}
\theoremstyle{definition}
\newtheorem{definition}[theorem]{Definition}
\newtheorem{problem}[theorem]{Problem}
\theoremstyle{remark}
\newtheorem{remark}[theorem]{Remark}
\numberwithin{figure}{section}
\numberwithin{equation}{section}
\DeclareMathOperator{\ad}{ad}
\DeclareMathOperator{\Real}{Re}
\DeclareMathOperator{\Res}{Res}
\newenvironment{doublecases}
{
	\left\{
			\begin{array}{lllll}
}
{			
			\end{array}
			\right.
}
\title[Massive Thirring Model]{Massive Thirring Model: Inverse Scattering and Soliton Resolution}
\author{Cheng He }
\author{Jiaqi Liu}
\author{Changzheng Qu}
\address[He]{School of Mathematics, Ningbo University. Ningbo, Zhejiang China}
\email{1811071003@nbu.edu.cn}
\address[Liu]{School of mathematics, University of Chinese Academy of Sciences. Beijing China }
\email{jqliu@ucas.ac.cn}
\address[Qu]{School of Mathematics, Ningbo University. Ningbo, Zhejiang China}
\email{quchangzheng@nbu.edu.cn}
\date{\today}
\begin{document}

\maketitle
\begin{abstract}
 In this paper the long-time dynamics of the massive Thirring model is investigated. Firstly the nonlinear steepest descent method for Riemann-Hilbert problem is explored to obtain the soliton resolution of the solutions to the massive Thirring model whose initial data belong to some weighted-Sobolev spaces. Secondly, the asymptotic stability  of multi-solitons follow as a corollary. The main difficulty in studying the massive Thirring model through inverse scattering is that the corresponding Lax pair has singularities at the origin and infinity. We overcome this difficulty by making use of two transforms that separate the singularities.
\end{abstract}

\tableofcontents

%
%

\newcommand{\eps}{\varepsilon}
\newcommand{\lam}{\zeta}

\newcommand{\bfN}{\mathbf{N}}
\newcommand{\calbR}{\mathcal{ \breve{R}}}
\newcommand{\rhobar}{\overline{\rho}}
\newcommand{\zetabar}{\overline{\zeta}}

\newcommand{\rarr}{\rightarrow}
\newcommand{\darr}{\downarrow}
\newcommand{\ttheta}{\widetilde{\theta}}
\newcommand{\dee}{\partial}
\newcommand{\dbar}{\overline{\partial}}
\newcommand{\tM}{\widetilde{M}}
\newcommand{\dint}{\displaystyle{\int}}

\newcommand{\dotarg}{\, \cdot \, }

%
%

\newcommand{\RHP}{\mathrm{LC}}			
\newcommand{\PC}{\mathrm{PC}}
\newcommand{\w}{w^{(2)}}
%
%

\newcommand{\zbar}{\overline{z}}

\newcommand{\bbC}{\mathbb{C}}
\newcommand{\bbR}{\mathbb{R}}

\newcommand{\calB}{\mathcal{B}}
\newcommand{\calC}{\mathcal{C}}
\newcommand{\calR}{\mathcal{R}}
\newcommand{\calS}{\mathcal{S}}
\newcommand{\calZ}{\mathcal{Z}}
\newcommand{\tgamma}{\widetilde{\gamma}}

\newcommand{\ba}{\breve{a}}
\newcommand{\bb}{\breve{b}}
\newcommand{\bchi}{\breve{\chi}}

\newcommand{\balpha}{\breve{\alpha}}
\newcommand{\brho}{\breve{\rho}}

\newcommand{\tPhi}{{\widetilde{\Phi}}}
\newcommand{\tdr}{\widetilde{r}}

\newcommand{\tp}{\text{p}}
\newcommand{\tq}{\text{q}}
\newcommand{\tr}{\text{r}}
\newcommand{\tw}{\widetilde{W}}
\newcommand{\bfe}{\mathbf{e}}
\newcommand{\bfn}{\mathbf{n}}

\newcommand{\tA}{\widetilde{A}}
\newcommand{\tB}{\widetilde{B}}
\newcommand{\tomega}{\widetilde{\omega}}
\newcommand{\tc}{\widetilde{c}}
\newcommand{\tz}{\widetilde{z}}
\newcommand{\tv}{\widetilde{V}}
\newcommand{\mhat}{\hat{m}}

\newcommand{\bphi}{\breve{\Phi}}
\newcommand{\bN}{\breve{N}}
\newcommand{\bV}{\breve{V}}
\newcommand{\bR}{\breve{R}}
\newcommand{\bdelta}{\breve{\delta}}
\newcommand{\bzeta}{\breve{\zeta}}
\newcommand{\bbeta}{\breve{\beta}}
\newcommand{\bm}{\breve{m}}
\newcommand{\br}{\breve{r}}
\newcommand{\bnu}{\breve{\nu}}
\newcommand{\bbfN}{\breve{\mathbf{N}}}
\newcommand{\rbar}{\overline{r}}

\newcommand{\One}{\mathbf{1}}
\newcommand{\tabincell}[2]{\begin{tabular}{@{}#1@{}}#2\end{tabular}}%
\renewcommand{\Re}{\operatorname{Re}}
\renewcommand{\Im}{\operatorname{Im}}
\newcommand{\diag}{\textrm{diag}}
\newcommand{\off}{\textrm{off}}
\newcommand{\T}{\textrm{T}}
\newcommand{\tm}{\widetilde{M}}
%
%

\newcommand{\bigO}[2][ ]
{
\mathcal{O}_{#1}
\left(
{#2}
\right)
}

\newcommand{\littleO}[1]{{o}\left( {#1} \right)}

\newcommand{\norm}[2]
{
\left\Vert		{#1}	\right\Vert_{#2}
}

%
%

\newcommand{\rowvec}[2]
{
\left(
	\begin{array}{cc}
		{#1}	&	{#2}	
	\end{array}
\right)
}

\newcommand{\uppermat}[1]
{
\left(
	\begin{array}{cc}
	0		&	{#1}	\\
	0		&	0
	\end{array}
\right)
}

\newcommand{\lowermat}[1]
{
\left(
	\begin{array}{cc}
	0		&	0	\\
	{#1}	&	0
	\end{array}
\right)
}

\newcommand{\offdiagmat}[2]
{
\left(
	\begin{array}{cc}
	0		&	{#1}	\\
	{#2}	&	0
	\end{array}
\right)
}

\newcommand{\diagmat}[2]
{
\left(
	\begin{array}{cc}
		{#1}	&	0	\\
		0		&	{#2}
		\end{array}
\right)
}

\newcommand{\Offdiagmat}[2]
{
\left(
	\begin{array}{cc}
		0			&		{#1} 	\\
		\\
		{#2}		&		0
		\end{array}
\right)
}

\newcommand{\twomat}[4]
{
\left(
	\begin{array}{cc}
		{#1}	&	{#2}	\\
		{#3}	&	{#4}
		\end{array}
\right)
}

\newcommand{\unitupper}[1]
{	
	\twomat{1}{#1}{0}{1}
}

\newcommand{\unitlower}[1]
{
	\twomat{1}{0}{#1}{1}
}

\newcommand{\Twomat}[4]
{
\left(
	\begin{array}{cc}
		{#1}	&	{#2}	\\[10pt]
		{#3}	&	{#4}
		\end{array}
\right)
}

%
%
%

\newcommand{\JumpMatrixFactors}[6]
{
	\begin{equation}
	\label{#2}
	{#1} =	\begin{cases}
					{#3} {#4}, 	&	\lambda \in (-\infty,\xi) \\
					\\
					{#5}{#6},	&	\lambda \in (\xi,\infty)
				\end{cases}
	\end{equation}
}


%
%
%

\newcommand{\RMatrix}[9]
{
\begin{equation}
\label{#1}
\begin{aligned}
\left. R_1 \right|_{(\xi,\infty)} 	&= {#2} &	\qquad\qquad		
\left. R_1 \right|_{\Sigma_1}		&= {#3}
\\[5pt]
\left. R_3 \right|_{(-\infty,\xi)} 	&= {#4} 	&	
\left. R_3 \right|_{\Sigma_2} 	&= {#5}
\\[5pt]
\left. R_4 \right|_{(-\infty,\xi)} 	&= {#6} &	
\left. R_4 \right|_{\Sigma_3} 	&= {#7}
\\[5pt]
\left. R_6 \right|_{(\xi,\infty)}  	&= {#8} &	
\left. R_6 \right|_{\Sigma_4} 	&= {#9}
\end{aligned}
\end{equation}
}

%
%

%
%
%
%
%
%

\newcommand{\SixMatrix}[6]
{
\begin{figure}
\centering
\caption{#1}
\vskip 15pt
\begin{tikzpicture}
[scale=0.75]
%
%
\draw[thick]	 (-4,0) -- (4,0);
\draw[thick] 	(-4,4) -- (4,-4);
\draw[thick] 	(-4,-4) -- (4,4);
%
%
\draw	[fill]		(0,0)						circle[radius=0.075];
\node[below] at (0,-0.1) 				{$\tz_0$};
%
%
\node[above] at (3.5,2.5)				{$\Omega_3$};
\node[below]  at (3.5,-2.5)			{$\Omega_7$};
\node[above] at (0,3.25)				{$\Omega_1$};
\node[below] at (0,-3.25)				{$\Omega_2$};
\node[above] at (-3.5,2.5)			{$\Omega_8$};
\node[below] at (-3.5,-2.5)			{$\Omega_4$};
%
%
\node[above] at (0,1.25)				{$\twomat{1}{0}{0}{1}$};
\node[below] at (0,-1.25)				{$\twomat{1}{0}{0}{1}$};
%
%
\node[right] at (1.20,0.70)			{$#3$};
\node[left]   at (-1.20,0.70)			{$#4$};
\node[left]   at (-1.20,-0.70)			{$#5$};
\node[right] at (1.20,-0.70)			{$#6$};
\end{tikzpicture}
\label{#2}
\end{figure}
}

\newcommand{\sixmatrix}[6]
{
\begin{figure}
\centering
\caption{#1}
\vskip 15pt
\begin{tikzpicture}
[scale=0.75]
%
%
\draw[thick]	 (-4,0) -- (4,0);
\draw[thick] 	(-4,4) -- (4,-4);
\draw[thick] 	(-4,-4) -- (4,4);
%
%
\draw	[fill]		(0,0)						circle[radius=0.075];
\node[below] at (0,-0.1) 				{$-\tz_0$};
%
%
\node[above] at (3.5,2.5)				{$\Omega_9$};
\node[below]  at (3.5,-2.5)			{$\Omega_5$};
\node[above] at (0,3.25)				{$\Omega_1$};
\node[below] at (0,-3.25)				{$\Omega_2$};
\node[above] at (-3.5,2.5)			{$\Omega_6$};
\node[below] at (-3.5,-2.5)			{$\Omega_{10}$};
%
%
\node[above] at (0,1.25)				{$\twomat{1}{0}{0}{1}$};
\node[below] at (0,-1.25)				{$\twomat{1}{0}{0}{1}$};
%
%
\node[right] at (1.20,0.70)			{$#3$};
\node[left]   at (-1.20,0.70)			{$#4$};
\node[left]   at (-1.20,-0.70)			{$#5$};
\node[right] at (1.20,-0.70)			{$#6$};
\end{tikzpicture}
\label{#2}
\end{figure}
}

\newcommand{\JumpMatrixRightCut}[6]
{
\begin{figure}
\centering
\caption{#1}
\vskip 15pt
\begin{tikzpicture}[scale=0.85]
%
%
\draw [fill] (4,4) circle [radius=0.075];						
\node at (4.0,3.65) {$\xi$};										
%
%
\draw 	[->, thick]  	(4,4) -- (5,5) ;								
\draw		[thick] 		(5,5) -- (6,6) ;
\draw		[->, thick] 	(2,6) -- (3,5) ;								
\draw		[thick]		(3,5) -- (4,4);	
\draw		[->, thick]	(2,2) -- (3,3);								
\draw		[thick]		(3,3) -- (4,4);
\draw		[->,thick]	(4,4) -- (5,3);								
\draw		[thick]  		(5,3) -- (6,2);
%
%
\draw [  thick, blue, decorate, decoration={snake,amplitude=0.5mm}] (4,4)  -- (8,4);				
\node at (1.5,4) {$0 < \arg (\zeta-\xi) < 2\pi$};
%
%
\node at (8.5,8.5)  	{$\Sigma_1$};
\node at (-0.5,8.5) 	{$\Sigma_2$};
\node at (-0.5,-0.5)	{$\Sigma_3$};
\node at (8.5,-0.5) 	{$\Sigma_4$};
%
%
\node at (7,7) {${#3}$};						
\node at (1,7) {${#4}$};						
\node at (1,1) {${#5}$};						
\node at (7,1) {${#6}$};						
\end{tikzpicture}
\label{#2}
\end{figure}
}

%
%

\section{Introduction}
\subsection{General introduction}
In this paper we apply the inverse scattering transform (IST) to study the long time asymptotics of the massive Thirring model (MTM):
\begin{equation}
\label{eq: MTM}
\left\{\begin{array}{c}
i\left(u_{t}+u_{x}\right)+v+|v|^{2} u=0 \\
i\left(v_{t}-v_{x}\right)+u+|u|^{2} v=0
\end{array}\right.
\end{equation}
subject to an initial condition \begin{equation}
(u(x,0), v(x,0)):=(u_0, v_0)\in \mathcal{I}\times \mathcal{I},
\end{equation}
where
\begin{equation}\label{space:initial}
\mathcal{I}:=\left\{f:f,f',f''\in L^{2,1}(\mathbb{R})\right\}
\end{equation}
with the space defined by
\begin{equation}
\|f\|_{L^{2,1}}:=\|(1+|\cdot|)f(\cdot)\|_{L^2}.
\end{equation}
The massive Thirring model was first introduced by W. Thirring in \cite{Thirring}.  Such a model arises as a modification of a nonlinear Schr\"odinger equation to relativistic velocities.  We first point out that \eqref{eq: MTM} is one type of the \textit{nonlinear Dirac equation}. For a general exposition, see \cite{EGT}. For the global well-posedness of the MTM through PDE methods, we refer the reader to the work of Candy\cite{Candy}. Also, in \cite{PS}, Pelinovsky and Saalmann established well-posedness by studying the bijective property of the direct and inverse scattering transform. They follow the work of Zhou  \cite{Zhou98},  where the author developed a rigorous analysis of the direct and inverse scattering transform of the AKNS system
for a class of initial conditions $u_0(x)=u(x,t=0)$ belonging to the space  $H^{i,j}(\bbR)$.
Here,  $H^{i,j}(\bbR)$  denotes  
the completion of $C_0^\infty(\bbR)$ in the norm
\begin{equation}
\label{sp: weighted}
    \norm{u}{H^{i,j}(\bbR)}
= \left( \norm{(1+|x|^j)u}{2}^2 + \norm{\frac{\partial^i u}{\partial x^i}}{2}^2 \right)^{1/2}. 
\end{equation} 
Besides well-posedness, another fundamental question for dispersive PDEs is the long-time asymptotics. In \cite{CL}, Candy and Lindblad calculated the long-time scattering of the solution to \eqref{eq: MTM} of small initial condition in \textit{Sobolev} space using PDE methods. In the current paper, we shall making use of the completely integrable structure of the MTM first introduced by \cite{KN}. 
\smallskip

Using the complete integrability of the modified KdV equation,  Deift and Zhou in their seminal work \cite{DZ93} developed the celebrated nonlinear steepest descent method for oscillatory Riemann-Hilbert problems. Since then, analysis of  long-time behavior
of integrable systems have been extensively treated by
many authors. The nonlinear steepest descent method provides a systematic way to 
reduce the original RHP 
to a canonical model RHP whose solution is calculated in terms of special functions. Dieng and McLaughlin in \cite{DM08} (see also an extended version \cite{DMM18})later developed a variant of Deift-Zhou method. In their approach 
rational approximation of the reflection coefficient is replaced by some 
non-analytic extension of the jump matrices off the real axis, which  leads to a $\bar{\partial}$-problem to 
be solved in some regions of the complex plane. 
 This method has shown its robustness in its application to other integrable models. Notably, for focussing NLS, this method was
successfully applied to address the soliton resolution in \cite{BJM}. Later, in \cite{JLPS18 } and \cite{CLL}, soliton resolution for the derivative NLS equation and sine-Gordon equation were established respectively. The massive Thirring model admits a family of soliton solutions:
\begin{equation}
\left\{\begin{array}{l}
u_\lambda(x, t)=i \delta^{-1} \sin \gamma \operatorname{sech}\left[\alpha(x+c t)-i \frac{\gamma}{2}\right] e^{-i \beta(t+c x)} \\
v_\lambda(x, t)=-i \delta \sin \gamma \operatorname{sech}\left[\alpha(x+c t)+i \frac{\gamma}{2}\right] e^{-i \beta(t+c x)}
\end{array}\right.
\end{equation}
We first mention that the solitons do not exist in MTM for small initial data and the first result of this was obtained in the last section of \cite{P}. Later in \cite{CPS} (see also \cite{PS-2}), the authors proved $L^2$ orbital stability of the solitons above, through the use of the auto-B\"acklund transformation. In the current paper, we shall employ the \textit{nonlinear steepest descent } method to to analyze the large $t$ asymptotics  of
the MTM  in  weighted Sobolev space given by \eqref{space:initial}. Consequentially, we obtain the soliton resolution and multi-soliton asymptotic stability of the MTM. 
\subsection{Main results}
\begin{theorem}[Soliton resolution]
		\label{thm:main1} 
		Let the initial data $(u_0, v_0)=(u(x,0), v(x,0))\in\mathcal{I}\times \mathcal{I}$
	 be \emph{generic} in the sense of Definition \ref{def:generic'} and let $(u(x,t), v(x,t))$
	be  the unique  solution to the massive Thirring \eqref{eq: MTM} obtained by solving the Riemann-Hilbert Problem \ref{RHP2-v} with initial data $(u_0, v_0)$. Then the
	solution $(u(x,t), v(x,t))$ can be written as the superposition of solitons and radiation. More precisely, there exist a non negative
	integer $N_{1}$, and two sets of complex parameters
	\[
	\left\{ \lambda_{j}\right\} _{j=1,\ldots N_{1}},\ \left\{ \tc_{j}\right\} _{j=1,\ldots N_{1}}
	\]
	such that the solutions $u$ and $v$ have the following representation
		\begin{align*}
	u(x,t) & =\sum_{j=1}^{N_{1}}u^{sol}\left(x,t;\lambda_{j},\tc_{j}\right)+u_r(x,t)\\
	v(x,t) & =\sum_{j=1}^{N_{1}}v^{sol}\left(x,t;\lambda_{j},\tc_{j}\right)+v_r(x,t)
	\end{align*}
{where the radiation term  $u_r$ and $v_r$ decay to $0$ and it exhibits the modified scattering inside the light-cone.}
	\end{theorem}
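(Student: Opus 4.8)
The plan is to read off the long-time asymptotics of $(u,v)$ directly from the Riemann--Hilbert problem that reconstructs them, using the $\dbar$-variant of the Deift--Zhou nonlinear steepest descent method of Dieng--McLaughlin, adapted to a spectral problem whose Lax pair is singular at both $\lambda=0$ and $\lambda=\infty$. First I would record the two reconstruction formulas: one recovers $v(x,t)$ from the coefficient in the large-$\lambda$ expansion of the normalized matrix solution, the other recovers $u(x,t)$ from the small-$\lambda$ (near-origin) expansion. This dual use of the two transforms that separate the singularities is exactly what makes the asymptotic analysis tractable. The jump oscillates through a phase $\theta(\lambda;x/t)$, and inside the light-cone $|x/t|<1$ this phase has two real stationary points $\pm z_0$, symmetric about the origin, around which the whole deformation is organized.

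Next I would carry out the conjugation step: introduce a scalar $\delta$-function solving an auxiliary scalar RHP on $\mathbb{R}$ so that the jump matrix admits the lower/upper triangular factorizations appropriate to the regions where $\Re(i\theta)$ is positive or negative; the genericity hypothesis guarantees that the relevant scalar quantity stays bounded away from zero and that the discrete spectrum $\{\lambda_j\}$ is finite. The eigenvalues are handled by the standard interpolation that trades the residue (pole) conditions at the $\lambda_j$ for localized triangular jumps on small circles, so that after conjugation the reflectionless part of the problem reproduces precisely the $N_1$-soliton with parameters $(\lambda_j,\tc_j)$, where $\tc_j$ packages the eigenvalue together with its time-propagated and $\delta$-renormalized norming constant.

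With the conjugated problem in hand I would open lenses along the steepest-descent rays through $\pm z_0$, using a non-analytic ($\dbar$) extension of the reflection coefficient rather than a rational approximation, which turns the RHP into a mixed RHP--$\dbar$ problem that I would split as $m = m_{\dbar}\, m_{R}$. The pure RHP factor $m_{R}$ decouples into an outer soliton model and two local models at $\pm z_0$, each solvable in terms of parabolic cylinder functions; the local models contribute the $t^{-1/2}$ dispersive wave carrying the logarithmic (modified-scattering) phase correction inside the light-cone, while the outer model supplies the solitons. The residual factor $m_{\dbar}$ carries the error, and I would bound its contribution to both the large-$\lambda$ and the small-$\lambda$ expansions by the usual Cauchy-integral fixed-point argument, showing it is $O(t^{-3/4})$ and hence subleading.

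Assembling the outer soliton model, the two parabolic-cylinder contributions, and the $\dbar$-error, and substituting into the two reconstruction formulas, then yields the stated decomposition of $u$ and $v$ into $N_1$ solitons plus a decaying radiation term. The main obstacle I anticipate is precisely the twin singularity at $0$ and $\infty$: the model and error estimates must be controlled simultaneously in the two distinct asymptotic regimes used to recover $u$ and $v$, so that neither the lens geometry near $\pm z_0$ nor the $\dbar$-bounds degenerate as $\lambda\to 0$ or $\lambda\to\infty$. Keeping the two transforms mutually compatible throughout every stage of the deformation is where the bulk of the technical work will lie.
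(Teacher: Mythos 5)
Your proposal follows essentially the same route as the paper: reconstruction of $v$ from the large-$\lambda$ expansion and of $u$ from the small-$\lambda$ limit of $\widetilde{M}$, conjugation by a scalar $\widetilde{\delta}$, trading poles for circle jumps, a $\dbar$-extension lens opening, factorization into an outer soliton model plus parabolic-cylinder local models at $\pm\widetilde{z}_0$, and an $\mathcal{O}(\tau^{-3/4})$ bound on the pure $\dbar$ remainder. The only ingredients you leave implicit are the ones the paper spends real effort on — the proof that the direct scattering map sends generic data in $\mathcal{I}\times\mathcal{I}$ to $\widetilde{r}\in H^{1,1}_0(\bbR)$ (needed before any deformation can start), the solvability of the RHP via the vanishing lemma, and the separate analysis outside and near the light cone where $\tau$ degenerates — but your sketch is structurally the paper's argument.
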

\begin{remark}
{We remark that $u_r(x,t)$ and $v_r(x,t)$ are computed explicitly in terms of stationary points and the reflection coefficient. In particualr the error term from $u_r(x,t)$ and $v_r(x,t)$ depend on the weighted norm of the initial data $\mathcal{I}$.  {We would like to point out that strictly inside, near and strictly outside of the line-cone,  the error terms  depend on $\frac{x}{t}$ in different manners.}
For the explicit expressions of the radiation terms, see Theorem \ref{thm:maindetail}.}
\end{remark}

\begin{remark}
From the inverse scattering point of view, the analysis of the MTM \eqref{eq: MTM} combines techniques from that of the derivative NLS equation and the sine-Gordon equation in the following sense: 
\begin{itemize}
\item the spectral dependence of the linear problem is rational which leads to singularities at both the origin and infinity. We mention that, for the derivative NLS, the singularity is at infinity while the sine-Gordon equation has singularity at the origin. These difficulties are overcome by introducing a pair of transforms that divide the analysis into two different linear problems (see \cite{PS}).
\item the phase function of the reflection coefficient has singularity at the origin which makes the direct scattering map between Sobolev spaces more involved. Here we follow the arguments developed in \cite{CLL} to show 
the following map:
\begin{equation}
\mathcal{I}\times \mathcal{I} \ni (u_0, v_0)\mapsto r\in H^{1,1}_0(\mathbb{R})\subset H^1(\mathbb{R}).
\end{equation}
which is crucial in determining the long time asymptotics. And this fills in the gap previously left out by \cite{Saalmann-2} (see \cite[p.28]{Saalmann-2}).
\end{itemize}

\end{remark}
\begin{remark}
Compared with \cite{Saalmann-1}-\cite{Saalmann-2} in which the author also discussed the long time asymptotics of the MTM using \textit{nonlinear steepest descent}, the current paper is different in the following aspects:
\begin{itemize}
\item we reconstruct both $u(x,t)$ and $v(x,t)$ from a single Riemann-Hilbert problem.
\item We follow the argument developed in \cite{JLP} to show the existence of solutions to the Riemann-Hilbert problem without relying on Darboux transform.
\item Instead of showing the mapping properties of the inverse scattering $ \widetilde{r} \mapsto (u(x, t), v(x,t))$ as in \cite{PS}
 we use techniques from pde theory to show global existence of solutions in \eqref{space:initial}.
\end{itemize}

\end{remark}
\subsection{List of notations} We give a list of notations for the reader's convenience:\\\\
\begin{itemize}
	\item[1.] Throughout this paper, we set
	$$\ttheta(\lambda; x,t)=\dfrac{1}{2} \left(  \left(  \lambda-\dfrac{1}{\lambda}  \right )x+\left( \lambda+\dfrac{1}{\lambda}  \right)t  \right)$$
	
	\item[2.] Let $\sigma_3$ be the third Pauli matrix:
\begin{equation}\label{eq:sigma3}
\sigma_3=\twomat{1}{0}{0}{-1}
\end{equation}
then one has the matrix operation 
$$e^{\ad\sigma_3}\twomat{a}{ b}{c}{d} =\twomat{a}{e^{2} b}{e^{-2}c}{d}.$$
\item[3.] Let $A$ and $B$ be $2\times 2$ matrices, then we define the commutator by
$$\left[A, B \right]=AB-BA.$$
\item[4.] $C^\pm$ is the Cauchy projection:
\begin{equation}
(C^\pm f)(z)= \lim_{z\to \Sigma_\pm}\dfrac{1}{2\pi i} \int_{\Sigma} \dfrac{f(s)}{s-z}ds.
\end{equation}
where $+(-)$ denotes taking limit from the positive (negative) side of the oriented contour $\Sigma$.

Similarly, suppose $M(z)$ is a matrix-valued function in $\bbC$, then $M_\pm$ denotes its continuous boundary value from either side of the oriented contour.\\
\smallskip
\item[5.] We define the Fourier transform as 
	\begin{equation}
	\hat{h}\left(\xi\right)=\mathcal{F}\left[h\right]\left(\xi\right)=\frac{1}{2\pi}\int_\bbR e^{-ix\xi}h\left(x\right)\,dx.\label{eq:FT}
	\end{equation}
	Notice that by the duality between the physical space and the Fourier
	space, if $h\in L^{2,s}\left(\mathbb{R}\right)$ with $s>\frac{1}{2}$
	then
	\begin{equation}
	h\in L^{1}\left(\mathbb{R}\right),\,\hat{h}\in H^{s}\left(\mathbb{R}\right).\label{eq:weightL22}
	\end{equation}
	Then by the trivial Sobolev embedding, $\hat{h}\left(\xi\right)\in L^{\infty}.$

\item[6.] As usual, $"A:=B"$
or $"B=:A"$
is the definition of $A$ by means of the expression $B$. We use
the notation $\langle x\rangle=\left(1+|x|^{2}\right)^{\frac{1}{2}}$.
For positive quantities $a$ and $b$, we write
$a\lesssim b$ for $a\leq Cb$ where $C$ is some prescribed constant.
Also $a\simeq b$ for $a\lesssim b$ and $b\lesssim a$. 
 Throughout, we use $u_{t}:=\frac{\partial u}{\partial_{t}}$, for the derivative in the time variable and 
$u_{x}:=\frac{\partial}{\partial x}u$ for the derivative in the space variable. These two notations are used interchangebly.
\end{itemize}

\begin{center}
\begin{tabular}{|c|c|}
\hline
Notation     &Summary   \\
\hline
$x$, $y$ & The space variable\\
\hline
$\langle x \rangle$ & \tabincell{c}{$\sqrt{1+|x|^2}$}\\
\hline
$I_\infty$ & $\mathbb{R}\setminus[-1,1]$\\
\hline
$\zeta$ & The spectral parameter of the original Lax pair\\
\hline
$\lambda:=\zeta^2$ & The spectral parameter after gauge transformations\\
\hline
$I$ &  $2\times 2$ identity matrix\\
\hline
$\textbf{1}$ & The identity operator\\
\hline
$\psi^{(\pm)}_{1,2}$     &Jost solutions of the original Lax pair\\
\hline
$m^{(\pm)}_{1,2}$     &Normalized solution of the original Lax pair\\
\hline
$T$                      &Gauge transformation for small $\zeta$\\
\hline
$n^{(\pm)}_{1,2}$        &\tabincell{c}{ Normalized solution of the \\Lax pair under the transformation $T$}\\
\hline
$n^{\pm 0}_{1,2}$        &The limit of $n^{(\pm)}_{1,2}$ with $\lambda$ goes to zero\\
\hline

$\widetilde{T}$  &\tabincell{c}{ Gauge transformation for \\ large $\zeta$}\\
\hline
$\widetilde{n}^{(\pm)}_{1,2}$        & \tabincell{c}{Normalized solution of the Lax pair\\ under the transformation $\widetilde{T}$}\\
\hline
$\widetilde{n}^{\pm \infty}_{1,2}$        &The limit of $\widetilde{m}^{(\pm)}_{1,2}$ with $|\lambda|$ goes to $\infty$\\
\hline
    \tabincell{c}{$S(\zeta)=\twomat{a(\zeta)}{b(\zeta)}{\breve{b}(\zeta)}{\breve{a}(\zeta)}$} &\tabincell{c}{ The scattering matrix and data of\\ the original Lax pair}\\
\hline
    \tabincell{c}{$S_g(\lambda)=\twomat{\alpha(\lambda)}{\beta(\lambda)}{\breve{\beta}(\lambda)}{\breve{\alpha}(\lambda)}$} & \tabincell{c}{The scattering matrix and data of\\ Lax pair under the transformation $T$}\\
\hline

\tabincell{c}{$\widetilde{S}_g(\lambda)=\twomat{\widetilde{\alpha}(\lambda)}{\widetilde{\beta}(\lambda)}{\breve{\widetilde{\beta}}(\lambda)}{\breve{\widetilde{\alpha}}(\lambda)}$} & \tabincell{c}{The scattering matrix and data of Lax pair\\ under the transformation $\widetilde{T}$}\\
\hline
$r(\gamma)$, $\breve{r}(\gamma)$ &\tabincell{c}{ reflection coefficients corresponding\\ to the Lax pair under the transformation $T$}\\
\hline
$\widetilde{r}(\lambda)$, $\breve{\widetilde{r}}(\lambda)$ & \tabincell{c}{ reflection coefficients corresponding to \\the Lax pair under the transformation $\widetilde{T}$}\\
\hline
$\mathbb C^{\pm \pm}$ & \tabincell{c}{Four quadrants of the complex plane.\\ The first superscript is the symbol for the real part\\ and the second for imaginary part}\\
\hline
\end{tabular}
\end{center}

\section{Direct and Inverse scattering}

\subsection{The direct scattering}
As an integrable system, the massive Thirring model \eqref{eq: MTM} is the compatibility condition for the following Lax pair:
\begin{equation}
\label{eq:lax}
\begin{array}{c}
\psi_x=L\psi \\
\psi_t=A\psi
\end{array}
\end{equation}
in the sense that
\begin{equation}
L_{t}-A_{x}+[L, A]=0
\end{equation}
where
\begin{subequations}
\begin{align}
L&=\frac{i}{4}\left(|u|^{2}-|v|^{2}\right) \sigma_{3}-\frac{i \zeta}{2}\left(\begin{array}{cc}
0 & \bar{v} \\
v & 0
\end{array}\right)+\frac{i}{2 \zeta}\left(\begin{array}{cc}
0 & \bar{u} \\
u & 0
\end{array}\right)+\frac{i}{4}\left(\zeta^{2}-\frac{1}{\zeta^{2}}\right) \sigma_{3}
\\
A&=-\frac{i}{4}\left(|u|^{2}+|v|^{2}\right) \sigma_{3}-\frac{i \zeta}{2}\left(\begin{array}{cc}
0 & \bar{v} \\
v & 0
\end{array}\right)-\frac{i}{2 \zeta}\left(\begin{array}{cc}
0 & \bar{u} \\
u & 0
\end{array}\right)+\frac{i}{4}\left(\zeta^{2}+\frac{1}{\zeta^{2}}\right) \sigma_{3}
\end{align}
\end{subequations}
and $\sigma_3$ is the third \textit{Pauli} matrix
$$\sigma_3=\twomat{1}{0}{0}{-1}.$$
We now define the simultaneous solutions ($2\times 2 $ matrix-valued \textit{Jost} functions)
$$\psi^\pm=\left[ \psi_{1}^{(\pm)}, \psi_{2}^{(\pm)} \right]$$
to the Lax pair \eqref{eq:lax} in the column-wise form and prescribe the boundary conditions at $\pm\infty$:
\begin{subequations}
\begin{align}
\psi_{1}^{(-)}(x ; \zeta) &\sim\left(\begin{array}{l}
1 \\
0
\end{array}\right) e^{ {i x}\left(\zeta^{2}-\zeta^{-2}\right)/4}, \quad \psi_{2}^{(-)}(x ; \zeta) \sim\left(\begin{array}{l}
0 \\
1
\end{array}\right) e^{{-i x}\left(\zeta^{2}-\zeta^{-2}\right)/4 } \quad \text { as } x \rightarrow-\infty
\\
\psi_{1}^{(+)}(x ; \zeta) &\sim\left(\begin{array}{l}
1 \\
0
\end{array}\right) e^{ {i x}\left(\zeta^{2}-\zeta^{-2}\right)/4 }, \quad \psi_{2}^{(+)}(x ; \zeta) \sim\left(\begin{array}{c}
0 \\
1
\end{array}\right) e^{-i x\left(\zeta^{2}-\zeta^{-2}\right)/4 } \quad \text { as } x \rightarrow+\infty.
\end{align}
\end{subequations}
We further normalize the Jost functions at $x=\pm\infty$:
\begin{subequations}
\begin{align}
\label{jost; normal-1}
  m_{1}^{(\pm)}(x ; \zeta)&=\psi_{1}^{(\pm)}(x ; \zeta) e^{ -i x \left(\zeta^{2}-\zeta^{-2}\right)/4 }\\
  \label{jost; normal-2}
  m_{2}^{(\pm)}(x ; \zeta)&=\psi_{2}^{(\pm)}(x ; \zeta) e^{i x \left(\zeta^{2}-\zeta^{-2}\right)/4 }
\end{align}
\end{subequations}
such that
\begin{subequations}
\begin{align}
\lim _{x \rightarrow \pm \infty} m_{1}^{(\pm)}(x ; \zeta)&= e_1 =(1,0)^{\T}\\
\lim _{x \rightarrow \pm \infty} m_{2}^{(\pm)}(x ; \zeta)&=e_{2}=(0,1)^{\T}.
\end{align}
\end{subequations}
It is easy to see from \eqref{eq:lax} that the normalized Jost functions \eqref{jost; normal-1}-\eqref{jost; normal-2} satisfy the following \textit{Volterra} integral equations:
\begin{eqnarray}\label{Volterra; origin}
\begin{aligned}
m^{(\pm)}(x;\zeta):&=\left[m_1^{(\pm)}(x;\zeta), m_2^{(\pm)}(x;\zeta)\right]\\
&=I+\int^x_{\pm\infty}e^{\frac i4(\zeta^2-\zeta^{-2})(x-y)\ad \sigma_3}\left[Q(u,v; \zeta) m^{(\pm)}(y ; \zeta)\right]dy
\end{aligned}
\end{eqnarray}
where
\begin{equation}
Q(u,v;\zeta)=\frac{i}{4}\left(|u|^{2}-|v|^{2}\right) \sigma_{3}-\frac{i \zeta}{2}\left(\begin{array}{cc}
0 & \bar{v} \\
v & 0
\end{array}\right)+\frac{i}{2 \zeta}\left(\begin{array}{cc}
0 & \bar{u} \\
u & 0
\end{array}\right)
\end{equation}
and $I$ is the $2\times 2$ identity matrix.
The following lemma follows from standard \textit{Volterra} theory which can be found in \cite{BDT88}:
\begin{lemma}
\label{lemma: 1}
For every $\zeta\in \left(\mathbb R\cup i\mathbb R\backslash \{0\}\right)$, if $ Q(u,v;\zeta) \in L^1(\mathbb R)$, then Volterra integral equations \eqref{Volterra; origin} admits unique solutions $m^{(\pm)}_1(\cdot,\zeta)$ and $m^{(\pm)}_2(\cdot,\zeta)$ in $L^\infty(\mathbb R)$.
\end{lemma}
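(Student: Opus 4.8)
The plan is to read \eqref{Volterra; origin} as a fixed-point equation and solve it by Picard iteration, the whole argument resting on one structural observation about the integrating factor on the two distinguished curves. Namely, for $\zeta\in\mathbb{R}\cup i\mathbb{R}\setminus\{0\}$ the quantity $\zeta^2-\zeta^{-2}$ is \emph{real}: if $\zeta\in\mathbb{R}$ this is clear, and if $\zeta=is$ with $s\in\mathbb{R}\setminus\{0\}$ then $\zeta^2-\zeta^{-2}=-s^2+s^{-2}\in\mathbb{R}$. Hence, by the rule for $e^{\ad\sigma_3}$ recorded in the list of notations, the matrix $e^{\frac i4(\zeta^2-\zeta^{-2})(x-y)\ad\sigma_3}$ has diagonal entries equal to $1$ and off-diagonal entries equal to the pure phases $e^{\pm\frac i2(\zeta^2-\zeta^{-2})(x-y)}$, so every entry is bounded by $1$ uniformly in $x,y$ and in the admissible $\zeta$.

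First I would introduce the Volterra operator
\[
(K_\pm f)(x):=\int_{\pm\infty}^x e^{\frac i4(\zeta^2-\zeta^{-2})(x-y)\ad\sigma_3}\left[Q(u,v;\zeta)f(y)\right]dy,
\]
so that \eqref{Volterra; origin} becomes $m^{(\pm)}=I+K_\pm m^{(\pm)}$, and solve it columnwise (the two columns decouple through their boundary data at $\pm\infty$). By the entrywise bound on the kernel, in any fixed matrix norm one has $|(K_\pm f)(x)|\le \|f\|_{L^\infty}\int_{J_\pm(x)}|Q(u,v;y;\zeta)|\,dy$, where $J_\pm(x)$ is the half-line joining $x$ to $\pm\infty$. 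Iterating and exploiting the nested-simplex structure of the repeated integrals produces the standard Volterra estimate
\[
\left|(K_\pm^n I)(x)\right|\le\frac1{n!}\left(\int_{J_\pm(x)}|Q(u,v;y;\zeta)|\,dy\right)^n\le\frac{\|Q(u,v;\dotarg;\zeta)\|_{L^1}^n}{n!}.
\]
Summing the Neumann series $m^{(\pm)}=\sum_{n\ge0}K_\pm^n I$ then yields a solution in $L^\infty(\mathbb{R})$ together with the a priori bound $\|m^{(\pm)}\|_{L^\infty}\le\exp\!\big(\|Q(u,v;\dotarg;\zeta)\|_{L^1}\big)$, which is finite precisely because $Q(u,v;\zeta)\in L^1(\mathbb{R})$ by hypothesis. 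Uniqueness follows from the same estimate: the difference $h$ of two $L^\infty$ solutions solves the homogeneous equation $h=K_\pm h$, whence $\|h\|_{L^\infty}\le\frac{\|Q(u,v;\dotarg;\zeta)\|_{L^1}^n}{n!}\|h\|_{L^\infty}$ for every $n$, and letting $n\to\infty$ forces $h\equiv0$.

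The only step demanding genuine care—and the reason the statement is confined to $\zeta\in\mathbb{R}\cup i\mathbb{R}\setminus\{0\}$—is the boundedness of the matrix integrating factor. Off these two curves $\zeta^2-\zeta^{-2}$ acquires a nonzero imaginary part, so one of the two off-diagonal exponentials grows without bound as $y\to\pm\infty$ and the Volterra iteration no longer closes in $L^\infty(\mathbb{R})$; one would then have to study the two columns separately in their respective half-planes of analyticity. On the admissible curves this difficulty disappears and the argument is exactly the classical Volterra theory of \cite{BDT88}. The exclusion of $\zeta=0$ is likewise forced, since both the kernel and $Q(u,v;\zeta)$ contain the singular factors $\zeta^{-2}$ and $\zeta^{-1}$.
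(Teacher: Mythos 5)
Your proof is correct and is exactly the ``standard Volterra theory'' that the paper invokes by citation to \cite{BDT88} without writing it out: the key observation that $\zeta^2-\zeta^{-2}$ is real on $\mathbb{R}\cup i\mathbb{R}\setminus\{0\}$ (so the conjugation $e^{\frac{i}{4}(\zeta^2-\zeta^{-2})(x-y)\ad\sigma_3}$ only produces unimodular phases), followed by the $1/n!$ simplex bound, Neumann summation, and uniqueness via the homogeneous equation, is precisely the cited argument. No discrepancies.
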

For every $\zeta\in (\mathbb R\cup i\mathbb R)\backslash \{0\}$, by standard ODE theory, there is a matrix $S(\zeta)$, the scattering matrix, with
\begin{equation}
\left[\psi^{(+)}_1,\psi^{(+)}_2\right]=\left[\psi_1^{(-)},\psi_2^{(-)}\right]S(\zeta).
\end{equation}
The matrix $S(\zeta)$ takes the form
\begin{equation}
S(\zeta)=\left(\begin{array}{cc}
a(\zeta) & {b}(\zeta) \\
\breve{b}(\zeta) & \breve{a}(\zeta)
\end{array}\right)
\end{equation}
and the determinant relation gives
\begin{equation}
a(\zeta) \breve{a}(\zeta)-b(\zeta) \breve{b}(\zeta)=1.
\end{equation}
By uniqueness we have
\begin{equation}
\psi_{11}^{(\pm)}(\zeta)=\overline{\psi_{22}^{(\pm)}(\bar{\zeta})}, \quad \psi_{12}^{(\pm)}(\zeta)=-\overline{\psi_{21}^{(\pm)}(\bar{\zeta})}.
\end{equation}
This leads to the symmetry relation of the entries of $S(\zeta)$:
\begin{equation}
\breve{a}(\zeta)=\overline{a(\bar{\zeta})}, \quad \breve{b}(\zeta)=-\overline{b(\bar{\zeta})}
\end{equation}
and
\begin{equation}
\left\{\begin{array}{l}
a(\zeta)=\text{det}\left[\psi^{(+)}_1, \psi^{(-)}_2\right] \\\\
{{\breve{b}(\zeta)}}=\text{det}\left[\psi^{(-)}_1, \psi^{(+)}_1\right].
\end{array}\right.
\end{equation}
For $\zeta\in (\mathbb R\cup i\mathbb R)\backslash \{0\}$,  the determinant of $S(\zeta)$ is given by
\begin{equation}
a(\zeta) \overline{a(\bar{\zeta})}+b(\zeta) \overline{b(\bar{\zeta})}=1.
\end{equation}
In order to find uniform norm for $m^{(\pm)}_1(\cdot,\zeta)$ and $m^{(\pm)}_2(\cdot,\zeta)$  as $\zeta\to 0$ and $\zeta\to \pm \infty$, we introduce the following two transformations from \cite{PS} and \cite{PS-1}
\subsubsection{Transformation of the Jost functions for small $\zeta$:}
Define the transformation matrix by:
\begin{equation}
    \label{mtrx: small}
    T(u; \zeta)=\twomat{1}{0}{u}{\zeta^{-1}}
\end{equation}
and set
$${\Psi}:=T \psi.$$
Direct computation shows that $\Psi$ solves the following linear spectral problem
\begin{equation}
\label{spec: small}
{\Psi}_{x}={\mathcal{L}} {\Psi}
\end{equation}
where
\begin{equation}
{\mathcal{L}}={Q}_{1}(u, v)+{\zeta^{2}} {Q}_{2}(u, v)+\frac{i}{4}\left(\zeta^{2}-\frac{1}{\zeta^{2}}\right) \sigma_{3}
\end{equation}
and
\begin{subequations}
\begin{align}
{Q}_{1}(u, v)&=\left(\begin{array}{cc}
-\dfrac{i}{4}\left(|u|^{2}+|v|^{2}\right) & \dfrac{i}{2} \bar{u} \\
u_{x}-\dfrac{i}{2}|v|^{2} u-\dfrac{i}{2} v & \dfrac{i}{4}\left(|u|^{2}+|v|^{2}\right)
\end{array}\right)\\
\smallskip
{Q}_{2}(u, v) &=\dfrac{i}{2}\left(\begin{array}{cc}
u\bar{v}  & -\bar{v} \\
u+u^2\bar{v} & -u\bar{v}
\end{array}\right).
\end{align}
\end{subequations}
Letting $\lambda=\zeta^2$, we define a new set of normalized Jost functions
\begin{subequations}
\begin{align}
{n}_{1}^{(\pm)}(x ; \lambda)&={T}(u(x) ; \zeta) m_{1}^{(\pm)}(x ; \zeta)\\
 {n}_{2}^{(\pm)}(x ; \lambda)&=\zeta {T}(u(x) ; \zeta) m_{2}^{(\pm)}(x ; \zeta)
\end{align}
\end{subequations}
subject to the constant boundary conditions at infinity:
\begin{subequations}
\begin{align}
\lim _{x \rightarrow \pm \infty} {n}_{1}^{(\pm)}(x ; \lambda)&=e_{1}=(1,0)^\text{T}\\
 \lim _{x \rightarrow \pm \infty} {n}_{2}^{(\pm)}(x ; \lambda)&=e_{2}=(0,1)^\text{T}.
\end{align}
\end{subequations}
Setting
\begin{equation}
    \label{eq: J}
    J(\lambda):=\dfrac{1}{4}\left(\lambda-\dfrac{1}{\lambda} \right)
\end{equation}
the transformed Jost functions are solutions to the following \textit{Volterra} integral equations:
\begin{align}
\label{eq: n-s}
{n}^{(\pm)}(x;\lambda):&=\left[{n}_{1}^{(\pm)}(x ; \lambda), {n}_{2}^{(\pm)}(x ; \lambda)\right]\\
\nonumber
&=I+\int^x_{\pm\infty}e^{ iJ(\lambda)(x-y)\ad \sigma_3}\left(\left[{Q}_{1}(y)+\lambda {Q}_{2}(y)\right] {n}^{(\pm)}(y ; \lambda)\right)dy.
\end{align}
The following lemma will be useful:
\begin{lemma}
\label{lm:asy-s}
\cite{PS}If $ Q_1(u,v),  Q_2(u,v) \in L^1(\mathbb R)$, then for every $\lambda\in \mathbb{R}$, Volterra integral equations \eqref{eq: n-s} admit unique solutions $n^{(\pm)}_1(\cdot,\lambda)$ and $n^{(\pm)}_2(\cdot,\lambda)$ in $L^\infty(\mathbb R)$. \textit{Riemann-Lebesgue} lemma leads to the following asymptotic behavior for $|\lambda|<1$:
\begin{subequations}
\begin{align}
\lim _{\lambda \rightarrow 0} \frac{{n}_{1}^{(\pm)}(x ; \lambda)}{{n}_{1}^{\pm 0}(x)}&=e_{1}=(1,0)^\text{T}\\
\lim _{\lambda \rightarrow 0} \frac{{n}_{2}^{(\pm)}(x ; \lambda)}{{n}_{2}^{\pm0}(x)} &=e_{2}=(0,1)^\text{T}
\end{align}
\end{subequations}
where
\begin{subequations}
\begin{align}
{n}_{1}^{\pm 0}(x)&=e^{-\frac{i}{4} \int_{\pm \infty}^{x}\left(|u|^{2}+|v|^{2}\right) d y}\\
{n}_{2}^{\pm 0}(x)&=e^{\frac{i}{4} \int_{\pm \infty}^{x}\left(|u|^{2}+|v|^{2}\right) d y}.
\end{align}
\end{subequations}
If in addition $u,v \in C^{1}(\mathbb{R})$, then
\begin{subequations}
\begin{align}
    \lim _{|\lambda| \rightarrow 0} \frac 1{\lambda} \left[\frac{{n}_{1}^{(\pm)}(x ; \lambda)}{{n}_{1}^{\pm 0}(x)}-e_{1}\right]&=\left(\begin{array}{c}
-\int_{\pm \infty}^x\left[\bar{u}\left(u_x-\frac{i}{2} u|v|^2-\frac{i}{2} v\right)-\frac{i}{2} u \bar{v}\right] d y\\
2 i u_x+u|v|^2+v
\end{array}\right)\\
\lim _{|\lambda| \rightarrow 0} \frac 1{\lambda}\left[\frac{{n}_{2}^{(\pm)}(x ; \lambda)}{{n}_{2}^{\pm 0}(x)}-e_{2}\right]&=\left(\begin{array}{c}
\bar{u} \\
\int_{\pm \infty}^x\left[\bar{u}\left(u_x-\frac{i}{2} u|v|^2-\frac{i}{2} v\right)-\frac{i}{2} u \bar{v}\right] d y
\end{array}\right).
\end{align}
\end{subequations}

\end{lemma}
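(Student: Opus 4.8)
The plan is to establish the three assertions in sequence, using only the Volterra structure of \eqref{eq: n-s} together with the elementary fact that conjugation by $e^{iJ(\lambda)(x-y)\ad\sigma_3}$ fixes diagonal matrices while multiplying the $(1,2)$ and $(2,1)$ entries by the oscillatory phases $e^{\pm 2iJ(\lambda)(x-y)}$. For existence and uniqueness I would run the standard successive-approximation argument for a Volterra equation of the second kind. For real $\lambda$ we have $J(\lambda)\in\bbR$, so every entry of the conjugation factor has modulus one; hence the integral operator in \eqref{eq: n-s} has kernel bounded in norm by $|Q_1(y)|+|\lambda|\,|Q_2(y)|\in L^1(\bbR)$. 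The $k$-th Volterra iterate is then dominated by $\tfrac{1}{k!}\big(\int_{\pm\infty}^x(|Q_1|+|\lambda||Q_2|)\,dy\big)^k$, so the Neumann series converges absolutely and uniformly in $x$, producing the unique $L^\infty$ solution. This is exactly Lemma \ref{lemma: 1} transported to the present kernel.

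For the leading limit as $\lambda\to0$ I would exploit that $J(\lambda)=\tfrac14(\lambda-\lambda^{-1})$ satisfies $|J(\lambda)|\to\infty$. Splitting the kernel of \eqref{eq: n-s} into its diagonal part, which carries no phase, and its off-diagonal part, which carries $e^{\pm 2iJ(\lambda)(x-y)}$, the off-diagonal contribution tends to $0$ by the Riemann--Lebesgue lemma since $Q_1,Q_2\in L^1$ and $|J(\lambda)|\to\infty$. The diagonal part of $Q_1+\lambda Q_2$ converges as $\lambda\to0$ to $\mathrm{diag}\big(-\tfrac{i}{4}(|u|^2+|v|^2),\,\tfrac{i}{4}(|u|^2+|v|^2)\big)$, so in the limit the first column of \eqref{eq: n-s} collapses to the scalar Volterra equation $f(x)=1+\int_{\pm\infty}^x\big(-\tfrac{i}{4}\big)(|u|^2+|v|^2)f\,dy$, whose unique solution is precisely $n_1^{\pm0}(x)$; the second column yields $n_2^{\pm0}(x)$ identically. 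Dividing by $n_j^{\pm0}$ gives the stated limits $e_1$ and $e_2$.

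For the $O(\lambda)$ correction I would use the extra hypothesis $u,v\in C^1$ to integrate by parts in the oscillatory off-diagonal integrals. Each integral $\int_{\pm\infty}^x e^{\pm 2iJ(\lambda)(x-y)}h(y)\,dy$, with $h$ assembled from entries of $Q_1$ and the leading profile $n_j^{\pm0}$, equals $\mp\tfrac{1}{2iJ(\lambda)}h(x)+O\big(J(\lambda)^{-2}\big)$ after one integration by parts, the boundary term at $\pm\infty$ vanishing by decay. Since $J(\lambda)^{-1}\sim-4\lambda$ near $\lambda=0$, this yields a genuine $O(\lambda)$ contribution localized at $y=x$. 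Collecting, the first-order term in $n_j^{(\pm)}/n_j^{\pm0}-e_j$ picks up two pieces: the diagonal part of $\lambda Q_2$, responsible for the $-\tfrac{i}{2}u\bar v$ term inside the integral, and the second Neumann iterate, in which the off-diagonal entry $(Q_1)_{21}=u_x-\tfrac{i}{2}|v|^2u-\tfrac{i}{2}v$ is fed back through the boundary term above, producing the local expression $2iu_x+u|v|^2+v$ in the off-diagonal slot and the cross term $\bar u\big(u_x-\tfrac{i}{2}u|v|^2-\tfrac{i}{2}v\big)$ in the diagonal slot. Dividing by $\lambda$ and sending $\lambda\to0$ then recovers the two displayed formulas.

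The routine parts are the existence/uniqueness argument and the leading limit; the delicate point will be the first-order expansion, where one must verify that every remainder is truly $o(\lambda)$. This amounts to bounding the $O\big(J(\lambda)^{-2}\big)$ tails from integration by parts uniformly in $x$, which is exactly where the $C^1$ regularity of $u,v$ (ensuring $u',v'$ are integrable against the ambient decay) enters, and to carefully tracking which entries of the second Neumann iterate are annihilated by Riemann--Lebesgue cancellation versus which survive as boundary terms. Keeping straight the interplay between the $\lambda Q_2$ diagonal correction and the off-diagonal coupling of $Q_1$ is the step most likely to conceal sign and constant errors.
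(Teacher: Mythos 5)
You should first be aware that the paper itself contains no proof of this lemma: it is imported verbatim from \cite{PS}, so your proposal is in effect supplying the argument the citation points to. Your first two steps are correct and are the standard ones: for real $\lambda\neq 0$ the conjugation $e^{iJ(\lambda)(x-y)\ad\sigma_3}$ is unimodular, so the Volterra iterates are dominated by $\frac{1}{k!}\bigl(\int(|Q_1|+|\lambda||Q_2|)\,dy\bigr)^k$ and the Neumann series gives the unique $L^\infty$ solution; and splitting the kernel into the phase-free diagonal part (which converges to $\mathrm{diag}(-\tfrac i4(|u|^2+|v|^2),\tfrac i4(|u|^2+|v|^2))$) and the oscillatory off-diagonal part (killed by Riemann--Lebesgue as $|J(\lambda)|\to\infty$) does collapse the limit to the scalar Volterra equation solved by $n_1^{\pm 0}$. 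Your bookkeeping for the first-order formula is also right: the diagonal entry of $\lambda Q_2$ produces the $-\tfrac i2 u\bar v$ term, the boundary term of the $(2,1)$ oscillatory integral produces $2iu_x+u|v|^2+v=2i(Q_1)_{21}(x)$, and feeding that back through $(Q_1)_{12}=\tfrac i2\bar u$ produces $\bar u(u_x-\tfrac i2 u|v|^2-\tfrac i2 v)$, with $1/J(\lambda)\sim -4\lambda$ converting everything into the stated $O(\lambda)$ coefficients.

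The gap is in the justification of that first-order step. Your integration by parts replaces $\int_{\pm\infty}^x e^{\pm 2iJ(x-y)}h(y)\,dy$ by $\mp\tfrac{1}{2iJ}h(x)$ plus a remainder controlled by $\int e^{\pm 2iJ(x-y)}h'(y)\,dy$, which requires $h'\in L^1$. But in the decisive $(2,1)$ entry, $h$ is built from $(Q_1)_{21}=u_x-\tfrac i2|v|^2u-\tfrac i2 v$, so $h'$ contains $u_{xx}$ --- one derivative more than the stated hypothesis $u,v\in C^1$ provides. Your remark that $C^1$ regularity ``ensures $u',v'$ are integrable'' misses the point: it is the second derivative that the integration by parts consumes. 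Nor is this a removable technicality: for $h$ merely continuous and integrable, the localization $2iJ\int_0^\infty e^{-2iJs}h(x-s)\,ds\to h(x)$ can genuinely fail (a profile like $h(x-s)\sim e^{i/s}\sqrt{s}$ near $s=0$ is continuous, vanishes at $s=0$, and is integrable, yet stationary phase produces a contribution of size exactly $|J|^{-1}$ with non-convergent phase), so some $W^{1,1}$-type control of $u_x$ is indispensable, not merely convenient. The repair in the context of this paper is to invoke the standing assumption that the data lie in $\mathcal{I}$, so $u'',v''\in L^{2,1}(\bbR)\subset L^1(\bbR)$ and your $h'\in L^1$ requirement is met; alternatively one must use whatever stronger hypothesis \cite{PS} actually imposes. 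Separately, a minor overclaim: one integration by parts yields a remainder $o(J^{-1})$ (Riemann--Lebesgue applied to $h'$), not $O(J^{-2})$, which would cost yet another derivative; since $o(J^{-1})=o(\lambda)$ is exactly what the limit requires, you should state the remainder that way.
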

By using the transformation formulas for small $\zeta$, there is a new scattering data matrix $S_g(\lambda)$ such that
\begin{equation}
\left[n^{(+)}_{1}e^{ix(\lambda-\lambda^{-1})/4},n^{(+)}_{2}e^{-ix(\lambda-\lambda^{-1})/4}\right]=\left[n^{(-)}_{1}e^{ix(\lambda-\lambda^{-1})/4},n^{(-)}_{2}e^{-ix(\lambda-\lambda^{-1})/4}\right]S_g(\lambda).
\end{equation}
The matrix $S_g(\lambda)$ takes the form
\begin{equation}
S_g(\lambda)=\left(\begin{array}{cc}
\alpha(\lambda) & {\beta}(\lambda) \\
\breve{\beta}(\lambda) & \breve{\alpha}(\lambda)
\end{array}\right)=\left(\begin{array}{cc}
a(\zeta) & \zeta{b}(\zeta) \\
\zeta^{-1}\breve{b}(\zeta) & \breve{a}(\zeta)
\end{array}\right).
\end{equation}
It is clear that
\begin{subequations}
\begin{align}
    \alpha(\lambda) &=\text{det}\left[n^{(+)}_{1}e^{ix(\lambda-\lambda^{-1})/4}, n^{(-)}_{2}e^{-ix(\lambda-\lambda^{-1})/4}\right]=\overline{\breve{\alpha}(\overline{\lambda})} \\
\breve{\beta}(\lambda)&=\text{det}\left[n^{(-)}_{1}e^{ix(\lambda-\lambda^{-1})/4}, n^{(+)}_{1}e^{ix(\lambda-\lambda^{-1})/4}\right]=-\overline{{\beta}(\lambda)}/\lambda.
\end{align}
\end{subequations}
thus for $\lambda\in \bbR$
\begin{equation}
\label{mtrx:small}
S_g(\lambda)=\twomat{\alpha(\lambda)}{-\lambda\overline{\breve{\beta}(\lambda)}}{\breve{\beta}(\lambda)}{\overline{\alpha(\lambda)}}.
\end{equation}
\subsubsection{Transformation of the Jost functions for large $\zeta$}
Given matrix
\begin{equation}
\widetilde{T}(v; \zeta):=\left(\begin{array}{ll}
1 & 0 \\
v & \zeta
\end{array}\right)
\end{equation}
we define $$\widetilde{\Psi}:=\widetilde{T} \psi$$
that solves the following linear spectral problem
\begin{equation}
\label{spec: large}
\widetilde{\Psi}_{x}=\widetilde{\mathcal{L}} \widetilde{\Psi}
\end{equation}
where
\begin{equation}
\widetilde{\mathcal{L}}=\widetilde{Q}_{1}(u, v)+\frac{1}{\zeta^{2}} \widetilde{Q}_{2}(u, v)+\frac{i}{4}\left(\zeta^{2}-\frac{1}{\zeta^{2}}\right) \sigma_{3}
\end{equation}
and
\begin{subequations}
\begin{align}
\widetilde{Q}_{1}(u, v)&=\left(\begin{array}{cc}
\frac{i}{4}\left(|u|^{2}+|v|^{2}\right) & -\frac{i}{2} \bar{v} \\
v_{x}+\frac{i}{2}|u|^{2} v+\frac{i}{2} u & -\frac{i}{4}\left(|u|^{2}+|v|^{2}\right)
\end{array}\right)\\
\smallskip
\widetilde{Q}_{2}(u, v) &=-\frac{i}{2}\left(\begin{array}{cc}
\bar{u} v & -\bar{u} \\
v+\bar{u} v^{2} & -\bar{u} v
\end{array}\right).
\end{align}
\end{subequations}
Similar to the small $\zeta$ case, we define a new set of normalized Jost functions
\begin{subequations}
\label{jost:ntilde}
\begin{align}
\widetilde{n}_{1}^{(\pm)}(x ; \lambda)&=\widetilde{T}(v(x) ; \zeta) m_{1}^{(\pm)}(x ; \zeta)\\ \quad \widetilde{n}_{2}^{(\pm)}(x ; \lambda)&=\zeta^{-1} \widetilde{T}(v(x) ; \zeta) m_{2}^{(\pm)}(x ; \zeta)
\end{align}
\end{subequations}
subject to the constant boundary conditions at infinity:
\begin{subequations}
\begin{align}
\lim _{x \rightarrow \pm \infty} \widetilde{n}_{1}^{(\pm)}(x ; \lambda)&=e_{1} =(1,0)^\T
\\
\lim _{x \rightarrow \pm \infty} \widetilde{n}_{2}^{(\pm)}(x ; \lambda)&=e_{2}=(0,1)^\T.
\end{align}
\end{subequations}
The transformed Jost functions are solutions to the following \textit{Volterra} integral equations:
\begin{align}
\label{eq:n-l}
\widetilde{n}^{(\pm)}(x;\lambda):&=\left[\widetilde{n}_{1}^{(\pm)}(x ; \lambda),\widetilde{n}_{2}^{(\pm)}(x ; \lambda)\right]\\
\nonumber
&=I+\int^x_{\pm\infty}e^{ iJ(\lambda)(x-y)\ad \sigma_3}\left(\left[\widetilde{Q}_{1}(y)+\lambda^{-1}\widetilde{Q}_{2}(y)\right] \widetilde{n}^{(\pm)}(y ; \lambda)\right)dy.
\end{align}
We need the following lemma from \cite{PS}:
\begin{lemma}
\label{lem:asy-l}
If $ Q_1(u,v),  Q_2(u,v) \in L^1(\mathbb R)$, then for every $\lambda\in \mathbb{R}\setminus \{0\}$, Volterra integral equations \eqref{eq:n-l} admits unique solutions $\widetilde{n}^{(\pm)}_1(\cdot,\lambda)$ and $\widetilde{n}^{(\pm)}_2(\cdot,\lambda)$ in the space $L^\infty(\mathbb R)$. \textit{Riemann-Lebesgue} lemma leads to the following asymptotic behavior of $|\lambda|>1$:
\begin{subequations}
\begin{align}
\lim _{|\lambda| \rightarrow \infty} \frac{\widetilde{n}_{1}^{(\pm)}(x ; \lambda)}{\widetilde{n}_{1}^{\pm\infty}(x)}=e_{1}=(1,0)^\text{T}
\\
\lim _{|\lambda| \rightarrow \infty} \frac{\widetilde{n}_{2}^{(\pm)}(x ; \lambda)}{\widetilde{n}_{2}^{\pm\infty}(x)}=e_{2}=(0,1)^\text{T}
\end{align}
\end{subequations}
where
\begin{subequations}
\begin{align}
\label{ntilde-1}
\widetilde{n}_{1}^{\pm\infty}(x)&=e^{\frac{i}{4} \int_{\pm \infty}^{x}\left(|u|^{2}+|v|^{2}\right) d y}\\
\label{ntilde-2}
\widetilde{n}_{2}^{\pm\infty}(x)&=e^{-\frac{i}{4} \int_{\pm \infty}^{x}\left(|u|^{2}+|v|^{2}\right) d y}.
\end{align}
\end{subequations}
If in addition $v \in C^{1}(\mathbb{R})$, then
\begin{subequations}
\begin{align}
    \lim _{|\lambda| \rightarrow \infty} \lambda\left[\frac{\widetilde{n}_{1}^{(\pm)}(x ; \lambda)}{\widetilde{n}_{1}^{\pm\infty}(x)}-e_{1}\right] &=\left(\begin{array}{c}
-\int_{\pm \infty}^{x}\left[\bar{v}\left(v_{x}+\frac{i}{2}|u|^{2} v+\frac{i}{2} u\right)+\frac{i}{2} \bar{u} v\right] d y \\
-2 i v_{x}+|u|^{2} v+u
\end{array}\right)\\
\lim _{|\lambda| \rightarrow \infty} \lambda\left[\frac{\widetilde{n}_{2}^{(\pm)}(x ; \lambda)}{\widetilde{n}_{2}^{\pm\infty}(x)}-e_{2}\right]&=\left(\begin{array}{c}
\bar{v} \\
\int_{\pm \infty}^{x}\left[\bar{v}\left(v_{x}+\frac{i}{2}|u|^{2} v+\frac{i}{2} u\right)+\frac{i}{2} \bar{u} v\right] d y
\end{array}\right).
\end{align}
\end{subequations}
\end{lemma}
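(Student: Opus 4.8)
The plan is to treat \eqref{eq:n-l} as a standard Volterra problem and then extract a two-term large-$\lambda$ expansion by isolating the diagonal part from the oscillatory off-diagonal part; the whole argument runs parallel to Lemma \ref{lm:asy-s}, with the roles of the origin and infinity (equivalently $\lambda$ and $\lambda^{-1}$, and $u$ and $v$) interchanged. \textbf{Existence and uniqueness.} First I would rewrite \eqref{eq:n-l} as a fixed-point equation $\widetilde{n}^{(\pm)}=I+K_\lambda\widetilde{n}^{(\pm)}$ on $L^\infty(\bbR)$. For real $\lambda\neq 0$ the phase $J(\lambda)=\tfrac14(\lambda-\lambda^{-1})$ in \eqref{eq: J} is real, so every entry of $e^{iJ(\lambda)(x-y)\ad\sigma_3}$ has modulus one and the kernel is dominated pointwise by $|\widetilde{Q}_1(y)|+|\lambda|^{-1}|\widetilde{Q}_2(y)|\in L^1(\bbR)$. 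The usual Volterra estimate then shows the Neumann series $\sum_k K_\lambda^k I$ converges absolutely in $L^\infty$, with bound $\exp(\norm{\widetilde{Q}_1}{L^1(\bbR)}+|\lambda|^{-1}\norm{\widetilde{Q}_2}{L^1(\bbR)})$, yielding the unique bounded solution. This uses only $\widetilde{Q}_1,\widetilde{Q}_2\in L^1$ and is the content attributed to \cite{PS}.

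\textbf{Leading order.} Writing the integral equation column by column and using that $\ad\sigma_3$ fixes the diagonal while attaching $e^{\pm 2iJ(\lambda)(x-y)}$ to the off-diagonal entries, the second (off-diagonal) component of each column carries an oscillatory factor. By the Riemann--Lebesgue lemma, $\int_{\pm\infty}^x e^{\pm 2iJ(\lambda)(x-y)}f(y)\,dy\to 0$ as $|\lambda|\to\infty$ for $f\in L^1$, so the off-diagonal contributions vanish and the $\lambda^{-1}\widetilde{Q}_2$ term drops out entirely. What survives is the scalar diagonal equation driven by $(\widetilde{Q}_1)_{11}=\tfrac i4(|u|^2+|v|^2)$ and $(\widetilde{Q}_1)_{22}=-\tfrac i4(|u|^2+|v|^2)$, whose solutions are precisely the exponentials $\widetilde{n}_1^{\pm\infty}$ and $\widetilde{n}_2^{\pm\infty}$ of \eqref{ntilde-1}--\eqref{ntilde-2}, establishing the first pair of limits.

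\textbf{First correction.} For the $O(\lambda^{-1})$ statements I would factor out the leading exponential, setting $\widetilde{n}_1^{(\pm)}=\widetilde{n}_1^{\pm\infty}h$, and insert the leading approximation into the off-diagonal integral. A single integration by parts of $\int_{\pm\infty}^x e^{-2iJ(\lambda)(x-y)}(\widetilde{Q}_1)_{21}(y)\,\widetilde{n}_1^{\pm\infty}(y)\,dy$ produces the boundary term at $y=x$ with prefactor $(2iJ(\lambda))^{-1}=\tfrac{2}{i(\lambda-\lambda^{-1})}\sim -2i/\lambda$; multiplying by $\lambda$ and using $(\widetilde{Q}_1)_{21}=v_x+\tfrac i2|u|^2v+\tfrac i2 u$ gives the second component $-2iv_x+|u|^2 v+u$. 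This $O(\lambda^{-1})$ off-diagonal term then re-enters the diagonal equation through $(\widetilde{Q}_1)_{12}=-\tfrac i2\bar v$, and together with the genuinely diagonal contribution $\lambda^{-1}(\widetilde{Q}_2)_{11}=-\tfrac{i}{2\lambda}\bar u v$ yields, after integrating in $x$, the first component $-\int_{\pm\infty}^x[\bar v(v_x+\tfrac i2|u|^2 v+\tfrac i2 u)+\tfrac i2\bar u v]\,dy$. The second column is handled identically.

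\textbf{Main obstacle.} The delicate point is the $O(\lambda^{-1})$ bookkeeping: the off-diagonal component is itself $O(\lambda^{-1})$ and feeds back into the diagonal component at the \emph{same} order, so the two corrections must be extracted consistently rather than independently. The refined statement is exactly where $v\in C^1$ is needed: it legitimizes the integration by parts that upgrades the pointwise Riemann--Lebesgue decay to the $O(\lambda^{-1})$ rate, while $\widetilde{Q}_1\in L^1$ forces the decay of $(\widetilde{Q}_1)_{21}$ that kills the boundary contribution at $y=\pm\infty$. The remaining technical step is to control the Neumann remainder uniformly in $x$ so that the limits hold with the displayed integrands rather than merely up to an uncontrolled $o(1)$ error.
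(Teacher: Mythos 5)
Your proposal is correct, but be aware that the paper contains no proof of this statement to compare against: the lemma is quoted verbatim from Pelinovsky--Saalmann \cite{PS} (``We need the following lemma from \cite{PS}''), exactly as its small-$\zeta$ counterpart, Lemma \ref{lm:asy-s}, is. So you have supplied an argument where the paper supplies only a citation, and the argument you give is the standard one that the cited reference itself uses: Volterra/Neumann iteration with the bound $\exp\bigl(\|\widetilde{Q}_1\|_{L^1}+|\lambda|^{-1}\|\widetilde{Q}_2\|_{L^1}\bigr)$ for existence and uniqueness, Riemann--Lebesgue for the leading diagonal limit, and a single integration by parts for the $O(\lambda^{-1})$ correction. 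Your bookkeeping of constants checks out: the boundary term of the (2,1)-integral carries $\bigl(2iJ(\lambda)\bigr)^{-1}\sim -2i/\lambda$ against $(\widetilde{Q}_1)_{21}=v_x+\tfrac{i}{2}|u|^2v+\tfrac{i}{2}u$, giving $-2iv_x+|u|^2v+u$; feeding this back through $(\widetilde{Q}_1)_{12}=-\tfrac{i}{2}\bar v$ contributes $-\lambda^{-1}\bar v\,(\widetilde{Q}_1)_{21}$, which together with $\lambda^{-1}(\widetilde{Q}_2)_{11}=-\tfrac{i}{2\lambda}\bar u v$ produces exactly the displayed integrand; and your device of factoring $\widetilde{n}_1^{(\pm)}=\widetilde{n}_1^{\pm\infty}h$ is what makes the diagonal kernel drop out (one gets $h'=G/\widetilde{n}_1^{\pm\infty}$ pointwise), so the correction is the plain integral rather than a Volterra resolvent applied to it. Two points in your sketch deserve sharper wording. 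First, the claim that ``$\widetilde{Q}_1\in L^1$ forces the decay of $(\widetilde{Q}_1)_{21}$'' is not right as stated: an $L^1$ function need not vanish at infinity, and killing the boundary term at $y=\pm\infty$ requires something like $(\widetilde{Q}_1)_{21}$ absolutely continuous with derivative in $L^1$ (in this paper that is supplied by $u,v\in\mathcal{I}$, not by the lemma's stated hypotheses alone; the same looseness is present in \cite{PS}). Second, since $\widetilde{n}^{(\pm)}$ itself depends on $\lambda$ inside the oscillatory integrals, the Riemann--Lebesgue step must be applied term by term in the (uniformly convergent) Neumann series, or to the limiting equation after a compactness/dominated-convergence argument; you flag this at the end, but it is the one step that turns your outline into a proof.
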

By using the transformation formulas for large $\zeta$, there is a new scattering data matrix $\widetilde{S}_g(\lambda)$ such that
\begin{equation}\label{Scattering Matrix}
\left[\widetilde{n}^{(+)}_{1}e^{ix(\lambda-\lambda^{-1})/4},\widetilde{n}^{(+)}_{2}e^{-ix(\lambda-\lambda^{-1})/4}\right]=\left[\widetilde{n}^{(-)}_{1}e^{ix(\lambda-\lambda^{-1})/4},\widetilde{n}^{(-)}_{2}e^{-ix(\lambda-\lambda^{-1})4}\right]\widetilde{S}_g(\lambda).
\end{equation}
The matrix $\widetilde{S}_g(\lambda)$ takes the form
\begin{equation}
\label{mtrx:S}
\widetilde{S}_g(\lambda)=\left(\begin{array}{cc}
\widetilde{\alpha}(\lambda) & \widetilde{\beta}(\lambda)  \\
\breve{\widetilde{\beta}}(\lambda)& \breve{\widetilde{\alpha}}(\lambda)
\end{array}\right)=\left(\begin{array}{cc}
a(\zeta) & \zeta^{-1}b(\zeta) \\
\zeta \breve{b}(\zeta) & \breve{a}(\zeta)
\end{array}\right).
\end{equation}
It is clear that
\begin{equation}
\label{eq:n-r}
\left\{\begin{array}{l}
\widetilde{\alpha}(\lambda)=\text{det}\left[\widetilde{n}^{(+)}_{1}e^{ix(\lambda-\lambda^{-1})/4}, \widetilde{n}^{(-)}_{2}e^{-ix(\lambda-\lambda^{-1})/4}\right]=\overline{\breve{\widetilde{\alpha}}(\overline{\lambda}) } \\\\
\breve{\widetilde{\beta}}(\lambda)=\text{det}\left[\widetilde{n}^{(-)}_{1}e^{ix(\lambda-\lambda^{-1})/4}, \widetilde{n}^{(+)}_{1}e^{ix(\lambda-\lambda^{-1})/4}\right]=-\lambda \overline{{\widetilde{\beta}}(\lambda)}.
\end{array}\right.
\end{equation}
thus for $\lambda\in \bbR$
\begin{equation}
\widetilde{S}_g(\lambda)=\twomat{\widetilde{\alpha}(\lambda)}{{\widetilde{\beta}}(\lambda)}{-\lambda \overline{{\widetilde{\beta}}(\lambda)}}{\overline{\widetilde{\alpha}(\overline{\lambda})}}
\end{equation}
and we obtain
\begin{equation}
\widetilde{\alpha}(\lambda)=\alpha(\lambda),\quad \breve{\widetilde{\beta}}(\lambda)=-\overline{\beta(\lambda)},\quad \breve{\beta}(\lambda)=-\overline{\widetilde{\beta}(\lambda)}, \quad \lambda\in {\mathbb R\backslash \{0\}}.
\end{equation}
and
\begin{subequations}
\begin{align}
    \lim_{\lambda \rightarrow 0} \alpha(\lambda)&=e^{\frac i4\int_{\mathbb R}\left(|u|^2+|v|^2\right)dy}=: \alpha_{0}\\
    \lim _{|\lambda| \rightarrow \infty} \alpha(\lambda)&=e^{-\frac i4\int_{\mathbb R}\left(|u|^2+|v|^2\right)dy}=: \alpha_{\infty}
\end{align}
\end{subequations}
We define the \textit{reflection} coefficient which enjoys the following useful relations :
\begin{subequations}
\begin{align}
\label{connection-1}
\widetilde{r}(\lambda):=\dfrac {{\widetilde{\beta}}(\lambda)}{\widetilde{\alpha}(\lambda)}&=\dfrac {\dfrac {{b}({\zeta})}{{\zeta}}}{\widetilde{\alpha}(\lambda)}=\dfrac{{{-\overline{\breve{\beta}(\lambda)}}}}{\alpha(\lambda)}\\
\nonumber
\\
\label{connection-2}
\breve{\widetilde{r}}(\lambda):=\dfrac {\breve{\widetilde{\beta}}(\lambda)}{\breve{\widetilde{\alpha}}(\lambda)}&=\dfrac {\zeta \breve{b}(\zeta)}{\overline{\widetilde{\alpha}(\lambda)}}=-\lambda\overline{\widetilde{r}(\lambda)}.
\end{align}
\end{subequations}
\subsubsection{Connecting different pairs of Jost functions}(see \cite[section 2.3]{PS})
 For the following two sets of Jost functions
\begin{equation}
\left\{n_{1}^{(\pm)}(\cdot ; \lambda), n_{2}^{(\pm)}(\cdot ; \lambda)\right\}, \quad\left\{\widetilde{n}_{1}^{(\pm)}(\cdot ; \lambda), \widetilde{n}_{2}^{(\pm)}(\cdot ; \lambda)\right\},
\end{equation}
we can directly read off that for every $\lambda \in \bbC\setminus \lbrace 0 \rbrace$ :
\begin{subequations}
\begin{align}
n_{1}^{(\pm)}(x ; \lambda)&=\left(\begin{array}{cc}
1 & 0 \\
u(x)-\lambda^{-1}v(x) & \lambda^{-1}
\end{array}\right) \widetilde{n}_{1}^{(\pm)}(x ; \lambda)
\\
\nonumber
\\
n_{2}^{(\pm)}(x ; \lambda)&=\left(\begin{array}{cc}
\lambda & 0 \\
u(x) \lambda-v(x) & 1
\end{array}\right) \widetilde{n}_{2}^{(\pm)}(x ; \lambda)
\end{align}
\end{subequations}
 and in the opposite direction
 \begin{subequations}
 \begin{align}
\widetilde{n}_{1}^{(\pm)}(x ; \lambda)&=\left(\begin{array}{cc}
1 & 0 \\
v(x)-\lambda u(x) & \lambda
\end{array}\right) n_{1}^{(\pm)}(x ; \lambda)
\\
\widetilde{n}_{2}^{(\pm)}(x ; \lambda)&=\left(\begin{array}{cc}
\lambda^{-1}& 0 \\
v(x) \lambda^{-1}-u(x) & 1
\end{array}\right) n_{2}^{(\pm)}(x ; \lambda)
\end{align}
\end{subequations}
From Lemma \ref{lm:asy-s} and Lemma \ref{lem:asy-l} we can deduce that
\begin{subequations}
\begin{align}
\label{asy-1}
 \lim _{|\lambda| \rightarrow \infty} \frac{n_{1}^{(\pm)}(x ; \lambda)}{\widetilde{n}_{1}^{\pm\infty}(x)}&=e_{1}+u(x) e_{2} \\ \lim _{|\lambda| \rightarrow \infty} \frac{n_{2}^{(\pm)}(x ; \lambda)}{\widetilde{n}_{2}^{\pm\infty}(x)}&=\bar{v}(x) e_{1}+(1+u(x) \bar{v}(x)) e_{2}
\\
\lim _{\lambda \rightarrow 0} \frac{\widetilde{n}_{1}^{(\pm)}(x ; \lambda)}{n_{1}^{\pm 0}(x)}&=e_{1}+v(x) e_{2}\\
\label{asy-4}
    \lim _{\lambda \rightarrow 0} \frac{\widetilde{n}_{2}^{(\pm)}(x ; \lambda)}{n_{2}^{\pm 0}(x)}&=\bar{u}(x) e_{1}+(1+\bar{u}(x) v(x)) e_{2}.
\end{align}
\end{subequations}
\begin{equation}
\lim _{\lambda \rightarrow 0} \beta(\lambda)=\lim _{\lambda \rightarrow 0} \widetilde{\beta}(\lambda)=\lim _{|\lambda| \rightarrow \infty} \beta(\lambda)=\lim _{|\lambda| \rightarrow \infty} \widetilde{\beta}(\lambda)=0.
\end{equation}
\begin{definition}
\label{def:generic}
We say that the potential $(u, v)$ admits an eigenvalue at $\lambda_{0} \in \mathbb{C}^{+}$ if $\alpha\left(\lambda_{0}\right)=0$ and a resonance at $\lambda_{0} \in \mathbb{R}$ if $\alpha\left(\lambda_{0}\right)=0$.
\end{definition}

\subsubsection{Residue condition}
If $a(\zeta_j)=0$ for some $\zeta_j\in \mathbb C^{++}$ then $\breve{a}(\bar{\zeta}_j)=0$ , and by symmetry, we have $a(-\zeta_j)=0$ and $\breve{a}(-\bar{\zeta}_j)=0$.
Set
\begin{equation}
\gamma_j=\zeta^{-2}_j,\quad \lambda_j=\zeta^2_j,\quad j=1,...,N
\end{equation}
Thus we have the linear dependence of the columns
\begin{subequations}
\label{list}
\begin{align}
\psi^{(+)}_{1}(x,\zeta_j)&=b_j\psi^{(-)}_{2}(x,\zeta_j)
\\
m^{(+)}_{1}(x,\zeta_j)&=b_j m^{(-)}_{2}(x,\zeta_j)e^{-ix(\zeta^2_j-\zeta^{-2}_j)/2}
\\
n^{(+)}_{1}(x;\lambda_j)&=\dfrac {b_j}{\zeta_j} n^{(-)}_{2}(x;\lambda_j)e^{-ix(\lambda_j-\lambda^{-1}_j)/2}
\\
\widetilde{n}^{(+)}_{1}(x;\lambda_j)&=b_j \zeta_j \widetilde{n}^{(-)}_{2}(x;\lambda_j)e^{-ix(\lambda_j-\lambda^{-1}_j)/2}
\\
-\bar{b}_j\psi^{(-)}_1(x;\bar{\zeta}_j)&=\psi^{(+)}_2(x;\bar{\zeta}_j)
\\
-\bar{b}_j m^{(-)}_1(x;\bar{\zeta}_j)&= m^{(+)}_2(x;\bar{\zeta}_j)e^{-ix(\bar{\zeta}^2_j-\bar{\zeta}^{-2}_j)/2}
\\
-\bar{b}_j n^{(-)}_{1}(x;\bar{\lambda}_j)&=\dfrac {1}{\bar{\zeta}_j} n^{(+)}_2(x;\bar{\lambda}_j)e^{-ix(\bar{\lambda}_j-\bar{\lambda}^{-1}_j)/2}
\end{align}
\end{subequations}
Using symmetry reduction we have that $\breve{a}'(\bar{\zeta}_j)=\overline{a'(\zeta_j)}$ so we can define norming constant
\begin{equation}
c_j=\dfrac {2\gamma_j}{b_j a'(\zeta_j)}.
\end{equation}
\subsubsection{Time evolution of the scattering data}
Using the Lax pair \eqref{eq:lax} and the global existence result obtained in section \ref{sec:global} one easily obtain
\begin{equation}
    a(t,\zeta)=a(\zeta),\quad b(t,\zeta)=b(\zeta)e^{-\frac i2(\zeta^2+\zeta^{-2})t}
\end{equation}

\subsection{Estimations on the reflection coefficient}We recall that
\begin{equation}
\breve{\widetilde{r}}(\lambda):=\dfrac { \breve{\widetilde{\beta}} (\lambda)}{\breve{\widetilde{\alpha}}(\lambda)},\quad \widetilde{r}(\lambda):=\dfrac {\widetilde{\beta} (\lambda)}{\widetilde{\alpha}(\lambda)}.
\end{equation}
We define the following function space:
\begin{equation}
\label{sp:r}
 H^{1,1}_0(\mathbb{R}):=H^{1,1}(\mathbb{R})\cap \lbrace f: \lim_{\lambda\to 0}f(\lambda)/\lambda=0 \rbrace
\end{equation}
where 
\begin{equation}
H^{1,1}(\mathbb{R})=\lbrace f: f, f'\in L^{2,1}(\mathbb{R}) \rbrace.
\end{equation}
In this subsection we will prove the following important proposition:
\begin{proposition}
\label{prop:r}
If $(u_0,v_0)\in \mathcal{I}\times \mathcal{I} $ \eqref{space:initial} and are generic in the sense of definition \ref{def:generic'}, then $\widetilde{r}(\lambda) \in {{H^{1,1}_0(\mathbb R)}}$.
\end{proposition}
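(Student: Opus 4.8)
The plan is to estimate the numerator $\widetilde r=\widetilde\beta/\widetilde\alpha$ by controlling $\widetilde\beta$ and $\widetilde\alpha$ separately and combining them with the quotient rule. Since $(u_0,v_0)$ is generic, $\alpha=\widetilde\alpha$ has no zeros on $\mathbb R$ (no resonance), and together with the unimodular limits $\alpha_0,\alpha_\infty$ at the two ends this yields a uniform lower bound $|\widetilde\alpha(\lambda)|\ge c>0$; hence division by $\widetilde\alpha$ is harmless and it suffices to establish $H^{1,1}$-type bounds for $\widetilde\beta$ and for $\widetilde\alpha$ (the latter modulo its constant limits). Both coefficients carry integral representations obtained by letting $x\to+\infty$ in the Volterra equations \eqref{eq:n-l} and \eqref{eq: n-s} for the $(-)$ solutions: schematically $\widetilde\beta(\lambda)=\int_{\mathbb R}F(y;\lambda)\,e^{-2iJ(\lambda)y}\,dy$, where $F$ is built from the potential entries in the first row of $\widetilde Q_1+\lambda^{-1}\widetilde Q_2$ and a column of $\widetilde n^{(-)}$, and $2J(\lambda)=\tfrac12(\lambda-\lambda^{-1})$ by \eqref{eq: J}. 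The analysis then splits at $|\lambda|=1$, using the large-$\zeta$ frame $\widetilde n$ on $\{|\lambda|\ge1\}$ and the small-$\zeta$ frame $n$ on $\{|\lambda|\le1\}$, which is exactly the device separating the two singularities of the phase.

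On the outer region $\{|\lambda|\ge1\}$ I would work entirely in the $\widetilde T$-frame, where Lemma \ref{lem:asy-l} controls $\widetilde n^{(\pm)}(\cdot;\lambda)$ uniformly. The substitution $p=2J(\lambda)$ has Jacobian $dp/d\lambda=\tfrac12(1+\lambda^{-2})\in(\tfrac12,1]$ there, so it is bi-Lipschitz and $\langle p\rangle\simeq\langle\lambda\rangle$; thus $\widetilde\beta$ is, up to this benign change of variables, the Fourier transform of $F$. Plancherel converts $\lambda$-weights into $y$-weights and $\lambda$-derivatives into $y$-derivatives of $F$: $\langle\lambda\rangle\widetilde\beta\in L^2$ needs one $y$-derivative of $F$, while $\langle\lambda\rangle\partial_\lambda\widetilde\beta\in L^2$ needs $\langle y\rangle\partial_y F\in L^2$. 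The key bookkeeping point is that the integrand producing the \emph{upper} off-diagonal coefficient $\widetilde\beta$ comes from the first row of $\widetilde Q_1$, namely $\tfrac i4(|u|^2+|v|^2)$ and $-\tfrac i2\bar v$, which carry \emph{no} derivative of the potential; hence the worst term after the required two $y$-derivatives is $v_{xx}$, and the bounds close using precisely $u,v,v',v''\in L^{2,1}$ supplied by $\mathcal I$. The same computation applied to the diagonal integrand gives $\widetilde\alpha-\alpha_\infty\in H^{1,1}(\{|\lambda|\ge1\})$.

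On the inner region $\{|\lambda|\le1\}$ the $\widetilde T$-frame is singular, so I would pass to the $T$-frame and exploit the algebraic identity $\widetilde r=\rho/\lambda$ with $\rho:=\beta/\alpha$, where $\beta$ is represented through the Volterra solutions $n^{(-)}(\cdot;\lambda)$ of \eqref{eq: n-s}, which are regular as $\lambda\to0$ by Lemma \ref{lm:asy-s}. Here $\langle\lambda\rangle\simeq1$, so only plain $L^2$ control is needed, and the singular phase $-1/(2\lambda)$ is straightened by the change of variable $\lambda\mapsto1/\lambda$, which maps this region to the regime already understood. The factor $1/\lambda$ is absorbed by proving the decay $\beta(\lambda)=\mathcal O(\lambda^2)$: in the representation of $\beta$ the integrand again comes from the first row of $Q_1$, i.e.\ $-\tfrac i4(|u|^2+|v|^2)$ and $\tfrac i2\bar u$, together with the regular block $\lambda Q_2$, so it contains no potential-derivative, the boundary terms vanish as $u,v$ decay, and two integrations by parts place two $y$-derivatives on $u'',v''\in L^1\subset L^{2,1}$. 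Since $2J(\lambda)\sim-1/(2\lambda)$, each integration by parts gains a power of $\lambda$, giving $\beta=\mathcal O(\lambda^2)$, and the Riemann--Lebesgue lemma upgrades this to $\beta=o(\lambda^2)$. Consequently $\widetilde r=\rho/\lambda=o(\lambda)$, which is exactly the vanishing condition $\lim_{\lambda\to0}\widetilde r(\lambda)/\lambda=0$ defining the subscript in $H^{1,1}_0(\mathbb R)$ of \eqref{sp:r}.

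The hard part will be the inner region. First, the rate $\beta=\mathcal O(\lambda^2)$ must be obtained \emph{uniformly} while tracking the $\lambda$-dependence of $n^{(-)}(\cdot;\lambda)$ through the integrations by parts, which requires the differentiated asymptotics of Lemma \ref{lm:asy-s} rather than just the leading limit. Second, and most delicately, the derivative estimate near the origin involves $\widetilde r'=\rho'/\lambda-\rho/\lambda^2$, where the factor $\lambda^{-2}$ collides with the blow-up $dp/d\lambda\sim1/(2\lambda^2)$ of the Jacobian; one must show that the vanishing of $\rho$ and $\rho'$ cancels these apparent singularities so that $\widetilde r'\in L^2(\{|\lambda|\le1\})$. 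Finally the two frames must be matched across $|\lambda|=1$, where both representations are valid and the estimates overlap, to assemble a global $H^{1,1}_0$ bound; the remaining weighted estimates on the outer region and for $\widetilde\alpha$ are routine Fourier analysis once the frame is fixed.
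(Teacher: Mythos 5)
Your overall architecture --- splitting at $|\lambda|=1$, working in the $\widetilde T$-frame outside and the $T$-frame inside, using genericity to bound $|\widetilde{\alpha}|$ from below, converting $\lambda$-weights and $\lambda$-derivatives into $y$-derivatives and $y$-weights via Plancherel, and straightening the inner phase by $\gamma=1/\lambda$ --- is the same as the paper's (Lemmas \ref{lm:large-1}, \ref{lm:large-2}, \ref{lm: small-1}), and your outer-region sketch is sound. The genuine gap is in the inner region, and it stems from your choice of representation $\widetilde{r}=\rho/\lambda$ with $\rho=\beta/\alpha$, which carries an explicit factor $1/\lambda$ that you then must kill by proving $\beta=\mathcal{O}(\lambda^{2})$. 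The mechanism you propose --- two integrations by parts, ``each gaining a power of $\lambda$'' --- fails: the amplitude in the integral representation of $\beta$ is $\left[(Q_1+\lambda Q_2)\,n^{(+)}\right]_{12}$, and every $y$-derivative hits the Jost function, whose $y$-derivative contains the commutator term $iJ(\lambda)[\sigma_3,n^{(+)}]$. This produces factors of size $|J(\lambda)|\sim |\lambda|^{-1}$ multiplying off-diagonal entries that are only $\mathcal{O}(\lambda)$, so the factor $1/J$ gained by each integration by parts is cancelled rather than accumulated. Equivalently, $e^{-2iJ(\lambda)y}\,n^{(+)}_{12}(y;\lambda)$ is not a rapidly oscillating quantity, because the Jost function's own oscillation can cancel the phase; extracting decay requires unwinding the Volterra series (Fubini on the iterated kernels), i.e.\ the full machinery of Lemma \ref{lm:large-2}, not stationary phase. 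The pointwise differentiated limits of Lemma \ref{lm:asy-s} that you invoke are limits at fixed $x$ and do not supply the uniform-in-$y$ structure this needs. The same defect leaves your second flagged step --- the cancellation in $\widetilde{r}'=\rho'/\lambda-\rho/\lambda^{2}$ --- asserted but unproved.

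The idea you are missing is the paper's use of the symmetry relations \eqref{connection-1}--\eqref{connection-2}: for real $\lambda$ one has $\widetilde{r}(\lambda)=-\overline{\breve{\beta}(\lambda)}/\alpha(\lambda)$, where $\breve{\beta}$ is the \emph{lower-left} entry of the small-frame scattering matrix. Since $\breve{\beta}=-\overline{\beta}/\lambda$, choosing this entry absorbs the singular factor algebraically, so the representation of $\widetilde{r}$ contains no explicit $1/\lambda$ at all, and no decay rate for $\beta$ is ever required. After the substitution $\gamma=1/\lambda$, the small-frame Volterra equation \eqref{eq: n-s'} is structurally identical to the large-frame one (potential $Q_1+\gamma^{-1}Q_2$, regular phase), so the one-power weighted estimate $\gamma\,\partial_\gamma S_{ng}\in L^{2}_{\gamma}(I_\infty)$ --- exactly what Lemma \ref{lm:large-2} already provides --- combined with the change-of-variables identity \eqref{variable change} yields $\widetilde{r},\lambda\widetilde{r}\in H^{1}((-1,1))$ directly. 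The vanishing condition $\lim_{\lambda\to 0}\widetilde{r}(\lambda)/\lambda=0$ then comes for free from the decay at infinity of the $H^{1}$ function $\gamma\widetilde{r}$ in the $\gamma$ variable (Corollary \ref{r-infty}), not from oscillatory-integral estimates. If you insist on your route, you would end up proving $\beta=o(\lambda^{2})$ with matching derivative bounds by running the Volterra/Fourier analysis in the $\gamma$ variable anyway, thereby reproducing the paper's proof with an extra, avoidable layer.
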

\begin{proof}
The proposition will follow from Lemma \ref{lm:large-1}, Lemma \ref{lm:large-2}, Lemma \ref{lm: small-1} and Corollary \ref{r-infty}.
\end{proof}
The importance of proposition \ref{prop:r} is two-fold. Not only it plays an important role in the reconstruction of the potentials $u$ and $v$ but also it is the key ingredient in obtaining the error term in the long time asymptotics formulas. Because of the $1/\lambda$ factor in the spectral problem \eqref{spec: large}, we will divide the proof into two cases:
 \begin{itemize}
 \item[1.]For $|\lambda|>1$ we will directly study problem \eqref{spec: large} in the spirit of nonlinear Fourier transform of \cite{Zhou98}.
 \item[2.]For $|\lambda|<1$, to avoid singularity at the origin, we instead study problem \eqref{spec: small}.
 \end{itemize}
\begin{remark}
We need the following observations:
\begin{itemize}
\item[1.] From \eqref{connection-2} we have that
$$\lambda\overline{\widetilde{r}(\lambda)}= -\dfrac {\breve{\widetilde{\beta}}(\lambda)}{\breve{\widetilde{\alpha}}(\lambda)}=-\dfrac{\overline{\beta(\lambda)}}{\breve{\alpha}(\lambda)}$$
and this allows us to make estimates on the scattering data \eqref{mtrx:small} associated to spectral problem \eqref{spec: small}.
\item[2.] To consider the estimates of reflection coefficient when $\lambda\in (-1,1)$, we need the following change of variable: $\lambda \mapsto \gamma=\frac{1}{\lambda}$ and consequently suppose that $f_1(\lambda)=f_2(\gamma)=f_1(1/\lambda)$, then
\begin{equation}\label{variable change}
\int^1_0|f'_1(\lambda)|^2d\lambda=\int^{+\infty}_1 |f'_2(\gamma)|^2\gamma^2d\gamma.
\end{equation}
Thus in order to show that $\widetilde{r}(\lambda)\in H^1((-1,1))$ and $\gamma\widetilde{r}(\gamma)\in H^1_{\gamma}(\mathbb{R}\setminus[-1,1])$ it suffices to show that $\lambda\overline{\widetilde{r}(\lambda)}\in H^1(\mathbb{R}\setminus[-1,1]) $.
\item[3.] { $\gamma\widetilde{r}(\gamma)\in H^1_{\gamma}(\mathbb{R}\setminus[-1,1])$ implies that $\lim_{\gamma \to \infty}\gamma\widetilde{r}(\gamma)=0$ and this in turn implies that $\lim_{\lambda\to 0}\widetilde{r}(\lambda)/\lambda=0$.}
\end{itemize}
\end{remark}
Before proving proposition \ref{prop:r}, we define the following mixed norm for $1\leq p, q\leq \infty$:
\begin{equation}
\|f\|_{L^p_x\left(X; L^q_\lambda(Y)\right)}:=\left(\int_X\left(\int_Y|f|^qd\lambda\right)^{\frac pq}dx\right)^{\frac 1p}.
\end{equation}
\subsubsection{Estimates on large $\lambda$}From the integral equation below
\begin{eqnarray*}
\begin{aligned}
\widetilde{n}^{(\pm)}(x;\lambda)=
I+\int^x_{\pm\infty}e^{ iJ(\lambda)(x-y)\ad \sigma_3}\left(\left[\widetilde{Q}_{1}(y)+\lambda^{-1}\widetilde{Q}_{2}(y)\right] \widetilde{n}^{(\pm)}(y ; \lambda)\right)dy
\end{aligned}
\end{eqnarray*}
we can find the scattering matrix $\widetilde{S}_g(\lambda)$:
\begin{equation}
\begin{aligned}
\widetilde{S}_g(\lambda)=I-\int^\infty_{-\infty}e^{ -iJ(\lambda)y\ad \sigma_3}\left(\left[\widetilde{Q}_{1}(y)+\lambda^{-1}\widetilde{Q}_{2}(y)\right] \widetilde{n}^{(+)}(y ; \lambda)\right)dy.
\end{aligned}
\end{equation}
To study the $L^2$ mapping property of the direct scattering map, we first split
\begin{align*}
\label{Q-split}
\widetilde{Q}_1&=\widetilde{Q}_1^{\textrm{diag}}+\widetilde{Q}_1^{\textrm{off}}\\
                &=\left(\begin{array}{cc}
\frac{i}{4}\left(|u|^{2}+|v|^{2}\right) & 0 \\
0 & -\frac{i}{4}\left(|u|^{2}+|v|^{2}\right)
\end{array}\right)+ \left(\begin{array}{cc}
0& -\frac{i}{2} \bar{v} \\
v_{x}+\frac{i}{2}|u|^{2} v+\frac{i}{2} u & 0
\end{array}\right)
\end{align*}
and consequently from \eqref{ntilde-1}-\eqref{ntilde-2} we have that
\begin{equation}
\widetilde{n}^{+\infty}(x)=I+\int_{+\infty}^x \widetilde{Q}_1^{\textrm{diag}}(y)\widetilde{n}^{+\infty}(y)dy.
\end{equation}
where
\begin{equation}
\widetilde{n}^{+\infty}(x):=\left(\widetilde{n}^{+\infty}_1(x) e_1,\widetilde{n}^{+\infty}_2(x) e_2\right).
\end{equation}
Setting
\begin{equation}
\widetilde{n}^{(+)}_{*}(y;\lambda):=\left(\widetilde{n}^{+\infty}(y)\right)^{-1}\widetilde{n}^{(+)}(y ; \lambda)
\end{equation}
we proceed to rewrite
\begin{equation}
\begin{aligned}
\label{New Volterra}
\widetilde{n}^{(+)}_{*}(x;\lambda)=&I+\int^x_{+\infty}e^{iJ(\lambda)(x-y)\ad \sigma_3}\left(\left(\widetilde{n}^{+\infty}(x)\right)^{-1}\left[\widetilde{Q}_{1}^{\textrm{off}}(y)+\lambda^{-1}\widetilde{Q}_2(y)\right] \widetilde{n}^{+\infty}(y)\left[\widetilde{n}^{(+)}_{*}(y;\lambda)-I\right]\right)dy\\
&+\int^x_{+\infty}e^{iJ(\lambda)(x-y)\ad \sigma_3}\left(\left(\widetilde{n}^{+\infty}(x)\right)^{-1}\widetilde{Q}_{1}^{\textrm{diag}}(y) \widetilde{n}^{+\infty}(y)\left[\widetilde{n}^{(+)}_{*}(y;\lambda)-I\right]\right)dy\\
&+\int^x_{+\infty}e^{iJ(\lambda)(x-y)\ad \sigma_3}\left(\left(\widetilde{n}^{+\infty}(x)\right)^{-1}\left[\widetilde{Q}_{1}^{\textrm{off}}(y)+\lambda^{-1}\widetilde{Q}_2(y)\right] \widetilde{n}^{+\infty}(y)\right)dy\\
&=:I+\widetilde{K}_1\left[ \widetilde{n}_*^{(+)}-I\right]+\widetilde{K}_2\left[ \widetilde{n}_*^{(+)}-I\right]+\widetilde{K}_1\left[I\right]
\end{aligned}
\end{equation}
This implies
\begin{equation}
\label{eq:nstar}
\widetilde{n}^{(+)}_{*}(x;\lambda)-I=\left[\mathbf{1}-\widetilde{K}_1-\widetilde{K}_2\right]^{-1}\left[\widetilde{K}_1I\right](x;\lambda).
\end{equation}
Recall that $I_\infty=\mathbb{R}\setminus[-1,1]$, then standard Volterra theory implies that
\begin{equation}
\left\|\widetilde{n}_*^{(+)}(x, \lambda)\right\|_{L_x^{\infty}\left(\mathbb{R}^{+} ; L_z^{\infty}\left(I_{\infty}\right)\right)}<\infty.
\end{equation}
\begin{remark}
\label{rmk:star}
It is clear that $ \widetilde{n}_1^{+\infty}(x)$ and $ \widetilde{n}_2^{+\infty}(x)$ are independent of the $\lambda$ variable so the estimates on $\widetilde{n}^{(+)}_{*}(x;\lambda)$ are equivalent to estimates on $\widetilde{n}^{(+)}(x;\lambda)$ which will lead to estimates on $\widetilde{r}(\lambda)$ \eqref{eq:n-r}.
\end{remark}
Before proving the next lemma, we make the following observations:
To simplify notations, we set $\lambda'=4J(\lambda)$, then for $\lambda\in(1,\infty)$, $\lambda'\in (0,\infty)$, we then let ${\mathfrak {h}}_i(\lambda):=g_i(\lambda-\frac 1\lambda)=g_i(\lambda')$ and find that
\begin{equation}\label{Variable change}
\begin{aligned}
\int_{I_\infty}\left|{\mathfrak {h}}_i(\lambda)\right|^2d\lambda \leq \int_{\mathbb R}\left|g_i(\lambda')\right|^2\left(\frac 12+\frac {\lambda'}{2\sqrt{\lambda'^2+4}}\right)d\lambda'<\infty.
\end{aligned}
\end{equation}
As a consequence, we have for instance
\begin{equation}
\begin{aligned}
g_1(\lambda'):=\int_{\mathbb R}\bar{v}\widetilde{n}^{+\infty}_2(y)e^{-\frac i2 y\lambda'}dy\quad {\mathrm {and}} \quad g_2(\lambda'):=\int_{\mathbb R}\left(v_y+\frac i2|u|^2v+\frac i2 u\right)\widetilde{n}^{+\infty}_1(y)e^{\frac i2 y\lambda'}dy
\end{aligned}
\end{equation}
belong to $L^2_{\lambda}(I_\infty)$ by \textit{Plancherel}'s theorem.

\begin{lemma}
\label{lm:large-1}
If $(u_0,v_0)\in \mathcal{I}\times \mathcal{I} $ \eqref{space:initial} and are generic in the sense of definition \ref{def:generic'}, then $\widetilde{r}(\lambda),\lambda\widetilde{r}(\lambda) \in L^2(I_\infty)$.
\end{lemma}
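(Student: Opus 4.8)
The plan is first to remove the denominator. By genericity (Definition~\ref{def:generic'}) the potential admits no resonance on $\mathbb{R}$, so $\widetilde{\alpha}(\lambda)=\alpha(\lambda)$ has no zero on $\mathbb{R}\setminus\{0\}$; since moreover $\widetilde{\alpha}(\lambda)\to\alpha_\infty$ with $|\alpha_\infty|=1$ as $|\lambda|\to\infty$, continuity forces $\inf_{\lambda\in I_\infty}|\widetilde{\alpha}(\lambda)|>0$. Hence $\widetilde{r}=\widetilde{\beta}/\widetilde{\alpha}\in L^2(I_\infty)$ if and only if $\widetilde{\beta}\in L^2(I_\infty)$. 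For the weighted statement I would not bound $\lambda\widetilde{\beta}$ directly; instead I invoke the symmetry $\breve{\widetilde{r}}(\lambda)=-\lambda\overline{\widetilde{r}(\lambda)}$ from \eqref{connection-2}, so that $\|\lambda\widetilde{r}\|_{L^2(I_\infty)}=\|\breve{\widetilde{r}}\|_{L^2(I_\infty)}$ and, dividing by $\breve{\widetilde{\alpha}}=\overline{\widetilde{\alpha}}$ which is likewise bounded below, the claim $\lambda\widetilde{r}\in L^2(I_\infty)$ reduces to $\breve{\widetilde{\beta}}\in L^2(I_\infty)$. Thus the entire lemma collapses to showing that the two off-diagonal entries $\widetilde{\beta}$ and $\breve{\widetilde{\beta}}$ of $\widetilde{S}_g$ lie in $L^2(I_\infty)$.

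\textbf{Integral representation and the leading (linear) term.} Next I read off $\widetilde{\beta}$ and $\breve{\widetilde{\beta}}$ from the integral formula for $\widetilde{S}_g(\lambda)$: the action of $e^{-iJ(\lambda)y\,\ad\sigma_3}$ inserts the oscillatory factors $e^{\mp 2iJ(\lambda)y}=e^{\mp\frac{i}{2}\lambda' y}$ in the $(1,2)$ and $(2,1)$ slots respectively, where $\lambda'=4J(\lambda)$. I then substitute $\widetilde{n}^{(+)}=\widetilde{n}^{+\infty}\,\widetilde{n}^{(+)}_{*}$ and split $\widetilde{n}^{(+)}_{*}=I+\bigl(\widetilde{n}^{(+)}_{*}-I\bigr)$. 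The $I$-part produces precisely the Fourier-type integrals $g_1(\lambda')$ and $g_2(\lambda')$ recorded just before the statement, up to the $\lambda^{-1}\widetilde{Q}_2$ contributions; the latter are harmless on $I_\infty$ because there $|\lambda|^{-1}\le 1$ and the entries of $\widetilde{Q}_2$ are products such as $\bar{u}v$, $\bar{u}$, $v+\bar{u}v^2$ that lie in $L^1\cap L^2$ under $(u,v)\in\mathcal{I}\times\mathcal{I}$. By Plancherel combined with the change of variable \eqref{Variable change}, exactly as already used to place $g_1,g_2$ in $L^2_\lambda(I_\infty)$, this entire leading piece belongs to $L^2(I_\infty)$.

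\textbf{The remainder term and the main obstacle.} The remaining contribution is the integral of the same oscillatory factors and potentials against $\widetilde{n}^{(+)}_{*}-I$, which by \eqref{eq:nstar} equals $[\mathbf{1}-\widetilde{K}_1-\widetilde{K}_2]^{-1}[\widetilde{K}_1 I]$ with the resolvent bounded on $L^\infty_x\bigl(\mathbb{R}^+;L^\infty_\lambda(I_\infty)\bigr)$ by the Volterra estimate stated after \eqref{eq:nstar}. The \emph{main obstacle} is that $\widetilde{n}^{(+)}_{*}$ depends on $\lambda$, so the remainder is not a pure Fourier transform and Plancherel does not apply directly. I would control its $L^2_\lambda(I_\infty)$ norm by Minkowski's integral inequality, pulling the $L^2_\lambda$ norm inside the $y$-integral to bound it by $\int_{\mathbb{R}}\|\,\text{potential}(y)\|\,\bigl\|(\widetilde{n}^{(+)}_{*}-I)(y;\cdot)\bigr\|_{L^2_\lambda(I_\infty)}\,dy$, and then use the resolvent bound to transfer the $L^2_\lambda$ norm onto $\widetilde{K}_1 I$, which is again of Fourier type and hence finite in $L^\infty_x\bigl(L^2_\lambda(I_\infty)\bigr)$. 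The coefficients $\bar{v}$ and $v_x+\tfrac{i}{2}|u|^2 v+\tfrac{i}{2}u$ appearing here lie in $L^1\cap L^2$ precisely because $u,v\in\mathcal{I}$ (in particular $v_x\in L^{2,1}\subset L^1$), so the $y$-integral converges and the remainder is finite in $L^2(I_\infty)$. Combining the leading and remainder estimates gives $\widetilde{\beta},\breve{\widetilde{\beta}}\in L^2(I_\infty)$, which by the first step yields $\widetilde{r},\lambda\widetilde{r}\in L^2(I_\infty)$.
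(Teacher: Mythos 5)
Your proposal is correct and takes essentially the same route as the paper: the same splitting $\widetilde{n}^{(+)}_{*}=I+\bigl(\widetilde{n}^{(+)}_{*}-I\bigr)$ inside the integral representation of $\widetilde{S}_g$, Plancherel (after the change of variable $\lambda'=\lambda-1/\lambda$) for the Fourier-type leading term, and the resolvent bound \eqref{resolvent bound} for the remainder. Your reduction of the weight $\lambda$ via the symmetry $\breve{\widetilde{r}}(\lambda)=-\lambda\overline{\widetilde{r}(\lambda)}$ together with the lower bound $|\alpha(\lambda)|\geq A>0$ from Definition \ref{def:generic'} is likewise exactly the mechanism the paper uses (see the remark preceding Proposition \ref{prop:r}), so no further comparison is needed.
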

\begin{proof}
We decompose $\widetilde{S}_g(\lambda)$ the scattering matrix \eqref{mtrx:S} into the following
\begin{equation}
\begin{aligned}
\widetilde{S}_g(\lambda)&=I -\int^\infty_{-\infty}e^{ -iJ(\lambda)y\ad \sigma_3}\left(\left[\widetilde{Q}_{1}(y) +\lambda^{-1}\widetilde{Q}_{2}(y)\right] \widetilde{n}^{+\infty}(y) \left(\widetilde{n}^{(+)}_{*}(y;\lambda)-I\right) \right) dy\\
&\qquad -\int^\infty_{-\infty}e^{ -iJ(\lambda)y\ad \sigma_3}\left(\left[\widetilde{Q}^\off_{1}(y)+\lambda^{-1}\widetilde{Q}_{2}(y)\right] \widetilde{n}^{+\infty}(y)\right)dy\\
   &\qquad -\int^\infty_{-\infty}e^{ -iJ(\lambda)y\ad \sigma_3}\left(\left[\widetilde{Q}^\diag_{1}(y)+\lambda^{-1}\widetilde{Q}_{2}(y)\right] \widetilde{n}^{+\infty}(y)\right)dy\\
&=I-\widetilde{S}_{g,1}(\lambda)-\widetilde{S}_{g,2}(\lambda)-\widetilde{S}_{g,3}(\lambda)
\end{aligned}
\end{equation}
The proof follows directly from the proof of \cite[lemma 3.1]{CLL}. More specifically, we have that
\begin{align}
\label{resolvent bound}
\left\|n^{+}_*-I\right\|_{L_x^{\infty}\left(\mathbb{R}^{+} ; L_\lambda^2(I_\infty)\right)} & =\left\|\left[\mathbf{1}-\widetilde{K}_1-\widetilde{K}_2\right]^{-1}\left[\widetilde{K}_1I\right](x;\lambda)\right\|_{L_x^{\infty}\left(\mathbb{R}^{+} ; L_\lambda^2(I_\infty)\right)} \\
\nonumber
& \lesssim e^{\|\widetilde{Q}_{1}+\lambda^{-1}\widetilde{Q}_{2}\|_{L_x^1\left(\mathbb{R}^{+} ; L_\lambda^\infty(I_\infty)\right)}} \|\widetilde{Q}^\off_{1}+\lambda^{-1}\widetilde{Q}_{2}\|_{L_x^\infty\left(\mathbb{R}^{+} ; L_\lambda^2(I_\infty)\right)}.
\end{align}
\end{proof}

\begin{lemma}
\label{lm:large-2}
If $(u_0,v_0)\in \mathcal{I}\times \mathcal{I} $ \eqref{space:initial} and are generic in the sense of definition \ref{def:generic'}, then $\widetilde{r}(\lambda),\lambda\widetilde{r}(\lambda) \in {{H^1(I_\infty)}}$.
\end{lemma}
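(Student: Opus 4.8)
The plan is to upgrade Lemma \ref{lm:large-1} from an $L^2$ to an $H^1$ statement on $I_\infty$ by controlling the $\lambda$-derivatives of $\widetilde{r}$ and $\lambda\widetilde{r}$. Since $(u_0,v_0)$ is generic, $\widetilde\alpha=\alpha$ has no resonance on $\mathbb R$, so $\widetilde\alpha$ is bounded and bounded away from zero on $I_\infty$ and $\widetilde\alpha\to\alpha_\infty\neq0$ as $|\lambda|\to\infty$. Together with $\widetilde\beta\in L^2(I_\infty)\cap L^\infty(I_\infty)$, the quotient rule
\[
\partial_\lambda\widetilde r=\frac{\partial_\lambda\widetilde\beta}{\widetilde\alpha}-\frac{\widetilde\beta\,\partial_\lambda\widetilde\alpha}{\widetilde\alpha^2}
\]
reduces the claim for $\widetilde r$ to showing $\partial_\lambda\widetilde\alpha,\partial_\lambda\widetilde\beta\in L^2(I_\infty)$. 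For $\lambda\widetilde r$ I would avoid the unbounded external multiplier $\lambda$ by using the symmetry $\breve{\widetilde r}=-\lambda\overline{\widetilde r}$, i.e. $\lambda\widetilde r=-\overline{\breve{\widetilde r}}$ on the real axis, and treating $\breve{\widetilde r}=\breve{\widetilde\beta}/\breve{\widetilde\alpha}$ through its own integral representation on exactly the same footing; here $\breve{\widetilde\alpha}=\overline{\widetilde\alpha}$ is again bounded below and $\breve{\widetilde\beta}$ is the $(2,1)$ entry of $\widetilde{S}_g$, whose integrand carries the coefficient $v_x+\tfrac i2|u|^2v+\tfrac i2 u$ of the same $L^{2,1}$ regularity.

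Next I would differentiate the integral representation of $\widetilde{S}_g(\lambda)$ in $\lambda$. The diagonal entry $\widetilde\alpha$ carries no oscillatory factor, so $\partial_\lambda\widetilde\alpha$ only feels $\partial_\lambda(\lambda^{-1})=-\lambda^{-2}$ from the $\lambda^{-1}\widetilde{Q}_2$ term (bounded on $I_\infty$) together with $\partial_\lambda\widetilde n^{(+)}$; it is the milder term. For $\widetilde\beta$, recall from the observation preceding the lemma that, after the substitution $\lambda'=\lambda-\lambda^{-1}$, the entry $\widetilde\beta$ is a bounded reweighting of the Fourier transform in $\lambda'$ of $[(\widetilde{Q}_1^{\off}+\lambda^{-1}\widetilde{Q}_2)\widetilde n^{(+)}]_{12}$. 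Differentiating the oscillatory factor brings down $-2iJ'(\lambda)y$ with $J'(\lambda)=\tfrac14(1+\lambda^{-2})$ bounded on $I_\infty$, so the only new feature is the extra weight $y$ inside the Fourier integral, which on the Fourier side costs one derivative. By Plancherel and the weighted change of variables \eqref{Variable change}, the resulting $L^2_\lambda(I_\infty)$ bound is governed by $\norm{\langle y\rangle(\widetilde{Q}_1^{\off}+\lambda^{-1}\widetilde{Q}_2)\widetilde n^{+\infty}}{L^2_y}$, which is finite since every entry of $\widetilde{Q}_1^{\off},\widetilde{Q}_2$ is a sum of products of $u,v$ and their first derivatives, all lying in $L^{2,1}(\mathbb R)$ for $(u_0,v_0)\in\mathcal I\times\mathcal I$, with the potentials themselves bounded by the embedding $H^2\hookrightarrow L^\infty$.

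The hard part, and the place where the full regularity $f''\in L^{2,1}$ in the definition of $\mathcal I$ is consumed, is controlling $\partial_\lambda\widetilde n^{(+)}_*$ in $L^\infty_x(\mathbb{R}^+;L^2_\lambda(I_\infty))$. I would obtain this by differentiating the resolvent identity \eqref{eq:nstar},
\[
\partial_\lambda\big(\widetilde n^{(+)}_*-I\big)=\big[\mathbf{1}-\widetilde K_1-\widetilde K_2\big]^{-1}\Big(\big(\partial_\lambda\widetilde K_1+\partial_\lambda\widetilde K_2\big)\big(\widetilde n^{(+)}_*-I\big)+\partial_\lambda\big(\widetilde K_1 I\big)\Big),
\]
where the resolvent is already bounded on this mixed-norm space by \eqref{resolvent bound} and $\widetilde n^{(+)}_*-I$ lies in it. Differentiating the kernels $\widetilde K_i$ in $\lambda$ produces at worst a single factor $(x-y)$ from $\partial_\lambda e^{iJ(\lambda)(x-y)\ad\sigma_3}$ (again with the bounded multiplier $J'(\lambda)$) together with factors $\lambda^{-2}$ harmless on $I_\infty$; this one extra factor $(x-y)$ is absorbed by trading the second available weight/derivative in the data, so that $\partial_\lambda\widetilde K_i$ stay bounded and $\partial_\lambda(\widetilde K_1 I)$ stays in the space. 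Thus the whole difficulty is bookkeeping of weights: the first weight handles the $y$ in the representation of $\widetilde\beta$, while the second handles the $(x-y)$ generated in differentiating the resolvent — which is precisely why $f,f',f''\in L^{2,1}$ is required in $\mathcal I$. Assembling the quotient-rule terms then yields $\partial_\lambda\widetilde r,\partial_\lambda(\lambda\widetilde r)\in L^2(I_\infty)$, and together with Lemma \ref{lm:large-1} gives $\widetilde r,\lambda\widetilde r\in H^1(I_\infty)$. The argument runs parallel to \cite[Lemma 3.2]{CLL}.
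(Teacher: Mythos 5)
Your core idea is genuinely different from the paper's, and it is worth stating the contrast first. The paper never removes the external factor $\lambda$: it writes $\widetilde S_g(\lambda)=\bigl[\widetilde n^{(-)}(0;\lambda)\bigr]^{-1}\widetilde n^{(+)}(0;\lambda)$ (i.e.\ \eqref{Scattering Matrix} at $x=0$), proves the weighted estimate $\lambda\,\partial_\lambda \widetilde S_g\in L^2_\lambda(I_\infty)$, and pays for the factor $\lambda$ by integrating by parts in $y$ — which is exactly where the control of $[\widetilde n^{(+)}_{*,12}]_x$ and the second derivatives of the potentials enter. You instead propose to absorb the $\lambda$ via the symmetry $\lambda\widetilde r=-\overline{\breve{\widetilde r}}$ and to estimate the $(2,1)$ entry $\breve{\widetilde\beta}$ directly, whose integrand already carries the extra derivative $v_x$; this is an attractive idea. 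However, as written there is a genuine gap: you differentiate the \emph{global} representation $\widetilde S_g(\lambda)=I-\int_{\mathbb R}e^{-iJ(\lambda)y\ad\sigma_3}\bigl[(\widetilde Q_1+\lambda^{-1}\widetilde Q_2)\widetilde n^{(+)}(y;\lambda)\bigr]dy$, so every term you must bound contains $\partial_\lambda\widetilde n^{(+)}(y;\lambda)$ for \emph{all} $y\in\mathbb R$, yet the only control you invoke (the differentiated resolvent identity, \eqref{resolvent bound}, \eqref{L22 estimate}) lives in $L^\infty_x(\mathbb R^{+};L^2_\lambda(I_\infty))$. On $y<0$ these bounds degrade: the kernel of $\partial_\lambda\widetilde K_i$ carries the weight $|y-y'|$ with $y'\in(y,+\infty)$, and the trick $|x-y'|\le |y'|$ is only valid for $0<x<y'$; for $y\to-\infty$ one only has $|y-y'|\le|y|+|y'|$, so $\|\partial_\lambda\widetilde n^{(+)}(y;\cdot)\|_{L^2_\lambda}$ grows like $\langle y\rangle$, and your final Minkowski/Cauchy--Schwarz step then requires $\int_{-\infty}^{0}\langle y\rangle\,|\widetilde Q(y)|\,dy<\infty$, i.e.\ $\widetilde Q\in L^{1,1}$, which is \emph{not} implied by $\widetilde Q\in L^{2,1}$ (take $|\widetilde Q|\sim\langle y\rangle^{-2}$). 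This is precisely why the paper abandons the global representation for this lemma and works at $x=0$, where only $\partial_\lambda\widetilde n^{(\pm)}(0;\cdot)$ is needed, each factor controlled from its own half-line. Your symmetry trick is compatible with that device, but the repair is absent from your proof.

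A second, related problem is your accounting of where the hypothesis $f''\in L^{2,1}$ is ``consumed.'' Absorbing the factor $(x-y)$ produced by $\partial_\lambda e^{iJ(\lambda)(x-y)\ad\sigma_3}$ costs a \emph{weight}, not a derivative: it is paid by $\|y\,\widetilde Q^{\off}\|_{L^2}$, i.e.\ by $f,f'\in L^{2,1}$, and there is no ``second weight'' available in $\mathcal I$ to trade. Indeed, if your symmetry reduction were carried through (with the $x=0$ representation), no integration by parts in $y$ ever occurs, and $f''$ would never be used — contradicting your own closing claim and indicating the key estimates were not actually traced. In the paper, $f''\in L^{2,1}$ enters only because the retained factor $\lambda$ is converted into $\partial_y$ by integration by parts, which lands on coefficients already containing $v_x$ (producing $v_{xx}$) and on $\widetilde n^{(+)}_{*,12}$ (forcing the $L^2_xL^2_\lambda$ bound on $[\widetilde n^{(+)}_{*,12}]_x$); none of that machinery is set up in your proposal, so you also cannot silently fall back on the paper's route.
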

\begin{proof}
Using the relation \eqref{Scattering Matrix} and letting $x=0$, we obtain
\begin{equation}
\dfrac {\partial}{\partial \lambda}\widetilde{S}_g(\lambda)=\left(\dfrac {\partial}{\partial \lambda}\left(\widetilde{n}^{(-)}(0,\lambda)\right)^{-1}\right)\widetilde{n}^{(+)}(0,\lambda)+\left(\widetilde{n}^{(-)}(0,\lambda)\right)^{-1} \dfrac {\partial}{\partial \lambda}\widetilde{n}^{(+)}(0,\lambda).
\end{equation}
By the standard \textit{Volterra} theory, $\left\|\widetilde{n}^{(\pm)}(x,\lambda)\right\|_{L^\infty_x L^\infty_\lambda}<\infty$. So we only need to show
\begin{equation}
\label{n'-lambda}
\lambda \dfrac {\partial}{\partial \lambda}\left(\widetilde{n}^{(\pm)}(0,\lambda)\right)^{-1} \in L^2_\lambda(I_\infty).
\end{equation}
Indeed we only need estimates on $\widetilde{n}^{(+)}(x,\lambda)$ for $x\geq 0$ and estimates on $\widetilde{n}^{(-)}(x,\lambda)$ for $x\leq 0$ follows from symmetry. Because of remark \ref{rmk:star}, we will deal with $\widetilde{n}_*^{(\pm)}(x,\lambda)$ instead.
From \eqref{New Volterra} we have
\begin{equation}
\begin{aligned}
\dfrac {\partial}{\partial \lambda}\widetilde{n}^{(+)}_*(x,\lambda)&=\left[\dfrac {\partial}{\partial \lambda}(\widetilde{K}_1 I)+\left[\dfrac {\partial \widetilde{K}_1}{\partial \lambda}\right]\left(\widetilde{n}^{(+)}_{*}-I\right)+\left[\dfrac {\partial \widetilde{K}_2}{\partial \lambda}\right]\left(\widetilde{n}^{(+)}_{*}-I\right)\right]\\
 &\quad+ \left[ \widetilde{K}_1\left(\dfrac {\partial \widetilde{n}^{(+)}_{*}}{\partial \lambda}\right) +\widetilde{K}_2\left(\dfrac {\partial \widetilde{n}^{(+)}_{*}}{\partial \lambda}\right)\right]\\
&=\widetilde{\xi}_1(x,\lambda)+\widetilde{\xi}_2(x,\lambda)+\widetilde{\xi}_3(x,\lambda)+ \left[ \widetilde{K}_1\left(\dfrac {\partial \widetilde{n}^{(+)}_{*}}{\partial \lambda}\right) +\widetilde{K}_2\left(\dfrac {\partial \widetilde{n}^{(+)}_{*}}{\partial \lambda}\right)\right].
\end{aligned}
\end{equation}
From the resolvent bound \eqref{resolvent bound}, we only need to show
$$\lambda\widetilde{\xi}_1,\lambda\widetilde{\xi}_2, \lambda\widetilde{\xi}_3\in L^{\infty}_x\left(\mathbb R^{+}; L^2_\lambda(I_\infty)\right).$$
Since they are matri-valued functions, we study one term from each of them since the proof of the rest are similar. For $\lambda\widetilde{\xi}_1$ we treat the following term:
\begin{equation*}
\begin{aligned}
\lambda h^{(+)}_1(x,\lambda):=&\lambda\frac {\partial}{\partial \lambda}\left(\int^x_{+\infty}\frac i{2\lambda}e^{\frac i2\int^{+\infty}_y\left(|u|^2+|v|^2\right)}\bar{u}(y)e^{i(x-y)(\lambda/2-1/(2\lambda))}dy\right)\\
=&\int^x_{+\infty}-\frac i{2\lambda}e^{\frac i2\int^{+\infty}_y\left(|u|^2+|v|^2\right)}\bar{u}(y)e^{i(x-y)(\lambda/2-1/(2\lambda))}dy\\
-&\int^x_{+\infty}\frac {x-y}{4}e^{\frac i2\int^{+\infty}_y\left(|u|^2+|v|^2\right)}\bar{u}(y)e^{i(x-y)(\lambda/2-1/(2\lambda))}dy\\
-&\int^x_{+\infty}\frac {x-y}{4\lambda^2}e^{\frac i2\int^{+\infty}_y\left(|u|^2+|v|^2\right)}\bar{u}(y)e^{i(x-y)(\lambda/2-1/(2\lambda))}dy\\
=:&h^{(+)}_{1,1}(x,\lambda)-h^{(+)}_{1,2}(x,\lambda)-h^{(+)}_{1,3}(x,\lambda).
\end{aligned}
\end{equation*}
We will only show that $h^{(+)}_{1,2}\in L^\infty_x\left(\mathbb R^+; L^2_\lambda(I_\infty)\right)$. The estimates for other terms are similar. Indeed, setting $\lambda'=\lambda-1/\lambda$ again, we have
\begin{equation}\label{L2 estimate}
\begin{aligned}
\left\|h^{(+)}_{1,2}(x,\lambda')\right\|_{L^2_{\lambda'}}&=\sup_{\phi\in C^{\infty}_0, \|\phi\|_{L^{2}}=1 }\left|\int_{\mathbb R}\phi(\lambda')\left(\int^x_{+\infty} (x-y)e^{\frac i2\int^{+\infty}_y\left(|u|^2+|v|^2\right)}\bar{u}(y)e^{i(x-y)\lambda'/2}dy\right)d\lambda'\right|\\
&\leq C \sup_{\phi\in C^{\infty}_0, \|\phi\|_{L^{2}}=1}\int^{+\infty}_x\left|\mathcal{F}^{-1}\left[ \phi(\cdot)\right]\left(\frac {x-y}2\right)\right||x-y||\bar{u}|dy\\
&\leq C \sup_{\phi\in C^{\infty}_0, \|\phi\|_{L^{2}}=1}\int^{+\infty}_x\left|\mathcal{F}^{-1}\left[\phi(\cdot)\right]\left(\frac {x-y}2\right)\right||y||\bar{u}|dy
\end{aligned}
\end{equation}
where $\mathcal{F}$ is Fourier transformation and we used the fact that $|x-y|<|y|$ for $0<x<y$. Finally an application of the Schwarz inequality gives
\begin{equation}
\left\|h^{(+)}_{1,2}(x,\lambda')\right\|_{L^\infty_x(\mathbb{R}^+; L^2_{\lambda'})}\lesssim\|u\|_{L^{2,1}}.
\end{equation}
Using the same argument of \eqref{Variable change}, we show that $h^{(+)}_{1,2}(x,\lambda)\in L^\infty_x(\mathbb R^+; L^2_{\lambda}(I_\infty))$. Next, we consider
\begin{equation}
\begin{aligned}
h^{(+)}_2(x,\lambda):=&\lambda\dfrac {\partial}{\partial \lambda}\left(\int^x_{+\infty}-\frac i2 e^{\frac i2\int^{+\infty}_y\left(|u|^2+|v|^2\right)}\bar{v}e^{i(x-y)(\lambda/2-1/(2\lambda))}dy\right)\\
=&\lambda\int^x_{+\infty}\frac 14 e^{\frac i2\int^{+\infty}_y\left(|u|^2+|v|^2\right)}\bar{v}(y)(x-y)e^{i(x-y)(\lambda/2-1/(2\lambda))}dy\\
&+\frac 1\lambda\int^x_{+\infty}\frac 14 e^{\frac i2\int^{+\infty}_y\left(|u|^2+|v|^2\right)}\bar{v}(y)(x-y)e^{i(x-y)(\lambda/2-1/(2\lambda))}dy\\
=:&\frac 14\left(h^{(+)}_{2,1}(x,\lambda)+h^{(+)}_{2,2}(x,\lambda)\right)
\end{aligned}
\end{equation}
We write $h^{(+)}_{2,1}$ as 
\begin{equation}
\begin{aligned}
h^{(+)}_{2,1}(x,\lambda)=&\int^x_{+\infty} e^{\frac i2\int^{+\infty}_y\left(|u|^2+|v|^2\right)}\bar{v}(y)(x-y)e^{i(x-y)(\lambda/2-1/(2\lambda))}dy\\
&+2i\int^x_{+\infty} e^{\frac i2\int^{+\infty}_y\left(|u|^2+|v|^2\right)}\bar{v}(y)(x-y)\dfrac {\partial}{\partial y}\left(e^{i(x-y)(\lambda/2-1/(2\lambda))}\right)dy\\
=& h^{(+)}_{2,2}(x,\lambda)\\
&\int^x_{+\infty} e^{\frac i2\int^{+\infty}_y\left(|u|^2+|v|^2\right)}\left[\left((|u|^2+|v|^2)\bar{v}+\bar{v}_y\right)(x-y)-\bar{v}(y)\right]e^{i(x-y)(\lambda/2-1/(2\lambda))}dy
\end{aligned}
\end{equation}
So we conclude that
\begin{equation}
\begin{aligned}
h^{(+)}_2(x,\lambda)=&\frac 1{2\lambda}\int^x_{+\infty} e^{\frac i2\int^{+\infty}_y\left(|u|^2+|v|^2\right)}\bar{v}(y)(x-y)e^{i(x-y)(\lambda/2-1/(2\lambda))}dy\\
-&\frac 14\int^x_{+\infty} e^{\frac i2\int^{+\infty}_y\left(|u|^2+|v|^2\right)}\left[\left((|u|^2+|v|^2)\bar{v}+\bar{v}_y\right)(x-y)-\bar{v}(y)\right]e^{i(x-y)(\lambda/2-1/(2\lambda))}dy\\
=:&\widetilde{\xi}^{(+)}_{2,1}(x,\lambda)+\widetilde{\xi}^{(+)}_{2,2}(x,\lambda)
\end{aligned}
\end{equation}
It is clear that $\widetilde{\xi}^{(+)}_{2,2}(x,\lambda)\in L^2_\lambda(I_\infty)$ by \eqref{L2 estimate}. We only need to show this term $\widetilde{\xi}^{(+)}_{2,1}(x,\lambda)\in L^2_\lambda(1,\infty)$. In fact, setting $\lambda'=\lambda-1/\lambda$, we have
\begin{equation}
\begin{aligned}
\left\|\widetilde{\xi}^{(+)}_{2,1}(x,\lambda')\right\|_{L^2_{\lambda'}}&=\sup_{\phi\in C^{\infty}_0, \|\phi\|_{L^{2}}=1 }\left|\int_{\mathbb R}\phi(\lambda')\left(\int^x_{+\infty} \dfrac {x-y}{\lambda'+\sqrt{(\lambda')^2+4}}e^{\frac i2\int^{+\infty}_y\left(|u|^2+|v|^2\right)}\bar{v}(y)e^{i(x-y)\lambda'/2}dy\right)d\lambda'\right|\\
&\leq C \sup_{\phi\in C^{\infty}_0, \|\phi\|_{L^{2}}=1}\int^{+\infty}_x\left|\mathcal{F}^{-1}\left[\dfrac {\phi(\cdot)}{(\cdot)+\sqrt{(\cdot)^2+4}}\right]\left(\frac {x-y}2\right)\right||x-y||\bar{v}|dy\\
&\leq C \sup_{\phi\in C^{\infty}_0, \|\phi\|_{L^{2}}=1}\int^{+\infty}_x\left|\mathcal{F}^{-1}\left[\dfrac {\phi(\cdot)}{(\cdot)+\sqrt{(\cdot)^2+4}}\right]\left(\frac {x-y}2\right)\right||y||\bar{v}|dy
\end{aligned}
\end{equation}
which implies $\widetilde{\xi}^{(+)}_{2,1}(x,\lambda)\in L^2_\lambda(I_\infty)$ and leads to $h^{(+)}_2(x,\lambda)\in L^\infty_x\left(\mathbb R^+; L^2_{\lambda}(I_\infty)\right)$. Finally we turn to
$\lambda\widetilde{\xi}^{(+)}_2(x,\lambda)$ and  $\lambda\widetilde{\xi}^{(+)}_3(x,\lambda)$.
We are only going to work with  $\widetilde{\xi}^{(+)}_3(x,\lambda)$. 
As before we focus on the following term:
\begin{equation}
F(\lambda,x):= \frac{i}{4}\int^x_{+\infty}\lambda(x-y)e^{\frac i2\int^{+\infty}_y(|u|^2+|v|^2)}\left(|u|^{2}+|v|^{2}\right)e^{i(x-y)(\lambda/2-1/(2\lambda))}\widetilde{n}^{(+)}_{*, 12}dy.
\end{equation}
We perform integration by parts to find:
\begin{equation}
\begin{aligned}
F(\lambda,x)&=2i\int^x_{+\infty}(x-y)e^{\frac i2\int^{+\infty}_y(|u|^2+|v|^2)}(|u|^2+|v|^2)\frac {\partial}{\partial y}\left(e^{i(x-y)(\lambda/2-1/(2\lambda))}\right)\widetilde{n}^{(+)}_{*, 12}dy\\
&\quad+\int^x_{+\infty}\frac 1\lambda(x-y)e^{\frac i2\int^{+\infty}_y(|u|^2+|v|^2)}(|u|^2+|v|^2)e^{i(x-y)(\lambda/2-1/(2\lambda))}\widetilde{n}^{(+)}_{*, 12}dy\\
&=\int^x_{+\infty}\frac 1\lambda(x-y)e^{\frac i2\int^{+\infty}_y(|u|^2+|v|^2)}(|u|^2+|v|^2)e^{i(x-y)(\lambda/2-1/(2\lambda))}\widetilde{n}^{(+)}_{*, 12}dy\\
&\quad+2i\int^x_{+\infty}e^{\frac i2\int^{+\infty}_y(|u|^2+|v|^2)}(|u|^2+|v|^2)e^{i(x-y)(\lambda/2-1/(2\lambda))}\widetilde{n}^{(+)}_{*, 12}dy\\
&\quad-2i\int^x_{+\infty}(x-y)e^{\frac i2\int^{+\infty}_y(|u|^2+|v|^2)}(|u|^2+|v|^2)_y e^{i(x-y)(\lambda/2-1/(2\lambda))}\widetilde{n}^{(+)}_{*, 12}dy\\
&\quad-\int^x_{+\infty}(x-y)e^{\frac i2\int^{+\infty}_y(|u|^2+|v|^2)}\left(|u|^2+|v|^2\right)e^{i(x-y)(\lambda/2-1/(2\lambda))}\widetilde{n}^{(+)}_{*, 12}dy\\
&\quad -2i\int^x_{+\infty}(x-y)e^{\frac i2\int^{+\infty}_y(|u|^2+|v|^2)}\left(|u|^2+|v|^2\right)e^{i(x-y)(\lambda/2-1/(2\lambda))}\left[\widetilde{n}^{(+)}_{*12}\right]_ydy
\end{aligned}
\end{equation}
Applying \textit{Minkowski}'s inequality and \textit{Cauchy-Schwarz} inequality we obtain
\begin{equation}
\begin{aligned}
\|F(\lambda,x)\|_{L^\infty_x(\mathbb{R}^+; L^2_\lambda(I_\infty))}\lesssim&  \left(\|u\|_{L^{2,1}}+\|v\|_{L^{2,1}} \right)\left\|\widetilde{n}^{(+)}_{*}-I\right\|_{L^2_yL^2_\lambda}\\
+&\left\|\int^x_{+\infty}(x-y)e^{\frac i2\int^{+\infty}_y(|u|^2+|v|^2)}\left(|u|^2+|v|^2\right)e^{i(x-y)(\lambda/2-1/(2\lambda))}\left[\widetilde{n}^{(+)}_{*12}\right]_ydy\right\|_{L^\infty_xL^2_\lambda}\\
=& \|F_1(\lambda,x)\|_{L^\infty_x(\mathbb{R}^+; L^2_\lambda(I_\infty))}+ \|F_2(\lambda,x)\|_{L^\infty_x(\mathbb{R}^+; L^2_\lambda(I_\infty))}
\end{aligned}
\end{equation}
For the first term we need to show that $\left\|\widetilde{n}^{(+)}_{*}-I\right\|_{L^2_x(\mathbb{R}^+; L^2_\lambda(I_\infty))}< \infty$.
Notice that
\begin{equation*}
\begin{aligned}
\|\widetilde{K}_1 I(y,\cdot)\|_{L^2_{\lambda'}}& \leq\left(\int_{x}^{+\infty}\langle y\rangle^{2 }\left(\left(\widetilde{n}^{+\infty}(x)\right)^{-1}\left[\widetilde{Q}_{1}^{\textrm{off}}(y)+\lambda^{-1}\widetilde{Q}_2(y)\right] \widetilde{n}^{+\infty}(y)\right) d y\right)^{1 / 2}\langle x\rangle^{-1} \\
& \leq\|\widetilde{Q}_{1}+\lambda^{-1}\widetilde{Q}_2\|_{L^{2, 1}(\mathbb{R})}\langle x\rangle^{-1}
\end{aligned},
\end{equation*}
we can conclude that
\begin{equation}\label{L22 estimate}
\begin{aligned}
\left\|\widetilde{n}^{(+)}_{*}-I\right\|_{L_{y}^{2}(\mathbb R^+; L_{\lambda}^{2}((I_\infty))} & \lesssim\|\widetilde{Q}_{1}+\lambda^{-1}\widetilde{Q}_2\|_{L^{2, 1}(\mathbb{R})}\left\|\left(1-\widetilde{K}_1-\widetilde{K}_2\right)^{-1}\right\|_{L_{y}^{\infty} L_{\lambda'}^{2} \rightarrow L_{y}^{\infty} L_{\lambda'}^{2}}\\
&\quad \times\left(\int\langle y\rangle^{-2 } d y\right)^{1 / 2} \\
& \lesssim\|\widetilde{Q}_{1}+\lambda^{-1}\widetilde{Q}_2\|_{L^{2, 1}(\mathbb{R})} e^{\|\widetilde{Q}_{1}+\lambda^{-1}\widetilde{Q}_2\|_{L_{y}^{1} L_{\lambda'}^{\infty}}}.
\end{aligned}
\end{equation}
Next we show that
\begin{equation}
\|F_2(\lambda,x)\|_{L^\infty_x(\mathbb{R}^+; L^2_\lambda(I_\infty))}<\infty.
\end{equation}
 We only show how to handle the terms $\widetilde{n}^{(+)}_{*11y}$ and $\widetilde{n}^{(+)}_{*12y}$ since the other two are similar. Direct calculations on \eqref{New Volterra} leads to
\begin{equation}
\begin{aligned}
\left[\widetilde{n}^{(+)}_{*11}(x,\lambda)\right]_x=& \int^x_{+\infty}\left(\left(\widetilde{n}^{+\infty}(x)\right)_x^{-1}\left[\widetilde{Q}_{1}(y)+\lambda^{-1}\widetilde{Q}_2(y)\right] \widetilde{n}^{+\infty}(y)\left[\widetilde{n}^{(+)}_{*}(y;\lambda)-I\right]\right)_{11}dy\\
&+ \left[ \left(\widetilde{n}^{+\infty}(x)\right)_x^{-1}\left[\widetilde{Q}_{1}(x)+\lambda^{-1}\widetilde{Q}_2(x)\right] \widetilde{n}^{+\infty}(x)\left[\widetilde{n}^{(+)}_{*}(y;\lambda)-I\right] \right]_{11}\\
&+\int^x_{+\infty}\left(\left(\widetilde{n}^{+\infty}(x)\right)_x^{-1}\left[\widetilde{Q}_{1}^{\textrm{off}}(y)+\lambda^{-1}\widetilde{Q}_2(y)\right] \widetilde{n}^{+\infty}(y)\right)_{11}dy\\
&+\left(\left(\widetilde{n}^{+\infty}(x)\right)^{-1}\left[\widetilde{Q}_{1}^{\textrm{off}}(x)+\lambda^{-1}\widetilde{Q}_2(x)\right] \widetilde{n}^{+\infty}(x)\right)_{11}
\end{aligned}
\end{equation}
where $\hat{Q}(y)=\widetilde{Q}_1(y)+\widetilde{Q}_2(y)$ We just look at the diagonal entries and find that
\begin{equation}
\widetilde{n}^{(+)}_{*11x}\in L^2_x\left(\mathbb{R}^+; L^2_\lambda(I_\infty)\right)
\end{equation}
and also from symmetry:
\begin{equation}
\widetilde{n}^{(+)}_{*22x}\in L^2_x\left(\mathbb{R}^+; L^2_\lambda(I_\infty)\right).
\end{equation}
After integrating by parts and some cancellation,
\begin{equation}
\begin{aligned}
\left[\widetilde{n}^{(+)}_{*12}(x,\lambda)\right]_x=& \int^x_{+\infty}e^{iJ(\lambda)(x-y)\ad \sigma_3}\left(\left(\widetilde{n}^{+\infty}(x)\right)_x^{-1}\left[\widetilde{Q}_{1}(y)+\lambda^{-1}\widetilde{Q}_2(y)\right] \widetilde{n}^{+\infty}(y)\left[\widetilde{n}^{(+)}_{*}(y;\lambda)-I\right]\right)_{12}dy\\
&+\int^x_{+\infty}e^{iJ(\lambda)(x-y)\ad \sigma_3}\left(\left(\widetilde{n}^{+\infty}(x)\right)_x^{-1}\left[\widetilde{Q}_{1}(y)+\lambda^{-1}\widetilde{Q}_2(y)\right] \widetilde{n}^{+\infty}(y)\right)_{12}dy\\
&+\int^x_{+\infty}e^{iJ(\lambda)(x-y)\ad \sigma_3}\left(\left(\widetilde{n}^{+\infty}(x)\right)^{-1}\left[\widetilde{Q}_{1}(y)+\lambda^{-1}\widetilde{Q}_2(y)\right] \widetilde{n}^{+\infty}(y)\right)_{y,12}dy\\
& +\int^x_{+\infty}e^{iJ(\lambda)(x-y)\ad \sigma_3}\left(\left(\widetilde{n}^{+\infty}(x)\right)^{-1}\left[\widetilde{Q}_{1}(y)+\lambda^{-1}\widetilde{Q}_2(y)\right] \widetilde{n}^{+\infty}(y)\left[\widetilde{n}^{(+)}_{*}(y;\lambda)-I\right]\right)_{y,12}dy\\
\end{aligned}
\end{equation}
To see $\widetilde{n}^{(+)}_{*12x}\in L^2_x(\mathbb{R}^+; L^2_\lambda(I_\infty))$, we simplify the notation above and let
\begin{equation}
\begin{aligned}
\left[\widetilde{n}^{(+)}_{*12}(x,\lambda)\right]_x=:& \mathcal{I}(x,\lambda)+\mathcal{K}\left[\widetilde{n}^{(+)}_{*12x}\right](x,\lambda).
\end{aligned}
\end{equation}
Following the same procedure used to prove \eqref{L22 estimate}, we conclude that
\begin{equation}
\left\|\widetilde{n}^{(+)}_{*12x}\right\|_{L^2_x(\mathbb{R}^+;L^2_\lambda(I_\infty))}< \infty.
\end{equation}
This implies that $\lambda\widetilde{\xi}^{(+)}_3\in L^\infty_x(\mathbb{R}^+;L^2_\lambda(I_\infty))$ and  thus we have proven \eqref{n'-lambda} and consequently
\begin{equation}
\lambda\dfrac {\partial \widetilde{S}_g(\lambda)}{\partial \lambda}\in L^2_\lambda(I_\infty).
\end{equation}
\end{proof}
\subsubsection{Estimates for small $\lambda$}
\begin{lemma}
\label{lm: small-1}
If $(u_0,v_0)\in \mathcal{I}\times \mathcal{I} $ \eqref{space:initial} and are generic in the sense of definition \ref{def:generic'}, then $\widetilde{r}(\lambda),\lambda\widetilde{r}(\lambda) \in {{H^{1}((-1,1))}}$.
\end{lemma}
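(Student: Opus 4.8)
The plan is to reduce the estimate on $(-1,1)$ to one on $\mathbb R\setminus[-1,1]$ that is structurally identical to the large-$\lambda$ analysis already carried out in Lemma \ref{lm:large-1} and Lemma \ref{lm:large-2}. By the observations recorded in the remark following Proposition \ref{prop:r}, it suffices to prove that $\lambda\overline{\widetilde r(\lambda)}\in H^1(\mathbb R\setminus[-1,1])$, and by the first observation there we may rewrite $\lambda\overline{\widetilde r(\lambda)}=-\overline{\beta(\lambda)}/\breve\alpha(\lambda)$, so that the whole problem is transferred to the scattering data $\beta,\breve\alpha$ of the small-$\zeta$ spectral problem \eqref{spec: small}. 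Thus I first replace the large problem by the small one and then perform the change of variables $\gamma=1/\lambda$, under which the phase $iJ(\lambda)(x-y)=\tfrac i4(\lambda-\lambda^{-1})(x-y)$ becomes $-\tfrac i4(\gamma-\gamma^{-1})(x-y)$ and the potential $Q_1+\lambda Q_2$ becomes $Q_1+\gamma^{-1}Q_2$. Up to the harmless sign in the phase, the resulting Volterra equation \eqref{eq: n-s} is then of exactly the same form as the large-$\lambda$ equation \eqref{eq:n-l}, with $(Q_1,Q_2)$ playing the role of $(\widetilde Q_1,\widetilde Q_2)$ and $\gamma$ the role of $\lambda$.

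First I would renormalize \eqref{eq: n-s} in the spirit of \eqref{New Volterra}: using the diagonal limits $n^{\pm0}$ supplied by Lemma \ref{lm:asy-s}, set $n^{(+)}_*:=(n^{+0})^{-1}n^{(+)}$ and split $Q_1=Q_1^{\diag}+Q_1^{\off}$, which produces a renormalized integral equation together with the resolvent bound that is the analogue of \eqref{resolvent bound}. Because the hypothesis $(u_0,v_0)\in\mathcal I\times\mathcal I$ forces $Q_1,Q_2\in L^{2,1}(\mathbb R)$ (every entry is a combination of $u,v,u_x,v_x$ and their bounded nonlinear multiples, all of which lie in $L^{2,1}$ under \eqref{space:initial}), the inputs required by the large-$\lambda$ proofs are available verbatim. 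The $L^2$ bound, i.e. the analogue of Lemma \ref{lm:large-1}, then follows immediately from the resolvent bound and Plancherel's theorem applied to the oscillatory integrals in the new variable $\gamma$.

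Next I would differentiate the scattering relation in $\lambda$, exactly as in the proof of Lemma \ref{lm:large-2}, reducing matters to showing that $\gamma\,\partial_\gamma n^{(\pm)}_*\in L^\infty_x(L^2_\gamma(\mathbb R\setminus[-1,1]))$; each term produced by the differentiation is handled by the same integration-by-parts and Cauchy--Schwarz estimates as in \eqref{L2 estimate}, the weight $|x-y|\le|y|$ being absorbed into $\|u\|_{L^{2,1}}+\|v\|_{L^{2,1}}$. This yields $-\overline{\beta(\lambda)}/\breve\alpha(\lambda)\in H^1(\mathbb R\setminus[-1,1])$ in the $\gamma$ variable, whence, translating back through \eqref{variable change}, $\widetilde r(\lambda)$ and $\lambda\widetilde r(\lambda)$ lie in $H^1((-1,1))$. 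The genericity assumption of Definition \ref{def:generic'} enters precisely here: it guarantees that $\breve\alpha(\lambda)=\overline{\alpha(\bar\lambda)}$ has no zeros on the closed interval, so division by $\breve\alpha$ preserves the $H^1$ regularity of the numerator. The main obstacle is the singularity of the phase function $J$ at $\lambda=0$, which obstructs any direct $H^1_\lambda$ estimate on $(-1,1)$; the change of variables $\gamma=1/\lambda$ is exactly the device that moves this singularity to $\gamma=\infty$ and converts the problem into the already-solved large-spectral-parameter estimate.
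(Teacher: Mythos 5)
Your proposal is correct and takes essentially the same route as the paper's own proof: change variables $\gamma=1/\lambda$, transfer the estimate to the scattering data of the small-$\zeta$ spectral problem \eqref{spec: small} via the connection formulas, observe that the transformed Volterra equation \eqref{eq: n-s} has the same structure as the large-$\lambda$ equation \eqref{eq:n-l}, run the analysis of Lemmas \ref{lm:large-1} and \ref{lm:large-2} to bound $\gamma\,\partial_\gamma$ of the scattering matrix in $L^2_\gamma(I_\infty)$, and translate back through \eqref{variable change}. The only difference is one of detail: the paper simply invokes ``similar analysis,'' whereas you also spell out the renormalization step and the point where genericity (the lower bound $|\alpha|\geq A$, hence $|\breve{\alpha}|\geq A$) is used to justify division by $\breve{\alpha}$.
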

\begin{proof}
Setting $\gamma:=\frac 1\lambda$, we have
\begin{equation}
S_g(\lambda)=\left(\begin{array}{cc}
\alpha(\lambda) & \breve{\beta}(\lambda) \\
\beta(\lambda) & \breve{\alpha}(\lambda)
\end{array}\right)=\left(\begin{array}{cc}
\alpha(\gamma^{-1}) & \breve{\beta}(\gamma^{-1}) \\
\beta(\gamma^{-1}) & \breve{\alpha}(\gamma^{-1})
\end{array}\right)=:S_{ng}(\gamma).
\end{equation}
We also recall \eqref{eq: n-s} and make change of variable $\lambda\mapsto\gamma$:
\begin{align}
\label{eq: n-s'}
{n}^{(\pm)}(x;\gamma):&=I+\int^x_{\pm\infty}e^{ -iJ(\gamma)(x-y)\ad \sigma_3}\left(\left[{Q}_{1}(y)+\frac{1}{\gamma} {Q}_{2}(y)\right] {n}^{(\pm)}(y ; \lambda)\right)dy.
\end{align}
We can use the similar analysis in the proof of lemma \ref{lm:large-2} to obtain
\begin{equation}
\gamma\frac {\partial S_{ng}(\gamma)}{\partial \gamma}\in L^2_{\gamma}(I_\infty).
\end{equation}
A combination of \eqref{connection-1}-\eqref{connection-2} and \eqref{variable change} completes the proof.
\end{proof}
An obvious conclusion of Lemma \ref{eq: n-s'} is the following:
\begin{corollary}
\label{r-infty}
If $(u_0,v_0)\in \mathcal{I}\times \mathcal{I} $ \eqref{space:initial} and are generic in the sense of definition \ref{def:generic'}, we have that
$$\lim_{\lambda\to 0}\dfrac{\widetilde{r}(\lambda)}{\lambda}=0.$$
\end{corollary}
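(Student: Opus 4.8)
The plan is to obtain the corollary directly from Lemma \ref{lm: small-1} together with the one-dimensional Sobolev embedding, exploiting the change of variables $\gamma = 1/\lambda$ already set up in the remark preceding the lemma. The key identity is that, after reparametrizing by $\gamma = 1/\lambda$, one has $\gamma\widetilde{r}(\gamma) = \widetilde{r}(\lambda)/\lambda$; hence the limit $\lim_{\lambda\to 0}\widetilde{r}(\lambda)/\lambda$ is nothing but $\lim_{|\gamma|\to\infty}\gamma\widetilde{r}(\gamma)$, and it suffices to show the latter vanishes.

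First I would observe that the proof of Lemma \ref{lm: small-1} in fact delivers more than its stated conclusion on $(-1,1)$: via the relation \eqref{variable change}, the estimate $\gamma\,\partial_\gamma S_{ng}(\gamma)\in L^2_\gamma(I_\infty)$ established there, combined with the definitions \eqref{connection-1}--\eqref{connection-2} of the reflection coefficient, shows that $\gamma\widetilde{r}(\gamma)$ and its $\gamma$-derivative both lie in $L^2_\gamma(I_\infty)$, i.e. $\gamma\widetilde{r}(\gamma)\in H^1_\gamma(I_\infty)$ with $I_\infty=\mathbb{R}\setminus[-1,1]$.

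Then I would apply the Sobolev embedding $H^1(J)\hookrightarrow C_0(\overline{J})$ on each of the two unbounded components $(1,\infty)$ and $(-\infty,-1)$ of $I_\infty$, which forces $\gamma\widetilde{r}(\gamma)\to 0$ as $\gamma\to+\infty$ and as $\gamma\to-\infty$. Since $\gamma\to\pm\infty$ corresponds to $\lambda\to 0^\pm$, this yields $\lim_{\lambda\to 0}\widetilde{r}(\lambda)/\lambda=0$, which is the assertion. Because all of the analytic work is already contained in Lemma \ref{lm: small-1}, there is no genuine obstacle here; the only point demanding a moment's care is the bookkeeping of the two-sided limit, namely using that the $H^1$ control holds on the whole of $I_\infty$ so that decay is obtained as $\lambda\to 0$ from both sides, not merely along one ray.
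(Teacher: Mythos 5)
Your reduction of the corollary to the statement $\lim_{|\gamma|\to\infty}\gamma\widetilde{r}(1/\gamma)=0$, and the use of the one-dimensional Sobolev embedding on the two unbounded components of $I_\infty$, is exactly the route the paper intends (it is item 3 of the remark preceding Proposition \ref{prop:r}). The gap is the step where you claim that the proof of Lemma \ref{lm: small-1} delivers $\gamma\widetilde{r}(1/\gamma)\in H^1_\gamma(I_\infty)$. Write $\alpha_{ng}(\gamma):=\alpha(1/\gamma)$, $\beta_{ng}(\gamma):=\beta(1/\gamma)$, $\breve{\beta}_{ng}(\gamma):=\breve{\beta}(1/\gamma)$ for the entries of $S_{ng}$. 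What the analysis behind that lemma establishes is that the entries of $S_{ng}-I$ lie in $L^2_\gamma\cap L^\infty$ and that $\gamma\,\partial_\gamma S_{ng}\in L^2_\gamma(I_\infty)$. Combining this with \eqref{connection-1}--\eqref{connection-2} and the symmetry $\breve{\beta}(\lambda)=-\overline{\beta(\lambda)}/\lambda$ (so $\breve{\beta}_{ng}=-\gamma\overline{\beta_{ng}}$), the quantities these estimates control are $r_{ng}:=\beta_{ng}/\alpha_{ng}$ and $\gamma r_{ng}=-\overline{\breve{\beta}_{ng}}/\alpha_{ng}=\widetilde{r}(1/\gamma)$; hence Sobolev embedding yields only $\widetilde{r}(1/\gamma)\to 0$, i.e.\ $\lim_{\lambda\to 0}\widetilde{r}(\lambda)=0$, which is one full power of $\lambda$ short of the corollary. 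The quantity you actually need is $\gamma\widetilde{r}(1/\gamma)=-\gamma\overline{\breve{\beta}_{ng}}/\alpha_{ng}=\gamma^{2}\beta_{ng}/\alpha_{ng}$, and nothing in the lemma bounds $\gamma\breve{\beta}_{ng}$ (equivalently $\gamma^{2}\beta_{ng}$). No soft argument can bridge this: from $f\in L^2\cap L^\infty$ and $\gamma f'\in L^2$ on $(1,\infty)$ one cannot conclude $\gamma f\to 0$ (take $f(\gamma)=1/\gamma$). Worse, the $L^2$ bound you assert, $\int_1^\infty \gamma^{2}|\widetilde{r}(1/\gamma)|^{2}\,d\gamma=\int_0^1 |\widetilde{r}(\lambda)|^{2}\lambda^{-4}\,d\lambda<\infty$, already presupposes a rate of vanishing of $\widetilde{r}$ at the origin stronger than the corollary's conclusion, so it cannot be harvested ``for free'' from the cited estimates.

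What is genuinely missing is a second-order weighted estimate: one must show that the $(2,1)$ entry decays faster, namely $\gamma\breve{\beta}_{ng}(\gamma)\to 0$, equivalently $\beta(\lambda)=o(\lambda^{2})$ as $\lambda\to 0$. This is precisely where the full strength of $\mathcal{I}$ (two derivatives, $u'',v''\in L^{2,1}$) must enter: $\breve{\beta}_{ng}$ is, to leading order, an oscillatory integral of $u_y-\tfrac{i}{2}|v|^{2}u-\tfrac{i}{2}v$ against $e^{\pm 2iJ(\gamma)y}$, and one additional integration by parts (legitimate since $u_{yy}$, etc., lie in $L^1$), followed by Riemann--Lebesgue and control of the Volterra correction terms, produces the extra factor $\gamma^{-1}$. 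In fairness, the paper itself suppresses this: the corollary is announced as ``an obvious conclusion'' of Lemma \ref{lm: small-1}, so you have faithfully reconstructed the intended route; but as written, both in the paper and in your proposal, the chain of estimates stops one power of $\gamma$ short, and the missing integration-by-parts argument is the actual mathematical content of the corollary.
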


\subsection{The Riemann-Hilbert problem and inverse scattering}
We derive the Riemann-Hilbert problem needed for the reconstruction of potentials.
Let us first define matrices $\widetilde{P}(x ; \lambda) \in \mathbb{C}^{2 \times 2}$ for every $x \in \mathbb{R}$ by
\begin{equation}
\widetilde{P}(x;\lambda)=\left\{\begin{array}{l}
\left[\widetilde{n}^{(+)}_{1}(x; \lambda), \dfrac {\widetilde{n}^{(-)}_{2}(x; \lambda)}{\widetilde{\alpha}(\lambda)}\right],\quad \lambda \in {\mathbb C}^{+} \\\\
\left[\dfrac {\widetilde{n}^{(-)}_{1}\left(x ; \lambda\right)}{\breve{\widetilde{\alpha}}(\lambda)}, \widetilde{n}^{(+)}_{2}(x;\lambda)\right],\quad \lambda \in {\mathbb C}^{-}
\end{array}\right.
\end{equation}
and two reflection coefficients

The following behavior of $\widetilde{P}(x,\lambda)$ for $|\lambda|>1$ in their analytic domains.
\begin{equation}
\widetilde{P}(x ; \lambda) \rightarrow\left[\begin{array}{cc}
\widetilde{n}_{1}^{+\infty}(x) & 0 \\
0 & \widetilde{n}_{2}^{+\infty}(x)
\end{array}\right]=: \widetilde{P}^{\infty}(x), \quad \text { as }|\lambda| \rightarrow \infty
\end{equation}
In order to normalize the boundary conditions, we define
\begin{equation}
\widetilde{M}(x ; \lambda):=\left[\widetilde{P}^{\infty}(x)\right]^{-1} \widetilde{P}(x ; \lambda), \quad \lambda \in \mathbb{C}.
\end{equation}
\begin{definition}
\label{def: data}
We define the following set of \textit{scattering data}:
\begin{equation}
\label{data}
\mathcal{S}=\left\{\widetilde{r}(\lambda), \left\{\lambda_j, \widetilde{c}_j\right\}_{j=1}^{N_1}\right\}.
\end{equation}
Here $\lambda_j=\rho_j e^{i \omega_j}$ with $ \rho_j>0$, $0<\omega_j<{\pi}$ and $\widetilde{c}_j$ are non-zero complex numbers playing the role of norming constants. We arrange eigenvalues $\lbrace \lambda_j \rbrace_{j=1}^{N_1}$ in the following way: for $\rho_j>0$, we have 
$$\rho_1<\rho_2<...<\rho_j<...<\rho_{\footnotesize{ N_1}}.$$
\end{definition}

The following Riemann-Hilbert problem is formulated for the function $\widetilde{M}(x ; \lambda)$:\\
\begin{problem}\label{RHP2-v}
For each $x\in \mathbb R$ and $\mathcal{S}\subset H_0^{1,1}(\mathbb{R}) \times \mathbb{C}^{2 N_1}$, find a $2\times 2$-matrix valued function $\widetilde{M}(x,\cdot)$ such that
\begin{enumerate}
\item[(1)] $\widetilde{M}(x ; \cdot)$ is piecewise analytic in $\mathbb{C} \backslash \mathbb{R}$ with continuous boundary values
\begin{equation}
\widetilde{M}_{\pm}(x ; \lambda)=\lim _{\varepsilon \downarrow 0} \widetilde{M}(x ; \lambda \pm i \varepsilon), \quad \lambda \in \mathbb{R}
\end{equation}
\item[(2)] $\widetilde{M}(x ; \lambda) \rightarrow I$ as $|\lambda| \rightarrow \infty$.\\
\item[(3)] The boundary values $\widetilde{M}_{\pm}(x ; \cdot)$ on $\mathbb{R}$ satisfy the jump relation
\begin{equation}
\widetilde{M}_{+}(x ; \lambda)=\widetilde{M}_{-}(x ; \lambda) \widetilde{V}(x ; \lambda), \quad \lambda \in \mathbb{R}
\end{equation}
where
\begin{align}
\widetilde{V}(x ; \lambda):&=\left[\begin{array}{cc}
1 & -\widetilde{r}(\lambda) e^{\frac{i}{2}\left(\lambda-\lambda^{-1}\right) x} \\
\breve{\widetilde{r}}(\lambda) e^{-\frac{i}{2}\left(\lambda-\lambda^{-1}\right) x} & 1- \breve{\widetilde{r}}(\lambda)\widetilde{r}(\lambda)
\end{array}\right]\\
&=\left[\begin{array}{cc}
1 & -\widetilde{r}(\lambda) e^{\frac{i}{2}\left(\lambda-\lambda^{-1}\right) x} \\
-\lambda\overline{\widetilde{r}(\lambda)} e^{-\frac{i}{2}\left(\lambda-\lambda^{-1}\right) x} & 1+\lambda |\widetilde{r}(\lambda)|^2
\end{array}\right]
\end{align}
\item[(4)]Residue conditions
\begin{subequations}
\begin{align}
\Res_{\lambda=\lambda_j}\widetilde{M}(x;\lambda)&=\lim_{\lambda\rightarrow \lambda_j}\widetilde{M}\left[\begin{array}{cc}
0 & \widetilde{c}_j e^{ix(\lambda_j-\lambda^{-1}_j)/2} \\
0 & 0
\end{array}\right]
\\
\Res_{\lambda=\widetilde{\lambda}_j}\widetilde{M}(x;\lambda)&=\lim_{\lambda\rightarrow \bar{\lambda}_j}\widetilde{M}\left[\begin{array}{cc}
0 & 0 \\
-\bar{\lambda}_j \overline{\widetilde{c}_j} e^{-ix(\bar{\lambda}_j-\bar{\lambda}^{-1}_j)/2} & 0
\end{array}\right]
\end{align}
\end{subequations}
hold, where
\begin{equation}
\widetilde{c}_j=\dfrac 2{b_j a'(\zeta_j)}.
\end{equation}
\end{enumerate}
\end{problem}
\begin{proposition}\cite[(4.10)]{PS}
If the Riemann Hilbert problem \ref{RHP2-v} is solvable and $\tdr\in H^{1,1}_0(\mathbb R)$ as in proposition \ref{prop:r}, we  then have the following reconstruction formulas:
\begin{align}
 \overline{v(x)} e^{\frac{i}{2} \int_{x}^{+\infty}\left(|u|^{2}+|v|^{2}\right) d y} &=\lim _{|\lambda| \rightarrow \infty} \lambda[\widetilde{M}(x ; \lambda)]_{12}.
\end{align}
Also (see \cite[Remark 6]{PS})as a consequence of $\widetilde{V}(x;0)=I$ and \eqref{asy-1}-\eqref{asy-4}, we have
\begin{equation}
\label{M_11}
\left[\widetilde{M}(x,0)\right]_{11}=\dfrac{n_1^{\infty}(x)}{\widetilde{n}_1^\infty(x)}=e^{\frac{i}{2} \int^{+ \infty}_{x}\left(|u|^{2}+|v|^{2}\right) d y}.
\end{equation}
\end{proposition}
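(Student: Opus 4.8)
The plan is to use the fact that the matrix assembled from the Jost functions, $\widetilde{M}(x;\lambda)=[\widetilde{P}^\infty(x)]^{-1}\widetilde{P}(x;\lambda)$, already solves Problem \ref{RHP2-v}; under the assumed solvability a standard vanishing-lemma (Liouville) argument in the spirit of \cite{JLP} gives uniqueness, so the RHP solution coincides with this representation and the reconstruction reduces to reading off the appropriate asymptotic coefficients of the explicit columns of $\widetilde{P}$. Since $\widetilde{P}^\infty(x)=\diag\!\big(\widetilde{n}_1^{+\infty}(x),\widetilde{n}_2^{+\infty}(x)\big)$, for $\lambda\in\mathbb{C}^{+}$ the two entries to be analyzed are
\[
[\widetilde{M}(x;\lambda)]_{12}=\frac{[\widetilde{n}_2^{(-)}(x;\lambda)]_1}{\widetilde{n}_1^{+\infty}(x)\,\widetilde{\alpha}(\lambda)},\qquad [\widetilde{M}(x;\lambda)]_{11}=\frac{[\widetilde{n}_1^{(+)}(x;\lambda)]_1}{\widetilde{n}_1^{+\infty}(x)},
\]
to be treated in the regimes $|\lambda|\to\infty$ and $\lambda\to0$ respectively.

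For the first formula I would insert the large-$\lambda$ expansion of Lemma \ref{lem:asy-l}, which gives $[\widetilde{n}_2^{(-)}(x;\lambda)]_1=\widetilde{n}_2^{-\infty}(x)\,\bar{v}(x)/\lambda+o(1/\lambda)$, together with $\widetilde{\alpha}(\lambda)=\alpha(\lambda)\to\alpha_\infty$. Multiplying by $\lambda$ and passing to the limit yields
\[
\lim_{|\lambda|\to\infty}\lambda[\widetilde{M}(x;\lambda)]_{12}=\frac{\widetilde{n}_2^{-\infty}(x)}{\widetilde{n}_1^{+\infty}(x)\,\alpha_\infty}\,\bar{v}(x).
\]
It then remains to simplify the exponential prefactor: substituting the explicit forms \eqref{ntilde-1}, \eqref{ntilde-2} and the value of $\alpha_\infty$, the three exponentials combine to $e^{\frac{i}{2}\int_x^{+\infty}(|u|^2+|v|^2)\,dy}$, which is exactly the claimed identity for $\overline{v(x)}$.

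For the second formula I would first verify that the jump is trivial at the origin. On the real axis the phase $e^{\mp\frac{i}{2}(\lambda-\lambda^{-1})x}$ has modulus one, so as $\lambda\to0$ the off-diagonal entries of $\widetilde{V}(x;\lambda)$ are controlled by $\widetilde{r}(\lambda)$ and $\lambda\overline{\widetilde{r}(\lambda)}$, both of which tend to $0$ because $\widetilde{r}\in H^{1,1}_0(\mathbb{R})$ (in particular $\widetilde{r}(\lambda)/\lambda\to0$ by Corollary \ref{r-infty}); hence $\widetilde{V}(x;0)=I$ and $\widetilde{M}(x;\cdot)$ extends continuously across $\lambda=0$. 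Evaluating the $(1,1)$-entry identity there and using the connection asymptotics \eqref{asy-1}--\eqref{asy-4}, which force $[\widetilde{n}_1^{(+)}(x;\lambda)]_1\to n_1^{+0}(x)$ as $\lambda\to0$, produces $[\widetilde{M}(x,0)]_{11}=n_1^{+0}(x)/\widetilde{n}_1^{+\infty}(x)$; the explicit forms of $n_1^{+0}$ (Lemma \ref{lm:asy-s}) and $\widetilde{n}_1^{+\infty}$ then collapse to $e^{\frac{i}{2}\int_x^{+\infty}(|u|^2+|v|^2)\,dy}$.

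The computations are essentially bookkeeping with the Volterra asymptotics. The genuinely delicate points are, first, securing uniqueness of the RHP solution so that the Jost representation of $\widetilde{M}$ is legitimate, and second, justifying the continuity of $\widetilde{M}$ up to $\lambda=0$ through the essential singularity $e^{\mp i\lambda^{-1}x/2}$ of the phase. This second point is where the decay $\widetilde{r}(\lambda)/\lambda\to0$ encoded in $H^{1,1}_0$ is indispensable, and it is precisely the reason Proposition \ref{prop:r} is established before the reconstruction step. A secondary care point is that the limits $|\lambda|\to\infty$ and $\lambda\to0$ must be taken uniformly in $x$ on the analytic side of the contour, which follows from the $L^\infty$ Volterra bounds underlying Lemmas \ref{lm:asy-s} and \ref{lem:asy-l}.
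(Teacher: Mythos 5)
Your proposal is correct and takes essentially the same route the paper relies on: the paper gives no independent proof here but defers to \cite[(4.10) and Remark 6]{PS}, where $\widetilde{M}$ is precisely the normalized Jost matrix $[\widetilde{P}^{\infty}]^{-1}\widetilde{P}$ and both formulas are read off, exactly as you do, from the large-$\lambda$ expansion in Lemma \ref{lem:asy-l}, the $\lambda\to 0$ connection asymptotics \eqref{asy-1}--\eqref{asy-4}, and the explicit exponentials \eqref{ntilde-1}--\eqref{ntilde-2}, with the prefactor bookkeeping $\widetilde{n}_2^{-\infty}(x)/\bigl(\widetilde{n}_1^{+\infty}(x)\,\alpha_{\infty}\bigr)=e^{\frac{i}{2}\int_x^{+\infty}(|u|^2+|v|^2)\,dy}$ checking out. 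Your additional steps---invoking uniqueness (Theorem \ref{Solvability}) to identify an arbitrary solution of Problem \ref{RHP2-v} with the Jost representation, and using Corollary \ref{r-infty} to justify $\widetilde{V}(x;0)=I$ and continuity of $\widetilde{M}$ at the origin---are the correct way to make the hypothesis that the RHP is solvable do its work.
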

\begin{definition}
\label{def:generic'}
We say that the initial condition $\mathcal{I}$ \eqref{def:generic} is generic if
\begin{itemize}
\item[1.]$\widetilde{\alpha}(\lambda)$ associated to $\mathcal{I}$ has only finitely many simple zeros in $\mathbb{C}^+$ and we assume there  exists a positive number $A$ such that $|\alpha(\lambda)| \geq A>0,\quad \lambda\in \mathbb R$.
\item[2.]  For $\lbrace \lambda_j \rbrace_{j=1}^{N_1}$ where  $z_j=\rho_j e^{i\omega_j}$, $ \rho_{j_1}\neq \rho_{j_2} $
for all $j$, $k$. This will avoid the unstable structure in which solitons travel in the same velocity.
\end{itemize}
\end{definition}

\subsection{Solvability of the RHP}
\begin{theorem}\label{Solvability}
Let $\mathcal{S}\subset H_0^{1,1}(\mathbb{R}) \times \mathbb{C}^{2 N_1}$, then there exists a unique solution for RHP problem \ref{RHP2-v}.
\end{theorem}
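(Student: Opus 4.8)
The plan is to recast Problem \ref{RHP2-v} as an $L^2$ singular integral equation and invoke Beals--Coifman/Zhou theory, so that solvability is reduced to an injectivity (``vanishing'') statement. I would factor the jump as $\widetilde V=b_-^{-1}b_+$ with $b_+=\unitupper{-\widetilde r(\lambda)e^{\frac i2(\lambda-\lambda^{-1})x}}$ and $b_-=\unitlower{\lambda\overline{\widetilde r(\lambda)}e^{-\frac i2(\lambda-\lambda^{-1})x}}$, and set $w_+=b_+-I$, $w_-=I-b_-$. By Proposition \ref{prop:r} and Lemmas \ref{lm:large-2}, \ref{lm: small-1} one has $\widetilde r,\lambda\widetilde r\in H^1\hookrightarrow L^\infty$, so $w_\pm\in L^2\cap L^\infty$ and the associated operator $C_w f=C^-(fw_+)+C^+(fw_-)$ is bounded on $L^2(\mathbb R)$. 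The RHP is then equivalent to $(\mathbf 1-C_w)\mu=I$; the operator $\mathbf 1-C_w$ is Fredholm of index zero, so existence will follow once injectivity is established. Uniqueness is comparatively routine: $\det\widetilde V\equiv1$ together with the nilpotent residue structure forces $\det\widetilde M\equiv1$, whence for two solutions the ratio $\widetilde M_1\widetilde M_2^{-1}$ continues to an entire function tending to $I$, i.e. $\equiv I$ by Liouville.

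Before addressing injectivity I would remove the discrete data. Genericity (Definition \ref{def:generic'}) gives finitely many simple zeros $\lambda_1,\dots,\lambda_{N_1}$ of $\widetilde\alpha$ in $\mathbb C^+$, distinct moduli $\rho_j$, and $|\alpha(\lambda)|\ge A>0$ on $\mathbb R$, so the poles are disjoint from the jump line and no spectral singularities occur. Conjugating $\widetilde M$ by an explicit rational interpolation matrix $\mathcal B(\lambda)$ that carries the prescribed residues at $\lambda_j$, $\bar\lambda_j$ trades condition (4) for a pole-free normalization while preserving the jump $\widetilde V$; the distinctness of the $\rho_j$ keeps $\mathcal B$ nondegenerate. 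This reduces matters to the holomorphic, reflection-only problem on $\mathbb R$, exactly as in \cite{JLP}.

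The heart of the proof is the vanishing lemma: a solution $\widetilde M$ of the homogeneous problem (same jump, $\widetilde M\to0$) must vanish. The key structural input is the reduction $\breve{\widetilde r}(\lambda)=-\lambda\overline{\widetilde r(\lambda)}$, which makes the weighted jump $\widetilde V(\lambda)\diagmat{1}{\lambda}$ Hermitian on $\mathbb R$; moreover the determinant relation $|\widetilde\alpha|^2+\lambda|\widetilde\beta|^2=1$ yields the crucial positivity $1+\lambda|\widetilde r(\lambda)|^2=|\widetilde\alpha(\lambda)|^{-2}>0$. Setting $H(\lambda)=\widetilde M(\lambda)\diagmat{1}{\lambda}\widetilde M(\bar\lambda)^\dagger$, which is analytic in $\mathbb C^+$ and decays at infinity, Cauchy's theorem gives $\int_{\mathbb R}\widetilde M_-(\lambda)\big(\widetilde V(\lambda)\diagmat{1}{\lambda}\big)\widetilde M_-(\lambda)^\dagger\,d\lambda=0$, in which the density $\widetilde V\diagmat{1}{\lambda}$ is a Hermitian form; for $\lambda>0$ this density is positive definite (leading entry $1$, determinant $\lambda$), so the $\lambda>0$ contribution forces $\widetilde M_-\equiv0$ there.

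The main obstacle is the sign of $\lambda$. Because $\det(\widetilde V\Lambda)=\det\Lambda$ and Hermiticity forces $\Lambda\propto\diagmat{1}{\lambda}$, no analytic diagonal weight can make the density positive definite on $\lambda<0$: there it is genuinely indefinite, and the negative half-line can in principle cancel the contribution of the positive one. To close this gap I would exploit the extra symmetry $\lambda\mapsto1/\lambda$ built into the model --- equivalently, lift to the $\zeta$-plane via $\lambda=\zeta^2$, where $\lambda<0$ corresponds to $\zeta\in i\mathbb R$ and the involutions $\zeta\mapsto-\zeta$, $\zeta\mapsto\bar\zeta$ on the symmetric contour $\mathbb R\cup i\mathbb R$ supply the missing positive-definite form --- mirroring the two-transform ($|\lambda|\gtrless1$) splitting used throughout the paper and the argument of \cite{JLP}, \cite{PS}. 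Combining the two regimes yields $\widetilde M\equiv0$, establishing injectivity of $\mathbf 1-C_w$ and hence, via the index-zero Fredholm property, the existence asserted in Theorem \ref{Solvability}; together with the uniqueness above this completes the proof.
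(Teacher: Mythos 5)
Your proposal is correct and follows essentially the same route as the paper: recast the RHP as a Beals--Coifman integral equation, use that $\mathbf{1}-C_w$ is Fredholm of index zero so that solvability reduces to the vanishing of homogeneous solutions, and obtain that vanishing by lifting via $\lambda=\zeta^2$ to the Schwartz-invariant cross $\mathbb{R}\cup i\mathbb{R}$ --- precisely the paper's passage from Problem \ref{vector problem 1} to Problem \ref{vector problem 2} via Lemmas \ref{lemma1}--\ref{lemma3} and Zhou's vanishing lemma, with your observation that the Hermitian density $\widetilde{V}\,\diag(1,\lambda)$ is positive definite only for $\lambda>0$ being exactly the reason that lift is needed. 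The only substantive deviation is bookkeeping: you describe removing the discrete data by a rational conjugation said to ``preserve the jump'' (which a rational right factor cannot literally do), whereas the paper, following \cite{JLP}, trades the residue conditions for jumps on the small circles $\Gamma_j,\Gamma_j^*$ of the augmented contour $\Lambda$ --- the method of the very source you cite, so this is wording rather than substance.
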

\begin{proof}
The proof is similar to the one in \cite{Liu}. For the convenience of the reader, we will sketch the main steps
\end{proof}
We begin by formulating precisely RHP $(\mathbb R,\widetilde{V})$ problem \ref{vector problem 1} and RHP $(\Sigma, \widetilde{V}_{*})$ problem \ref{vector problem 2}.
\begin{figure}
\caption{The Augmented Contour $\Lambda$}
\vspace{.5cm}
\label{figure-lambda}
\begin{tikzpicture}[scale=0.75]
\draw[ thick] (0,0) -- (-3,0);
\draw[ thick] (-3,0) -- (-5,0);
\draw[thick,->,>=stealth] (0,0) -- (3,0);
\draw[ thick] (3,0) -- (5,0);
\node[above] at 		(2.5,0) {$+$};
\node[below] at 		(2.5,0) {$-$};
\node[right] at (3.5 , 2) {$\Gamma_j$};
\node[right] at (3.5 , -2) {$\Gamma_j^*$};
\draw[->,>=stealth] (-0.4,2) arc(360:0:0.6);
\draw[->,>=stealth] (3.4,2) arc(360:0:0.4);
\draw[->,>=stealth] (-0.4,-2) arc(0:360:0.6);
\draw[->,>=stealth] (3.4,-2) arc(0:360:0.4);
\draw [red, fill=red] (-1,2) circle [radius=0.05];
\draw [red, fill=red] (3,2) circle [radius=0.05];
\draw [red, fill=red] (-1,-2) circle [radius=0.05];
\draw [red, fill=red] (3,-2) circle [radius=0.05];
\node[right] at (5 , 0) {$\bbR$};
\draw [red, fill=red] (0, 1) circle [radius=0.05];
\draw[->,>=stealth] (0.3, 1) arc(360:0:0.3);
\draw [red, fill=red] (0, -1) circle [radius=0.05];
\draw[->,>=stealth] (0.3, -1) arc(0:360:0.3);
\end{tikzpicture}
\end{figure}
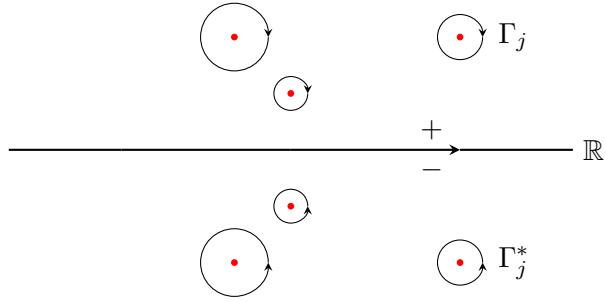
\begin{problem}\label{vector problem 1}
Fix $x\in \mathbb R$ and let $\left(\widetilde{r},\left\{\widetilde{c}_j,\lambda_j\right\}^n_{j=1}\right)$ be a set of scattering data such that $\widetilde{r}\in H^{1,1}_0(\mathbb R)$ and $\lambda_j\in \mathbb C^+$, $\widetilde{c}_j\in \mathbb C$. Find a vector-valued function $\widetilde{N}(x,\cdot)$ with the following properties:\\
\begin{enumerate}
\item[(i)] (Analyticity) $\widetilde{N}(x, \lambda)$ is a row vector-valued analytic function of $\lambda$ for $\lambda \in \mathbb{C} \backslash \Lambda$ where
\begin{equation}
\Lambda=\mathbb{R} \bigcup\left\{\Gamma_{1}, \ldots, \Gamma_{n}, \Gamma_{1}^{*}, \ldots, \Gamma_{n}^{*}\right\}
\end{equation}
\item[(ii)] (Normalization) There are two types of normalization:
\begin{eqnarray}
\begin{aligned}
&A.\; \widetilde{N}(x, \lambda)=(1,0)+\mathcal{O}\left(\lambda^{-1}\right)\quad as\; \lambda \rightarrow \infty,\\
&B.\; \widetilde{N}(x, \lambda)=\mathcal{O}\left(\lambda^{-1}\right)\quad as\; \lambda \rightarrow \infty.
\end{aligned}
\end{eqnarray}
\item[(iii)] (Jump condition) For each $\lambda \in \Lambda$, $\widetilde{N}$ has continuous boundary values $\widetilde{N}_{\pm}(\lambda)$ as $\lambda \rightarrow \Lambda$ \textit{non-tangentially} from $\mathbb{C}^{\pm}$. Moreover, the jump relation
\begin{eqnarray}
\begin{aligned}
\widetilde{N}_{+}(x, \lambda)=\widetilde{N}_{-}(x, \lambda) \widetilde{V}(\lambda)
\end{aligned}
\end{eqnarray}
holds, where for $\lambda \in \mathbb{R}$
\begin{equation}
\widetilde{V}(\lambda)=\left[\begin{array}{cc}
1 & -\widetilde{r}(\lambda) e^{\frac{i}{2}\left(\lambda-\lambda^{-1}\right) x} \\
-\lambda\overline{\widetilde{r}(\lambda)} e^{-\frac{i}{2}\left(\lambda-\lambda^{-1}\right) x} & 1+\lambda |\widetilde{r}(\lambda)|^2
\end{array}\right]
\end{equation}
and for $\lambda \in \Gamma_{i} \cup \Gamma_{i}^{*}$
\begin{equation}
\widetilde{V}(\lambda)=\left\{
\begin{array}{cc}
&\left[\begin{array}{cc}
1 & \dfrac {\widetilde{c}_j e^{ix(\lambda-\lambda^{-1})/2}}{\lambda-\lambda_j} \\
0 & 1
\end{array}\right] \quad \lambda \in \Gamma_{j} \\
&\left[\begin{array}{cc}
1 & 0 \\
\dfrac {\bar{\lambda}_j \overline{\widetilde{c}_j} e^{-ix(\lambda-\lambda^{-1})/2}}{\lambda-\bar{\lambda}_j} & 1
\end{array}\right] \quad \lambda \in \Gamma_{j}^{*}
\end{array}\right.
\end{equation}
\end{enumerate}
\end{problem}
\begin{figure}
\caption{The Augmented Contour $\Sigma'$}
\vspace{.5cm}
\label{figure-zeta}
\begin{tikzpicture}[scale=0.75]
\draw[thick,<-,>=stealth] (-3,0) -- (0,0);
\draw[ thick] (-3,0) -- (-5,0);
\draw[thick,->,>=stealth] (0,0) -- (3,0);
\draw[ thick] (3,0) -- (5,0);
\draw[thick,->,>=stealth] (0,3) -- (0,2);
\draw[ thick] (0,3) -- (0,5);
\draw[ thick] (0,2) -- (0,0);
\draw[thick,<-,>=stealth] (0,-2) -- (0,-3);
\draw[ thick] (0,-3) -- (0,-5);
\draw[ thick] (0,-2) -- (0,0);
\node[above] at 		(2.5,0) {$+$};
\node[below] at 		(2.5,0) {$-$};
\node[above] at 		(-2.5,0) {$-$};
\node[below] at 		(-2.5,0) {$+$};
\node[left] at 		     (0,2.5) {$-$};
\node[right] at 		(0,2.5) {$+$};
\node[left] at 		     (0,-2.5) {$+$};
\node[right] at 		(0,-2.5) {$-$};
\node[right] at (3.5 , 2) {$\gamma_j$};
\node[right] at (3.5 , -2) {$\gamma_j^*$};
\node[left] at (-3.5 , 2) {$-\gamma_j^*$};
\node[left] at (-3.5 , -2) {$-\gamma_j$};
\draw[->,>=stealth] (3.4,2) arc(360:0:0.4);
\draw[->,>=stealth] (3.4,-2) arc(0:360:0.4);
\draw[->,>=stealth] (-2.6,2) arc(0:360:0.4);
\draw[->,>=stealth] (-2.6,-2) arc(360:0:0.4);
\draw [green, fill=green] (-3,2) circle [radius=0.05];
\draw [red, fill=red] (3,2) circle [radius=0.05];
\draw [red, fill=red] (-3,-2) circle [radius=0.05];
\draw [green, fill=green] (3,-2) circle [radius=0.05];
\draw[->,>=stealth] (1.2,1) arc(360:0:0.2);
\draw[->,>=stealth] (1.2,-1) arc(0:360:0.2);
\draw[->,>=stealth] (-0.8,1) arc(0:360:0.2);
\draw[->,>=stealth] (-0.8,-1) arc(360:0:0.2);
\draw [green, fill=green] (-1,1) circle [radius=0.05];
\draw [red, fill=red] (1,1) circle [radius=0.05];
\draw [red, fill=red] (-1,-1) circle [radius=0.05];
\draw [green, fill=green] (1,-1) circle [radius=0.05];
\node[right] at (5 , 0) {$\Sigma_1$};
\node[above] at (0 , 5) {$\Sigma_2$};
\node[left] at (-5 , 0) {$\Sigma_3$};
\node[below] at (0 , -5) {$\Sigma_4$};
\end{tikzpicture}
\end{figure}
We proceed to define Schwartz-invariant contour $\Sigma'$ in Figure \ref{figure-zeta}
\begin{equation}
\begin{gathered}
\Sigma=\bigcup_{j=1}^{4} \Sigma_{j} \\
\Sigma^{\prime}=\Sigma \bigcup\left\{\pm \gamma_{i}\right\}_{i=1}^{n} \bigcup\left\{\pm \gamma_{i}^{*}\right\}_{i=1}^{n}
\end{gathered}
\end{equation}
\begin{problem}\label{vector problem 2}
Find a matrix-valued meromorphic function $\widetilde{N}_{*}(x,\zeta)$ with the following properties:
\begin{enumerate}
\item[(i)] (Analyticity) $\widetilde{N}_{*}(x, \zeta)$ is analytic in $\mathbb{C}\backslash \Sigma^{\prime}$ and has continuous boundary values $\widetilde{N}_{*\pm}$.\\
\item[(ii)] (Normalization)
\begin{eqnarray}
\begin{aligned}
&A. \widetilde{N}_{*}(x, \zeta)=I+\mathcal{O}\left(\zeta^{-1}\right)\quad as\quad \zeta \rightarrow \infty,\;or\\
&B. \widetilde{N}_{*}(x, \zeta)=\mathcal{O}\left(\zeta^{-1}\right)\quad as\quad \zeta \rightarrow \infty
\end{aligned}
\end{eqnarray}
\item[(iii)] (Jump Condition)On $\Sigma^{\prime}$, $\widetilde{N}_{*\pm}$ satisfy
\begin{equation}
\widetilde{N}_{*+}(x, \zeta)=\widetilde{N}_{*-}(x, \zeta) \widetilde{V}_{*}(\zeta)
\end{equation}
where on $\Sigma$
\begin{equation}
\widetilde{V}_{*}(\zeta)=\left[\begin{array}{cc}
1 & -\rho(\zeta) e^{\frac{i}{2}\left(\zeta^2-\zeta^{-2}\right) x} \\
\widetilde{\rho}(\zeta) e^{-\frac{i}{2}\left(\zeta^2-\zeta^{-2}\right) x} & 1-\rho(\zeta)\widetilde{\rho}(\zeta)
\end{array}\right]
\end{equation}
where
\begin{eqnarray}
\begin{aligned}
\rho(\zeta)=\zeta\widetilde{r}(\zeta^2),\quad \widetilde{\rho}(\zeta)=-\zeta\overline{\widetilde{r}(\zeta^2)}
\end{aligned}
\end{eqnarray}
and
\begin{equation}
\widetilde{V}_{*}(\zeta)=\left\{\begin{array}{cc}
\left(\begin{array}{cc}
1 & \frac{\widetilde{c}_{*j} e^{i x (\zeta^2-\zeta^{-2})/2}}{\zeta \mp \zeta_{j}} \\
0 & 1
\end{array}\right) & \zeta \in \pm \gamma_{j} \\
\left(\begin{array}{cc}
1 & 0 \\
\frac{\overline{\widetilde{c}_{*j}} e^{-i x (\zeta^2-\zeta^{-2})/2}}{\zeta \mp \bar{\zeta}_{j}} & 1
\end{array}\right) & \zeta \in \pm \gamma_{j}^{*}
\end{array}\right.
\end{equation}
where
\begin{eqnarray}
\begin{aligned}
\widetilde{c}_{*j}=\frac 12\widetilde{c}_{j},\;\zeta^2=\lambda\; {\mathrm {and}} \; \zeta^2_j=\lambda_j\; {\mathrm {for}}\;\zeta_j\in {\mathbb C^{++}}.
\end{aligned}
\end{eqnarray}
\end{enumerate}
\end{problem}
We then derive the Beals-Coifman integral equation for RHPs problem \ref{vector problem 1} and problem \ref{vector problem 2}. The unique solvability of Problem \ref{vector problem 1}  is equivalent to unique solvability of this integral equation. We set
\begin{equation}\label{BCRHP1}
\nu=\widetilde{N}^{+}\left(I+W_{x}^{+}\right)^{-1}=\widetilde{N}^{-}\left(I-W_{x}^{-}\right)^{-1},
\end{equation}
where
\begin{equation}
\begin{gathered}
W_{x}^{+}=\left(\begin{array}{cc}
0 & -\widetilde{r}(\lambda) e^{\frac{i}{2}\left(\lambda-\lambda^{-1}\right) x} \\
0 & 0
\end{array}\right), \quad W_{x}^{-}=\left(\begin{array}{cc}
0 & 0 \\
-\lambda\overline{\widetilde{r}(\lambda)} e^{-\frac{i}{2}\left(\lambda-\lambda^{-1}\right) x} & 0
\end{array}\right) \quad \lambda \in \mathbb{R} \\
W_{x}^{+}=\left(\begin{array}{cc}
0 & \dfrac {\widetilde{c}_j e^{ix(\lambda-\lambda^{-1})/2}}{\lambda-\lambda_j} \\
0 & 0
\end{array}\right), \quad W_{x}^{-}=\left(\begin{array}{cc}
0 & 0 \\
0 & 0
\end{array}\right) \quad \lambda \in \Gamma_{j} \\
W_{x}^{+}=\left(\begin{array}{cc}
0 & 0 \\
0 & 0
\end{array}\right), \quad W_{x}^{-}=\left(\begin{array}{cc}
0 & 0 \\
\dfrac {\bar{\lambda}_j \overline{\widetilde{c}_j} e^{-ix(\lambda-\lambda^{-1})/2}}{\lambda-\bar{\lambda}_j} & 0
\end{array}\right) \quad \lambda \in \Gamma_{j}^{*}
\end{gathered}
\end{equation}
From \eqref{BCRHP1} we have
\begin{equation}
\widetilde{N}^+ -\widetilde{N}^-=\nu(W^+_x+W^-_x).
\end{equation}
The \textit{Plemelj} formula and Type A normalization together give the following Beals-Coifman integral equation
\begin{equation}
\nu=(1,0)+\mathcal{C}_{W} \nu=(1,0)+C_{\Lambda}^{+}\left(\nu W_{x}^{-}\right)+C_{\Lambda}^{-}\left(\nu W_{x}^{+}\right).
\end{equation}
Similarly, for Type B normalization we have that
\begin{equation}\label{homonu}
\nu=\mathcal{C}_{W} \nu=C_{\Lambda}^{+}\left(\nu W_{x}^{-}\right)+C_{\Lambda}^{-}\left(\nu W_{x}^{+}\right).
\end{equation}
For $\lambda\in \mathbb R$ and Type A normalization we have
\begin{equation}\label{nu11}
\nu_{11}(x,\lambda)=1+\int^{+\infty}_{-\infty}\frac {-\nu_{12}(x,s)s\overline{\widetilde{r}(s)}e^{-i(s-s^{-1})x/2}}{s-\lambda-i0}\frac {ds}{2\pi i}+\sum^n_{j=1}\frac {1}{2\pi i}\int_{\Gamma^*_j}\dfrac {\nu_{12}(x,s)\bar{\lambda}_j \overline{\widetilde{c}_j} e^{-ix(s-s^{-1})/2}}{(s-\lambda)(s-\bar{\lambda}_j)}ds
\end{equation}
and
\begin{equation}\label{nu12}
\nu_{12}(x,\lambda)=\int^{\infty}_{-\infty}\frac {-\widetilde{r}(s)\nu_{11}(x,s)e^{i(s-s^{-1})x/2}}{s-\lambda+i0}\frac {ds}{2\pi i}+\sum^n_{j=1}\frac {1}{2\pi i}\int_{\Gamma_j}\frac {\nu_{11}(x,s)\widetilde{c}_je^{i(s-s^{-1})x/2}}{(s-\lambda)(s-\lambda_j)}ds
\end{equation}
An application of Cauchy's integral formula on \eqref{nu11} and \eqref{nu12} gives the following integral-algebraic equations:
\begin{equation}
\nu_{11}(x,\lambda)=1+\int^{+\infty}_{-\infty}\frac {-\nu_{12}(x,s)s\overline{\widetilde{r}(s)}e^{-i(s-s^{-1})x/2}}{s-\lambda-i0}\frac {ds}{2\pi i}+\sum^n_{j=1}\frac {\nu_{12}(x,\bar{\lambda}_j)\bar{\lambda}_j \overline{\widetilde{c}_j} e^{-ix(\bar{\lambda}_j-\bar{\lambda}^{-1}_j)/2}}{\bar{\lambda}_j-\lambda}
\end{equation}
and
\begin{equation}
\nu_{12}(x,\lambda)=\int^{\infty}_{-\infty}\frac {-\widetilde{r}(s)\nu_{11}(x,s)e^{i(s-s^{-1})x/2}}{s-\lambda+i0}\frac {ds}{2\pi i}+\sum^n_{j=1}\frac {-\nu_{11}(x,\lambda_j)\widetilde{c}_je^{i(\lambda_j-\lambda^{-1}_j)x/2}}{\lambda_j-\lambda}.
\end{equation}
To close the system above, we evaluate \eqref{nu11} and \eqref{nu12} at the eigenvalues to get
\begin{equation}
\nu^-_j=\nu_{12}(x,\bar{\lambda}_j)=\int^{\infty}_{-\infty}\frac {-\widetilde{r}(s)\nu_{11}(x,s)e^{i(s-s^{-1})x/2}}{s-\bar{\lambda}_j}\frac {ds}{2\pi i}+\sum^n_{k=1}\frac {-\nu_{11}(x,z_k)\widetilde{c}_ke^{i(\lambda_k-\lambda^{-1}_k)x/2}}{z_k-\bar{\lambda}_j}
\end{equation}
\begin{equation}\label{nu+}
\nu^+_j=\nu_{11}(x,\lambda_j)=1+\int^{+\infty}_{-\infty}\frac {-\nu_{12}(x,s)s\overline{\widetilde{r}(s)}e^{-i(s-s^{-1})x/2}}{s-\lambda_j}\frac {ds}{2\pi i}+\sum^n_{k=1}\frac {\nu_{12}(x,\bar{\lambda}_k)\bar{\lambda}_k \overline{\widetilde{c}_k} e^{-ix(\bar{\lambda}_k-\bar{\lambda}^{-1}_k)/2}}{\bar{\lambda}_k-\lambda_j}
\end{equation}
For Type B case, we only need to remove the $"1"$ term from the right hand side of equation \eqref{nu11} and \eqref{nu+}.The solution of RHP \ref{vector problem 1} with Type A normalization, in terms of
\begin{equation}
\nu(x,s)=(\nu_{11}(x,s),\nu_{12}(x,s))
\end{equation}
should then be
\begin{equation}
\begin{aligned}
\widetilde{N}(x, z)=(1,0) &+\frac{1}{2 \pi i} \int_{\Lambda} \frac{\nu(x, s)\left(W_{x}^{+}(s)+W_{x}^{-}(s)\right)}{s-z} d s \\
=(1,0) &+\frac{1}{2 \pi i} \int_{\mathbb{R}} \frac{\nu(x, s)\left(W_{x}^{+}(s)+W_{x}^{-}(s)\right)}{s-z} d s \\
&+\left(\sum^n_{j=1}\frac {\nu_{12}(x,\bar{\lambda}_j)\bar{\lambda}_j \overline{\widetilde{c}_j} e^{-ix(\bar{\lambda}_j-\bar{\lambda}^{-1}_j)/2}}{\bar{\lambda}_j-z}, \sum^n_{j=1}\frac {-\nu_{11}(x,\lambda_j)\widetilde{c}_je^{i(\lambda_j-\lambda^{-1}_j)x/2}}{\lambda_j-z}\right)
\end{aligned}
\end{equation}
In analogy to Problem \ref{vector problem 1}, we can deduce the following Beals-Coifman integral equation for \ref{vector problem 2}. For Type A normalization:
\begin{equation}
\mu=I+\mathcal{C}_{w} \mu=I+C_{\Sigma^{\prime}}^{+}\left(\mu w_{x}^{-}\right)+C_{\Sigma^{\prime}}^{-}\left(\mu w_{x}^{+}\right)
\end{equation}
where $I$ is the $2\times 2$ identity matrix. And for Type B normalization:
\begin{equation}\label{homomu}
\mu=\mathcal{C}_{w} \mu=C_{\Sigma^{\prime}}^{+}\left(\mu w_{x}^{-}\right)+C_{\Sigma^{\prime}}^{-}\left(\mu w_{x}^{+}\right).
\end{equation}
Componentwise, for $\zeta\in \Sigma$ we have
\begin{equation}
\begin{aligned}
\mu_{11}(x, \zeta)=&C_{\Sigma}^{+}\left[\mu_{12}(x, \cdot) \widetilde{\rho}(\cdot) e^{- i x((\cdot)^2-(\cdot)^{-2})/2}\right](\zeta) \\
&-\sum_{j=1}^{n}\left(\frac{\mu_{12}\left(x, \bar{\zeta}_{j}\right) \overline{\widetilde{c}_{*j}} e^{- i x (\bar{\zeta}_j^2-\bar{\zeta}^{-2}_j)/2}}{\zeta-\bar{\zeta}_j}+\frac{\mu_{12}\left(x, -\bar{\zeta}_{j}\right) \overline{\widetilde{c}_{*j}} e^{- i x (\bar{\zeta}_j^2-\bar{\zeta}^{-2}_j)/2}}{\zeta+\bar{\zeta}_j}\right)
\end{aligned}
\end{equation}
\begin{equation}
\begin{aligned}
\mu_{12}(x, \zeta)=&-C_{\Sigma}^{-}\left[\mu_{11}(x, \cdot) \rho(\cdot) e^{i x((\cdot)^2-(\cdot)^{-2})/2}\right](\zeta) \\
&+\sum_{j=1}^{n}\left(\frac{\mu_{11}\left(x, \zeta_{j}\right) \widetilde{c}_{*j} e^{ i x (\zeta_j^2-\zeta^{-2}_j)/2}}{\zeta-\zeta_j}+\frac{\mu_{11}\left(x, -\zeta_{j}\right) \widetilde{c}_{*j} e^{ i x (\zeta_j^2-\zeta^{-2}_j)/2}}{\zeta+\zeta_j}\right)
\end{aligned}
\end{equation}
\begin{equation}
\begin{aligned}
\mu_{21}(x, \zeta)=&C_{\Sigma}^{+}\left[\mu_{22}(x, \cdot) \widetilde{\rho}(\cdot) e^{- i x((\cdot)^2-(\cdot)^{-2})/2}\right](\zeta) \\
&-\sum_{j=1}^{n}\left(\frac{\mu_{22}\left(x, \bar{\zeta}_{j}\right) \overline{\widetilde{c}_{*j}} e^{- i x (\bar{\zeta}_j^2-\bar{\zeta}^{-2}_j)/2}}{\zeta-\bar{\zeta}_j}+\frac{\mu_{22}\left(x, -\bar{\zeta}_{j}\right) \overline{\widetilde{c}_{*j}} e^{- i x (\bar{\zeta}_j^2-\bar{\zeta}^{-2}_j)/2}}{\zeta+\bar{\zeta}_j}\right)
\end{aligned}
\end{equation}
\begin{equation}
\begin{aligned}
\mu_{22}(x, \zeta)=&-C_{\Sigma}^{-}\left[\mu_{21}(x, \cdot) \rho(\cdot) e^{i x((\cdot)^2-(\cdot)^{-2})/2}\right](\zeta) \\
&+\sum_{j=1}^{n}\left(\frac{\mu_{21}\left(x, \zeta_{j}\right) \widetilde{c}_{*j} e^{ i x (\zeta_j^2-\zeta^{-2}_j)/2}}{\zeta-\zeta_j}+\frac{\mu_{21}\left(x, -\zeta_{j}\right) \widetilde{c}_{*j} e^{ i x (\zeta_j^2-\zeta^{-2}_j)/2}}{\zeta+\zeta_j}\right)
\end{aligned}
\end{equation}
and in order to close the system, we have
\begin{equation}
\begin{aligned}
\mu_{11}\left(x, \pm \zeta_{j}\right)=&\int_{\Sigma} \frac{\mu_{12}(x, s) \widetilde{\rho}(s) e^{- i (s^{2}-s^{-2}) x/2}}{s \mp \zeta_{j}} \frac{d s}{2 \pi i} \\
&-\sum_{k=1}^{n}\left(\frac{\mu_{12}\left(x, \bar{\zeta}_{k}\right) \overline{\widetilde{c}_{*k}} e^{- i x (\bar{\zeta}_k^2-\bar{\zeta}^{-2}_k)/2}}{\pm \zeta_j-\bar{\zeta}_k}+\frac{\mu_{12}\left(x, -\bar{\zeta}_{k}\right) \overline{\widetilde{c}_{*k}} e^{- i x (\bar{\zeta}_k^2-\bar{\zeta}^{-2}_k)/2}}{\pm\zeta_j+\bar{\zeta}_k}\right)
\end{aligned}
\end{equation}
\begin{equation}
\begin{aligned}
\mu_{12}\left(x, \pm \bar{\zeta}_{j}\right)=&- \int_{\Sigma} \frac{\mu_{11}(x, s) \rho(s) e^{ i (s^{2}-s^{-2}) x/2}}{s \mp \bar{\zeta}_{j}} \frac{d s}{2 \pi i} \\
&+\sum_{k=1}^{n}\left(\frac{\mu_{11}\left(x, \zeta_{k}\right) \widetilde{c}_{*k} e^{ i x (\zeta_k^2-\zeta^{-2}_k)/2}}{\pm\bar{\zeta}_{j}-\zeta_k}+\frac{\mu_{11}\left(x, -\zeta_{k}\right) \widetilde{c}_{*k} e^{ i x (\zeta_k^2-\zeta^{-2}_k)/2}}{\pm\bar{\zeta}_{j}+\zeta_k}\right)
\end{aligned}
\end{equation}
\begin{equation}
\begin{aligned}
\mu_{21}\left(x, \pm \zeta_{j}\right)=&\int_{\Sigma} \frac{\mu_{22}(x, s) \widetilde{\rho}(s) e^{- i (s^{2}-s^{-2}) x/2}}{s \mp \zeta_{j}} \frac{d s}{2 \pi i} \\
&-\sum_{k=1}^{n}\left(\frac{\mu_{22}\left(x, \bar{\zeta}_{k}\right) \overline{\widetilde{c}_{*k}} e^{- i x (\bar{\zeta}_k^2-\bar{\zeta}^{-2}_k)/2}}{\pm \zeta_j-\bar{\zeta}_k}+\frac{\mu_{22}\left(x, -\bar{\zeta}_{k}\right) \overline{\widetilde{c}_{*k}} e^{- i x (\bar{\zeta}_k^2-\bar{\zeta}^{-2}_k)/2}}{\pm\zeta_j+\bar{\zeta}_k}\right)
\end{aligned}
\end{equation}
\begin{equation}
\begin{aligned}
\mu_{22}\left(x, \pm \bar{\zeta}_{j}\right)=&- \int_{\Sigma} \frac{\mu_{21}(x, s) \rho(s) e^{ i (s^{2}-s^{-2}) x/2}}{s \mp \bar{\zeta}_{j}} \frac{d s}{2 \pi i} \\
&+\sum_{k=1}^{n}\left(\frac{\mu_{21}\left(x, \zeta_{k}\right) \widetilde{c}_{*k} e^{ i x (\zeta_k^2-\zeta^{-2}_k)/2}}{\pm\bar{\zeta}_{j}-\zeta_k}+\frac{\mu_{21}\left(x, -\zeta_{k}\right) \widetilde{c}_{*k} e^{ i x (\zeta_k^2-\zeta^{-2}_k)/2}}{\pm\bar{\zeta}_{j}+\zeta_k}\right).
\end{aligned}
\end{equation}
 We now show the equivalence of two RHPs. We begin with the following change of variable formulas: for $\zeta\in \Sigma$
\begin{equation}
\mu_{11}(x, \zeta)=\nu_{11}\left(x, \zeta^{2}\right), \quad \mu_{12}(x, \zeta)=\zeta \nu_{12}\left(x, \zeta^{2}\right)
\end{equation}
and for $\zeta_i\in \mathbb C^{++}$
\begin{equation}
\mu_{11}\left(x, \bar{\zeta}_{i}\right)=\nu_{11}\left(x, \bar{\zeta}_{i}^{2}\right), \quad \mu_{12}\left(x, \zeta_{i}\right)=\zeta_{i} \nu_{12}\left(x, \zeta_{i}^{2}\right)
\end{equation}
It is easy to see that
\begin{equation}
\mu_{11}(x,-\zeta)=\mu_{11}(x, \zeta), \quad \mu_{12}(x,-\zeta)=-\mu_{12}(x, \zeta).
\end{equation}
We have the three lemmas from \cite{Liu}:
\begin{lemma}\label{lemma1}
For $\nu=(\nu_{11},\nu_{12})$ a solution of the homogeneous Beals-Coifman equation \eqref{homonu}, then $\nu_{11},\nu_{12}\in L^2_\lambda(\mathbb R)$ implies that $\mu_{11}\rho$ and $\mu_{12}\widetilde{\rho}$ belongs to $L^2_\zeta(\Sigma)\cap L^1_\zeta(\Sigma)$.
\end{lemma}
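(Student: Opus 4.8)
The plan is to push everything through the quadratic change of variable $\lambda=\zeta^{2}$, which is exactly the map linking Problem \ref{vector problem 2} (on the cross-shaped contour $\Sigma$) to Problem \ref{vector problem 1} (on $\mathbb{R}$). First I would record that along each of the four rays making up $\Sigma$ one has $|d\zeta|=|d\lambda|/(2\sqrt{|\lambda|})$, with the real $\zeta$-axis covering $\mathbb{R}_{+}$ and the imaginary $\zeta$-axis covering $\mathbb{R}_{-}$ in the $\lambda$-plane. Combining the change-of-variable identities $\mu_{11}(x,\zeta)=\nu_{11}(x,\zeta^{2})$, $\mu_{12}(x,\zeta)=\zeta\,\nu_{12}(x,\zeta^{2})$ with $\rho(\zeta)=\zeta\widetilde r(\zeta^{2})$ and $\widetilde\rho(\zeta)=-\zeta\overline{\widetilde r(\zeta^{2})}$, the two products to be estimated become, in the $\lambda$ variable,
\begin{equation*}
|(\mu_{11}\rho)(\zeta)|=|\lambda|^{1/2}\,|\nu_{11}(\lambda)|\,|\widetilde r(\lambda)|,\qquad |(\mu_{12}\widetilde\rho)(\zeta)|=|\lambda|\,|\nu_{12}(\lambda)|\,|\widetilde r(\lambda)|.
\end{equation*}
The homogeneous equation \eqref{homonu} enters only to guarantee that the $\mu$ built from $\nu$ through these formulas is the genuine solution of the homogeneous Problem \ref{vector problem 2}; the membership claim itself is then pure estimation.

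Next I would do the $L^{2}$ bounds. Inserting the Jacobian turns the squared $L^{2}_\zeta(\Sigma)$ norms into
\begin{equation*}
\|\mu_{11}\rho\|_{L^{2}_\zeta(\Sigma)}^{2}\lesssim\int_{\mathbb R}|\nu_{11}(\lambda)|^{2}\,|\lambda|^{1/2}|\widetilde r(\lambda)|^{2}\,|d\lambda|,\qquad \|\mu_{12}\widetilde\rho\|_{L^{2}_\zeta(\Sigma)}^{2}\lesssim\int_{\mathbb R}|\nu_{12}(\lambda)|^{2}\,|\lambda|^{3/2}|\widetilde r(\lambda)|^{2}\,|d\lambda|.
\end{equation*}
The key observation is that the weights $|\lambda|^{1/2}|\widetilde r|^{2}$ and $|\lambda|^{3/2}|\widetilde r|^{2}$ lie in $L^{\infty}(\mathbb R)$. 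Indeed, by Proposition \ref{prop:r} we have $\widetilde r\in H^{1,1}_{0}(\mathbb R)\subset H^{1}(\mathbb R)\hookrightarrow L^{\infty}$, while $\lambda\widetilde r\in H^{1}(\mathbb R)\hookrightarrow L^{\infty}$ yields $|\widetilde r(\lambda)|\lesssim|\lambda|^{-1}$ for $|\lambda|\ge 1$, and the defining vanishing condition of $H^{1,1}_{0}$ gives $\widetilde r(\lambda)=o(|\lambda|)$ as $\lambda\to0$; these two regimes together bound both weights uniformly. Hence the right-hand sides are controlled by $\|\nu_{11}\|_{L^{2}}^{2}$ and $\|\nu_{12}\|_{L^{2}}^{2}$, which are finite by hypothesis.

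The $L^{1}$ bounds are handled the same way, except that now the $\sqrt{|\lambda|}$ in the Jacobian partially cancels the weights, leaving
\begin{equation*}
\|\mu_{11}\rho\|_{L^{1}_\zeta(\Sigma)}\lesssim\int_{\mathbb R}|\nu_{11}|\,|\widetilde r|\,|d\lambda|,\qquad \|\mu_{12}\widetilde\rho\|_{L^{1}_\zeta(\Sigma)}\lesssim\int_{\mathbb R}|\nu_{12}|\,|\lambda|^{1/2}|\widetilde r|\,|d\lambda|.
\end{equation*}
A single Cauchy--Schwarz bounds the first by $\|\nu_{11}\|_{L^{2}}\|\widetilde r\|_{L^{2}}$ and the second by $\|\nu_{12}\|_{L^{2}}\,\||\lambda|^{1/2}\widetilde r\|_{L^{2}}$, where $\||\lambda|^{1/2}\widetilde r\|_{L^{2}}^{2}=\int|\lambda||\widetilde r|^{2}\le\|\widetilde r\|_{L^{2}}\|\lambda\widetilde r\|_{L^{2}}<\infty$ by one more Cauchy--Schwarz, both factors being finite since $\widetilde r\in H^{1,1}_{0}\subset L^{2,1}$. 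This disposes of all four estimates.

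The one genuinely delicate point — and the step I expect to require the most care — is the half-integer weight $\sqrt{|\lambda|}$ created by the branch point of $\zeta\mapsto\zeta^{2}$ at the origin: it has to be absorbed \emph{simultaneously} at $\lambda=0$ and at $\lambda=\infty$. This is precisely why the refined space $H^{1,1}_{0}$, rather than merely $H^{1,1}$, is indispensable here: the vanishing condition $\widetilde r(\lambda)/\lambda\to0$ supplies the decay at the origin that tames $|\lambda|^{1/2}$, while the weighted regularity $\lambda\widetilde r\in H^{1}$ furnishes the decay at infinity. Once these two endpoint behaviors are secured, every inequality above is a one-line application of H\"older or Cauchy--Schwarz.
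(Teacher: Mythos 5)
The paper does not actually prove Lemma \ref{lemma1}; it is quoted from \cite{Liu}, so there is no in-paper argument to compare against, and your proof must stand on its own. It does: your argument is correct and is the natural (and, in \cite{Liu}, essentially the standard) one --- push everything through $\lambda=\zeta^2$, pick up the Jacobian $|d\zeta|=|d\lambda|/(2\sqrt{|\lambda|})$ on each ray of $\Sigma$, reduce the four memberships to weighted integrals over $\mathbb{R}$, bound the weights $|\lambda|^{1/2}|\widetilde{r}|^2$ and $|\lambda|^{3/2}|\widetilde{r}|^2$ in $L^\infty$ using $\widetilde{r}\in L^\infty$ together with $|\widetilde{r}(\lambda)|\lesssim|\lambda|^{-1}$ for $|\lambda|\geq 1$ (which follows since $\lambda\widetilde{r}\in H^1\hookrightarrow L^\infty$), and finish the $L^1$ statements with Cauchy--Schwarz and $\widetilde{r}\in L^{2,1}$. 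One correction to your closing commentary, though: the vanishing condition defining $H^{1,1}_0$, namely $\widetilde{r}(\lambda)/\lambda\to 0$ as $\lambda\to 0$, is \emph{not} doing any work in this lemma, and calling it indispensable here is a misattribution. Near $\lambda=0$ the weights carry positive powers of $|\lambda|$, so mere boundedness of $\widetilde{r}$ (Sobolev embedding of $H^1$) suffices; the only place genuine decay is required is $\lambda\to\infty$, and there it is supplied by $\lambda\widetilde{r}\in H^1(\mathbb{R})$, which already follows from $\widetilde{r}\in H^{1,1}(\mathbb{R})$ with no need of the subscript $0$. The refined space $H^{1,1}_0$ earns its keep elsewhere in the paper (the identity $\widetilde{V}(x;0)=I$, the reconstruction of the potential at $\lambda=0$, the estimates near the light cone), not in this estimation.
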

\begin{lemma}\label{lemma2}
For $\nu=(\nu_{11},\nu_{12})$ a solution of the homogeneous Beals-Coifman equation \eqref{homonu}, define
\begin{equation}
\mu(x, \zeta)=\twomat{\mu_{11}(x, \zeta) }{\mu_{12}(x, \zeta)}{\mu_{21}(x, \zeta) }{\mu_{22}(x, \zeta)}=\Twomat{\nu_{11}(x, \zeta^{2})}{\zeta \nu_{12}\left(x, \zeta^{2}\right)}{-\zeta \overline{\nu_{12}\left(x, \zeta^{2}\right)} }{\overline{\nu_{11}(x, \zeta^{2})}}
\end{equation}
and for $j=1...n$
\begin{equation}
\left(\begin{array}{cc}
\mu_{11}\left(x, \pm \bar{\zeta}_{j}\right) & \mu_{12}\left(x, \pm \zeta_{j}\right) \\
\mu_{21}\left(x, \pm \bar{\zeta}_{j}\right) & \mu_{22}\left(x, \pm \zeta_{j}\right)
\end{array}\right)=\Twomat{\nu_{11}\left(x, \bar{\zeta}_{j}^{2}\right)}{\pm \zeta_{j} \nu_{12}\left(x, \zeta_{j}^{2}\right)}{\mp \bar{\zeta}_{j} \overline{\nu_{12}\left(x, \zeta_{j}^{2}\right)}}{\overline{\nu_{11}\left(x, \bar{\zeta}_{j}^{2}\right)}}
\end{equation}
then $\mu$ solves integral equation \eqref{homomu}.
\end{lemma}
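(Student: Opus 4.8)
\emph{Proof proposal.} The plan is to verify directly that the matrix $\mu$ built from the solution $\nu=(\nu_{11},\nu_{12})$ of \eqref{homonu} satisfies the homogeneous Beals--Coifman equation \eqref{homomu}; equivalently, that the ``squaring'' substitution $\lambda=\zeta^2$ intertwines the operators $\mathcal{C}_W$ on $\Lambda$ and $\mathcal{C}_w$ on $\Sigma'$. First I would observe that the definition makes the second row $(\mu_{21},\mu_{22})=(-\zeta\overline{\nu_{12}(x,\zeta^2)},\,\overline{\nu_{11}(x,\zeta^2)})$ the Schwarz reflection of the first, and that the jump $\widetilde V_*$ together with the residue data of Problem \ref{vector problem 2} are invariant under this reflection. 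Hence it suffices to check the two scalar component equations for $\mu_{11}$ and $\mu_{12}$ listed after \eqref{homomu}; the equations for $\mu_{21},\mu_{22}$ then follow by conjugation.

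The mechanism is a folding of the contour $\Sigma$ under $\zeta\mapsto\zeta^2$. Recall from Figure \ref{figure-zeta} that $\Sigma$ is invariant under $w\mapsto-w$, with $\Sigma_1,\Sigma_3$ lying over $\mathbb{R}^+$ and $\Sigma_2,\Sigma_4$ over $\mathbb{R}^-$ in the $\lambda=w^2$ plane. Since $\mu_{11}(x,\cdot)$ is even and $\mu_{12}(x,\cdot)$ is odd, the integrands occurring in the component equations have definite parity: with $\rho(w)=w\widetilde r(w^2)$ and $\widetilde\rho(w)=-w\overline{\widetilde r(w^2)}$, the integrand $\mu_{11}(x,w)\rho(w)e^{ix(w^2-w^{-2})/2}$ of the $\mu_{12}$--equation is odd, while $\mu_{12}(x,w)\widetilde\rho(w)e^{-ix(w^2-w^{-2})/2}$ of the $\mu_{11}$--equation is even. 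Parametrizing $\Sigma_3$ by $w=-t$ shows that $dw$ contributes a sign under $w\mapsto -w$, so folding the two preimages of each ray produces the relative minus
\[
\frac{f(w)}{w-\zeta}-\frac{f(-w)}{-w-\zeta},
\]
which for an even numerator collapses to $f(w)\,\tfrac{2w}{w^2-\zeta^2}$ and for an odd numerator to $f(w)\,\tfrac{2\zeta}{w^2-\zeta^2}$.

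Carrying out the substitution $s=w^2$, $dw=ds/(2w)$, this is exactly what matches the two pictures. In the $\mu_{12}$--equation the odd integrand selects the factor $2\zeta$, and after the $1/(2w)$ from $ds$ the surviving Cauchy integral is precisely $\zeta$ times the continuous term of \eqref{nu12} evaluated at $\lambda=\zeta^2$, which is what $\mu_{12}=\zeta\nu_{12}(x,\zeta^2)$ demands. In the $\mu_{11}$--equation the even integrand selects $2w$; one power of $w$ cancels against $ds$ and the remaining $w^2=s$ reproduces exactly the factor $s$ multiplying $\overline{\widetilde r(s)}$ in the continuous term of \eqref{nu11}. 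For the discrete parts, the two poles $\pm\zeta_j$ (resp.\ $\pm\bar\zeta_j$) both lie over the single pole $\lambda_j=\zeta_j^2$ (resp.\ $\bar\lambda_j$); using $\mu_{11}(x,\pm\zeta_j)=\nu_{11}(x,\lambda_j)$ and combining the two fractions by the same identity reproduces the single residue term of the $\nu$--equation, the normalization $\widetilde c_{*j}=\tfrac12\widetilde c_j$ compensating precisely for the doubling created by the folding.

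The step I expect to be the main obstacle is the orientation and boundary-value bookkeeping: one must confirm that, with the orientations fixed in Figure \ref{figure-zeta}, the two sheets over each ray are traversed so that their contributions add (this is what furnishes the relative minus sign above rather than a cancellation), and that the one-sided value $C_\Sigma^\pm$ transforms into the correct $C_\mathbb{R}^\pm$ with the matching $\pm i0$ prescription after squaring. Once this is settled, the integrability $\mu_{11}\rho,\ \mu_{12}\widetilde\rho\in L^2_\zeta(\Sigma)\cap L^1_\zeta(\Sigma)$ supplied by Lemma \ref{lemma1} legitimizes all the interchanges and guarantees convergence, completing the verification that $\mu$ solves \eqref{homomu}.
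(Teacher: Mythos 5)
Your proposal is correct, and it is worth noting that the paper itself offers no proof of this lemma: Lemmas \ref{lemma1}--\ref{lemma3} are quoted from \cite{Liu}, so your direct verification by folding the contour under $\lambda=\zeta^2$ is the actual substance of the argument (and is the standard one). The computational core checks out. For the discrete part, the identity
\[
\frac{1}{\zeta-\zeta_j}+\frac{1}{\zeta+\zeta_j}=\frac{2\zeta}{\zeta^{2}-\zeta_j^{2}},
\]
together with $\mu_{11}(x,\pm\zeta_j)=\nu_{11}(x,\lambda_j)$, collapses the two pole terms of the $\mu$-system to $2\widetilde{c}_{*j}$ times the single residue term of the $\nu$-system, which is exactly what the normalization $\widetilde{c}_{*j}=\tfrac12\widetilde{c}_j$ absorbs; for the continuous part, your parity analysis is right (the odd integrand in the $\mu_{12}$-equation selects the factor $2\zeta$, and after $s=w^2$ one recovers $\zeta$ times the Cauchy term of \eqref{nu12}, while the even integrand in the $\mu_{11}$-equation selects $2w$, the substitution cancels one power of $w$, and $w^{2}=s$ regenerates precisely the factor $s\,\overline{\widetilde r(s)}$ appearing in \eqref{nu11}). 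The orientation issue you flag as the main obstacle does resolve favorably: in Figure \ref{figure-zeta} the two real rays are oriented outward from the origin and the two imaginary rays inward, so under $\lambda=\zeta^{2}$ each of $\Sigma_1,\Sigma_3$ (resp.\ $\Sigma_2,\Sigma_4$) maps onto $\mathbb{R}^{+}$ (resp.\ $\mathbb{R}^{-}$) with the standard left-to-right orientation, and the $\pm$ side labels are preserved --- e.g.\ the $+$ side of $\Sigma_3$ is drawn \emph{below} the negative real axis precisely because that side is carried to the upper half $\lambda$-plane --- hence $C_{\Sigma}^{\pm}$ folds onto the correct $\mp i0$ prescriptions and the two sheets add rather than cancel, as you anticipated. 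The one step I would insist you write out rather than assert is the reduction of the second row to the first: ``invariance of the jump data under Schwarz reflection'' here means concretely that conjugating the $\nu$-equations (which flips $\frac{ds}{2\pi i}$ to $-\frac{ds}{2\pi i}$ and exchanges the $\pm i0$ boundary values) and using $\rho(\zeta)=\zeta\widetilde r(\zeta^{2})$, $\widetilde\rho(\zeta)=-\zeta\overline{\widetilde r(\zeta^{2})}$ reproduces the $(\mu_{21},\mu_{22})$ equations with all signs intact; this is routine but sign-heavy, and on the imaginary rays one must additionally use $\overline{\zeta}=-\zeta$. With that spelled out, and with Lemma \ref{lemma1} justifying the convergence of the folded integrals, your proof is complete.
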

\begin{lemma}\label{lemma3}
From $\nu_{11},\nu_{12}\in L^2_\lambda(\mathbb R)$ that solve equation \eqref{homonu}, we can construct solution $\widetilde{N}_{*}$ to RHP \ref{vector problem 2} with Type B normalization such that $\widetilde{N}_{*\pm}(x,\cdot)\in\partial C_{\Sigma}(L^2)$.
\end{lemma}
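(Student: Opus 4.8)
The plan is to reconstruct $\widetilde N_*$ directly from the matrix $\mu$ furnished by Lemma \ref{lemma2} and to verify that the resulting function satisfies every requirement of Problem \ref{vector problem 2} under Type B normalization. Concretely, given $\nu=(\nu_{11},\nu_{12})\in L^2_\lambda(\mathbb R)$ solving the homogeneous equation \eqref{homonu}, Lemma \ref{lemma2} produces a matrix $\mu$ solving \eqref{homomu} whose entries are built from $\nu$ through the squaring map $\lambda=\zeta^2$ together with the imposed Schwarz symmetries. I would then \emph{define}
\begin{equation*}
\widetilde N_*(x,\zeta):=\frac{1}{2\pi i}\int_{\Sigma'}\frac{\mu(x,s)\left(w_x^+(s)+w_x^-(s)\right)}{s-\zeta}\,ds,\qquad \zeta\in\mathbb C\setminus\Sigma',
\end{equation*}
that is, as the Cauchy transform of the jump data carried by $\mu$, with no additive constant term since we work in the Type B (vanishing) normalization.

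First I would record that this integral is well defined and holomorphic off $\Sigma'$. On the discrete part of $\Sigma'$ (the circles $\pm\gamma_j,\pm\gamma_j^*$) the density is rational in $s$ with poles only at the eigenvalues, so those contributions are manifestly smooth away from the nodes; on the continuous part $\Sigma$ the density reduces to $\mu_{11}\rho$ and $\mu_{12}\widetilde\rho$ up to the factors $e^{\pm ix(s^2-s^{-2})/2}$, which are unimodular on $\Sigma$ (since $s^2-s^{-2}$ is real on both the real and imaginary axes) and hence do not affect integrability. By Lemma \ref{lemma1} these densities lie in $L^2_\zeta(\Sigma)\cap L^1_\zeta(\Sigma)$. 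The $L^1$ membership yields the decay $\widetilde N_*(x,\zeta)=\mathcal O(\zeta^{-1})$ as $\zeta\to\infty$, which is exactly Type B normalization, while analyticity in $\mathbb C\setminus\Sigma'$ is automatic for a Cauchy integral.

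Next I would verify the jump relation. Because $\mu$ solves the Beals–Coifman equation \eqref{homomu}, the Plemelj formula applied to the displayed Cauchy integral gives $\widetilde N_{*+}-\widetilde N_{*-}=\mu\,(w_x^++w_x^-)$ on $\Sigma'$, and the Beals–Coifman identities $\widetilde N_{*+}=\mu(I+w_x^+)$, $\widetilde N_{*-}=\mu(I-w_x^-)$ then rearrange into $\widetilde N_{*+}=\widetilde N_{*-}\widetilde V_*$ with $\widetilde V_*=(I-w_x^-)^{-1}(I+w_x^+)$, matching the jump prescribed in Problem \ref{vector problem 2}; the residue structure at $\pm\zeta_j,\pm\bar\zeta_j$ is reproduced automatically from the rational pieces of the density. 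Finally, the claim $\widetilde N_{*\pm}(x,\cdot)\in\partial C_\Sigma(L^2)$ is precisely the assertion that the boundary values are Cauchy projections of $L^2(\Sigma)$ densities: since the continuous density is in $L^2(\Sigma)$ by Lemma \ref{lemma1} and $C_\Sigma^\pm$ is bounded on $L^2(\Sigma)$, the conclusion follows once the discrete rational contributions, which are smooth and decaying along $\Sigma$, are absorbed into the same class.

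The main obstacle I anticipate is controlling the behavior at the origin $\zeta=0$, where the spectral dependence is singular: the jump data involve $\rho(\zeta)=\zeta\widetilde r(\zeta^2)$ and $\widetilde\rho(\zeta)=-\zeta\overline{\widetilde r(\zeta^2)}$ together with the factor $e^{\pm ix(\zeta^2-\zeta^{-2})/2}$, whose $\zeta^{-2}$ term is violently oscillatory near $0$. The $H^{1,1}_0$-membership of $\widetilde r$ established in Proposition \ref{prop:r}, giving $\widetilde r(\lambda)/\lambda\to0$ as $\lambda\to0$, is what tames $\rho(\zeta)/\zeta\to0$ and keeps the densities in $L^2(\Sigma)\cap L^1(\Sigma)$ up to $\zeta=0$; making this uniform across the four rays of $\Sigma$ meeting at the origin, and ensuring that the Cauchy integral retains its boundary values in $\partial C_\Sigma(L^2)$ there, is the delicate part of the argument. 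The remaining verifications are the routine Beals–Coifman bookkeeping already carried out in \cite{Liu}.
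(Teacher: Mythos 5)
Your proposal is correct and follows essentially the same route as the paper, which states this lemma without proof by citing Liu's thesis: there, as in your argument, $\widetilde{N}_*$ is defined as the Cauchy transform of $\mu\,(w_x^{+}+w_x^{-})$ over $\Sigma'$, the jump $\widetilde{N}_{*+}=\widetilde{N}_{*-}\widetilde{V}_*$ is recovered from the Plemelj formula together with the Beals--Coifman identities $\widetilde{N}_{*\pm}=\mu(I\pm w_x^{\pm})$, and the Type B decay and the membership $\widetilde{N}_{*\pm}(x,\cdot)\in\partial C_{\Sigma}(L^2)$ follow from the $L^1\cap L^2$ bounds on the densities supplied by Lemma \ref{lemma1}. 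Your identification of the behavior at the origin, controlled precisely by $\widetilde{r}\in H^{1,1}_0(\mathbb{R})$ so that $\rho(\zeta)=\zeta\widetilde{r}(\zeta^2)$ stays tame on all four rays of $\Sigma$, as the only genuinely delicate point is also consistent with how the paper's framework handles it.
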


To prove the solvability of RHP \ref{vector problem 1}, we first notice that $I-C_{W}$ is  a \textit{Fredholm} operator of the index zero\cite{ZhouXin-1}. Thus, using the \textit{Fredholm} alternative, we only need to show that the zero solution to the homogeneous equation \eqref{homonu} is unique in $L^2_{\zeta}(\mathbb R)$. And the following proposition is an application of the \textit{vanishing} lemma from \cite[proposition 9.3]{ZhouXin-1}:
\begin{proposition}
The solution to RHP problem \ref{vector problem 2} with Type B normalization is identically zero. 
\end{proposition}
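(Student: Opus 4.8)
The plan is to realize this statement as an instance of Zhou's \emph{vanishing lemma} \cite[Proposition 9.3]{ZhouXin-1}: once we verify that the jump matrix $\widetilde{V}_{*}$ carries the correct Hermitian positivity along the \emph{oriented} contour $\Sigma'$ of Figure \ref{figure-zeta}, and that the discrete data are Schwarz–paired, the only solution of the homogeneous (Type B) problem must be identically zero. Let $\widetilde{N}_{*}$ solve Problem \ref{vector problem 2} with Type B normalization, so $\widetilde{N}_{*}(x,\zeta)=\mathcal{O}(\zeta^{-1})$ as $\zeta\to\infty$; by Lemma \ref{lemma3} its boundary values lie in $\partial C_{\Sigma}(L^2)$. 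Write $\theta:=\tfrac12(\zeta^2-\zeta^{-2})x$, which is real for $\zeta\in\Sigma$ (both on the real and on the imaginary axis), so that the exponentials in $\widetilde{V}_{*}$ are unimodular there.

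First I would introduce the Schwarz reflection $\widetilde{N}_{*}^{\dagger}(\zeta):=\overline{\widetilde{N}_{*}(\bar{\zeta})}^{\,\T}$, which is analytic on $\mathbb{C}\setminus\Sigma'$ (the contour being Schwarz symmetric) and whose boundary values satisfy $\widetilde{N}_{*\pm}^{\dagger}(\zeta)=\overline{\widetilde{N}_{*\mp}(\bar{\zeta})}^{\,\T}$. The product $H(\zeta):=\widetilde{N}_{*}(\zeta)\,\widetilde{N}_{*}^{\dagger}(\zeta)$ is then analytic in $\mathbb{C}^{+}$, continuous up to the contour, and $H=\mathcal{O}(\zeta^{-2})$ at infinity. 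Integrating $H$ over the boundary of $\mathbb{C}^{+}$ and using Cauchy's theorem together with the decay would give
\begin{equation}
\int \widetilde{N}_{*+}(\zeta)\,\overline{\widetilde{N}_{*-}(\zeta)}^{\,\T}\,d\zeta = 0 .
\end{equation}
Adding this identity to its conjugate transpose and inserting the jump relation $\widetilde{N}_{*+}=\widetilde{N}_{*-}\widetilde{V}_{*}$ converts the integrand into the Hermitian form $\widetilde{N}_{*-}\bigl(\widetilde{V}_{*}+\widetilde{V}_{*}^{\dagger}\bigr)\overline{\widetilde{N}_{*-}}^{\,\T}$, so everything reduces to the positivity of $\widetilde{V}_{*}+\widetilde{V}_{*}^{\dagger}$ along $\Sigma'$.

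The heart of the argument is this positivity. A direct computation gives $\det\widetilde{V}_{*}=1$ identically, while $\rho\widetilde{\rho}=-\zeta^{2}|\widetilde{r}(\zeta^{2})|^{2}=-\lambda|\widetilde{r}(\lambda)|^{2}$, so the $(2,2)$ entry is $1-\rho\widetilde{\rho}=1+\lambda|\widetilde{r}(\lambda)|^{2}=1/|\alpha(\lambda)|^{2}$ by the determinant relation for the scattering coefficients, which is strictly positive and (by the genericity of Definition \ref{def:generic'}, $|\alpha(\lambda)|\ge A>0$) uniformly nondegenerate. On the real rays $\Sigma_{1},\Sigma_{3}$ one has $\widetilde{\rho}=-\overline{\rho}$, so $\widetilde{V}_{*}$ is already Hermitian with $(1,1)$ entry $1$ and determinant $1$, hence positive definite. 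On the imaginary rays $\Sigma_{2},\Sigma_{4}$ the orientation is reversed (Figure \ref{figure-zeta}), which interchanges the roles of $\widetilde{N}_{*+}$ and $\widetilde{N}_{*-}$; the same computation then produces the form governed by $\widetilde{V}_{*}^{-1}+\widetilde{V}_{*}^{-\dagger}$, again positive definite since $\det\widetilde{V}_{*}=1$ and the relevant diagonal entry remains $1/|\alpha|^{2}>0$. Thus the total integrand is nonnegative and must vanish, forcing $\widetilde{N}_{*\pm}\equiv 0$ on $\Sigma$, whence $\widetilde{N}_{*}\equiv 0$ by its Cauchy representation. The discrete data enter through the small circles $\pm\gamma_{j},\pm\gamma_{j}^{*}$: because the poles occur in Schwarz–symmetric quadruples $\pm\zeta_{j},\pm\bar{\zeta}_{j}$ with the matched norming constants of Lemma \ref{lemma2}, the function $H$ continues analytically across those circles and contributes nothing to the contour integral.

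I expect the main obstacle to be precisely the orientation bookkeeping on the imaginary axis together with the discrete–spectrum contributions: one must check that the sign conventions of Figure \ref{figure-zeta} make the Hermitian form positive \emph{simultaneously} on all four rays, and that the residue structure (upper triangular at $\zeta_{j}$, lower triangular at $\bar{\zeta}_{j}$, paired under $\zeta\mapsto-\zeta$ and $\zeta\mapsto\bar\zeta$) leaves $H$ genuinely analytic across every circle. This is exactly the configuration packaged by \cite[Proposition 9.3]{ZhouXin-1}, so once the positivity and Schwarz–pairing above are verified, the vanishing lemma applies and yields $\widetilde{N}_{*}\equiv 0$.
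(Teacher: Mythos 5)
Your overall strategy---recasting the Type~B problem as an instance of Zhou's vanishing lemma \cite[Proposition 9.3]{ZhouXin-1}---is exactly the paper's approach; the paper's proof is essentially nothing more than that citation. Your verification of the real-axis ingredients is also correct: the phase $\tfrac12(\zeta^2-\zeta^{-2})x$ is real on all of $\Sigma$, and on $\Sigma_1,\Sigma_3$ the relation $\widetilde{\rho}=-\overline{\rho}$ makes $\widetilde{V}_{*}$ Hermitian with unit determinant and positive diagonal, hence positive definite.

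The genuine gap is your treatment of the imaginary rays, which is the one place where the hypotheses of the vanishing lemma actually require work for this contour. You first assert that $H(\zeta)=\widetilde{N}_{*}(\zeta)\,\overline{\widetilde{N}_{*}(\bar{\zeta})}^{\,\T}$ is analytic in $\mathbb{C}^{+}$; but $\Sigma_2=i\mathbb{R}^{+}$ and the circles $\gamma_j$, $-\gamma_j^{*}$ lie \emph{inside} $\mathbb{C}^{+}$, so this is precisely what must be proved, not assumed. You then, inconsistently, treat $\Sigma_2,\Sigma_4$ as contributing a Hermitian form ``governed by $\widetilde{V}_{*}^{-1}+\widetilde{V}_{*}^{-\dagger}$'' to the boundary integral. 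That step is not meaningful: off the real axis the two factors in the boundary values of $H$ are evaluated at the two \emph{distinct} points $\zeta$ and $\bar{\zeta}$, so the imaginary-axis contribution has the bilinear form
\begin{equation*}
\int \widetilde{N}_{*-}(\zeta)\left[\widetilde{V}_{*}(\zeta)-\widetilde{V}_{*}(\bar{\zeta})^{\dagger}\right]\overline{\widetilde{N}_{*-}(\bar{\zeta})}^{\,\T}\,d\zeta ,
\end{equation*}
which cannot be organized into a positive quadratic form in a single unknown; positivity there is neither needed nor usable. What must be checked instead is the Schwarz symmetry $\widetilde{V}_{*}(\zeta)=\widetilde{V}_{*}(\bar{\zeta})^{\dagger}$ for $\zeta\in i\mathbb{R}$, together with the analogous matching of the jumps on $\gamma_j$ versus $\gamma_j^{*}$ (and $-\gamma_j$ versus $-\gamma_j^{*}$), under the orientations of Figure \ref{figure-zeta}: this is what makes $H$ jump-free off $\mathbb{R}$, kills the imaginary-axis and circle contributions, and leaves only the real-axis positive form. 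The symmetry does in fact hold---on $i\mathbb{R}$ one has $\rho(\bar{\zeta})=-\rho(\zeta)$, $\widetilde{\rho}(\zeta)=\overline{\rho(\zeta)}$, and the phase is real, while on the circles it follows from the conjugated norming constants---so the conclusion survives, but your proof as written does not establish it: positivity on all four rays is the wrong hypothesis, and the correct one (reflection symmetry of the jump off $\mathbb{R}$) is never verified.
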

This proposition together with lemma \ref{lemma1}, \ref{lemma2} and \ref{lemma3} and the construction of $\nu_{21}, \nu_{22}$ from \eqref{jost:ntilde} implies that the homogeneous Beal-Coifman integral equation given by \eqref{homonu} has only the trivial solution.

The following lemma is a standard result from \cite{ZhouXin-1}.
\begin{lemma}
If $\widetilde{N}$ is a solution to Problem \ref{vector problem 1} with $\widetilde{N}-\mathbf{1}\in \partial C_{\Sigma}(L^2)$, then $\widetilde{N}$ is unique.
\end{lemma}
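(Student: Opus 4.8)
The plan is to reduce the uniqueness of Problem \ref{vector problem 1} to the triviality of the kernel of the homogeneous Beals--Coifman equation \eqref{homonu}, which has been established just above. Suppose $\widetilde{N}_1$ and $\widetilde{N}_2$ are two solutions of Problem \ref{vector problem 1}, each satisfying $\widetilde{N}_i-\mathbf{1}\in\partial C_{\Sigma}(L^2)$, and set $D:=\widetilde{N}_1-\widetilde{N}_2$. First I would verify that $D$ solves the \emph{homogeneous} version of the problem: both $\widetilde{N}_i$ are analytic on $\mathbb{C}\setminus\Lambda$, hence so is $D$; subtracting the two identical jump relations $\widetilde{N}_{i,+}=\widetilde{N}_{i,-}\widetilde{V}$ gives $D_+=D_-\widetilde{V}$ on $\Lambda$; the two Type A normalizations $\widetilde{N}_i=(1,0)+\mathcal{O}(\lambda^{-1})$ cancel, leaving the Type B normalization $D=\mathcal{O}(\lambda^{-1})$ as $\lambda\to\infty$; and the common residue conditions at $\lambda_j,\bar{\lambda}_j$ subtract to homogeneous ones. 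Moreover $D=(\widetilde{N}_1-\mathbf{1})-(\widetilde{N}_2-\mathbf{1})\in\partial C_{\Sigma}(L^2)$, the constant terms cancelling.

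Next I would pass from $D$ to a density solving \eqref{homonu}. Because $D\in\partial C_{\Sigma}(L^2)$ its boundary values $D_\pm$ exist in $L^2$, so one may define, exactly as in \eqref{BCRHP1}, the quantity $\nu_D:=D_+(I+W_x^+)^{-1}=D_-(I-W_x^-)^{-1}$, the second equality following from $D_+=D_-\widetilde{V}$ together with the factorization $\widetilde{V}=(I-W_x^-)^{-1}(I+W_x^+)$. The \textit{Plemelj} formula, combined with the fact that the $(1,0)$ inhomogeneity is now absent (Type B normalization), then shows that $\nu_D$ satisfies precisely $\nu_D=C_{\Lambda}^{+}(\nu_D W_x^-)+C_{\Lambda}^{-}(\nu_D W_x^+)=\mathcal{C}_W\nu_D$, i.e. the homogeneous equation \eqref{homonu}, with $\nu_D\in L^2$.

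Finally, I would invoke the result proved immediately above: by Lemmas \ref{lemma1}--\ref{lemma3}, the construction of $\nu_{21},\nu_{22}$ from \eqref{jost:ntilde}, and the \textit{vanishing} lemma applied to Problem \ref{vector problem 2} with Type B normalization, the homogeneous equation \eqref{homonu} admits only the trivial solution in $L^2$. Hence $\nu_D=0$, and the Cauchy-integral representation of $D$ forces $D\equiv 0$, so $\widetilde{N}_1=\widetilde{N}_2$. The main obstacle is the middle step: one must check that the membership $D\in\partial C_{\Sigma}(L^2)$ genuinely places $\nu_D$ in the domain of the singular integral operator $\mathcal{C}_W$ and that the pole contributions along $\Gamma_j,\Gamma_j^*$ are captured correctly, so that $\nu_D$ is a bona fide $L^2$ solution of \eqref{homonu} rather than merely a formal one. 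This is exactly where the $\partial C_{\Sigma}(L^2)$ hypothesis and the \textit{Fredholm} framework of \cite{ZhouXin-1} (index-zero of $\mathbf{1}-\mathcal{C}_W$) are essential.
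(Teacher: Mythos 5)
Your proposal is correct and is essentially the argument the paper has in mind: the paper gives no proof of its own (it cites the result as standard from \cite{ZhouXin-1}), but the whole point of the preceding paragraph --- Fredholm index zero of $\mathbf{1}-\mathcal{C}_W$ plus the vanishing lemma showing that \eqref{homonu} has only the trivial $L^2$ solution --- is precisely to feed the reduction you carry out, namely that the difference of two solutions lies in $\partial C_{\Sigma}(L^2)$, solves the homogeneous (Type B) problem, hence corresponds via the factorization $\widetilde{V}=(I-W_x^-)^{-1}(I+W_x^+)$ and the Plemelj formula to an $L^2$ solution of \eqref{homonu}, which must vanish. Your attention to the middle step (that $D\in\partial C_{\Sigma}(L^2)$ is what licenses the Cauchy-integral representation $D=C_\Lambda\bigl(\nu_D(W_x^++W_x^-)\bigr)$ and hence the passage from $\nu_D=0$ to $D\equiv 0$) is exactly the content of Zhou's cited result.
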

\subsubsection{Reconstruction of the potential $u$}
\begin{proposition}
Suppose that the reflection coefficients $\widetilde{r}(\lambda)\in {{H^{1,1}_0(\mathbb R)}}$ as in proposition \ref{prop:r}, then from the solution to problem \ref{RHP2-v} we can derive
\begin{equation}
\label{recon-1}
\overline{u(x)}  =\lim _{|\lambda| \rightarrow 0} [\widetilde{M}(x, t ; \lambda)]_{12}
\end{equation}
\end{proposition}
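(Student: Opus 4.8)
The plan is to read the reconstruction off directly from the construction of $\widetilde{M}$ out of the Jost functions, exploiting that near $\lambda=0$ the large-$\zeta$ objects $\widetilde{n}$ are tied to the small-$\zeta$ objects $n$ (hence to $u$) through the connection formulas \eqref{asy-1}--\eqref{asy-4}. By the uniqueness part of Theorem \ref{Solvability}, the solution of Problem \ref{RHP2-v} coincides with $\widetilde{M}(x;\lambda)=[\widetilde{P}^{\infty}(x)]^{-1}\widetilde{P}(x;\lambda)$; since $\widetilde{P}^{\infty}$ is diagonal we have
\[
[\widetilde{M}(x;\lambda)]_{12}=\frac{[\widetilde{P}(x;\lambda)]_{12}}{\widetilde{n}_1^{+\infty}(x)}.
\]
It therefore suffices to compute $\lim_{\lambda\to 0}[\widetilde{P}(x;\lambda)]_{12}$. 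Because $\widetilde{V}(x;0)=I$, the boundary values of $\widetilde{M}$ agree at $\lambda=0$ (this is exactly the mechanism behind \eqref{M_11}), so the limit may be taken through $\mathbb{C}^{+}$; there the second column of $\widetilde{P}$ equals $\widetilde{n}_2^{(-)}(x;\lambda)/\widetilde{\alpha}(\lambda)$, whence $[\widetilde{P}(x;\lambda)]_{12}=[\widetilde{n}_2^{(-)}(x;\lambda)]_1/\widetilde{\alpha}(\lambda)$.

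Next I would insert the small-$\lambda$ asymptotics. By \eqref{asy-4}, as $\lambda\to 0$,
\[
\frac{\widetilde{n}_2^{(-)}(x;\lambda)}{n_2^{-0}(x)}\longrightarrow \bar{u}(x)\,e_1+\big(1+\bar{u}(x)v(x)\big)\,e_2,
\]
so the first component obeys $[\widetilde{n}_2^{(-)}(x;\lambda)]_1\to n_2^{-0}(x)\,\bar{u}(x)$. Together with $\widetilde{\alpha}(\lambda)=\alpha(\lambda)\to\alpha_0$ this yields
\[
\lim_{\lambda\to 0}[\widetilde{M}(x;\lambda)]_{12}=\frac{n_2^{-0}(x)\,\bar{u}(x)}{\widetilde{n}_1^{+\infty}(x)\,\alpha_0}.
\]
The final step is the cancellation of the three normalizing exponentials. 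Substituting $n_2^{-0}(x)=e^{\frac{i}{4}\int_{-\infty}^{x}(|u|^2+|v|^2)\,dy}$, $\widetilde{n}_1^{+\infty}(x)=e^{\frac{i}{4}\int_{+\infty}^{x}(|u|^2+|v|^2)\,dy}$ and $\alpha_0=e^{\frac{i}{4}\int_{\mathbb{R}}(|u|^2+|v|^2)\,dy}$, the total exponent is
\[
\frac{i}{4}\left(\int_{-\infty}^{x}+\int_{x}^{+\infty}-\int_{\mathbb{R}}\right)(|u|^2+|v|^2)\,dy=0,
\]
so the prefactor is $1$ and $\lim_{\lambda\to 0}[\widetilde{M}(x;\lambda)]_{12}=\bar{u}(x)$, as claimed. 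This mirrors the reconstruction of $\bar{v}$ from $\lambda\to\infty$: the two transforms $T$ and $\widetilde{T}$ separate the singularities so that the single matrix $\widetilde{M}$ recovers $v$ at infinity and $u$ at the origin.

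The main obstacle is not the algebra but justifying that the pointwise Jost limits may legitimately be fed into the reconstruction, i.e.\ that $\widetilde{M}(x;\cdot)$ extends continuously to $\lambda=0$ and that the limit is independent of the branch. For this I would invoke genericity (Definition \ref{def:generic'}): the bound $|\alpha(\lambda)|\ge A>0$ rules out a resonance at the origin, so $\widetilde{\alpha}$ stays bounded away from $0$ and the quotient $\widetilde{n}_2^{(-)}/\widetilde{\alpha}$ is continuous up to $\lambda=0$. One then checks, using the symmetry relations \eqref{eq:n-r} and the $\mathbb{C}^{-}$ definition of $\widetilde{P}$, that approaching $0$ through $\mathbb{C}^{-}$ gives the first component of $\widetilde{n}_2^{(+)}$, hence $n_2^{+0}(x)\,\bar{u}(x)/\widetilde{n}_1^{+\infty}(x)=\bar{u}(x)$ since $n_2^{+0}=\widetilde{n}_1^{+\infty}$; this confirms the branch-independence already forced by $\widetilde{V}(x;0)=I$. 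The one genuinely delicate point is the uniformity of the convergence in \eqref{asy-4} so that it survives the division by $\widetilde{\alpha}$ and the passage to the limit, which I would extract from the Volterra estimates underlying Lemma \ref{lm:asy-s} and Lemma \ref{lem:asy-l}.
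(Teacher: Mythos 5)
Your proof is correct in substance, but it takes a genuinely different route from the paper's. You argue from the direct-scattering side: identify the solution of Problem \ref{RHP2-v} with the Jost-function construction $\widetilde{M}=[\widetilde{P}^{\infty}]^{-1}\widetilde{P}$ via uniqueness (Theorem \ref{Solvability}), insert the small-$\lambda$ limits \eqref{asy-4} and $\widetilde{\alpha}(\lambda)\to\alpha_0$, and verify the cancellation of the three normalizing exponentials; your algebra is right, and the $\mathbb{C}^-$ cross-check using $n_2^{+0}=\widetilde{n}_1^{+\infty}$ is a sound consistency confirmation. The paper instead works intrinsically from the RHP side: it first proves that $\lim_{\lambda\to 0}\widetilde{M}$ exists by dominated convergence applied to the Beals--Coifman singular integral representation (using only $\widetilde{r}\in H^{1,1}_0$, via the bound $\left\vert \widetilde{r}(z)/(z-\lambda)\right\vert \le \frac{3}{2}\left\vert \widetilde{r}(z)/z\right\vert$ along $\lambda=i\sigma$), and then identifies the limit by multiplying the Lax equation for $\widetilde{P}^{\infty}\widetilde{M}$ by $\lambda$ and letting $\lambda\to 0$, which produces the algebraic relation $\widetilde{Q}_2\,\widetilde{P}^{\infty}\widetilde{M}_0=\left[\tfrac{i}{2}\sigma_3,\widetilde{P}^{\infty}\widetilde{M}_0\right]$ from which \eqref{recon-1} is read off; this route requires the extra technical step $\lim_{\lambda\to 0}\lambda\widetilde{M}_x=0$ \eqref{lim-m}, which the paper checks using the same integral representation. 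The trade-off is this: your argument is more elementary and makes the mechanism transparent, but it is only available when the RHP solution is already known to coincide with the Jost construction of an existing potential, so it leans on the global well-posedness, the time evolution of the scattering data, and uniqueness as scaffolding; in the inverse direction -- where the formula is used to \emph{define} $u(x,t)$ from time-evolved data and to extract asymptotics from the deformed RHPs of the later sections -- the paper's representation-theoretic proof applies directly to the RHP solution without reference to Jost functions. Your closing caveats are exactly the delicate points: the convergence in \eqref{asy-4} must hold non-tangentially up to $\lambda=0$ from $\overline{\mathbb{C}^{+}}$ (not merely along $\mathbb{R}$), and genericity $|\alpha(\lambda)|\ge A>0$ is what licenses the division by $\widetilde{\alpha}$; the paper bypasses the first issue entirely by taking the limit inside the singular integral.
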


\begin{proof}
By construction of \eqref{eq:n-l}, we can see that $\widetilde{M}(x ; \lambda)$ solves the equation:
\begin{equation}
\begin{aligned}
\left(\widetilde{P}^{\infty}(x) \widetilde{M}(x ; \lambda)\right)_x=&\left[\widetilde{P}^{\infty}(x) \widetilde{M}(x ; \lambda), iJ(\lambda) \sigma_3\right]\\
&+\left(\widetilde{Q}_{1}(y)+\lambda^{-1}\widetilde{Q}_{2}(y)\right)\widetilde{P}^{\infty}(x)\widetilde{M}(x ; \lambda)
\end{aligned}
\end{equation}
for $\lambda\in \bbC\setminus \lbrace 0 \rbrace$. To control the behavior of $\widetilde{M}(\lambda; x)$ near the origin, we appeal to the singular integral representation of $\widetilde{M}(\lambda; x)$:
\begin{align}
\label{M: sing int}
\widetilde{M}(\lambda; x, t) &=I+ \dfrac{1}{2\pi i}\int_{\Sigma} \dfrac{\widetilde{\nu} (W_x^+ + W_x^-)  }{z-\lambda} dz.
   \end{align}
where $\widetilde{\nu}$ is the usual \textit{Beals-Coifman} solution associated to $\widetilde{M}$. We first show that the limit in \eqref{recon-1} exists and
\begin{equation}
   \widetilde{M}_0(x, t) =I+  \dfrac{1}{2\pi i}\int_{\Sigma} \dfrac{\widetilde{\nu} (W_x^+ + W_x^-)  }{ z} dz.  \end{equation}
This is a simple consequence of the dominated convergence theorem. Since the limit is taken non-tangentially, for simplicity, we let
$\lambda=i\sigma$ where $\sigma>0$, then
\begin{align*}
\left\vert \dfrac{\widetilde{r}(z)}{z-\lambda} \right\vert &= \left\vert \dfrac{(z + i\sigma)\widetilde{r}(z)}{z^2+\sigma^2} \right\vert\\
                         &\leq \left\vert \dfrac{\widetilde{r}(z)}{z} \right\vert + \left\vert \dfrac{\sigma \widetilde{r}(z)}{z^2+\sigma^2} \right\vert\\
                          &\leq  \left\vert \dfrac{\widetilde{r}(z)}{z} \right\vert + \left\vert \dfrac{\sigma z }{z^2+\sigma^2}\right\vert \left\vert \dfrac{\widetilde{r}(z)}{z} \right\vert\\
                          & \leq \dfrac{3}{2}  \left\vert \dfrac{\widetilde{r}(z)}{z} \right\vert.
\end{align*}
The last term is in $L^1$ by (1) and (2) of Proposition \ref{prop:r} applied to the regions with $|\lambda|$ large and  $|\lambda|$  small respectively. Following the same procedure in \cite[Theorem 4]{CVZ99},  we deduce that
\begin{align*}
\left[ \widetilde{P}^{\infty}(x) \widetilde{M}(\lambda; x, t)\right]_x =& \lambda\left[\frac{i}{2}\sigma_3,  \widetilde{P}^{\infty}(x)\widetilde{ M}(\lambda; x,t)\right] - \lambda^{-1} \left[\frac{i}{2}\sigma_3,    \widetilde{P}^{\infty}(x) \widetilde{M}(\lambda; x,t)\right] \\
                 & + \widetilde{Q}_1 \widetilde{P}^{\infty}(x)\widetilde{ M}(\lambda; x,t)  + \lambda^{-1} \widetilde{Q}_2  \widetilde{P}^{\infty}(x)  \widetilde{M}(\lambda; x,t).
\end{align*}
If we multiply both sides by $\lambda$ and assume that \begin{equation}
\label{lim-m}
\lim_{\lambda\to 0} \lambda \widetilde{M}(\lambda; x,t)_x=0,
\end{equation}
we can establish
$$\widetilde{Q}_2 \widetilde{P}^{\infty}(x) \widetilde{M}_0=\left[\frac{i}{2}\sigma_3,    \widetilde{P}^{\infty}(x) \widetilde{M}_0\right] $$
from which \eqref{recon-1} can be directly read off. For \eqref{lim-m}, a direct computation gives:
\begin{align*}
\lambda \frac{\partial\widetilde{M}(\lambda; x,t)}{\partial x}&= \dfrac{1}{2\pi i}\int_{\bbR} \dfrac{\lambda \widetilde{\nu}_x (W_x^+ + W_x^-)  }{z-\lambda} dz + \dfrac{1}{4\pi }\int_{\bbR} \dfrac{ \lambda z \widetilde{\nu} W_x^+   }{z-\lambda} dz-\dfrac{1}{4\pi }\int_{\bbR} \dfrac{ \lambda z\widetilde{\nu} W_x^-   }{z-\lambda}dz  \\
              & \quad  - \dfrac{1}{4\pi }\int_{\bbR} \dfrac{\lambda\widetilde{\nu} W_x^+   }{ z( z-\lambda )} dz + \dfrac{1}{4\pi }\int_{\bbR} \dfrac{\lambda\widetilde{\nu} W_x^-  }{ z( z-\lambda )} dz\\
              & \quad + \lambda \times [\text{ $x$-derivative of the discrete part }].
\end{align*}
Here we can ignore derivative of the discrete part since this part obviously vanishes as $\lambda\to 0$. The first two integrals hold and have zero limit since $\widetilde{\nu}_x \in L^2(\bbR)$ (see \cite[Lemma 6.2.2]{Liu}) and $\widetilde{r}\in L^{2,1}(\bbR) $ by proposition \ref{prop:r}. For the last integral, we notice that
$$ \dfrac{1}{4\pi }\int_{\bbR} \dfrac{\lambda\widetilde{\nu} W^{\pm}_{\ttheta}   }{ z( z-\lambda )} dz=  \dfrac{1}{4\pi } \left( \int_{\bbR} \dfrac{\widetilde{\nu} W^{\pm}_{\ttheta}  }{z-\lambda} dz-  \int_{\bbR} \dfrac{\widetilde{\nu} W^{\pm}_{\ttheta}  }{z } dz  \right)$$
which again goes to zero by the dominated convergence theorem similarly to the argument above with the explicit formulas of $W^{\pm}_{\ttheta}$ and the fact $(\nu-I)\in L^2_\lambda (\bbR)$.

\end{proof}
\section{Conjugation}
From this section on-wards we will assume that the initial condition $(u_0, v_0)\in \mathcal{I}\times \mathcal{I}$ as in definition \ref{def:generic'}. Along a characteristic line $x=\text{v}t$ for $-1<\text{v}<1$ we have the signature table for the following phase function:
\begin{equation}
\label{phasefunction}
\widetilde{\theta}(t,x;\lambda)=\frac t2(\lambda+\lambda^{-1})+\frac x2(\lambda-\lambda^{-1})
\end{equation}
Take $\lambda=\xi+i\eta$ into $Re(i\widetilde{\theta})$, we have
\begin{equation}
Re(i\widetilde{\theta}(t,x;\lambda))=\frac{1}{2}\left[-\left(1+\frac{x}{t}\right) \eta t+\left(1-\frac{x}{t}\right) \frac{\eta t}{\xi^2+\eta^2}\right]
\end{equation}
Let $\pm\widetilde{z}_0:=\pm \sqrt{\frac {t-x}{x+t}}$ be the positive root of  $Re(i\widetilde{\theta})=0$.
\begin{figure}[H]
\label{sig-table}
\caption{Signature Table}
\begin{tikzpicture}[scale=0.8]
\draw [->] (-4,0)--(3,0);
\draw (4,0)--(3,0);
\draw [->] (0,-4)--(0,3);
\draw (0,3)--(0,4);
 

 \draw	[fill, red]  (1, 2)		circle[radius=0.06];	    
 \draw	[fill, red]  (1, -2)		circle[radius=0.06];	    
 	    
  \draw	[fill, red]  (2, 2.5)		circle[radius=0.06];	    
 \draw	[fill, red]  (2, -2.5)		circle[radius=0.06];	    
 \draw	[fill, red]  (-1, 1.5)		circle[radius=0.06];	    
 \draw	[fill, red]  (-1, -1.5)		circle[radius=0.06];	    
 \draw (0, 0)[dashed] circle[radius=2.236];

   \node [below] at (1.9,0) {\footnotesize $\widetilde{z}_0$};
    \node [below] at (-1.9,0) {\footnotesize $-\widetilde{z}_0$};
    \node[above]  at (0, 0.5) {\footnotesize $\text{Re}(i\widetilde{\theta})>0$};
    
      \node[below]  at (0, -0.5) {\footnotesize $\text{Re}(i\theta)<0$};
     
     \node[above]  at (0, 3) {\footnotesize $\text{Re}(i\widetilde{\theta})<0$};
     \node[below]  at (0, -3) {\footnotesize $\text{Re}(i\widetilde{\theta})>0$};
    \end{tikzpicture}
 \begin{center}
  \begin{tabular}{ccc}
Solitons ({\color{red} $\bullet$}) 
\end{tabular}
 \end{center}
\end{figure}
\begin{figure}
\caption{The Augmented Contour $\Lambda$}
\vspace{.5cm}
\label{figure-lambda}
\begin{tikzpicture}[scale=0.9]
 \draw[ thick] (0,0) -- (-3,0);
\draw[ thick] (-3,0) -- (-5,0);
\draw[thick,->,>=stealth] (0,0) -- (3,0);
\draw[ thick] (3,0) -- (5,0);
\node[above] at 		(2.5,0) {$+$};
\node[below] at 		(2.5,0) {$-$};
\node[right] at (3.5 , 2) {$\Gamma_j$};
\node[right] at (3.5 , -2) {$\Gamma_j^*$};
\draw[->,>=stealth] (3.4,2) arc(360:0:0.4);

\draw[->,>=stealth] (3.4,-2) arc(0:360:0.4);

\draw [red, fill=red] (3,2) circle [radius=0.05];

\draw [red, fill=red] (3,-2) circle [radius=0.05];
\node[right] at (5 , 0) {$\bbR$};

\draw (0,0) [dashed] circle [ radius=1.83];

\draw [red, fill=red] (0, 1) circle [radius=0.05];
\draw[->,>=stealth] (0.3, 1) arc(360:0:0.3);
\draw [red, fill=red] (0, -1) circle [radius=0.05];
\draw[->,>=stealth] (0.3, -1) arc(0:360:0.3);
\node[above] at 		(0, 1.25) {$\Gamma_k$};
\node[below] at 		(0, -1.25) {$\Gamma_k^*$};
 \node [below] at (1.9,0) {\footnotesize $\tz_0$};
    \node [below] at (-1.9,0) {\footnotesize $-\tz_0$};
     \draw	[fill, red]  (1.529, 1.006)		circle[radius=0.05];	  
     \draw[->,>=stealth] (1.729, 1.006) arc(360:0:0.2);
    \draw	[fill, red]  (1.529, -1.006)		circle[radius=0.05];	  
     \draw[->,>=stealth] (1.729, -1.006) arc(0:360:0.2);
    \node[above]  at (0, 0) {\footnotesize $\text{Re}(i\widetilde{\theta})>0$};
    
      \node[below]  at (0, 0) {\footnotesize $\text{Re}(i\widetilde{\theta})<0$};
    
     \node[above]  at (0, 2) {\footnotesize $\text{Re}(i\widetilde{\theta})<0$};
     \node[below]  at (0, -2) {\footnotesize $\text{Re}(i\widetilde{\theta})>0$};
     \node[above] at 		(1.9, 1.1) {$\Gamma_\ell$};
\node[below] at 		(2.1, -1.0) {$\Gamma_\ell^*$};
\end{tikzpicture}
\begin{center}
\begin{tabular}{ccc}
	Soliton ({\color{red} $\bullet$}) 
\end{tabular}
\end{center}
\end{figure}
In the figure above, we have chosen 
$$\mathrm{v}_\ell=\dfrac{x}{t}= \dfrac { 1-\rho^2_\ell}  {1+\rho^2_\ell }. $$
Recall definition \ref{def: data}, here we set $ {\mathcal{Z}}= \lbrace  \lambda_j \rbrace_{j=1}^{N_1} \ni \lambda_\ell=\rho_\ell e^{i\omega_\ell}$ with $1\leq \ell\leq N_1$. Define the set
\begin{equation}
\label{B-set}
\mathcal{B}_\ell=\lbrace \lambda_j: ~   \rho_j <\rho_\ell  \rbrace .
\end{equation}
and
\begin{equation}
\label{Up}
\Upsilon=\text{min}\lbrace \text{min}_{z,z'\in \calZ} |z-z'|, \quad \text{dist}(\calZ, \bbR)   \rbrace.
\end{equation}
For all $\lambda_j\in \mathcal{B}_\ell $, 
$ \text{Re}(i\theta(\lambda_j))>0$.
define the scalar functions 
$\widetilde{\delta}(\lambda)$ which solves the scalar \textit{Riemann-Hilbert} problem \ref{lambda-RHP} below:
\begin{problem}\label{lambda-RHP}
Given $\pm \widetilde{z}_0\in \mathbb R$ and $\widetilde{r}\in H^{1,1}_0(\mathbb R)$ find a scalar function $\widetilde{\delta}(\lambda)=\widetilde{\delta}(\lambda,\widetilde{z}_0)$, meromorphic for $\lambda\in\mathbb C\backslash [-\widetilde{z}_0,\widetilde{z}_0]$ with the following properties:
\begin{enumerate}
\item[(1)] $\widetilde{\delta}(\lambda)\rightarrow 1$ as $\lambda\rightarrow +\infty$.
\item[(2)] $\widetilde{\delta}(\lambda)$ has continuous boundary values $\widetilde{\delta}_{\pm}(\lambda)=\lim_{\epsilon\downarrow 0}\widetilde{\delta}(\widetilde\pm i\epsilon)$ for $\widetilde\in \mathbb R$.
\item[(3)] $\widetilde{\delta}_{\pm}$ obey the jump relation
\begin{equation}
\widetilde{\delta}_{+}(\lambda)=
\begin{cases}
\widetilde{\delta}_{-}(\lambda)\left(1+\lambda|\widetilde{r}(\lambda)|^{2}\right), & \lambda \in\left(-\widetilde{z}_{0}, \widetilde{z}_{0}\right) \\
\widetilde{\delta}_{-}(\lambda), & \lambda \in \mathbb{R} \backslash\left(-\widetilde{z}_{0}, \widetilde{z}_{0}\right)
\end{cases}
\end{equation}
\item[(4)] $\widetilde{\delta}(\lambda)$ has simple poles at $\lambda_j \in B_{\ell}$.
\end{enumerate}
\end{problem}
{
\begin{lemma}\label{delta}
Suppose $\widetilde{r}\in H^{1,1}_0(\mathbb R)$. Then we have the following conclusions:
\begin{enumerate}
\item [(1)] \ref{lambda-RHP} has the unique solution
\begin{eqnarray}
\begin{aligned}
\widetilde{\delta}(\lambda)=\left(\prod_{\lambda_j\in B_{\ell}}\frac {\lambda-\bar{\lambda}_j}{\lambda-\lambda_j}\right)
e^{\chi(\lambda)}
\end{aligned}
\end{eqnarray}
where
\begin{eqnarray}
\begin{aligned}
\chi(\lambda)=\frac 1{2\pi i}\int^{\widetilde{z}_0}_{-\widetilde{z}_0}\dfrac {\log(1+s|\widetilde{r}|^2)}{s-\lambda}ds
\end{aligned}
\end{eqnarray}
Define 
\begin{align*}
\kappa(\lambda)&=\frac {1}{2\pi}\log\left(1+\lambda|\tdr(\lambda)|^2\right)\\
\kappa^\pm_0&=\kappa(\pm\widetilde{z}_0)=\frac {1}{2\pi}\log\left(1\pm\widetilde{z}_0|\tdr(\pm\widetilde{z}_0)|^2\right)\\
\widetilde{\delta}_0^{ \pm}&=\left(\prod_{\lambda_j\in B_{\ell}}\frac {\pm\tz_0-\bar{\lambda}_j}{\pm\tz_0-\lambda_j}\right)\exp \left\{ \pm \frac{1}{i} \int_0^{ \pm \tz_0} \frac{\kappa(s) \mp s \cdot \kappa_0^{ \pm}}{s \mp \tz_0} d s \mp \frac{1}{i} \int_0^{\mp \tz_0} \frac{\kappa(s)}{s \mp \tz_0} d s \mp i \kappa_0^{ \pm}\right\}
\end{align*}
Here we have chosen the branch of the logarithm with $-\pi<\arg(\lambda)<\pi$.
\item[(2)] $\widetilde{\delta}(\lambda)= 1+O(\lambda^{-1})$ as $|\lambda| \rightarrow \infty$.
\item[(3)] Along any ray of the form $\{\lambda\in \mathbb C|\lambda=\pm \widetilde{z}_0+e^{i\phi}\mathbb R^+\}$ with $0<\phi<\pi$ or $\pi<\phi<2\pi$ one has
\begin{align*}
\left|\widetilde{\delta}(\lambda)-\left( {\lambda-\widetilde{z}_0}\right)^{-i\kappa^+} \widetilde{\delta}_{0}^+\right|& \leq C_{\tdr}\left|\lambda - \widetilde{z}_{0}\right|^{\frac 12}\\
\left|\widetilde{\delta}(\lambda)-\left( -\lambda-\widetilde{z}_0\right)^{i\kappa^-} \widetilde{\delta}_{0}^-\right|& \leq C_{\tdr}\left|\lambda + \widetilde{z}_{0}\right|^{\frac 12}
\end{align*}
and the implied constant depends on r through its $H^{1,1}(\mathbb R)$-norm and is independent of $\pm \widetilde{z}_0\in \mathbb R$.
\end{enumerate}
\end{lemma}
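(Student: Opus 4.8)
The plan is to solve this scalar problem in closed form by passing to logarithms, and then to extract parts (2) and (3) from the resulting Cauchy integral.

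\emph{Construction and uniqueness (part 1).} First I would separate the discrete data by introducing the Blaschke-type factor $B(\lambda)=\prod_{\lambda_j\in B_\ell}\frac{\lambda-\bar\lambda_j}{\lambda-\lambda_j}$. This factor is meromorphic with exactly the simple poles demanded by condition (4) of Problem \ref{lambda-RHP}, carries no jump across $\mathbb{R}$ (its zeros and poles lie off the axis), and tends to $1$ at infinity. Dividing it out, $\widetilde\delta/B$ must solve the same scalar problem but now pole-free, with jump factor $1+\lambda|\widetilde r(\lambda)|^2$ supported on $(-\widetilde z_0,\widetilde z_0)$ and normalization $1$. Since the problem is scalar, I can take logarithms and reduce to an additive scalar Riemann--Hilbert problem whose solution is handed to us by the Plemelj--Sokhotski formula as the Cauchy transform $\chi(\lambda)$, yielding the stated formula. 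Before doing so I would check that $1+\lambda|\widetilde r|^2>0$ on the interval so that the principal-branch logarithm is legitimate and real; for $\lambda>0$ this is automatic, and near $\lambda=0$ the membership $\widetilde r\in H^{1,1}_0$ forces $\lambda|\widetilde r(\lambda)|^2\to 0$, hence $\kappa(0)=0$, which will also be what makes the integrals defining $\widetilde\delta_0^\pm$ converge. For uniqueness, the quotient of two solutions is analytic across $\mathbb{R}$ (jumps cancel), has removable singularities at the $\lambda_j$ and at $\pm\widetilde z_0$ (the endpoint singularity of $\chi$ is only logarithmic, hence integrable), is bounded, and tends to $1$; Liouville then forces it to be identically $1$.

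\emph{Decay at infinity (part 2).} For (2) I would expand $\frac{1}{s-\lambda}=-\lambda^{-1}\sum_{k\ge 0}(s/\lambda)^k$ inside $\chi$ over the compact interval. Since $\log(1+s|\widetilde r|^2)=2\pi\kappa(s)$ is integrable there, this gives $\chi(\lambda)=O(\lambda^{-1})$, hence $e^{\chi(\lambda)}=1+O(\lambda^{-1})$; as $B(\lambda)=1+O(\lambda^{-1})$ as well, the product obeys (2).

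\emph{Endpoint expansion (part 3), the main obstacle.} Writing $\chi(\lambda)=\frac{1}{i}\int_{-\widetilde z_0}^{\widetilde z_0}\frac{\kappa(s)}{s-\lambda}\,ds$, near $\lambda=\widetilde z_0$ I would add and subtract the endpoint value $\kappa_0^+=\kappa(\widetilde z_0)$. The constant piece $\frac{\kappa_0^+}{i}\bigl[\log(\widetilde z_0-\lambda)-\log(-\widetilde z_0-\lambda)\bigr]$ produces precisely the power $(\lambda-\widetilde z_0)^{-i\kappa^+}$ once the branch $-\pi<\arg<\pi$ is fixed; the switch from $\log(\widetilde z_0-\lambda)$ to $\log(\lambda-\widetilde z_0)$, together with $\log B(\widetilde z_0)$ and the value at $\widetilde z_0$ of the regularized remainder $\frac{1}{i}\int\frac{\kappa(s)-\kappa_0^+}{s-\lambda}\,ds$, assembles into the constant $\widetilde\delta_0^+$, which is exactly why its exponent carries the extra $\mp i\kappa_0^\pm$ bookkeeping term. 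The crux is bounding $\widetilde\delta(\lambda)-(\lambda-\widetilde z_0)^{-i\kappa^+}\widetilde\delta_0^+$ by $C_{\widetilde r}|\lambda-\widetilde z_0|^{1/2}$ uniformly along the admissible rays. Here I would use that $\widetilde r\in H^{1,1}(\mathbb{R})$ embeds into $C^{1/2}$, so $\kappa\in C^{1/2}$ and $\kappa(s)-\kappa_0^+=O(|s-\widetilde z_0|^{1/2})$; a standard estimate for Cauchy integrals of H\"older densities near an endpoint (as in \cite{CLL,Liu}) then delivers the $\tfrac12$-H\"older modulus, with constant controlled by the $H^{1,1}$-norm of $\widetilde r$ and independent of $\widetilde z_0$. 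The symmetric computation at $-\widetilde z_0$ gives the second bound. I expect this endpoint analysis --- tracking the exact power-law prefactor and branch while bounding the H\"older remainder uniformly in $\widetilde z_0$, and simultaneously keeping control of the MTM-specific factor $\lambda$ inside $1+\lambda|\widetilde r|^2$ --- to be the principal technical difficulty.
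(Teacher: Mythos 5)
Your overall route --- peeling off the Blaschke factor, solving the resulting scalar additive jump problem via Plemelj, expanding the Cauchy transform at infinity, and doing the add-and-subtract endpoint analysis with the $H^1\hookrightarrow C^{1/2}$ embedding --- is exactly the standard argument, and it is in substance the proof the paper relies on: the paper's own proof merely declares (1)--(2) to be formal computations and cites \cite[Proposition 4.2]{Saalmann-1} for the endpoint expansion in (3). So you have reconstructed, in more detail, the argument the paper outsources, including the correct bookkeeping of the branch and of the extra $\mp i\kappa_0^\pm$ term in $\widetilde{\delta}_0^\pm$.

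There is, however, one concrete gap. You claim to verify positivity of $1+s|\widetilde{r}(s)|^2$ on the jump interval, but your check covers only $s>0$ (trivial) and $s$ near $0$ (where $H^{1,1}_0$-membership makes $s|\widetilde{r}(s)|^2$ small). For $s\in(-\widetilde{z}_0,0)$ bounded away from the origin, nothing prevents an abstract $\widetilde{r}\in H^{1,1}_0(\mathbb{R})$ from having $|s|\,|\widetilde{r}(s)|^2\geq 1$ (take $|\widetilde{r}(-1)|=2$ with $\widetilde{z}_0>1$); then $\log\left(1+s|\widetilde{r}(s)|^2\right)$ is undefined and $\chi$, $\kappa$, $\widetilde{\delta}_0^{\pm}$ all collapse. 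What actually saves the construction is not the function space but the scattering origin of $\widetilde{r}$: the determinant relation $|\widetilde{\alpha}(\lambda)|^2+\lambda|\widetilde{\beta}(\lambda)|^2=1$ gives $1+\lambda|\widetilde{r}(\lambda)|^2=|\widetilde{\alpha}(\lambda)|^{-2}>0$ for all real $\lambda$, and the genericity assumption $|\alpha(\lambda)|\geq A>0$ of Definition \ref{def:generic'}, together with boundedness of $\widetilde{\beta}$, yields uniform upper and lower bounds. Those bounds (not merely positivity) are also needed in part (3): your H\"older estimate $|\kappa(s)-\kappa_0^+|\lesssim|s-\widetilde{z}_0|^{1/2}$ differentiates the logarithm, so its constant involves $\inf\left(1+s|\widetilde{r}(s)|^2\right)$, and hence $C_{\widetilde{r}}$ genuinely depends on this lower bound in addition to $\|\widetilde{r}\|_{H^{1,1}}$. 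The paper's statement glosses over the same point, but since your proposal explicitly attempts this verification, it should be repaired by invoking the scattering identity (or by adding the lower bound as a hypothesis).
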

\begin{proof}
The proof of $(1)-(2)$ are formal computations. To prove $(4)$, see \cite[proposition 4.2]{Saalmann-1}.
\end{proof}
}
We now define a new unknown function matrix $\widetilde{M}^{(1)}$ using our $\widetilde{\delta}(\lambda)$:
\begin{equation}\label{modified-tildeM}
\widetilde{M}^{(1)}=\widetilde{M}(x,t;\lambda)\widetilde{\delta}(\lambda)^{\sigma_3}
\end{equation}
\begin{proposition}
The function $\widetilde{M}^{(1)}$ defined by \eqref{modified-tildeM} satisfies the following Riemann-Hilbert problem.
\end{proposition}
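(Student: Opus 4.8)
The plan is to derive the four data of the transformed problem---analyticity domain, normalization at infinity, jump on $\mathbb{R}$, and residue conditions---directly from the corresponding data of $\widetilde{M}$ in Problem \ref{RHP2-v} and the analytic structure of $\widetilde{\delta}$ recorded in Lemma \ref{delta}. Since $\widetilde{\delta}$ is analytic and nonvanishing on $\mathbb{C}\setminus[-\widetilde{z}_0,\widetilde{z}_0]$ apart from the simple poles at $\lambda_j\in B_\ell$, the conjugating matrix $\widetilde{\delta}(\lambda)^{\sigma_3}=\operatorname{diag}(\widetilde{\delta},\widetilde{\delta}^{-1})$ is analytic off $\mathbb{R}$ except at these points, so $\widetilde{M}^{(1)}=\widetilde{M}\widetilde{\delta}^{\sigma_3}$ is piecewise analytic in $\mathbb{C}\setminus\mathbb{R}$ with new singularities only at the $\lambda_j\in B_\ell$ and their conjugates. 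Property (2) is immediate: $\widetilde{\delta}(\lambda)\to1$ as $|\lambda|\to\infty$ by Lemma \ref{delta}(2) gives $\widetilde{\delta}^{\sigma_3}\to I$, whence $\widetilde{M}^{(1)}\to I$.

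For the jump relation, from $\widetilde{M}^{(1)}_{\pm}=\widetilde{M}_{\pm}\widetilde{\delta}_{\pm}^{\sigma_3}$ together with $\widetilde{M}_+=\widetilde{M}_-\widetilde{V}$ one reads off $\widetilde{M}^{(1)}_+=\widetilde{M}^{(1)}_-\widetilde{V}^{(1)}$ with
\[
\widetilde{V}^{(1)}=\widetilde{\delta}_-^{-\sigma_3}\,\widetilde{V}\,\widetilde{\delta}_+^{\sigma_3}.
\]
I would evaluate this on the two pieces of the real line separately. On $\mathbb{R}\setminus(-\widetilde{z}_0,\widetilde{z}_0)$ the scalar $\widetilde{\delta}$ has no jump ($\widetilde{\delta}_+=\widetilde{\delta}_-=\widetilde{\delta}$), so conjugation by the diagonal matrix only rescales the off-diagonal entries, sending the lower--upper factorization of $\widetilde{V}$ to the same factorization with its $(1,2)$ and $(2,1)$ entries multiplied by $\widetilde{\delta}^{-2}$ and $\widetilde{\delta}^{2}$. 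On $(-\widetilde{z}_0,\widetilde{z}_0)$ I would instead start from the upper--diagonal--lower factorization
\[
\widetilde{V}=\twomat{1}{-\dfrac{\widetilde{r}}{1+\lambda|\widetilde{r}|^2}e^{\frac{i}{2}(\lambda-\lambda^{-1})x}}{0}{1}\twomat{(1+\lambda|\widetilde{r}|^2)^{-1}}{0}{0}{1+\lambda|\widetilde{r}|^2}\twomat{1}{0}{-\dfrac{\lambda\overline{\widetilde{r}}}{1+\lambda|\widetilde{r}|^2}e^{-\frac{i}{2}(\lambda-\lambda^{-1})x}}{1}
\]
and conjugate each factor in turn. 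The decisive cancellation is that the jump $\widetilde{\delta}_+=\widetilde{\delta}_-(1+\lambda|\widetilde{r}|^2)$ of Problem \ref{lambda-RHP} forces the conjugated middle factor $\widetilde{\delta}_-^{-\sigma_3}\operatorname{diag}((1+\lambda|\widetilde{r}|^2)^{-1},1+\lambda|\widetilde{r}|^2)\widetilde{\delta}_+^{\sigma_3}$ to reduce to $I$, leaving a pure triangular product in which the upper factor carries $\widetilde{\delta}_-^{-2}$ and the lower factor carries $\widetilde{\delta}_+^{2}$. This is exactly the diagonal-free factorized jump that the subsequent lens opening requires.

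It remains to transform the residue conditions, and this is where the argument is delicate. For eigenvalues with $\lambda_j\notin B_\ell$ the function $\widetilde{\delta}$ is analytic and nonzero at $\lambda_j$ and $\bar{\lambda}_j$; writing $\widetilde{M}=\widetilde{M}^{(1)}\widetilde{\delta}^{-\sigma_3}$ and conjugating the nilpotent residue matrix shows the condition at $\lambda_j$ persists with $\widetilde{c}_j$ replaced by $\widetilde{c}_j\,\widetilde{\delta}(\lambda_j)^{-2}$ (and the reflected condition at $\bar{\lambda}_j$ by the conjugate rescaling). For $\lambda_j\in B_\ell$, however, the simple pole of $\widetilde{\delta}$ coincides with the pole already carried by $\widetilde{M}$, so one cannot simply evaluate $\widetilde{\delta}(\lambda_j)$. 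Here I would expand $\widetilde{\delta}^{\sigma_3}$ in a Laurent series about $\lambda_j$, using the Blaschke factor $\prod_{\lambda_j\in B_\ell}\frac{\lambda-\bar{\lambda}_j}{\lambda-\lambda_j}$ built into $\widetilde{\delta}$, and track how the two singularities combine; the outcome is that the residue condition is converted to the \emph{opposite} triangular type, with a new norming constant assembled from $\Res_{\lambda=\lambda_j}\widetilde{\delta}$ and the regular part $e^{\chi}$. Since $\operatorname{Re}(i\widetilde{\theta}(\lambda_j))>0$ for $\lambda_j\in B_\ell$, this interchange is precisely what replaces the growing exponential $e^{\frac{i}{2}(\lambda_j-\lambda_j^{-1})x}$ appearing in the residue by a bounded one, and carrying out this bookkeeping---together with the matching conjugate statements at the reflected points $\bar{\lambda}_j$---is the main obstacle. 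Assembling the inherited analyticity, the normalization $\widetilde{M}^{(1)}\to I$, the jump $\widetilde{V}^{(1)}$, and these transformed residue conditions yields the Riemann--Hilbert problem asserted for $\widetilde{M}^{(1)}$.
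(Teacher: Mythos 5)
Your proposal is correct, and it is essentially the verification the paper leaves implicit: the proposition is stated there with no proof, the conjugation $\widetilde{M}\mapsto\widetilde{M}\widetilde{\delta}^{\sigma_3}$ being treated as a formal computation. Two details are worth recording. First, your conjugation on $(-\widetilde{z}_0,\widetilde{z}_0)$ produces $\widetilde{\delta}_-^{-2}$ in the upper factor and $\widetilde{\delta}_+^{2}$ in the lower factor; this is the correct outcome (the middle diagonal factor cancels precisely because of the jump $\widetilde{\delta}_+=\widetilde{\delta}_-(1+\lambda|\widetilde{r}|^2)$ of Problem \ref{lambda-RHP}), and it is what the paper itself uses later in \eqref{R5}--\eqref{R8-}, even though the jump displayed in Problem \ref{modifiedRHP} carries $\widetilde{\delta}_-^{2}$ in the lower factor, which is a typo. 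Second, your Laurent-expansion treatment of the poles $\lambda_j\in B_\ell$ reproduces exactly the flipped residue conditions of part (4): the residue of the first column of $\widetilde{M}^{(1)}$ equals $\widetilde{c}_j^{-1}e^{-i\widetilde{\theta}}\left[(1/\widetilde{\delta})'(\lambda_j)\right]^{-2}$ times the second column evaluated at $\lambda_j$, since $\left[(1/\widetilde{\delta})'(\lambda_j)\right]^{-1}=\Res_{\lambda=\lambda_j}\widetilde{\delta}$. One wording fix: $\widetilde{\delta}$ is \emph{not} nonvanishing off $[-\widetilde{z}_0,\widetilde{z}_0]$ --- the Blaschke factors give it simple zeros at the conjugate points $\bar{\lambda}_j$, $\lambda_j\in B_\ell$ --- and it is exactly these zeros that regularize the first column and flip the triangularity of the residue condition at $\bar{\lambda}_j$, as your ``reflected points'' remark requires.
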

\begin{problem}\label{modifiedRHP}
Find a meromorphic function $\widetilde{M}^{(1)}$ on $\mathbb C\backslash \Sigma$ with the following properties:
\begin{enumerate}
\item[(1)] $\widetilde{M}^{(1)}(x,t;\lambda)\rightarrow I$ as $|\lambda|\rightarrow \infty$,
\item[(2)] $\widetilde{M}^{(1)}(x,t;\lambda)$ is analytic for $\mathbb C\backslash \Sigma$ with continuous boundary values $\widetilde{M}^{(1)}_{\pm}(x,t;\lambda)$.
\item[(3)] On $\mathbb R$, the jump relation
\begin{equation}
\widetilde{M}^{(1)}_{+}(x,t;\lambda)=\widetilde{M}^{(1)}_{-}(x,t;\lambda)\widetilde{V}^{(1)}(x,t;\lambda)
\end{equation}
holds, where
\begin{equation}
\widetilde{V}^{(1)}(x,t;\lambda)=\widetilde{\delta}_{-}(\lambda)^{-\sigma_3}V(x,t;\lambda)\widetilde{\delta}_{+}(\lambda)^{\sigma_3}
\end{equation}
The jump matrix $V^{(1)}(x,t;\lambda)$ is factorized as
\begin{equation}
\widetilde{V}^{(1)}(x,t;\lambda)=\left\{
\begin{array}{cc}
&\left(\begin{array}{cc}
1 & 0\\
-\lambda\bar{\widetilde{r}}\widetilde{\delta}^2e^{-i\widetilde{\theta}} & 1
\end{array}\right)
\left(\begin{array}{cc}
1 & -\widetilde{r}\widetilde{\delta}^{-2}e^{i\widetilde{\theta}}\\
0 & 1
\end{array}\right)
\qquad\qquad\qquad \lambda\in \mathbb R \backslash [-\widetilde{z}_0,\widetilde{z}_0],\\
&\left(\begin{array}{cc}
1 & -\frac {\widetilde{r}e^{i\widetilde{\theta}}}{\widetilde{\delta}^2_-(1+\lambda|\widetilde{r}|^2)} \\
0 & 1
\end{array}\right)
\left(\begin{array}{cc}
1 & 0 \\
\frac{-\lambda \bar{\widetilde{r}}\widetilde{\delta}^2_-e^{-i\widetilde{\theta}}}{1+\lambda|\widetilde{r}|^2 } & 1
\end{array}\right)
\qquad \lambda \in (-\widetilde{z}_0,\widetilde{z}_0).
\end{array}\right.
\end{equation}
\item[(4)](Residue condition) $\widetilde{M}^{(1)}(x,t;\lambda)$ has simple poles at each $\lambda_j$:
\begin{equation}
\Res_{\lambda=\lambda_j} \widetilde{M}^{(1)}(x,t;\lambda)=\lim_{\lambda \rightarrow \lambda_j}\widetilde{M}^{(1)}\left(\begin{array}{ccc}
0 & 0 & \\
\widetilde{c}^{-1}_je^{-i\widetilde{\theta}}\left[\left(\frac 1{\widetilde{\delta}}\right)'\right]^{-2} & 0 &
\end{array}\right) \quad \lambda_j \in B^+_{l}
\end{equation}
\begin{equation}
\Res_{\lambda=\bar{\lambda}_j} \widetilde{M}^{(1)}(x,t;\lambda)=\lim_{\lambda \rightarrow \bar{\lambda}_{j}}\widetilde{M}^{(1)}\left(\begin{array}{ccc}
0 & -\bar{\lambda}^{-1}_j\bar{\widetilde{c}}^{-1}_je^{i\widetilde{\theta}}\left[\widetilde{\delta}'(\bar{\lambda}_j)\right]^{-2} & \\
0 & 0 &
\end{array}\right) \quad \bar{\lambda}_j \in B^-_{l}
\end{equation}
\begin{equation}
\Res_{\lambda=\lambda_{l}}\widetilde{M}^{(1)}(x,t;\lambda)=\lim_{\lambda \rightarrow \lambda_l}\widetilde{M}^{(1)}\left(\begin{array}{ccc}
0 & \widetilde{c}_l e^{i\widetilde{\theta}}\widetilde{\delta}(\lambda_l)^{-2} & \\
0 & 0 &
\end{array}\right) \quad \lambda_l \in B^+_{r}
\end{equation}
\begin{equation}
\Res_{\lambda=\bar{\lambda}_l}\widetilde{M}^{(1)}(x,t;\lambda)=\lim_{\lambda \rightarrow \bar{\lambda}_l}\widetilde{M}^{(1)}\left(\begin{array}{ccc}
0 & 0 & \\
-\bar{\lambda}_l \bar{\widetilde{c}}_l e^{-i\widetilde{\theta}}[\widetilde{\delta}(\bar{\lambda}_l)]^2 & 0 &
\end{array}\right) \quad \bar{\lambda}_l \in B^-_{r}
\end{equation}
\end{enumerate}
\end{problem}
\section{Contour deformation}
We now perform contour deformation on Problem \ref{modifiedRHP}, following the standard procedure outlined in \cite{DZ93} and also \cite{BJM} in the presence of discrete spectrum. Since the phase function \eqref{phasefunction} has two critical points at $\pm \tz_0$, our new contour is chosen to be
\begin{equation}
\Sigma^{(2)}=\Sigma_{1} \cup \Sigma_{2} \cup \Sigma_{3} \cup \Sigma_{4} \cup \Sigma_{5} \cup \Sigma_{6} \cup \Sigma_{7} \cup \Sigma_{8}
\end{equation}
shown in Figure \ref{fig:contour-def} (also see \cite[Figure 8]{CVZ99})
\begin{figure}[H]
\label{fig:contour-def}
\caption{Deformation from $\mathbb{R}$ to $\Sigma^{(2)}$}
\vskip 15pt
\begin{tikzpicture}[scale=0.9]

\draw 	[->,thick,>=stealth] 	(-3.564, 0)-- (-5.296, 0);	
\draw 	[thick] 	 (-5.296, 0)--(-7.028,0 );

\draw 	[thick] 	(3.564, 0)-- (5.296, 0);	
\draw 	[->,thick,>=stealth] 	(7.028,0 )-- (5.296, 0);

\draw 	[->,thick,>=stealth] 	(-3.564, 0)-- (-1.732, 0);	
\draw 	[thick] 	 (-1.732, 0)--(0,0 );

\draw 	[->,thick,>=stealth] 	(0, 0)-- (1.732, 0);	
\draw 	[thick] 	 (1.732, 0)--(3.564,0 );

\draw[thick] 			(3.564 ,0) -- (5.296, 1);								
\draw[->,thick,>=stealth] 	(   7.028 ,2  )--	(5.296, 1);

\draw[->,thick,>=stealth] 			(1.732 ,1) -- (3.564 ,0) ;
\draw[->,thick,>=stealth] 			(0,0)--(1.732 ,1);

\draw[thick] 			(3.564 ,0) -- (5.296, -1);								
\draw[->,thick,>=stealth] 	(   7.028 , -2  )--	(5.296, -1);

\draw[thick] 			(1.732 ,-1) -- (3.564 ,0) ;
\draw[->,thick,>=stealth] 			(0,0)--(1.732 ,-1);

\draw[->,thick,>=stealth] 			 (-3.564 ,0)--(-1.732 ,-1) ;
\draw[thick] 			(-1.732 ,-1)--(0,0);
\draw[->,thick,>=stealth] 			 (-3.564 ,0)--(-1.732 ,1) ;
\draw[thick] 			(-1.732 , 1)--(0,0);

\draw[->,thick,>=stealth] 			(-3.564 ,0) -- (-5.296, 1);							
\draw[thick] 	(   -7.028 ,2  )--	(-5.296, 1);
\draw[->,thick,>=stealth] 			(-3.564 ,0) -- (-5.296, -1);							
\draw[thick] 	(   -7.028 , -2  )--	(-5.296, -1);

\draw	[fill]							(-3.564 ,0)		circle[radius=0.1];	
\draw	[fill]							(3.564, 0)		circle[radius=0.1];
\draw							(0,0)		circle[radius=0.1];
\node[below] at (-3.564,-0.1)			{$-\tz_0$};
\node[below] at (3.564,-0.1)			{$\tz_0$};
\node[right] at (5, 1.6)					{$\Sigma_1$};
\node[left] at (-5, 1.6)					{$\Sigma_2$};
\node[left] at (-5,-1.6)					{$\Sigma_3$};
\node[right] at (5,-1.6)				{$\Sigma_4$};
\node[left] at (-2.4, 1)					{$\Sigma_5^+$};
\node[left] at (-2.4,-1)					{$\Sigma_7^+$};
\node[left] at (-0.5, 1)					{$\Sigma_5^-$};
\node[left] at (-0.5,-1)					{$\Sigma_7^-$};
\node[right] at (0.5,1)					{$\Sigma_6^-$};
\node[right] at (0.5,-1)					{$\Sigma_8^-$};
\node[right] at (2.4, 1)					{$\Sigma_6^+$};
\node[right] at (2.4,-1)					{$\Sigma_8^+$};

\node[above] at (0, 1.5)				{$\Omega_1$};
\node[below] at (0, -1.5)				{$\Omega_2$};

\node[right] at (5, 0.5)				{$\Omega_3$};
\node[right] at (5, -0.5)				{$\Omega_7$};
\node[right] at (1.73, 0.5)				{$\Omega_8$};
\node[right] at (1.73, -0.5)				{$\Omega_4$};
\node[left] at (-1.73, 0.5)				{$\Omega_9$};
\node[left] at (-1.73, -0.5)				{$\Omega_5$};
\node[left] at (-5, 0.5)				{$\Omega_6$};
\node[left] at (-5, -0.5)				{$\Omega_{10}$};

\end{tikzpicture}

\end{figure}
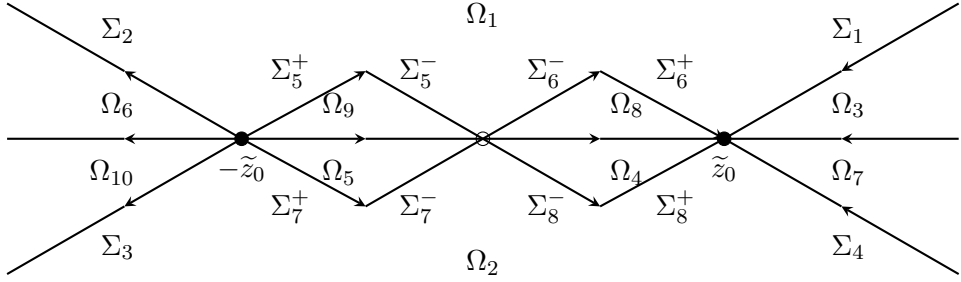
In order to separate the discrete spectral points from the $\bar{\partial}$ part, we define the following smooth cutoff function:
\begin{equation}
\Xi_{\mathcal{Z}}(\lambda)= \begin{cases}1 & \operatorname{dist}\left(\lambda, \mathcal{Z} \cup \mathcal{Z}^{*}\right) \leq \Upsilon / 3 \\ 0 & \operatorname{dist}\left(\lambda, \mathcal{Z} \cup \mathcal{Z}^{*}\right)>2 \Upsilon / 3\end{cases}
\end{equation}
where
\begin{equation}
\Upsilon=\min \left\{\min _{\lambda, \lambda^{\prime} \in \mathcal{Z}}\left|\lambda-\lambda^{\prime}\right|, \quad \operatorname{dist}(\mathcal{Z}, \mathbb{R})\right\}
\end{equation}
We now introduce another matrix-valued function $\widetilde{M}^{(2)}$:
\begin{equation}
\widetilde{M}^{(2)}(\lambda)=\widetilde{M}^{(1)}(\lambda)\mathcal{R}^{(2)}(\lambda).
\end{equation}
Here $\mathcal{R}^{(2)}$ will be chosen to remove the jump on the real axis and bring about new analytic jump matrices with the desired exponential decay along the contour $\Sigma^{(2)}$. Straight forward computation gives
\begin{equation}
\begin{aligned}
\widetilde{M}_{+}^{(2)} &=\widetilde{M}_{+}^{(1)} \mathcal{R}_{+}^{(2)} \\
&=\widetilde{M}_{-}^{(1)} \widetilde{V}^{(1)} \mathcal{R}_{+}^{(2)} \\
&=\widetilde{M}_{-}^{(2)}\left(\mathcal{R}_{-}^{(2)}\right)^{-1} \widetilde{V}^{(1)} \mathcal{R}_{+}^{(2)}
\end{aligned}
\end{equation}
We want to make sure that the following condition is satisfied
\begin{equation}
\left(\mathcal{R}_{-}^{(2)}\right)^{-1} \widetilde{V}^{(1)} \mathcal{R}_{+}^{(2)}=I
\end{equation}
where $\mathcal{R}_{\pm}^{(2)}$ are the boundary values of $\mathcal{R}^{(2)}(\lambda)$ as $\pm\mathrm{Im}(\lambda)\downarrow 0$. In this case the jump matrix associated to $\widetilde{M}^{(2)}_{\pm}$
will be the identity matrix on $\mathbb{R}$.\\
We can easily check that the function $e^{-i\widetilde{\theta}}$ is exponentially decreasing on $\Sigma_k$, $k=3,4,5,6$ and increasing on $\Sigma_j$, $j=1,2,7,8$ while the reverse is true for $e^{i\widetilde{\theta}}$. We define $\mathcal{R}^{2}$ as follows (Figure \ref{fig R-2+}-\ref{fig R-2-}):
 the functions $R_i$, $i=1,2...8$ satisfy
 \begin{align}
\label{R1}
R_1(\lambda)	&=	\begin{cases}
							\widetilde{r}(\lambda)\widetilde{\delta}^{-2}(\lambda)			
								&	\lambda \in (\tz_0,\infty)\\[10pt]
						\widetilde{r}(\widetilde{z}_0) \left( {\lambda-\widetilde{z}_0}\right)^{2i\kappa^+} (\widetilde{\delta}_{0}^+)^{-2}(1-\Xi_\calZ)
								&	\lambda	\in \Sigma_1,
					\end{cases}\\[10pt]
\label{R2}
R_2(\lambda)	&=	\begin{cases}
						\widetilde{r}(\lambda)\widetilde{\delta}^{-2}(\lambda)		
								&	\lambda \in (-\infty, -\tz_0)\\[10pt]
					\widetilde{r}(-\widetilde{z}_0) \left( -{\lambda-\widetilde{z}_0}\right)^{-2i\kappa^-} (\widetilde{\delta}_{0}^-)^{-2}(1-\Xi_\calZ)
								&	\lambda	\in \Sigma_2,
					\end{cases}\\[10pt]
\label{R3}
R_3(\lambda)	&=	\begin{cases}
						-\lambda\overline{\widetilde{r}(\lambda)}\widetilde{\delta}(\lambda)^2		
								&	\lambda \in (-\infty, -\tz_0)\\[10pt]
						\widetilde{z}_0\overline{\widetilde{r}(-\widetilde{z}_0)}\left( -{\lambda-\widetilde{z}_0}\right)^{2i\kappa^-} (\widetilde{\delta}_{0}^-)^{2}(1-\Xi_\calZ)
								&	\lambda	\in \Sigma_3,
					\end{cases}\\[10pt]
\label{R4}
R_4(\lambda)	&=	\begin{cases}
						-\lambda\overline{\widetilde{r}(\lambda)}\widetilde{\delta}(\lambda)^2		
								&	\lambda \in (\tz_0, \infty)\\[10pt]
						-\widetilde{z}_0\overline{\widetilde{r}(\widetilde{z}_0)}\left( {\lambda-\widetilde{z}_0}\right)^{-2i\kappa^+} (\widetilde{\delta}_{0}^+)^{2}(1-\Xi_\calZ)
								&	\lambda	\in \Sigma_4,
					\end{cases}
\end{align}
\begin{align}
\label{R5}
R_5(\lambda)	&=	\begin{cases}
						\frac {\lambda\overline{\widetilde{r}(\lambda)}\widetilde{\delta}^2_+}{1+\lambda|\widetilde{r}|^2}		
								& \lambda \in (-\widetilde{z}_0,0)\\[10pt]
					    \frac {-\widetilde{z}_0\overline{\widetilde{r}(-\widetilde{z}_0)}\left( -{\lambda-\widetilde{z}_0}\right)^{2i\kappa^-} (\widetilde{\delta}_{0}^-)^{2}}{1-\widetilde{z}_0|\widetilde{r}(-\widetilde{z}_0)|^2}(1-\Xi_\calZ)
						& \lambda \in \Sigma_5^+\\
						0 & \lambda \in \Sigma_5^-
					\end{cases}
					\\[10pt]
\label{R8+}
R_6(\lambda)	&=	\begin{cases}
				    \frac {\lambda\overline{\widetilde{r}(\lambda)}\widetilde{\delta}^2_+}{1+\lambda|\widetilde{r}|^2}		
								& \lambda \in (0, \widetilde{z}_0)\\[10pt]
					 \frac {\widetilde{z}_0\overline{\widetilde{r}(\widetilde{z}_0)}\left( {\lambda-\widetilde{z}_0}\right)^{-2i\kappa^+} (\widetilde{\delta}_{0}^+)^{2}}{1+\widetilde{z}_0|\widetilde{r}(\widetilde{z}_0)|^2}(1-\Xi_\calZ)
						& \lambda \in \Sigma_6^+\\
						0 & \lambda \in \Sigma_6^-
					\end{cases}
					\\[10pt]
\label{R7-}
R_{7}(\lambda)	&=	\begin{cases}
						-\frac {\widetilde{r}\widetilde{\delta}^{-2}_-}{(1+\lambda|\widetilde{r}|^2)}		
								& \lambda \in (-\widetilde{z}_0,0)\\[10pt]
					-\frac {\widetilde{r}(-\widetilde{z}_0)\left( -{\lambda-\widetilde{z}_0}\right)^{-2i\kappa^-} (\widetilde{\delta}_{0}^-)^{-2}  }{(1-\widetilde{z}_0|\widetilde{r}(-\widetilde{z}_0)|^2)}(1-\Xi_\calZ)
                                  & \lambda \in \Sigma_7^+\\
                                  0  & \lambda \in \Sigma_7^-
						\end{cases}
						\\[10pt]
\label{R8-}
R_8(\lambda)	&=	\begin{cases}
						-\frac {\widetilde{r}\widetilde{\delta}^{-2}_-}{(1+\lambda|\widetilde{r}|^2)}		
								& \lambda \in (0, \widetilde{z}_0)\\[10pt]
					-\frac {\widetilde{r}(\widetilde{z}_0)\left( {\lambda-\widetilde{z}_0}\right)^{2i\kappa^+} (\widetilde{\delta}_{0}^+)^{-2}}{(1+\widetilde{z}_0|\widetilde{r}(\widetilde{z}_0)|^2)}(1-\Xi_\calZ)
                                  & \lambda \in \Sigma_8^+\\
                                  0 &  \lambda \in \Sigma_8^-
                                  \end{cases}
\end{align}

{
\SixMatrix{The Matrix  $\calR^{(2)}$ for Region I, near $\widetilde{z}_0$}{fig R-2+}
	{\twomat{1}{R_1 e^{i\widetilde{\theta}}}{0}{1}}
	{\twomat{1}{0}{R_6 e^{-i\widetilde{\theta}}}{1}}
	{\twomat{1}{R_8 e^{i\widetilde{\theta}}}{0}{1}}
	{\twomat{1}{0}{R_4 e^{-i\widetilde{\theta}}}{1}}
}

{
\sixmatrix{The Matrix  $\calR^{(2)}$ for Region I, near $-\widetilde{z}_0$}{fig R-2-}
	{\twomat{1}{0}{R_5 e^{-i\widetilde{\theta}}}{1}}
	{\twomat{1}{R_2 e^{i\widetilde{\theta}}}{0}{1}}
	{\twomat{1}{0}{R_3 e^{-i\widetilde{\theta}}}{1}}
	{\twomat{1}{R_7 e^{i\widetilde{\theta}}} {0}{1}}
}
Each $R_i(z)$ in $\Omega_i$ is constructed in such a way that the jump matrices on the contour and $\dbar R_i(z)$ enjoys the property of exponential decay as $t\to \infty$.
We formulate Problem \ref{modifiedRHP} into a mixed RHP-$\dbar$ problem. In the following sections we will separate this mixed problem into a localized RHP and a pure $\dbar$ problem whose long-time contribution to the asymptotics of solution is  smaller than the leading term.\\
The following lemma (\cite[Proposition 2.1]{DM08}) will be used in the error estimates of
$\bar \partial$-problem in Section \ref{sec:dbar}.
We first denote the entries that appear in \eqref{R1}--\eqref{R8-} by
\begin{align*}
p_1(\lambda)=p_2(\lambda)	&=	\widetilde{r}(\lambda),	&
p_3(\lambda)=p_4(\lambda)	&=	-\lambda\overline{\widetilde{r}(\lambda)},&\\
p_{5}(\lambda)=p_{6}(\lambda)	&=\frac {\lambda\overline{\widetilde{r}(\lambda)}}{1+\lambda|\widetilde{r}|^2},& p_{7}(\lambda)=p_{8}(\lambda)	&= -\frac {\widetilde{r}(\lambda)}{1+\lambda|\widetilde{r}|^2}.
\end{align*}
For technical purpose, we further split the regions $\Omega_4$, $\Omega_5$, $\Omega_8$ and $\Omega_9$ in Figure \ref{fig:contour-2'}:
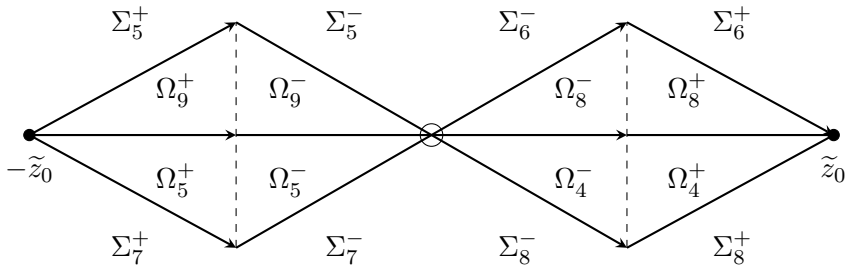
\begin{figure}[H]
\caption{Splitting the regions }
\vskip 15pt
\begin{tikzpicture}[scale=1.5]

\draw 	[->,thick,>=stealth] 	(-3.564, 0)-- (-1.732, 0);	
\draw 	[thick] 	 (-1.732, 0)--(0,0 );

\draw 	[->,thick,>=stealth] 	(0, 0)-- (1.732, 0);	
\draw 	[thick] 	 (1.732, 0)--(3.564,0 );

\draw[->,thick,>=stealth] 			(1.732 ,1) -- (3.564 ,0) ;
\draw[->,thick,>=stealth] 			(0,0)--(1.732 ,1);

\draw[thick] 			(1.732 ,-1) -- (3.564 ,0) ;
\draw[->,thick,>=stealth] 			(0,0)--(1.732 ,-1);

\draw[->,thick,>=stealth] 			 (-3.564 ,0)--(-1.732 ,-1) ;
\draw[thick] 			(-1.732 ,-1)--(0,0);
\draw[->,thick,>=stealth] 			 (-3.564 ,0)--(-1.732 ,1) ;
\draw[thick] 			(-1.732 , 1)--(0,0);

\draw	[fill]							(-3.564 ,0)		circle[radius=0.05];	
\draw	[fill]							(3.564, 0)		circle[radius=0.05];
\draw							(0,0)		circle[radius=0.1];

\node[below] at (-3.564,-0.1)			{$-\tz_0$};
\node[below] at (3.564,-0.1)			{$\tz_0$};

\node[left] at (-2.4, 1)					{$\Sigma_5^+$};
\node[left] at (-2.4,-1)					{$\Sigma_7^+$};
\node[left] at (-0.5, 1)					{$\Sigma_5^-$};
\node[left] at (-0.5,-1)					{$\Sigma_7^-$};
\node[right] at (0.5,1)					{$\Sigma_6^-$};
\node[right] at (0.5,-1)					{$\Sigma_8^-$};
\node[right] at (2.4, 1)					{$\Sigma_6^+$};
\node[right] at (2.4,-1)					{$\Sigma_8^+$};

\node[right] at (2, 0.4)				{$\Omega_8^+$};
\node[right] at (2, -0.4)				{$\Omega_4^+$};
\node[left] at (-2, 0.4)				{$\Omega_9^+$};
\node[left] at (-2, -0.4)				{$\Omega_5^+$};

\node[right] at (1, 0.4)				{$\Omega_8^-$};
\node[right] at (1, -0.4)				{$\Omega_4^-$};
\node[left] at (-1, 0.4)				{$\Omega_9^-$};
\node[left] at (-1, -0.4)				{$\Omega_5^-$};

\draw [dashed]  (-1.732, -1)--(-1.732, 1);
\draw [dashed]  (1.732, -1)--(1.732, 1);

\end{tikzpicture}
\label{fig:contour-2'}
\end{figure}
\begin{lemma}
\label{dbar.Ri}
Suppose $\widetilde{r} \in H^{1,1}_0(\bbR)$. There exist functions $R_i$ on $\Omega_3, \Omega_6, \Omega_7, \Omega_{10}$ and $\Omega_4^+, \Omega_5^+, \Omega_8^+, \Omega_9^+$ satisfying \eqref{R1}--\eqref{R8-}, so that
$$
|\dbar R_i(\lambda)| \lesssim
	 |p_i'(\Real(\lambda))| + |\lambda-\xi|^{-1/2}  +\dbar \left( \Xi_\calZ(\lambda) \right) , 	
				$$
where $\xi=\pm \widetilde{z}_0$. And for functions $R_i$ on $ \Omega_4^-, \Omega_5^+, \Omega_8^+, \Omega_9^+$  we have
$$
|\dbar R_i(\lambda)| \lesssim
	 |p_i'(\Real(\lambda))| + |\lambda|^{-1/2}  +\dbar \left( \Xi_\calZ(\lambda) \right) .
			$$
All the implied constants are uniform for $\widetilde{r} $ in a bounded subset of $H^{1,1}_0(\bbR)$.
\end{lemma}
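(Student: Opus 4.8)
The plan is to define each $R_i$ by hand as a smooth, non-analytic bridge between its value on the real axis and its value on the opposite ray $\Sigma_i$ --- both prescribed by \eqref{R1}--\eqref{R8-} --- and then to read off the $\dbar$-bound by a direct computation in polar coordinates. I fix the corner of each sector: the stationary point $\xi=\pm\tz_0$ for the sectors $\Omega_3,\Omega_6,\Omega_7,\Omega_{10}$ and $\Omega_4^+,\Omega_5^+,\Omega_8^+,\Omega_9^+$ touching $\pm\tz_0$, and the origin for the inner sectors abutting $0$. Writing $\lambda=\xi+\rho e^{i\phi}$ and choosing a smooth angular cutoff $\eta=\eta(\phi)$ that is $\equiv 1$ near the real axis and $\equiv 0$ near $\Sigma_i$, I set
\[
R_i(\lambda)=\Bigl(A_i(\Real\lambda)\,\eta(\phi)+B_i(\lambda)\,\bigl(1-\eta(\phi)\bigr)\Bigr)\bigl(1-\Xi_\calZ(\lambda)\bigr),
\]
where $A_i$ is the real-axis datum ($p_i(\Real\lambda)$ times the full factor $\widetilde{\delta}(\lambda)^{\mp2}$ or its analogue carrying $1+\lambda|\tdr|^2$) and $B_i$ is the frozen ray datum ($p_i(\xi)$ times $(\lambda\mp\tz_0)^{\pm2i\kappa^\pm}(\widetilde{\delta}_0^\pm)^{\mp2}$). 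By construction $R_i$ agrees with its prescribed boundary values on both edges of the sector, and the factor $1-\Xi_\calZ$ excises neighbourhoods of $\calZ\cup\calZ^*$.

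The second step is to compute $\dbar R_i$. Using $\dbar=\tfrac12 e^{i\phi}\bigl(\partial_\rho+i\rho^{-1}\partial_\phi\bigr)$ together with $\partial_\rho(\Real\lambda)=\cos\phi$ and $\partial_\phi(\Real\lambda)=-\rho\sin\phi$, the derivative splits into three groups: (i) terms in which a derivative lands on $p_i(\Real\lambda)$, controlled by $|p_i'(\Real\lambda)|$, since the accompanying $\widetilde{\delta}$-factors are analytic and bounded on the sector by Lemma \ref{delta}; (ii) the angular term $\rho^{-1}\eta'(\phi)\bigl(A_i(\Real\lambda)-B_i(\lambda)\bigr)$ arising when $\partial_\phi$ hits $\eta$; and (iii) the term $\dbar(1-\Xi_\calZ)=-\dbar\Xi_\calZ$. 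The frozen factor $B_i$ and the power $(\lambda\mp\tz_0)^{\pm2i\kappa^\pm}$ are analytic and of unit modulus (purely imaginary exponent), so they contribute nothing singular.

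The decisive point is group (ii). On the support of $\eta'$ the difference $A_i-B_i$ is governed by two H\"older-$\tfrac12$ bounds. First, since $\tdr\in H^{1,1}_0(\bbR)\subset H^1(\bbR)$ and, by genericity, $1+\lambda|\tdr|^2\ge c>0$, each $p_i$ lies in $H^1$ on the relevant (bounded) $\lambda$-range, so the fundamental theorem of calculus and Cauchy--Schwarz give
\[
|p_i(\Real\lambda)-p_i(\xi)|\le\|p_i'\|_{L^2}\,|\Real\lambda-\xi|^{1/2}\le\|p_i'\|_{L^2}\,|\lambda-\xi|^{1/2}.
\]
Second, $|\widetilde{\delta}(\lambda)-(\lambda-\tz_0)^{-i\kappa^+}\widetilde{\delta}_0^+|\le C_{\tdr}|\lambda-\tz_0|^{1/2}$ (and its mirror near $-\tz_0$) by Lemma \ref{delta}(3). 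Hence $\rho^{-1}|A_i-B_i|\lesssim|\lambda-\xi|^{-1/2}$. For $\xi=\pm\tz_0$ this is the first asserted bound; for the inner sectors I anchor the interpolation at the origin, where $\tdr(0)=0$ forces $p_i(0)=0$, and the estimate becomes $|\lambda|^{-1/2}$. As every constant enters only through $\|p_i'\|_{L^2}\lesssim\|\tdr\|_{H^{1,1}}$ and the fixed lower bound $c$, the estimates are uniform on bounded subsets of $H^{1,1}_0(\bbR)$.

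I expect the main obstacle to be the inner sectors $\Omega_4^-,\Omega_5^-,\Omega_8^-,\Omega_9^-$ abutting the origin, where the phase $\widetilde{\theta}$ is singular and $p_5,\dots,p_8$ carry the denominator $1+\lambda|\tdr|^2$: one must verify that these combinations remain in $H^1$ up to $\lambda=0$ and vanish there, which is precisely where the defining condition $\lim_{\lambda\to0}\tdr(\lambda)/\lambda=0$ in \eqref{sp:r} and the quantitative lower bound $|\alpha|\ge A>0$ from Definition \ref{def:generic'} are indispensable. The remaining bookkeeping --- matching each of the eight $R_i$ to its two-part boundary prescription and confirming that the $\widetilde{\delta}$-factors are benign --- is routine.
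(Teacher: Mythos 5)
Your construction coincides with the paper's own proof: the paper's interpolant $R_1=\left(f_1+\left[p_1(\Real\lambda)-f_1\right]\mathcal{K}(\phi)\right)\widetilde{\delta}(\lambda)^{-2}(1-\Xi_\calZ)$ is exactly your $\bigl(A_i\eta+B_i(1-\eta)\bigr)(1-\Xi_\calZ)$, and the paper derives the same three-term bound via the polar-coordinate $\dbar$, the $H^1$ Cauchy--Schwarz H\"older-$\tfrac12$ estimate, Lemma \ref{delta}(3), and the anchoring $p_i(0)=0$ (from $\tdr\in H^{1,1}_0$) in the sectors abutting the origin. The only slip is cosmetic: $(\lambda\mp\tz_0)^{\pm 2i\kappa^{\pm}}$ is not of unit modulus (its modulus is $e^{\mp 2\kappa^{\pm}\arg(\lambda\mp\tz_0)}$), but it is uniformly bounded on the sector, which is all your argument actually uses.
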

\begin{proof}
We first prove the lemma for $R_1$. Define $f_1(\lambda)$ on $\Omega_3$ by
$$ f_1(\lambda) = p_1(\widetilde{z}_0) \left( {\lambda-\widetilde{z}_0}\right)^{2i\kappa^+} (\widetilde{\delta}_{0}^+)^{-2} \widetilde{\delta}(\lambda)^{2} $$
and let
\begin{equation}
\label{interpol}
\ R_1(\lambda) = \left( f_1(\lambda) + \left[ p_1(\Real(\lambda)) - f_1(\lambda) \right] \mathcal{K}(\phi) \right) \widetilde{\delta}(\lambda)^{-2}
(1-\Xi_\calZ)
\end{equation}
where $\phi = \arg (\lambda-\xi)$ and $\mathcal{K}$ is a smooth function on $(0, \pi/6)$ with
$$
\mathcal{K}(\phi)=
	\begin{cases}
			1
			&	\phi \in [0, \pi/24], \\
			0
			&	\phi \in[\pi/12, \pi/6]
	\end{cases}
$$
 It is easy to see that $R_1$ as constructed has the boundary values \eqref{R1}.
Writing $\lambda-\tz_0= \rho e^{i\phi}$, we have
$$ \dbar = \frac{1}{2}\left( \frac{\dee}{\dee x} + i \frac{\dee}{\dee y} \right)
			=	\frac{1}{2} e^{i\phi} \left( \frac{\dee}{\dee \rho} + \frac{i}{\rho} \frac{\dee}{\dee \phi} \right).
$$
We calculate
\begin{align*}
\dbar R_1 (\lambda) & = \left( \frac{1}{2}  p_1'(\Real \lambda) \mathcal{K}(\phi)  ~ \widetilde{\delta}(\lambda)^{-2} -
		\left[ p_1(\Real \lambda) - f_1(\lambda) \right]\widetilde{\delta}(\lambda)^{-2}  \frac{ie^{i\phi}}{|\lambda-\xi|}  \mathcal{K}'(\phi) \right)\\
		 & \quad \times \left( 1-\Xi_\calZ  \right) e^{i\widetilde{\theta}}-\left( f_1(\lambda) + \left[ p_1(\Real(\lambda)) - f_1(\lambda) \right] \mathcal{K}(\phi) \right) \widetilde{\delta}(\lambda)^{-2} \dbar \left( \Xi_\calZ(\lambda) \right) e^{i\widetilde{\theta}} .
\end{align*}
Given that $\Xi(\lambda)$ is infinitely smooth and compactly supported, it follows from Lemma \ref{delta} (4) that
$$
 \left|\left( \dbar R_1 \right)(\lambda)  \right| \lesssim
\left[  |p_1'(\Real \lambda)| + |\lambda-\tz_0|^{-1/2} +\dbar \left( \Xi_\calZ(\lambda) \right) \right] |e^{i\widetilde{\theta}}|
$$
where the implied constants depend on $\norm{r}{H^{1,1}}$ and the smooth function $\mathcal{K}$. We then move to region $\Omega_8^-$.
We set
$$ f_8(\lambda) = p_8(0)\left( {\lambda-\widetilde{z}_0}\right)^{-2i\kappa^+} (\widetilde{\delta}_{0}^+)^{2}\widetilde{\delta}(\lambda)^{2}=0$$
and calculate
\begin{align*}
\dbar R_8 (\lambda) &=\dbar  \left( p_8(\Real(\lambda) ) \mathcal{K}(\phi)  \widetilde{\delta}(\lambda)^{-2} (1-\Xi_\calZ)  \right) \\
                     & = \left( \frac{1}{2}  p_8'(\Real (\lambda)) \mathcal{K}(\phi) \widetilde{\delta}(\lambda)^{-2} -
		p_8(\Real(\lambda)) \widetilde{\delta}(\lambda)^{-2}  \frac{ie^{i\phi}}{| \lambda |}  \mathcal{K}'(\phi) \right) \left( 1-\Xi_\calZ  \right)\\
		 &\quad - p_8(\Real(\lambda))  \mathcal{K}(\phi)  \widetilde{\delta}(\lambda)^{-2} \dbar \left( \Xi_\calZ(\lambda) \right) .
\end{align*}
The estimates in the remaining regions are analogous.
\end{proof}
\begin{remark}
We note that the interpolation defined through \eqref{interpol} introduces a new jump on $\Sigma^{(3)}_9$ with  the jump matrix given by
\begin{equation}
\label{jump v9}
v_9=\begin{cases} I, & \lambda \in \left(-i ( \widetilde{z}_0/2) \tan(\pi/24) , i ( \widetilde{z}_0/2) \tan(\pi/24) \right)\\
\\
  \unitlower{ (R_5^+-R_5^-)e^{-i\widetilde{\theta}} },  & \lambda\in \left(i( \widetilde{z}_0/2) \tan(\pi/24)-\widetilde{z}_0/2, i\widetilde{z}_0/(2\sqrt{3} ) -\widetilde{z}_0/2\right) \\
        \\
          \unitlower{ (R_6^--R_6^+)e^{-i\widetilde{\theta}} },  & \lambda\in \left(i ( \widetilde{z}_0/2)  \tan(\pi/24)+\widetilde{z}_0/2,  i\widetilde{z}_0/(2\sqrt{3} )+\widetilde{z}_0/2 \right) \\
        \\
        \unitupper{ (R_7^+ -R_7^-)e^{i\widetilde{\theta}} }  & \lambda\in  \left( -i( \widetilde{z}_0/2)  \tan(\pi/24)-\widetilde{z}_0/2, - i\widetilde{z}_0/(2\sqrt{3} )-\widetilde{z}_0/2 \right) \\
        \\
         \unitupper{ (R_8^--R_8^+)e^{i\widetilde{\theta}} },  & \lambda\in  \left ( -i( \widetilde{z}_0/2) \tan(\pi/24)+\widetilde{z}_0/2, -i\widetilde{z}_0/(2\sqrt{3} )+\widetilde{z}_0/2 \right)
\end{cases}
\end{equation}
\end{remark}

\begin{figure}[H]
\caption{$\Sigma'^{(2)}$}
\vskip 15pt
\begin{tikzpicture}[scale=0.8]

\draw[thick] 			(3.564 ,0) -- (5.296, 1);								
\draw[->,thick,>=stealth] 	(   7.028 ,2  )--	(5.296, 1);

\draw[->,thick,>=stealth] 			(1.732 ,1) -- (3.564 ,0) ;
\draw[->,thick,>=stealth] 			(0,0)--(1.732 ,1);

\draw[thick] 			(3.564 ,0) -- (5.296, -1);								
\draw[->,thick,>=stealth] 	(   7.028 , -2  )--	(5.296, -1);

\draw[thick] 			(1.732 ,-1) -- (3.564 ,0) ;
\draw[->,thick,>=stealth] 			(0,0)--(1.732 ,-1);

\draw[->,thick,>=stealth] 			 (-3.564 ,0)--(-1.732 ,-1) ;
\draw[thick] 			(-1.732 ,-1)--(0,0);
\draw[->,thick,>=stealth] 			 (-3.564 ,0)--(-1.732 ,1) ;
\draw[thick] 			(-1.732 , 1)--(0,0);

\draw[->,thick,>=stealth] 			(-3.564 ,0) -- (-5.296, 1);							
\draw[thick] 	(   -7.028 ,2  )--	(-5.296, 1);
\draw[->,thick,>=stealth] 			(-3.564 ,0) -- (-5.296, -1);							
\draw[thick] 	(   -7.028 , -2  )--	(-5.296, -1);

\draw[->,thick,>=stealth] 			(-1.732 , -1) -- (-1.732, 0);	
\draw	[thick]	(-1.732 , 0) -- (-1.732, 1);	
\draw[->,thick,>=stealth] 			(1.732 , -1) -- (1.732, 0);	
\draw	[thick]	(1.732 , 0) -- (1.732, 1);

\draw	[fill]							(-3.564 ,0)		circle[radius=0.1];	
\draw	[fill]							(3.564, 0)		circle[radius=0.1];
\draw							(0,0)		circle[radius=0.1];

\node[below] at (-3.564,-0.1)			{$-\tz_0$};
\node[below] at (3.564,-0.1)			{$\tz_0$};

\node[right] at (5, 1.6)					{$\Sigma_1'^{(2)} $};
\node[left] at (-5, 1.6)					{$\Sigma_2'^{(2)}$};
\node[left] at (-5,-1.6)					{$\Sigma_3'^{(2)}$};
\node[right] at (5,-1.6)				{$\Sigma_4'^{(2)}$};
\node[left] at (-1,1.4)					{$\Sigma_5'^{(2)}$};
\node[left] at (-1,-1.4)					{$\Sigma_7'^{(2)}$};
\node[right] at (1,1.4)					{$\Sigma_6'^{(2)}$};
\node[right] at (1,-1.4)					{$\Sigma_8'^{(2)}$};
\node[right] at (-1.732, 0) {$\Sigma_9'^{(2)} $};
\node[right] at (1.732, 0) {$\Sigma_{10}'^{(2)} $};

\end{tikzpicture}
\label{fig:contour-2}
\end{figure}
The unknown $\widetilde{M}^{(2)}$ satisfies a mixed $\dbar$-RHP. We first identify the jumps of $\widetilde{M}^{(2)}$ along the contour $\Sigma^{(2)}$. Recall that $\widetilde{M}^{(1)}$ is analytic along the contour,  the jumps are determined entirely by
$\mathcal{R}^{(2)}$, see \eqref{R1}--\eqref{R8-}. Away from $\Sigma^{(2)}$, using the triangularity of $\mathcal{R}^{(2)}$, we have that
\begin{equation}
\label{N2.dbar}
 \dbar \widetilde{M}^{(2)} = \widetilde{M}^{(2)} \left( \calR^{(2)} \right)^{-1} \dbar \calR^{(2)} = \widetilde{M}^{(2)} \dbar \calR^{(2)}.
 \end{equation}
 \begin{remark}
 \label{radius}
By construction of $\mathcal{R}^{(2)}$ (see \eqref{R1}-\eqref{R8-} and \eqref{interpol}) and the choice of the radius of the circles in the set $ \Gamma$, the right multiplication of $\mathcal{R}^{(2)}$ to $\widetilde{M}^{(1)}$ will not change the jump conditions on circles in the set $ \Gamma$. Thus over circles in the set $ \Gamma$, $\widetilde{M}^{(2)}$ has the same jump matrices as given by (4) of Problem \ref{modifiedRHP}.
\end{remark}

\begin{problem}
\label{prob:MTM.RHP.dbar}
Given $\widetilde{r} \in H^{1,1}_0(\bbR)$, find a matrix-valued function $\widetilde{M}^{(2)}(\lambda;x,t)$ on $ \bbC \setminus \left( \Sigma'^{(2)} \cup \Gamma \right)$ with the following properties:
\begin{enumerate}
\item		$\widetilde{M}^{(2)}(\lambda;x,t) \rarr I$ as $ \lambda \rarr \infty$ in $ \bbC \setminus \left( \Sigma'^{(2)} \cup \Gamma \right);$
\item		$\widetilde{M}^{(2)}(\lambda;x,t)$ is continuous for $\lambda \in  \bbC \setminus\left( \Sigma'^{(2)} \cup \Gamma \right)$
			with continuous boundary values
			$\widetilde{M}^{(2)}_\pm(\lambda;x,t) $
			(where $\pm$ is defined by the orientation in Figure \ref{fig:contour-def});
\item		The jump relation $\widetilde{M}^{(2)}_+(\lambda;x,t)=\widetilde{M}^{(2)}_-(\lambda;x,t)	
			\widetilde{V}^{(2)}(\lambda)$ holds, where
			$\widetilde{V}^{(2)}(\gamma)$ is given in Figure \ref{fig:jumps-1}-\ref{fig:jumps-2} and \eqref{jump v9} and part (4) of Problem \ref{modifiedRHP}.
\item		The equation
			$$
			\dbar \widetilde{M}^{(2)} = \widetilde{M}^{(2)} \, \dbar \calR^{(2)}
			$$
			holds in $\bbC \setminus \left( \Sigma'^{(2)} \cup \Gamma \right)$, where
			$$
			\dbar \calR^{(2)}=
				\begin{doublecases}
					\Twomat{0}{(\dbar R_1) e^{i\widetilde{\theta}}}{0}{0}, 	& \lambda \in \Omega_3	&&
					\Twomat{0}{0}{(\dbar R_5)e^{-i\widetilde{\theta}}}{0}	,	& \lambda \in \Omega_9	\\
					\\
					\Twomat{0}{(\dbar R_2)e^{i\widetilde{\theta}}}{0}{0},	&	\lambda \in \Omega_6	&&
					\Twomat{0}{0}{(\dbar R_6)e^{-i\widetilde{\theta}}}{0}	,	&	\lambda	\in \Omega_8	 \\
					\\
					\Twomat{0}{0}{(\dbar R_3) e^{-i\widetilde{\theta}}}{0}, 	& \lambda \in \Omega_{10}	&&
					\Twomat{0}{(\dbar R_7)e^{i\widetilde{\theta}}} {0}  {0}	,	& \lambda \in \Omega_5	\\
					\\
					\Twomat{0} {0} {(\dbar R_4)e^{-i\widetilde{\theta}}}{0},	&	\lambda \in \Omega_7	&&
					\Twomat{0}{(\dbar R_8)e^{i\widetilde{\theta}}}{0}{0}	,	&	\lambda	\in \Omega_4 \\
					\\
					0	&\hspace{-5pt} \lambda\in \Omega_1\cup\Omega_2	
				\end{doublecases}
			$$
\end{enumerate}
\end{problem}
The following picture is an illustration of the jump matrices of RHP Problem \ref{prob:MTM.RHP.dbar}. For brevity we ignore the discrete scattering data.

\begin{figure}[H]
\caption{Jump Matrices  $\widetilde{V}^{(2)}$  for $\widetilde{M}^{(2)}$ near $\widetilde{z}_0$ }
\vskip 15pt
\begin{tikzpicture}[scale=0.7]
\draw[dashed] 				(-6,0) -- (6,0);							
\draw [thick] 	(0,0 )-- (1.732, 1);						
\draw [->,thick,>=stealth]   (3.464,2 ) -- (1.732, 1);
\draw [thick] 	(0,0 )-- (1.732, -1);						
\draw  [->,thick,>=stealth]  (3.464,-2 ) -- (1.732, -1);
\draw [thick] 	(0,0 )-- (-1.732, 1);						
\draw [->,thick,>=stealth] (-3.464,2 ) -- (-1.732, 1);
\draw [thick] 	(0,0 )-- (-1.732, -1);						
\draw [->,thick,>=stealth]  (-3.464, -2 ) -- (-1.732, -1);
\draw[fill]						(0,0)	circle[radius=0.075];		
\node [below] at  			(0,-0.15)		{$\widetilde{z}_0$};
\node[right] at					(3.2,3)		{$\unitupper{R_1 e^{i\widetilde{\theta}}}$};
\node[left] at					(-3.2,3)		{$\unitlower{-R_6 e^{-i\widetilde{\theta}}}$};
\node[left] at					(-3.2,-3)		{$\unitupper{R_8 e^{i\widetilde{\theta}}}$};
\node[right] at					(3.2,-3)		{$\unitlower{-R_4 e^{-i\widetilde{\theta}}}$};
\node[left] at					(2.5, 1.8)		{$\Sigma_1$};
\node[right] at					(-2.5,1.8)		{$\Sigma_6$};
\node[right] at					(-2.5,-1.8)		{$\Sigma_8$};
\node[left] at					(2.5,-1.8)		{$\Sigma_4$};
\end{tikzpicture}
\label{fig:jumps-1}
\end{figure}

\begin{figure}[H]
\caption{Jump Matrices  $\widetilde{V}^{(2)}$  for $\widetilde{M}^{(2)}$ near $-\widetilde{z}_0$}
\vskip 15pt
\begin{tikzpicture}[scale=0.7]
\draw[dashed] 				(-6,0) -- (6,0);							
\draw [->,thick,>=stealth] 	(0,0 )-- (1.732, 1);						
\draw  [thick]  (3.464,2 ) -- (1.732, 1);
\draw [->,thick,>=stealth] 	(0,0 )-- (1.732, -1);						
\draw  [thick]  (3.464,-2 ) -- (1.732, -1);
\draw [->,thick,>=stealth] 	(0,0 )-- (-1.732, 1);						
\draw  [thick]  (-3.464,2 ) -- (-1.732, 1);
\draw [->,thick,>=stealth] 	(0,0 )-- (-1.732, -1);						
\draw  [thick]  (-3.464, -2 ) -- (-1.732, -1);
\draw[fill]						(0,0)	circle[radius=0.075];		
\node [below] at  			(0,-0.15)		{$-\widetilde{z}_0$};
\node[right] at					(3.2,3)		{$\unitlower{-R_5 e^{-i\widetilde{\theta}}}$};
\node[left] at					(-3.2,3)		{$\unitupper{R_2 e^{i\widetilde{\theta}}}$};
\node[left] at					(-3.2,-3)		{$\unitlower{-R_3 e^{-i\widetilde{\theta}}}$};
\node[right] at					(3.2,-3)		{$\unitupper{R_7 e^{i\widetilde{\theta}}}$};
\node[left] at					(2.5, 1.8)		{$\Sigma_5$};
\node[right] at					(-2.5, 1.8)		{$\Sigma_2$};
\node[right] at					(-2.5,-1.8)		{$\Sigma_3$};
\node[left] at					(2.5,-1.8)		{$\Sigma_7$};
\end{tikzpicture}
\label{fig:jumps-2}
\end{figure}
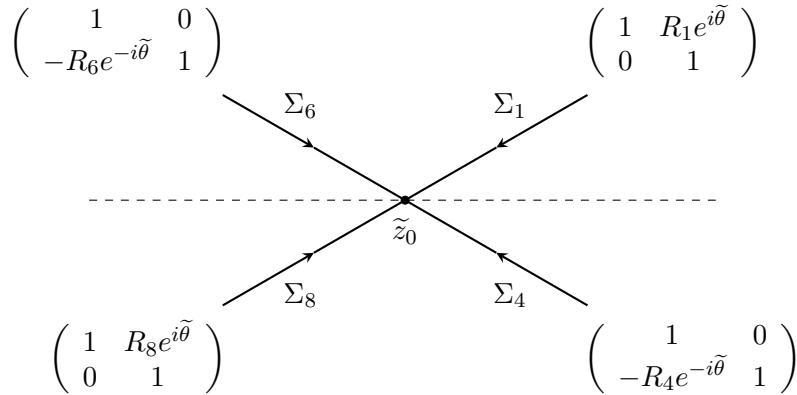
\section{The Localized Riemann-Hilbert Problem}
\label{sec:local}
We perform the following factorization of $\widetilde{M}^{(2)}$
\begin{equation}
\label{factor-LC}
\widetilde{M}^{(2)} = \widetilde{M}^{(3)} \widetilde{M}^{\RHP}.
\end{equation}
Here we require that $\widetilde{M}^{(3)} $ to be the solution of the pure $\dbar$-problem and it does not have any jumps, and $ \widetilde{M}^{\RHP}$ solution of the localized  RHP Problem \ref{MTM.RHP.local} below
with the jump matrix $\widetilde{V}^\RHP=\widetilde{V}^{(2)}$. The current section focuses on finding $ \widetilde{M}^{\RHP}$.
\begin{problem}
\label{MTM.RHP.local}
Find a $2\times 2$ matrix-valued function $\widetilde{M}^\RHP(\lambda; x,t)$, analytic on $\bbC \setminus (\Sigma'^{(2)}\cup\Gamma)$,
with the following properties:
\begin{enumerate}
\item	$\widetilde{M}^\RHP(\lambda;x,t) \rarr I$ as $|\lambda| \rarr \infty$ in $\bbC \setminus (\Sigma'^{(2)}\cup\Gamma)$, where $I$ is the $2\times 2$ identity matrix;
\item	$\widetilde{M}^\RHP(\lambda; x,t)$ is analytic for $\lambda \in \bbC \setminus  (\Sigma'^{(2)}\cup\Gamma)$ with continuous boundary values $\widetilde{M}^\RHP_\pm$
		on $\Sigma^{(3)}\cup\Gamma $;
\item	The jump relation $\widetilde{M}^\RHP_+(\lambda;x,t) = \widetilde{M}^\RHP_-(\lambda; x,t ) \widetilde{V}^\RHP(\lambda)$ holds on $\Sigma'^{(2)}\cup\Gamma $, where
		\begin{equation*}	
		\widetilde{V}^\RHP(\lambda) =	\widetilde{V}^{(2)}(\lambda).
		\end{equation*}
\end{enumerate}
\end{problem}

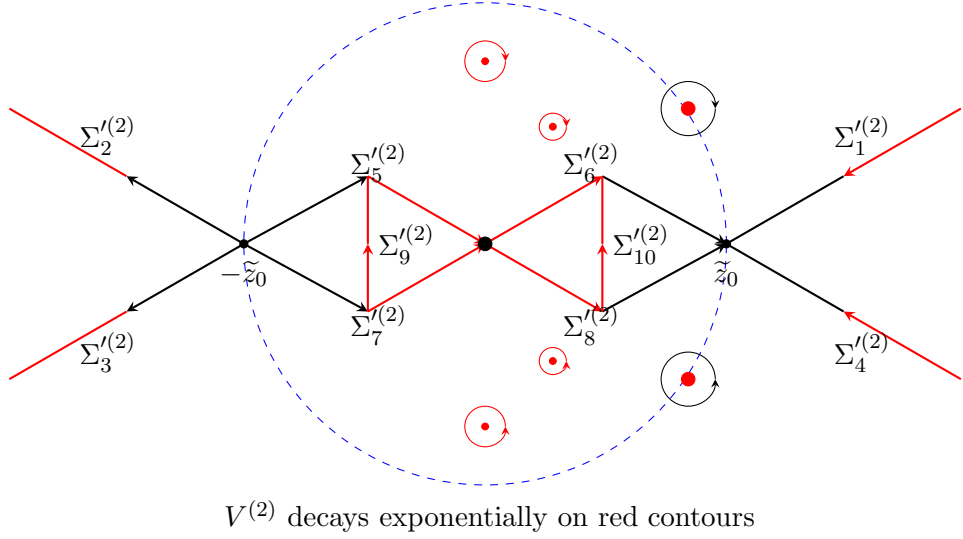
\begin{figure}[H]
\caption{ $\Sigma'^{(2)}\cup\Gamma$ }

\vskip 15pt
\begin{tikzpicture}[scale=0.9]

\draw[thick] 		(3.564 ,0) -- (5.296, 1);								
\draw[->,thick,>=stealth] [red]	(   7.028 ,2  )--	(5.296, 1);

\draw[->,thick,>=stealth] 			(1.732 ,1) -- (3.564 ,0) ;
\draw[->,thick,>=stealth] 	 [red]			(0,0)--(1.732 ,1);

\draw[thick] 			(3.564 ,0) -- (5.296, -1);								
\draw[->,thick,>=stealth]  [red]		(   7.028 , -2  )--	(5.296, -1);

\draw[->,thick,>=stealth] 			(1.732 ,-1) -- (3.564 ,0) ;
\draw[->,thick,>=stealth] 		 [red]		(0,0)--(1.732 ,-1);

\draw[->,thick,>=stealth] 			 (-3.564 ,0)--(-1.732 ,-1) ;
\draw[->,thick,>=stealth] 	 [red]			(-1.732 ,-1)--(0,0);
\draw[->,thick,>=stealth] 			 (-3.564 ,0)--(-1.732 ,1) ;
\draw[->,thick,>=stealth] 		 [red]		(-1.732 , 1)--(0,0);

\draw[->,thick,>=stealth] 			(-3.564 ,0) -- (-5.296, 1);							
\draw[thick]  [red]		(   -7.028 ,2  )--	(-5.296, 1);
\draw[->,thick,>=stealth] 			(-3.564 ,0) -- (-5.296, -1);							
\draw[thick]  [red]		(   -7.028 , -2  )--	(-5.296, -1);

\draw[->,thick,>=stealth]  [red]				(-1.732 , -1) -- (-1.732, 0);	
\draw	[thick]  [red]		(-1.732 , 0) -- (-1.732, 1);	
\draw[->,thick,>=stealth] 	 [red]			(1.732 , -1) -- (1.732, 0);	
\draw	[thick]	 [red]	 (1.732 , 0) -- (1.732, 1);

\draw [blue][dashed] (0,0) circle[radius=3.564];

\draw	[fill]							(-3.564 ,0)		circle[radius=0.06];	
\draw	[fill]							(3.564, 0)		circle[radius=0.06];
\draw		[fill]					(0,0)		circle[radius=0.1];

\node[below] at (-3.564,-0.1)			{$-\widetilde{z}_0$};
\node[below] at (3.564,-0.1)			{$\widetilde{z}_0$};

\node[right] at (5, 1.6)					{$\Sigma_1'^{(2)} $};
\node[left] at (-5, 1.6)					{$\Sigma_2'^{(2)}$};
\node[left] at (-5,-1.6)					{$\Sigma_3'^{(2)}$};
\node[right] at (5,-1.6)				{$\Sigma_4'^{(2)}$};
\node[left] at (-1,1.2)					{$\Sigma_5'^{(2)}$};
\node[left] at (-1,-1.2)					{$\Sigma_7'^{(2)}$};
\node[right] at (1,1.2)					{$\Sigma_6'^{(2)}$};
\node[right] at (1,-1.2)					{$\Sigma_8'^{(2)}$};
\node[right] at (-1.732, 0) {$\Sigma_9'^{(2)} $};
\node[right] at (1.732, 0) {$\Sigma_{10}'^{(2 )} $};

 \draw [red, fill=red] (1, 1.732) circle [radius=0.05];
\draw[->,>=stealth] [red] (1.2, 1.732) arc(360:0:0.2);
\draw [red, fill=red] (1,-1.732) circle [radius=0.05];
\draw[->,>=stealth] [red] (1.2, -1.732) arc(0:360:0.2);
\draw [red, fill=red]  (0, 2.7) circle [radius=0.05];
\draw[->,>=stealth] [red] (0.3, 2.7) arc(360:0:0.3);
\draw [red, fill=red] (0, -2.7) circle [radius=0.05];
\draw[->,>=stealth] [red]  (0.3, -2.7) arc(0:360:0.3);
\draw[->,>=stealth] (3.4,2) arc(360:0:0.4);
\draw[->,>=stealth] (3.4,-2) arc(0:360:0.4);
\draw [red, fill=red] (3,2) circle [radius=0.1];
\draw [red, fill=red] (3,-2) circle [radius=0.1];
\end{tikzpicture}
\label{fig:contour-d}
\begin{center}
  \begin{tabular}{ccc}
$V^{(2)}$ decays exponentially on red contours
\end{tabular}
 \end{center}
\end{figure}

We will eventually construct a classical solution to problem \ref{MTM.RHP.local}. We make the following observations.  On figure \ref{fig:contour-d}, for some fixed $\eps>0$, we define
\begin{align*}
L_\eps &=\lbrace \lambda: \lambda=u \widetilde{z}_0 e^{i\pi/6},  0 \leq u\leq 1/ \sqrt{3} \rbrace\\
           &\cup   \lbrace \lambda: \lambda=u \widetilde{z}_0 e^{i 5\pi/6},  0\leq u\leq  1/ \sqrt{3} \rbrace \\
           & \cup \lbrace \lambda: \lambda= \widetilde{z}_0+ u \widetilde{z}_0 e^{i\pi/6}, \eps\leq u <+\infty\rbrace\\
             &\cup   \lbrace \lambda: \lambda=-\widetilde{z}_0+u \widetilde{z}_0 e^{5i\pi/6},  \eps\leq u < +\infty \rbrace \\
           \Sigma'&= \left(\Sigma^{(3)}\setminus (L_\eps\cup {L_\eps^*}\cup \Sigma^{(3)}_9 \cup\Sigma^{(3)}_{10} )\right) \cup \left( \pm \lambda_\ell  \right)  \cup \left( \pm \lambda^*_\ell\right) .
\end{align*}
\begin{figure}[H]
\caption{ $\Sigma'$}
\vskip 15pt
\begin{tikzpicture}[scale=1.2]

\draw[thick]		(2,0) -- (2.866, 0.5);								

\draw[thick] 		(-2,0) -- (-2.866, 0.5);					

\draw[thick] 		(-2,0) -- (-2.866, -0.5);					

\draw[thick]		(2,0) -- (2.866,-0.5);								

\draw[thick] 	(-1.134, 0.5)--(-2, 0) ; 
\draw[thick] 	(-1.134, -0.5)--(-2, 0) ;

\draw[thick]	(2,0) -- (1.134, 0.5);
\draw[thick]	(2,0) -- (1.134, -0.5);

\draw	[fill]							(-2,0)		circle[radius=0.1];	
\draw	[fill]							(2,0)		circle[radius=0.1];
\node[below] at (-2,-0.1)			{$-\widetilde{z}_0$};
\node[below] at (2,-0.1)			{$\widetilde{z}_0$};

\draw [blue] [dashed] (0,0) circle[radius=2];	

\draw [red, fill=red] (1, 1.732) circle [radius=0.07];
\draw[->,>=stealth]  (1.2, 1.732) arc(360:0:0.2);

\draw [red, fill=red] (1,-1.732) circle [radius=0.07];
\draw[->,>=stealth] (1.2, -1.732) arc(0:360:0.2);

  \node[above]  at (0, 0) {\footnotesize $\text{Re}(i\widetilde{\theta})>0$};

      \node[below]  at (0, 0) {\footnotesize $\text{Re}(i\widetilde{\theta})<0$};

     \node[above]  at (0, 2.5) {\footnotesize $\text{Re}(i\widetilde{\theta})<0$};
     \node[below]  at (0, -2.5) {\footnotesize $\text{Re}(i\widetilde{\theta})>0$};
     \draw[dashed ] (0,0) -- (-3,0);
\draw[ dashed] (-3,0) -- (-5,0);
\draw[->,>=stealth] [dashed](0,0) -- (3,0);
 \draw[ dashed] (3,0) -- (5,0);
 \node[right] at (5,0) {$\bbR$};
\end{tikzpicture}
\label{fig:sigma'}
\end{figure}
Here $\Sigma'$ is the black portion of the contour $\Sigma'^{(2)}\cup\Gamma $ given in Figure \ref{fig:contour-d}. Our goal is to build explicitly solvable models out of this contour. Now we decompose $\widetilde{W}^{ ( 2 )} = \widetilde{V}^{ ( 2 )} -I $ into two parts:
\begin{equation}
\widetilde{W}^{ ( 2 )}=\widetilde{W}^e+\widetilde{W}'
\end{equation}
where $\widetilde{W}'=\widetilde{W}^{ ( 2 )}\restriction_{  \Sigma' }$ and $\widetilde{W}^e=\widetilde{W}^{ ( 2 )}\restriction_{   \left( \Sigma^{(2)}\cup\Gamma \right) \setminus\Sigma'   }$.

Near $\pm \widetilde{z}_0$, we write
$$\text{Re} i\widetilde{\theta}(\lambda; x, t)=t(\text{Im}\lambda)\left(\frac 1{1+\widetilde{z}^2_0}\right) \left(   \dfrac{\widetilde{z}_0^2}{(\text{Re}\lambda )^2 + (\text{Im}\lambda )^2 } -1\right)$$
and set
\begin{equation}
\label{tau}
\tau=\dfrac{t \widetilde{z}_0}{1+\widetilde{z}_0^2}.
\end{equation}
On $L_\eps$, away from $\pm \tz_0$, for $i=1,  2, 7, 8$ we estimate:
\begin{equation}
\label{Ri-decay}
\left\vert R_i e^{i\ttheta} \right\vert \leq C_r e^{-C \eps \tau},
\end{equation}
Similarly, on ${L_\eps^*}$  for $j=3,  4, 5, 6$
\begin{equation}
\label{Rj-decay}
\left\vert R_j e^{-i\ttheta} \right\vert  \leq C_r e^{-C \eps \tau}.
\end{equation}

Due to the construction of $\mathcal{K}(\phi)$ and $v_9$
on $\Sigma'^{(2)}_9$ and $\Sigma^{(2)}_{10}$, one obtains
\begin{equation}
\label{R9-decay}
\left\vert v_9 -I \right\vert \lesssim e^{-c\tau}.
\end{equation}
This together with the results above imply
\begin{equation}
\label{expo}
|w^e|\lesssim e^{-c\tau}
\end{equation}

\subsection{Construction of local parametrices}
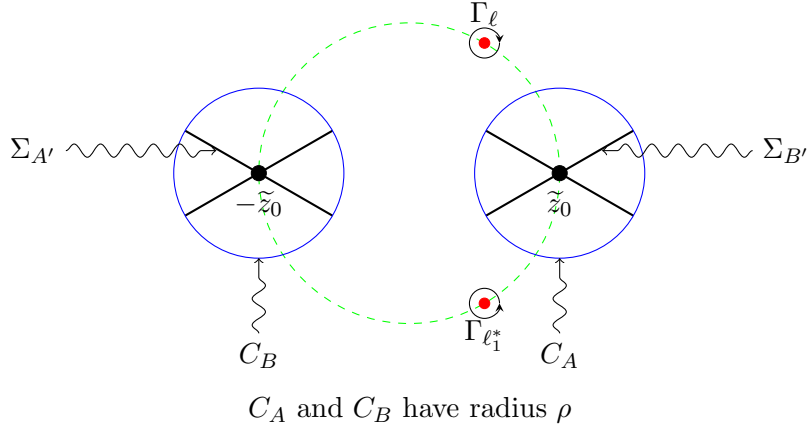
\begin{figure}[H]
\caption{ $\Sigma'=\Sigma_{A'}\cup \Sigma_{B'}\cup \Gamma_\ell \cup \Gamma^*_\ell$}
\vskip 15pt
\begin{tikzpicture}[ photon/.style={decorate,decoration={snake,post length=0.8mm}} ]

\draw [green] [dashed] (0,0) circle [radius=2];

\draw  [thick]   (2.9794, 0.5655)-- (1.02, -0.5655);
\draw  [thick]   (2.9794, -0.5655)-- (1.02, 0.5655);
\draw  [thick]   (-2.9794, 0.5655)-- (-1.02, -0.5655);
\draw  [thick]   (-2.9794, -0.5655)-- (-1.02, 0.5655);
\draw	[fill]						(-2,0)		circle[radius=0.1];	
\draw	[fill]					(2,0)		circle[radius=0.1];
\draw		[blue]				(-2,0)		circle[radius=1.131];	
\draw		[blue]					(2,0)		circle[radius=1.131];
\node[below] at (-2,-0.1)			{$-\widetilde{z}_0$};
\node[below] at (2,-0.1)			{$\widetilde{z}_0$};
\node[left] at (-4.56, 0.3)					{$\Sigma_{A'}$};
\node[right] at (4.56, 0.3)				{$\Sigma_{B'}$};

\draw [red, fill=red] (1, 1.732) circle [radius=0.07];
\draw[->,>=stealth]  (1.2, 1.732) arc(360:0:0.2);
\draw [red, fill=red] (1,-1.732) circle [radius=0.07];
\draw[->,>=stealth] (1.2, -1.732) arc(0:360:0.2);

\draw[->,photon] ( 4.565,0.3) --  (2.565, 0.3);
\draw[->,photon] ( -4.565,0.3) --  (-2.565, 0.3);
\draw[->,photon] (-2, -2.131) --  (-2, -1.131);
\draw[->,photon] (2, -2.131) --  (2, -1.131);
\node [below] at  (-2, -2.131)  {$C_B$};
\node [below] at  (2, -2.131) {$C_A$};
\node [above] at (1, 1.82) {$\Gamma_{\ell}$};
\node [below] at (1, -1.82)  {$\Gamma_{\ell^*_1}$};
\end{tikzpicture}
\begin{center}
  \begin{tabular}{ccc}
$C_A$ and $C_B$ have radius $\rho$
\end{tabular}
\end{center}
\label{fig:contour-2}
\end{figure}
In this subsection we construct some local parametrices which are needed to obtain $\widetilde{M}^\RHP$. To achieve this, we build the solutions of the following three exactly solvable RHPs:
\begin{problem}
\label{prob:MTM.sol}
Find a matrix-valued function $\widetilde{M}^{(sol)}(\lambda;x,t)$ on $\bbC \setminus\Sigma$ with the following properties:
\begin{enumerate}
\item		$\widetilde{M}^{(sol)}(\lambda;x,t) \rarr I$ as $|\lambda| \rarr \infty$;
\item		$\widetilde{M}^{(sol)}(\lambda;x,t)$ is analytic for $\lambda \in  \bbC \setminus ( \gamma_\ell \cup \gamma^*_\ell ) $
			with continuous boundary values
			$\widetilde{M}^{(sol)}_\pm(\lambda;x,t)$;
\item On $ \lambda_\ell \cup \lambda^*_\ell$, let $\widetilde{\delta}(\lambda)$ be the solution to Problem \ref{delta} and we have the following residue conditions
\begin{equation}
\Res_{\lambda=\lambda_{j}}\widetilde{M}^{(sol)}(x,t;\lambda)=\lim_{\lambda \rightarrow \lambda_{j}}\widetilde{M}^{(sol)}\left(\begin{array}{ccc}
0 & \widetilde{\delta}^{-2}(\lambda)\widetilde{c}_j e^{i\widetilde{\theta}} & \\
0 & 0 &
\end{array}\right) \quad \lambda_j \in B_{r}
\end{equation}
\begin{equation}
\Res_{\lambda=\lambda^*_{j}}\widetilde{M}^{(sol)}(x,t;\lambda)=\lim_{\lambda \rightarrow \lambda^*_{j}}\widetilde{M}^{(sol)}\left(\begin{array}{ccc}
0 & 0 & \\
-\lambda^*_j\bar{\widetilde{c}}_j\widetilde{\delta}^2(\lambda)e^{-i\widetilde{\theta}} & 0 &
\end{array}\right) \quad \lambda^*_j \in B_{r}
\end{equation}
\end{enumerate}
\end{problem}
\begin{problem}
\label{prob:MTM.A}
Find a matrix-valued function $\widetilde{M}^{A'}(\lambda;x,t)$ on $\bbC \setminus\Sigma_A'$ with the following properties:
\begin{enumerate}
\item		$\widetilde{M}^{A'}(\lambda;x,t) \rarr I$ as $ \lambda \rarr \infty$.
\item		$\widetilde{M}^{A'}(\lambda;x,t)$ is analytic for $\lambda \in  \bbC \setminus \Sigma_A' $
			with continuous boundary values
			$\widetilde{M}^{A'}_\pm(\lambda;x,t)$.
\item On $ \Sigma_A'$ we have the following jump conditions
$$\widetilde{M}^{A'}_+(\lambda;x,t)=\widetilde{M}^{A'}_-(\lambda;x,t)	
			\tv^{A'}(\lambda),$$
			where $\tv^{A'}=\tv^{(2)}\restriction _{\Sigma_A' }$.
\end{enumerate}
\end{problem}			

\begin{problem}
\label{prob:MTM.B}
Find a matrix-valued function $\widetilde{M}^{B'}(\lambda;x,t)$ on $\bbC \setminus\Sigma_B'$ with the following properties:
\begin{enumerate}
\item		$\widetilde{M}^{B'}(\lambda;x,t) \rarr I$ as $ \lambda \rarr \infty$;
\item		$\widetilde{M}^{B'}(\lambda;x,t)$ is analytic for $\lambda \in  \bbC \setminus \Sigma_B' $
			with continuous boundary values
			$\widetilde{M}^{B’}_\pm(\lambda;x,t)$;
\item On $ \Sigma_B'$ we have the following jump conditions
$$\widetilde{M}^{B'}_+(\lambda;x,t)=\widetilde{M}^{B’}_-(\lambda;x,t)	
			\tv^{B'}(\lambda),$$
			where $\tv^{B’}=\tv^{(2)}\restriction_{\Sigma_B' }$.
\end{enumerate}
\end{problem}	
We first study the solution to Problem \ref{prob:MTM.sol}. Since this problem consists of only discrete data, using Beals-Coifman formula, we have a closed system:
\begin{equation}
\label{BC-Int-solitons}
\begin{pmatrix}
{ \mu_{11}(\lambda_j ) } & {\mu_{12}( {\lambda^*_j} )}\\
{\mu_{21}(\lambda_j )} & {\mu_{22}( \lambda^*_j )}
\end{pmatrix}
= I +\begin{pmatrix}
        { \dfrac{\mu_{12}(\lambda^*_j)\widetilde{c}^*_j \lambda^*_j  \left[ \widetilde{\delta}(\lambda^*_j)\right]^{2} e^{-i\widetilde{\theta}(\lambda^*_j )} }{\lambda_j-\lambda^*_j }  } &
        { \dfrac{\mu_{11}( \lambda_j) \widetilde{c}_j \left[ \widetilde{\delta}(\lambda_j)\right]^{-2} e^{i\widetilde{\theta}(\lambda_j)} }{\lambda^*_j- \lambda_j}   }\\
        {\dfrac{\mu_{22}(\lambda^*_j)\widetilde{c}^*_j \lambda^*_j  \left[ \widetilde{\delta}(\lambda^*_j)\right]^{2} e^{-i\widetilde{\theta}(\lambda^*_j )} }{\lambda_j-\lambda^*_j }} &
        { \dfrac{\mu_{21}( \lambda_j) \widetilde{c}_j \left[ \widetilde{\delta}(\lambda_j)\right]^{-2} e^{i\widetilde{\theta}(\lambda_j)} }{\lambda^*_j- \lambda_j} },
        \end{pmatrix}
\end{equation}
Using Beals-Coifman's theorem, we arrive at the following expressions
\begin{equation}\label{BC-solitons}
\tm^{(sol)}(x,t,\lambda)=I+A
\begin{pmatrix}
{\dfrac {(\lambda_j-\lambda^*_j)\lambda^*_j\left|\widetilde{\delta} (\lambda^*_j)\right|^4|\widetilde{c}_j|^2e^{-2{\Im}\widetilde{\theta}(\lambda_j)}}{\lambda-\lambda^*_j}} &
\dfrac {(\lambda_j-\lambda^*_j)^2\widetilde{\delta}^{-2}(\lambda_j)\widetilde{c}_j e^{i\widetilde{\theta}(\lambda_j)}}{\lambda-\lambda_j}\\
\dfrac {-(\lambda_j-\lambda^*_j)^2\widetilde{\delta}^2(\lambda^*_j )\widetilde{c}^*_j\lambda^*_je^{-i\widetilde{\theta}(\lambda^*_j)}}{\lambda-\lambda^*_j} &
\dfrac {(\lambda^*_j-\lambda_j)\lambda^*_j\left|\widetilde{\delta} (\lambda^*_j)\right|^4|\widetilde{c}_j|^2e^{-2\Im \widetilde{\theta}(\lambda_j)}}{\lambda-\lambda_j},
\end{pmatrix}
\end{equation}
where $A^{-1}=(\lambda_j-\lambda^*_j)^2-\lambda^*_j\left|\widetilde{\delta} (\lambda^*_j)\right|^4|\widetilde{c}_j|^2 e^{-2{\Im}\widetilde{\theta}(\lambda_j)}$. Using the reconstruction formula of $u,v$, we have
\begin{eqnarray}
\begin{aligned}
&u=\overline{\widetilde{M}_{12}(x,t,0)}=\dfrac {(\lambda_j-\lambda^*_j)^2\widetilde{\delta}^{-2}(\lambda^*_j)\widetilde{c}^*_j e^{-i\widetilde{\theta}(\lambda^*_j)}}{-\lambda^*_j\left((\lambda_j-\lambda^*_j)^2-\lambda_j\left|\widetilde{\delta} (\lambda^*_j)\right|^4|\widetilde{c}_j|^2 e^{-2{\Im}\widetilde{\theta}(\lambda_j)}\right)}\\
&v=\widetilde{M}_{21}(x,t,0)=\dfrac {(\lambda_j-\lambda^*_j)^2\widetilde{\delta}^2(\lambda^*_j )\widetilde{c}^*_je^{-i\widetilde{\theta}(\lambda^*_j)}}{(\lambda_j-\lambda^*_j)^2-\lambda^*_j\left|\widetilde{\delta} (\lambda^*_j)\right|^4|\widetilde{c}_j|^2 e^{-2{\Im}\widetilde{\theta}(\lambda_j)}}.
\end{aligned}
\end{eqnarray}
Recall that
\begin{eqnarray*}
\begin{aligned}
\lambda_j=\xi_j+i\eta_j.
\end{aligned}
\end{eqnarray*}
We split $\widetilde{\theta}(\lambda_j;x,t)$ into real and imaginary parts,
\begin{eqnarray*}
\begin{aligned}
\widetilde{\theta}(\lambda_j;x,t)=\widetilde{\theta}_{\mathbb R}(\lambda_j;x,t)+i\widetilde{\theta}_{\mathbb I}(\lambda_j;x,t),
\end{aligned}
\end{eqnarray*}
with 
\begin{eqnarray*}
\begin{aligned}
&\widetilde{\theta}_{\mathbb R}(\xi_j,\eta_j;x,t)=\frac 12\left[\left(\xi_j-\frac {\xi_j}{\xi^2_j+\eta^2_j}\right)x+\left(\xi_j+\frac {\xi_j}{\xi^2_j+\eta^2_j}\right)t\right]\\
&\widetilde{\theta}_{\mathbb I}(\xi_j,\eta_j;x,t)=\frac 12\left[\left(\eta_j+\frac {\eta_j}{\xi^2_j+\eta^2_j}\right)x+\left(\eta_j-\frac {\eta_j}{\xi^2_j+\eta^2_j}\right)t\right].
\end{aligned}
\end{eqnarray*}
Therefore,
\begin{eqnarray*}
\begin{aligned}
&(\lambda_j-\lambda^*_j)^2-\lambda_j\left|\widetilde{\delta} (\lambda^*_j)\right|^4|\widetilde{c}_j|^2 e^{-2{\Im}\widetilde{\theta}(\lambda_j)}\\
=&(2i\eta_j)^2-(\xi_j+i\eta_j)\left|\widetilde{\delta} (\lambda^*_j)\right|^4|\widetilde{c}_j|^2 e^{-2{\Im}\widetilde{\theta}(\lambda_j)}\\
=&-4\eta_j\left|\widetilde{\delta} (\lambda^*_j)\right|^2|\widetilde{c}_j|(\xi^2_j+\eta^2_j)^{\frac 14}e^{-{\Im}\widetilde{\theta}(\lambda_j)+\frac i2\arctan(\frac {\eta_j}{\xi_j})}\cosh\left(\widetilde{\theta}_{\mathbb I}(\lambda_j)-\frac i2\arctan(\frac {\eta_j}{\xi_j})+\log\left[\frac {2\eta_j}{(\xi^2_j+\eta^2_j)^{\frac 14}|\widetilde{\delta}(\lambda^*_j)|^2|\widetilde{c}_j|}\right]\right)
\end{aligned}
\end{eqnarray*}
Finally,
\begin{align}
\label{u-sol}
u&=\dfrac {-\eta_j(\xi^2+\eta^2_j)^{-\frac 34}\widetilde{c}^*_je^{-i\widetilde{\theta}_{\mathbb R}(\lambda_j)+\frac i2\arctan(\frac {\eta_j}{\xi_j})}}{|\widetilde{\delta}(\lambda^*_j)|^2\widetilde{\delta}^2(\lambda^*_j)|\widetilde{c}_j|}{\text {sech}}\left(\widetilde{\theta}_{\mathbb I}(\lambda_j)-\frac i2\arctan(\frac {\eta_j}{\xi_j})+\log\left[\frac {2\eta_j}{(\xi^2_j+\eta^2_j)^{\frac 14}|\widetilde{\delta}(\lambda^*_j)|^2|\widetilde{c}_j|}\right]\right)\\
\label{v-sol}
v&=\dfrac {-4\eta^2_j\widetilde{\delta}^2(\lambda^*_j)\widetilde{c}^*_je^{-i\widetilde{\theta}_{\mathbb R}(\lambda_j)-\widetilde{\theta}_{\mathbb I}(\lambda_j)}}{-4\eta_j\left|\widetilde{\delta} (\lambda^*_j)\right|^2|\widetilde{c}_j|(\xi^2_j+\eta^2_j)^{\frac 14}e^{-\widetilde{\theta}_{\mathbb I}(\lambda_j)-\frac i2\arctan(\frac {\eta_j}{\xi_j})}\cosh\left(\widetilde{\theta}_{\mathbb I}(\lambda_j)+\frac i2\arctan(\frac {\eta_j}{\xi_j})+\log\left[\frac {2\eta_j}{(\xi^2_j+\eta^2_j)^{\frac 14}|\widetilde{\delta}(\lambda^*_j)|^2|\widetilde{c}_j|}\right]\right)}\\
\nonumber
&=\dfrac {\eta_j\widetilde{\delta}(\lambda^*_j)\widetilde{c}^*_je^{-i\widetilde{\theta}_{\mathbb R}(\lambda_j)+\frac i2\arctan(\frac {\eta_j}{\xi_j})}}{\widetilde{\delta}(\lambda_j)|\widetilde{c}_j|(\xi^2_j+\eta^2_j)^{\frac 14}}{\text {sech}}\left(\widetilde{\theta}_{\mathbb I}(\lambda_j)+\frac i2\arctan(\frac {\eta_j}{\xi_j})+\log\left[\frac {2\eta_j}{(\xi^2_j+\eta^2_j)^{\frac 14}|\widetilde{\delta}(\lambda^*_j)|^2|\widetilde{c}_j|}\right]\right)
\end{align}
In order to obtain the asymptotic expansions of solutions to \ref{prob:MTM.A} and \ref{prob:MTM.B},  we need the following matrix-valued function:
\begin{equation}
\label{eq: para}
\mathcal{P}=\begin{cases}
\twomat{\mathcal{P}_-}{0 }{0}{\mathcal{P}_-^{-1}}, \quad &\left\vert z + z_0 \right\vert<\rho \\
\twomat{\mathcal{P}_+}{0 }{0}{\mathcal{P}_+^{-1}}, \quad & \left\vert z - z_0 \right\vert <\rho \\
\twomat{1}{0}{0}{1},  \quad & \left\vert z \pm z_0 \right\vert \geq \rho
\end{cases}
\end{equation}
where 
\begin{align*}
\mathcal{P}_-&= (8 \tau)^{i \kappa / 2} e^{ i \tau}(-\lambda-\tz_0)^{-i\kappa^-} (-\zeta_-)^{-i\kappa^-} e^{i \zeta_-^{2} / 4}e^{i\ttheta/2} \\
\mathcal{P}_+&= (8 \tau)^{-i \kappa / 2} e^{- i \tau}(\lambda-\tz_0)^{i\kappa^+} \zeta_+^{i\kappa^+} e^{-i \zeta_+^{2} / 4}e^{i\ttheta/2} 
\end{align*}
with 
$$\zeta _\mp=    \sqrt{\dfrac{2t   }{ (1+\widetilde{z}_0^2)\widetilde{z}_0  } }  (\lambda \pm \widetilde{z}_0). $$
Then we further set 
\begin{equation}
\label{c}
 m^\RHP :=  \tilde{m_p}\mathcal{P}^{-1}
\end{equation}
where 
\begin{align}
\tilde{m_p}\restriction  \lbrace  z: \left\vert z + z_0 \right\vert < \rho \rbrace &=m^{A'} \twomat{\mathcal{P}_-}{0 }{0}{\mathcal{P}_-^{-1}}  := m^{A} ,\\
\tilde{m_p}\restriction  \lbrace  z: \left\vert z - z_0 \right\vert < \rho \rbrace &=m^{B'} \twomat{\mathcal{P}_+}{0 }{0}{\mathcal{P}_+^{-1}}  := m^{B}.
\end{align}
Denote
\begin{align*}
\delta^0_A &= (8\tau)^{i\kappa/2} e^{i\tau} \widetilde{\delta}_0^-\\
\delta^0_B &= (8\tau)^{-i\kappa/2} e^{-i\tau} \widetilde{\delta}_0^+
\end{align*}
\begin{figure}[H]
\caption{$\Sigma_A,\Sigma_B$}
\vskip 15pt
\begin{tikzpicture}[scale=0.6]
\draw[dashed] 				(-4,0) -- (4,0);							
\draw [->,thick,>=stealth] 	(0,0 )-- (1.732, 1);						
\draw  [thick]  (3.464,2 ) -- (1.732, 1);
\draw [->,thick,>=stealth] 	(0,0 )-- (1.732, -1);						
\draw  [thick]  (3.464,-2 ) -- (1.732, -1);
\draw [->,thick,>=stealth] 	(0,0 )-- (-1.732, 1);						
\draw  [thick]  (-3.464,2 ) -- (-1.732, 1);
\draw [->,thick,>=stealth] 	(0,0 )-- (-1.732, -1);						
\draw  [thick]  (-3.464, -2 ) -- (-1.732, -1);

\draw[fill]						(0,0)	circle[radius=0.075];		
\node [below] at  			(0,-0.15)		{$0$};
\node[left] at					(2.5, 2.3)		{$\Sigma_A^1$};
\node[right] at					(-2.5,2.3)		{$\Sigma_A^2$};
\node[right] at					(-2.5,-2.3)		{$\Sigma_A^3$};
\node[left] at					(2.5,-2.3)		{$\Sigma_A^4$};
\end{tikzpicture}
\qquad
\begin{tikzpicture}[scale=0.6]
\draw[dashed] 				(-4,0) -- (4,0);							
\draw[dashed] 				(-4,0) -- (4,0);							
\draw [thick] 	(0,0 )-- (1.732, 1);						
\draw [->,thick,>=stealth]   (3.464,2 ) -- (1.732, 1);
\draw [thick] 	(0,0 )-- (1.732, -1);						
\draw  [->,thick,>=stealth]  (3.464,-2 ) -- (1.732, -1);
\draw [thick] 	(0,0 )-- (-1.732, 1);						
\draw [->,thick,>=stealth] (-3.464,2 ) -- (-1.732, 1);
\draw [thick] 	(0,0 )-- (-1.732, -1);						
\draw [->,thick,>=stealth]  (-3.464, -2 ) -- (-1.732, -1);

\draw[fill]						(0,0)	circle[radius=0.075];		
\node [below] at  			(0,-0.15)		{$0$};
\node[left] at					(2.5, 2.3)		{$\Sigma_B^1$};
\node[right] at					(-2.5,2.3)		{$\Sigma_B^2$};
\node[right] at					(-2.5,-2.3)		{$\Sigma_B^3$};
\node[left] at					(2.5,-2.3)		{$\Sigma_B^4$};
\end{tikzpicture}
\label{fig:jumps-A-B}
\end{figure}
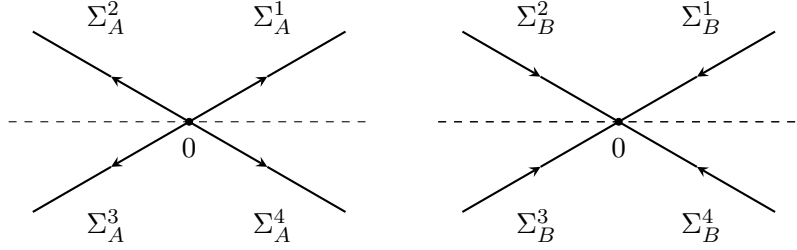
 Let $\Sigma_A$ and $\Sigma_B$ denote the contours
\begin{equation*}
\lbrace \lambda= u e^{\pm i\pi/6} : -\infty<u<\infty \rbrace
\end{equation*}
with the same orientation as those of $\Sigma_{A'}$ and $\Sigma_{B'}$ respectively.
$m^A$ solves the following Riemann-Hilbert problem
\begin{equation}
\left\{\begin{array}{ll}m_{+}^{A}(\zeta) & =m_{-}^{A}(\zeta) v^{A}(\zeta), \quad \zeta \in \Sigma_{A} \\\\ m^{A}(\zeta) &  = I-\frac{m_{1}^{A}}{\zeta}+O\left(\zeta^{-2}\right), \quad \zeta \rightarrow \infty\end{array}\right.
\end{equation}
We have from the list of entries stated in \eqref{R1}-\eqref{R8-} the rescaled jump matrices on $\Sigma_A$ and $\Sigma_B$ respectively :
\begin{equation}
\label{jump-vA}
v^{A}=\begin{cases}
\Twomat{1}{0}{-\dfrac{(\delta^0_A)^{2} \widetilde{z}_0\overline{\widetilde{r}(-\widetilde{z}_0)}}{1 - \widetilde{z}_0|\widetilde{r}(-\widetilde{z}_0)|^2} (-\zeta_-)^{ -2i\kappa^-} e^{ i \zeta_-^2/2}  }{1}, \quad \zeta_- \in \Sigma_A^1 \\
 \Twomat{1}{(\delta^0_A)^{-2}\widetilde{r}(-\widetilde{z}_0)(-\zeta_-)^{2i\kappa^-} e^{ -i \zeta_-^2/2}}{0  }{1}, \quad \zeta_- \in \Sigma_A^2 \\
\Twomat{1}
{0 }{(\delta^0_A)^{2}\widetilde{z}_0\overline{\widetilde{r}(-\widetilde{z}_0)}(-\zeta_-)^{- 2i\kappa^-} e^{ i \zeta_-^2/2}}{1}, \quad \zeta_- \in \Sigma_A^3 \\
\Twomat{1}{-\dfrac {(\delta^0_A)^{-2}\widetilde{r}(-\widetilde{z}_0)}{1-\widetilde{z}_0|\widetilde{r}(-\widetilde{z}_0)|^2}(-\zeta_-)^{ 2i\kappa^-} e^{ -i \zeta_-^2/2}  } {0}{1}, \quad \zeta_- \in \Sigma_A^4 \\
\end{cases}
\end{equation}

\begin{equation}
\label{jump-vB}
v^{B}=\begin{cases}
 \Twomat{1}{(\delta^0_B)^{-2}\widetilde{r}(\widetilde{z}_0)\zeta_+^{- 2i\kappa^+} e^{ i \zeta_+^2/2}}{0  }{1}, \quad \zeta_+ \in \Sigma_B^1 \\
\Twomat{1}{0}{-\dfrac{(\delta^0_B)^{2} \widetilde{z}_0\overline{\widetilde{r}(\widetilde{z}_0)}}{1 +  \widetilde{z}_0|\widetilde{r}(\widetilde{z}_0)|^2} \zeta_+^{ 2i\kappa^+} e^{ -i \zeta_+^2/2}  }{1}, \quad \zeta_+ \in \Sigma_B^2 \\
\Twomat{1}{-\dfrac {(\delta^0_B)^{-2}\widetilde{r}(\widetilde{z}_0)}{1+\widetilde{z}_0|\widetilde{r}(\widetilde{z}_0)|^2}\zeta_+^{- 2i\kappa^+} e^{ i \zeta_+^2/2}  } {0}{1}, \quad \zeta_+ \in \Sigma_B^3 \\
 \Twomat{1}
{0 }{(\delta^0_B)^{2}\widetilde{z}_0\overline{\widetilde{r}(\widetilde{z}_0)}\zeta_+^{ 2i\kappa^+} e^{ -i \zeta_+^2/2}}{1}, \quad \zeta_+ \in \Sigma_B^4 .
\end{cases}
\end{equation}
$m^B$ solves the following Riemann-Hilbert problem
\begin{equation}
\left\{\begin{array}{ll}m_{+}^{B}(\zeta) & =m_{-}^{B^{0}}(\zeta) v^{B}(\zeta), \quad \zeta \in \Sigma_{B} \\\\ m^{B}(\zeta) & = I-\frac{m_{1}^{B}}{\zeta}+O\left(\zeta^{-2}\right), \quad \zeta \rightarrow \infty\end{array}\right.
\end{equation}
The explicit form of $m^{B^0}_1$ is given as follows :
\begin{equation}
\label{explicit-B0}
m^{B^0}_1=\twomat{0}{-i\beta^B_{21}}{i\beta^B_{12}}{0}
\end{equation}
where
$$\beta^B_{12}=\dfrac{\sqrt{2\pi } e^{i\pi/4} e^{-\pi \kappa/2 } }{\widetilde{r}(\widetilde{z}_0) \Gamma(-i\kappa)}, \qquad \beta^B_{21}=\dfrac {\kappa}{\beta^B_{12}}$$
and $\Gamma(z)$ is the \textit{Gamma} function.
Recall that on $\Sigma_B$, $\zeta=  \sqrt{\dfrac{2t   }{ (1+\widetilde{z}_0^2)\widetilde{z}_0  } }  (\lambda - \widetilde{z}_0) $, thus we have
\begin{equation}
\label{differ-m-B}
\left\vert   \dfrac{m_1^B}{\zeta}-\dfrac{m_1^{B^0}}{\zeta}   \right\vert \lesssim \dfrac{1}{t(\lambda-\widetilde{z}_0)}.
\end{equation}
In the same fashion, the explicit form of $m^{A^0}_1(\zeta)$ is
\begin{equation}
\label{explicit-B0}
m^{A^0}_1=\twomat{0}{i\beta^A_{12}}{-i\beta^A_{21}}{0}
\end{equation}
where
\begin{align*}
\beta^A_{12}=\dfrac{\sqrt{2\pi } e^{i\pi/4} e^{-\pi \kappa/2 } }{-\widetilde{z}_0\overline{\widetilde{r}(-\widetilde{z}_0)} \Gamma(-i\kappa)}, \qquad \beta^A_{21}=\dfrac {\kappa}{\beta^A_{12}}
\end{align*}
We also have an analogue of \eqref{differ-m-B} for $m_1^{A^0}$:
\begin{equation}
\label{differ-m-A}
\left\vert   \dfrac{m_1^A}{\zeta}-\dfrac{m_1^{A^0}}{\zeta}   \right\vert \lesssim \dfrac{1}{t(\lambda+\widetilde{z}_0)}.
\end{equation}
\begin{proposition}
\label{solution-A-B}
Setting $\zeta=  \sqrt{\dfrac{2t   }{ (1+\widetilde{z}_0^2)\widetilde{z}_0  } }  (\lambda + \widetilde{z}_0)  $, the solution to RHP Problem  \ref{prob:MTM.A}  $\tm^{A'}$ admits the following expansion:
\begin{equation}
\label{expansion-A}
\tm^{A'}(\lambda(\zeta) ;x,t)=I +\dfrac{1}{\zeta}\twomat{0}{i ( -\delta^0_A)^{-2} \beta^A_{12}}{-i ( \delta^0_A)^{2}\beta^A_{21} }{0} +\mathcal{O}(\tau^{-1}).
\end{equation}
Similarly, in the case of $\zeta=  \sqrt{\dfrac{2t   }{ (1+\widetilde{z}_0^2)\widetilde{z}_0  } }  (\lambda - \widetilde{z}_0)  $, the solution to RHP Problem  \ref{prob:MTM.B}  $\tm^{B'}$ admits the following expansion:
\begin{equation}
\label{expansion-B}
\tm^{B'}(\lambda(\zeta) ;x,t)=I +\dfrac{1}{\zeta}\twomat{0}{-i ( \delta^0_B)^{-2} {\beta}^B_{21}}{i ( \delta^0_B)^{2}{\beta}^B_{12} }{0} +\mathcal{O}(\tau^{-1}).
\end{equation}
\end{proposition}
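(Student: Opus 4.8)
The plan is to identify each local problem with the exactly solvable parabolic cylinder ($\PC$) model, import the classical large-argument expansion of that model, and then undo the scaling and the diagonal dressing to read off the stated coefficient. I carry out the argument for Problem \ref{prob:MTM.A} near $-\tz_0$; Problem \ref{prob:MTM.B} near $\tz_0$ is handled verbatim under the replacements $-\tz_0\mapsto \tz_0$, $\kappa^-\mapsto\kappa^+$, $\delta^0_A\mapsto\delta^0_B$, $\Sigma_A\mapsto\Sigma_B$, and $(\beta^A_{12},\beta^A_{21})\mapsto(\beta^B_{12},\beta^B_{21})$.

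First I would record the reduction already built into \eqref{c}: inside the disk $\{|\lambda+\tz_0|<\rho\}$ one has $m^A=\tm^{A'}\mathcal{P}_-^{\sigma_3}$, equivalently $\tm^{A'}=m^A\,\mathcal{P}_-^{-\sigma_3}$, with $\mathcal{P}_-$ the scalar of \eqref{eq: para} and $m^A$ the solution of the $\PC$ problem on $\Sigma_A$ carrying the frozen jump $v^A$ of \eqref{jump-vA}, written in the rescaled variable $\zeta_-=\sqrt{2t/((1+\tz_0^2)\tz_0)}\,(\lambda+\tz_0)$. The factor $\mathcal{P}_-$ is designed precisely so that this conjugation carries the $\lambda$-dependent jump $\tv^{(2)}$ restricted to $\Sigma_{A'}$ onto the constant-coefficient $\PC$ jump; the analytic input is Lemma \ref{delta}(3), which replaces $\widetilde{\delta}(\lambda)^{\pm2}$ near $-\tz_0$ by $(-\lambda-\tz_0)^{\pm2i\kappa^-}(\widetilde{\delta}^-_0)^{\pm2}$ up to an $\mathcal{O}(|\lambda+\tz_0|^{1/2})$ error, together with the freezing of $\tdr(\lambda)$ to $\tdr(-\tz_0)$.

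Second I would solve the $\PC$ model explicitly. The jump $v^A$ differs from the canonical $\PC$ jump only by the constant diagonal conjugation $(\delta^0_A)^{\sigma_3}$, so its solution is $m^A(\zeta)=(\delta^0_A)^{-\sigma_3}m^{A^0}(\zeta)(\delta^0_A)^{\sigma_3}$, where $m^{A^0}$ is the classical parabolic cylinder solution, $m^{A^0}(\zeta)=I-m^{A^0}_1/\zeta+\mathcal{O}(\zeta^{-2})$, and $m^{A^0}_1$ is the explicit off-diagonal matrix with entries $i\beta^A_{12}$, $-i\beta^A_{21}$ built from the $\Gamma$-function constants (cf.\ \cite{CVZ99}). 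Since the conjugating matrices are $\zeta$-independent, the coefficient transforms as $m^A_1=(\delta^0_A)^{-\sigma_3}m^{A^0}_1(\delta^0_A)^{\sigma_3}$, which inserts the factors $(\delta^0_A)^{\mp2}$ on the off-diagonal; reproducing the matrix of \eqref{expansion-A} once the $I-m_1^A/\zeta$ normalization and the residual scalar dressing $\mathcal{P}_-^{-\sigma_3}$ (frozen to its value at $-\tz_0$) are accounted for (note that the sign inside $(-\delta^0_A)^{-2}=(\delta^0_A)^{-2}$ is immaterial).

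Third I would assemble the expansion and control the error. On the matching circle $C_A$ of fixed radius $\rho$ one has $|\zeta_-|\simeq\sqrt{\tau}$, so the $\PC$ remainder $\mathcal{O}(\zeta^{-2})$ is $\mathcal{O}(\tau^{-1})$, while the passage from the true local coefficient $m_1^A$ to the frozen one is quantified by \eqref{differ-m-A}, whose bound $\lesssim (t(\lambda+\tz_0))^{-1}$ is again $\mathcal{O}(\tau^{-1})$ on $C_A$ because $\lambda+\tz_0\simeq\rho$ is bounded below there and $\tau\simeq t$. Adding these two $\mathcal{O}(\tau^{-1})$ contributions to the clean $1/\zeta$ term yields \eqref{expansion-A}, and the symmetric computation with \eqref{jump-vB} and \eqref{differ-m-B} gives \eqref{expansion-B}. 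The main obstacle is the interface between the first two steps: verifying that freezing the jump coefficients at the stationary point (both $\tdr(\lambda)\to\tdr(\mp\tz_0)$ and $\widetilde{\delta}(\lambda)^{\pm2}$ to its frozen value) perturbs the extracted $1/\zeta$ coefficient by only $\mathcal{O}(\tau^{-1})$, uniformly over the scattering data. This rests on the $C^{1/2}$ regularity supplied by $\tdr\in H^{1,1}_0(\bbR)\hookrightarrow H^1(\bbR)$, on the half-power estimate of Lemma \ref{delta}(3), and on a small-norm resolvent bound for the associated $\PC$ singular integral operator guaranteeing that pointwise jump errors propagate to an $\mathcal{O}(\tau^{-1})$ error in $m_1^A$; the comparison bounds \eqref{differ-m-A}--\eqref{differ-m-B}, which I am free to assume, encapsulate precisely this propagation.
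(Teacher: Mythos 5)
Your proposal is correct and follows essentially the same route as the paper, which presents the proposition as a direct assembly of the material immediately preceding it: dress $\tm^{A'}$, $\tm^{B'}$ into $m^A$, $m^B$ via \eqref{c}, recognize the rescaled jumps \eqref{jump-vA}--\eqref{jump-vB} as the classical parabolic-cylinder jump conjugated by the constant diagonal matrices $(\delta^0_A)^{\sigma_3}$, $(\delta^0_B)^{\sigma_3}$, import the explicit $\Gamma$-function coefficients $m_1^{A^0}$, $m_1^{B^0}$, and absorb both the truncation error $\mathcal{O}(\zeta^{-2})$ and the comparison bounds \eqref{differ-m-A}--\eqref{differ-m-B} into $\mathcal{O}(\tau^{-1})$ on the matching circles $C_A$, $C_B$, where $|\zeta_\pm|\simeq\sqrt{\tau}$. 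One clarification: the ``main obstacle'' you describe in your final paragraph (propagating the freezing of $\tdr$ and of $\widetilde{\delta}$ through a small-norm argument, using $C^{1/2}$ regularity and Lemma \ref{delta}(3)) is vacuous in this $\dbar$ framework, because by construction of $\mathcal{R}^{(2)}$ in \eqref{R1}--\eqref{R8-} the boundary values on the deformed contour are already exactly the frozen ones near $\pm\tz_0$ (there $\Xi_{\mathcal{Z}}\equiv 0$, since the cutoff is supported near the discrete spectrum), so that the conjugation by $\mathcal{P}_\pm^{\sigma_3}$ carries $\tv^{(2)}\restriction_{\Sigma_{A'}}$, $\tv^{(2)}\restriction_{\Sigma_{B'}}$ onto $v^A$, $v^B$ exactly; the non-analytic remainder created by the freezing lives entirely in the $\dbar$-problem of Section \ref{sec:dbar}, which is where Lemma \ref{delta}(3) and the regularity of $\tdr$ are actually consumed.
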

\subsection{The derivation of $\tm^\RHP$}
Now we construct $\tm^\RHP$ defined by problem \ref{MTM.RHP.local}.
In Figure \ref{fig:contour-2}, we let $\rho$ be the radius of the circle $C_A$ ($C_B$) centered at $\widetilde{z}_0$ ($-\widetilde{z}_0$). We seek a solution of the form
\begin{equation}
\label{parametrix}
\tm^\RHP(\lambda)=\begin{cases}
E(\lambda)\tm^{(out)}(\lambda) \quad  &\left\vert \lambda\pm \widetilde{z}_0 \right\vert>\rho \\
E(\lambda)\tm^{(out)}(\lambda)  \tm^{A'}(\lambda) \quad &\left\vert \lambda + \widetilde{z}_0 \right\vert\leq\rho \\
E(\lambda)\tm^{(out)}(\lambda)  \tm^{B'}(\lambda) \quad & \left\vert \lambda - \widetilde{z}_0 \right\vert\leq \rho
\end{cases}
\end{equation}
where $\tm^{(out)}$ solves the discrete Riemann-Hilbert problem whose jump condition are given by problem \ref{modifiedRHP} with $\widetilde{r}\equiv 0$.
Since $ \tm^{A'}$ and $ \tm^{B'}$ solve  Problem \ref{prob:MTM.A} and Problem \ref{prob:MTM.B} respectively and discrete RHPs have explicit solutions, we can construct the solution $\tm^\RHP(\lambda)$ if we find $E(\lambda)$. Indeed,  $E$ solves the following Riemann-Hilbert problem on $\Sigma_E$ given in Figure
\ref{fig:contour-E} :
\begin{problem}
\label{prob: E-2}
Find a matrix-valued function $E (\lambda)$ on $\bbC \setminus \Sigma_E $ with the following properties:
\begin{enumerate}
\item		$E(\lambda) \rarr I$ as $ \lambda \rarr \infty$,
\item		$E (\lambda) $ is analytic for $\lambda \in  \bbC \setminus \Sigma_E$
			with continuous boundary values
			$E_{\pm}(\lambda)$.
\item On $ C_A\cup C_B $, we have the following jump conditions
\begin{equation}
\label{jump:E}
E_{+}(\lambda)=E_{-}(\lambda) \tv^{ (E) }(\lambda)
\end{equation}
			where
			\begin{equation}
			\label{v-E}
			 \tv^{ (E) }(\lambda)=\begin{cases}
			\tm^{(out)} (\lambda)\tm^{A'}(\lambda(\zeta) )[\tm^{(out )} (\lambda)]^{-1}, \quad \lambda\in C_A\\
			\tm^{(out )} (\lambda)\tm^{B'}(\lambda(\zeta) )[\tm^{( out )} (\lambda)]^{-1}, \quad \lambda\in C_B\\
			\tm^{(out )} (\lambda)v^{(2)} [\tm^{( out )} (\lambda)]^{-1}, \quad \lambda\in \Sigma_E\setminus\left(  C_A\cup C_B\right)
			\end{cases}
			\end{equation}
\end{enumerate}
\end{problem}
\begin{figure}[H]
\caption{$\Sigma_E$}
\vskip 15pt

\begin{tikzpicture}[ photon/.style={decorate,decoration={snake,post length=0.8mm}} ][scale=0.6]

\draw[thick]   [red]			(4.472,0.467) -- (5.296, 1);								
\draw[->,thick,>=stealth] [red]	(   7.028 ,2  )--	(5.296, 1);

\draw[->,thick,>=stealth] 	  [red]			(1.732 ,1) -- (2.656 ,0.467) ;

\draw[thick] 		  [red]		(4.472,-0.467) -- (5.296, -1);								
\draw[->,thick,>=stealth]  [red]		(   7.028 , -2  )--	(5.296, -1);

\draw[->,thick,>=stealth] 	  [red]			(1.732 ,-1) -- (2.656 ,-0.467) ;

\draw[->,thick,>=stealth] 	  [red]			 (-2.656 ,-0.467)--(-1.732 ,-1) ;
\draw[->,thick,>=stealth] 	  [red]			 (-2.656 ,0.467)--(-1.732 ,1) ;

\draw[->,thick,>=stealth]   [red]				(-4.472 ,0.467) -- (-5.296, 1);						
\draw[thick]  [red]		(   -7.028 ,2  )--	(-5.296, 1);
\draw[->,thick,>=stealth] 	  [red]			(-4.472 ,-0.467) -- (-5.296, -1);							
\draw[thick]  [red]		(   -7.028 , -2  )--	(-5.296, -1);

\draw[->,thick,>=stealth]  [red]				(-1.732 , -1) -- (-1.732, 0);	
\draw	[thick]  [red]		(-1.732 , 0) -- (-1.732, 1);	
\draw[->,thick,>=stealth] 	 [red]			(1.732 , -1) -- (1.732, 0);	
\draw	[thick]	 [red]	 (1.732 , 0) -- (1.732, 1);

\draw [blue][dashed] (0,0) circle[radius=3.564];

\draw	[fill]							(-3.564 ,0)		circle[radius=0.06];	
\draw	[fill]							(3.564, 0)		circle[radius=0.06];
\draw		[fill]					(0,0)		circle[radius=0.1];

\node[below] at (-3.564,-0.1)			{$-\widetilde{z}_0$};
\node[below] at (3.564,-0.1)			{$\widetilde{z}_0$};

\node[right] at (5, 1.6)					{$\Sigma_1'^{(2)} $};
\node[left] at (-5, 1.6)					{$\Sigma_2'^{(2)}$};
\node[left] at (-5,-1.6)					{$\Sigma_3'^{(2)}$};
\node[right] at (5,-1.6)				{$\Sigma_4'^{(2 )}$};
\node[left] at (-1,1.2)					{$\Sigma_5'^{(2)}$};
\node[left] at (-1,-1.2)					{$\Sigma_7'^{( 2 )}$};
\node[right] at (1,1.2)					{$\Sigma_6'^{(2 )}$};
\node[right] at (1,-1.2)					{$\Sigma_8'^{(2 )}$};
\node[right] at (-1.732, 0) {$\Sigma_9'^{(2)} $};
\node[right] at (1.732, 0) {$\Sigma_{10}'^{(2 )} $};

 \draw [red, fill=red] (1, 1.732) circle [radius=0.05];
\draw[->,>=stealth] [red] (1.2, 1.732) arc(360:0:0.2);
\draw [red, fill=red] (1,-1.732) circle [radius=0.05];
\draw[->,>=stealth] [red] (1.2, -1.732) arc(0:360:0.2);
\draw [red, fill=red]  (0, 2.7) circle [radius=0.05];
\draw[->,>=stealth] [red] (0.3, 2.7) arc(360:0:0.3);
\draw [red, fill=red] (0, -2.7) circle [radius=0.05];
\draw[->,>=stealth] [red]  (0.3, -2.7) arc(0:360:0.3);
\draw[->,>=stealth] (3.4,2) arc(360:0:0.4);
\draw[->,>=stealth] (3.4,-2) arc(0:360:0.4);
\draw [red, fill=red] (3,2) circle [radius=0.1];
\draw [red, fill=red] (3,-2) circle [radius=0.1];

\draw[->,thick,>=stealth] (4.564,0) arc(360:0:1);
\draw[->,thick,>=stealth] (-2.564,0) arc(360:0:1);

\draw[->,photon] (-4, -2.) --  (-4, -1.);
\draw[->,photon] (4, -2) --  (4, -1);
\node [below] at  (-4, -2.131)  {$C_B$};
\node [below] at  (4, -2.131) {$C_A$};

\end{tikzpicture}
\label{fig:contour-E}
\begin{center}
  \begin{tabular}{ccc}
The jumps have exponential decay on the red portion of the contour.
\end{tabular}
 \end{center}
\end{figure}
Note that we omitted the contours passing through the origin since the jumps are identity by definition (see \eqref{R5}-\eqref{R8-}) .
\begin{proposition}
 $E(z)$ admits a classical solution, i.e jump condition \eqref{jump:E} holds pointwise on the contour $\Sigma_E$.
\begin{proof}
Here we invoke to the well-established existence and uniqueness theory from \ref{Solvability}. We then take care of the zero sum condition at the self-intersecting points of $\Sigma_E$. Since the remaining cases follows from symmetry, we will only look $\Sigma'^{(2)}_6 \cap \Sigma'^{(2)}_8 $ and $\Sigma'^{(2)}_6\cap C_A $. The zero sum condition holds at the first point by comparing \eqref{R8+} and the third line of \eqref{jump v9}. For $\Sigma'^{(2)}_6\cap C_A $, we explicitly compute
\begin{align*}
I & = \left[ m^{(out)} (\lambda)\tm^{A'}(\lambda(\zeta) )\left[m^{( out )} (\lambda)\right]^{-1}  \right] \left[  m^{(out )} (\lambda)v^{(2)} \left[m^{( out )} (\lambda)\right]^{-1}  \right]^{-1} \\
  &\quad \times \left[ m^{(out)} (\lambda)m^{A'}(\lambda(\zeta) )\left[m^{( out )} (\lambda)\right]^{-1}  \right] \\
   &= m^{(out)} (\lambda) m^{A'}_+(\lambda(\zeta) )\left( v^{(2)} \right)^{-1} \left( m^{A'}_-(\lambda(\zeta))\right)^{-1}\left[m^{( out )} (\lambda)\right]^{-1} .
\end{align*}
Since $v^{(2)}$ is smooth away from the intersections and zero sum conditions have been verified, this completes the proof.
\end{proof}
\end{proposition}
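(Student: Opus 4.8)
The plan is to establish $E$ in two stages: first obtain a unique solution from the abstract Riemann--Hilbert theory, and then promote it to a \emph{classical} solution by checking the local compatibility (zero-sum) conditions at the self-intersection points of $\Sigma_E$. For the first stage I would observe that the jump $\tv^{(E)}$ in \eqref{v-E} differs from the identity only through the conjugated parametrices $\tm^{A'},\tm^{B'}$ on $C_A\cup C_B$ and through $v^{(2)}$ on the remaining arcs, all of which lie in $L^2\cap L^\infty$ of the contour. Moreover, on the red portion of $\Sigma_E$ in Figure \ref{fig:contour-E} the estimates \eqref{Ri-decay}, \eqref{Rj-decay} and \eqref{expo} show that $\tv^{(E)}-I$ is exponentially small in $\tau$. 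Hence the associated Beals--Coifman operator $\One-C_{\tv^{(E)}}$ is Fredholm of index zero, and the vanishing-lemma argument underlying Theorem \ref{Solvability} applies verbatim to give a unique solution $E$.

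The substance of the proposition is the \emph{pointwise} validity of \eqref{jump:E}, which is equivalent to the statement that the cyclic product of the jump matrices is the identity at each self-intersection point of $\Sigma_E$. I would first cut down the number of points to be inspected using the two symmetries built into the construction, $\lambda\mapsto\bar\lambda$ and $\lambda\mapsto-\lambda$, so that it suffices to treat one intersection of two deformation rays and one intersection of a ray with a circle. For the ray--ray case I would take $\Sigma^{\prime(2)}_6\cap\Sigma^{\prime(2)}_8$ and read off the two unipotent factors from \eqref{R8+} and the third line of \eqref{jump v9}; since the relevant entries are negatives of one another, the lower/upper factors multiply to $I$.

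For the ray--circle case I would take $\Sigma^{\prime(2)}_6\cap C_A$ and use that $\tm^{A'}$ solves Problem \ref{prob:MTM.A} with jump $\tv^{A'}=\tv^{(2)}\restriction_{\Sigma_A'}$, so that across the ray one has $\tm^{A'}_+=\tm^{A'}_-\,v^{(2)}$. Because $\tm^{(out)}$ solves a purely discrete problem, it is analytic across the arcs meeting the circle, so conjugation by $\tm^{(out)}$ is single-valued in a neighborhood of the intersection, and the three jump factors of \eqref{v-E} encircling the point collapse to
$$\tm^{(out)}\,\tm^{A'}_+\,\left(v^{(2)}\right)^{-1}\,\left(\tm^{A'}_-\right)^{-1}\,\left(\tm^{(out)}\right)^{-1}=I,$$
exactly as in the displayed computation. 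Since $v^{(2)}$ and the parametrices are smooth away from the intersections, this upgrades the boundary values of $E$ to continuous ones and makes \eqref{jump:E} hold pointwise.

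I expect the main obstacle to be the orientation bookkeeping at the ray--circle intersections: one must match the $\pm$ boundary-value conventions carried by $C_A,C_B$ in Figure \ref{fig:contour-E} with those of the rays, confirm that $\tm^{(out)}$ is genuinely analytic across the arcs in question (so that the conjugation is consistent), and ensure that the branch of $\widetilde{\delta}$ entering $\tm^{A'},\tm^{B'}$ through \eqref{parametrix} agrees with the one used in \eqref{R1}--\eqref{R8-}. Once these conventions are pinned down the algebraic cancellations in each case are immediate, and the finitely many remaining self-intersection points follow from the two symmetries, completing the proof.
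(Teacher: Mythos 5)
Your proposal is correct and takes essentially the same route as the paper's proof: existence and uniqueness via the solvability theory of Theorem \ref{Solvability}, reduction by symmetry to the two intersection points $\Sigma'^{(2)}_6 \cap \Sigma'^{(2)}_8$ and $\Sigma'^{(2)}_6\cap C_A$, the first verified by comparing \eqref{R8+} with the third line of \eqref{jump v9}, and the second by the cyclic product of the three conjugated jumps, which collapses to $I$ precisely because $\tm^{A'}_+=\tm^{A'}_-\,\tv^{(2)}$ on $\Sigma_{A'}$ (the jump relation of Problem \ref{prob:MTM.A}). The extra detail you supply on why that product vanishes is exactly what the paper's displayed computation uses implicitly, so there is no substantive difference.
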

Furthermore, for $t$ large enough, we can solve a small norm RHP to obtain some desired estimates on $E$. Setting
$$\eta(\lambda)=E_{-}(\lambda)-I$$
then by the standard theory, we have the following singular integral equation
$$ \left( \mathbf{1}-C_{v^{(E)}}  \right) \eta =C_{v^{(E)}} I$$
where the singular integral operator is defined by:
$$  C_{\tv^{(E)}}f =C^-\left( f \left( \tv^{(E)}-I  \right)  \right). $$
We first deduce from \eqref{expansion-A}-\eqref{expansion-B} that
\begin{equation}
\norm{\tv^{(E)}-I}{L^\infty} \lesssim t^{-1/2}.
\end{equation}
Hence the operator norm of $C_{\tv^{(E)}} $
\begin{equation}
\label{norm-vE}
\norm{C_{\tv^{(E)}}f }{ L^2} \leq \norm{f}{L^2} \norm{\tv^{(E)}-I}{L^\infty} \lesssim t^{-1/2}.
\end{equation}
Then the resolvent operator $(1-C_{v^{(E)}})^{-1}$ can be obtained through a \textit{Neumann} series and we obtain the unique solution to Problem \ref{prob: E-2}:
\begin{align}\label{Error-sol}
E(\lambda) &=I+ \dfrac{1}{2\pi i}\int_{C_A\cup C_B} \dfrac{ (1+\eta(s))(\tv^{(E)}(s)-I )   }{s-\lambda}ds\\
    &\quad +\dfrac{1}{2\pi i}\int_{  \Sigma_E \setminus ( C_A\cup C_B) } \dfrac{ (1+\eta(s))(\tv^{(E)}(s)-I )   }{s-\lambda}ds\\
    &=I+ \dfrac{1}{2\pi i}\int_{C_A\cup C_B} \dfrac{ (1+\eta(s))(\tv^{(E)}(s)-I )   }{s-\lambda}ds +\mathcal{O}(e^{-ct}),
\end{align}
where we make use of the fact that $\tv^{(E)}(s)-I $ decays exponentially on $ \Sigma_E \setminus ( C_A\cup C_B) $.

We  also have the bound on $\eta$ on $C_A\cup C_B$:
\begin{equation}
\label{norm-eta}
\norm{\eta}{L^2}\lesssim t^{-1/2}.
\end{equation}
Letting $\lambda\to 0$ and using the bounds given by \eqref{norm-vE} and \eqref{norm-eta} and an application of Cauchy-Schwarz inequality, we obtain
\begin{align}
\label{bound-E-1}
E (0) - \mathcal{O}(e^{-ct})&=I+ \dfrac{1}{2\pi i}\int_{C_A\cup C_B}   \dfrac{ { (1+\eta(s))(\tv^{(E)}(s)-I )   }  } {s}ds  \\
\nonumber
              &=I+\dfrac{1}{2\pi i}\int_{C_A\cup C_B} \dfrac{ \tv^{(E)}(s)-I    }{  s }ds +\dfrac{1}{2\pi i}\int_{C_A\cup C_B} \dfrac{\eta(s) (\tv^{(E)}(s)-I)    }{  s }ds \\
              \nonumber
              &\lesssim I+\dfrac{1}{2\pi i}\int_{C_A\cup C_B} \dfrac{ \tv^{(E)}(s)-I    }{  s }ds +
              \norm{\eta}{L^2} \norm{\tv^{(E)}-I}{L^\infty\cap L^2 } \\
              \nonumber
             & \lesssim  I+\dfrac{1}{2\pi i}\int_{C_A\cup C_B} \dfrac{ \tv^{(E)}(s)-I    }{  s }ds +\mathcal{O}(t^{-1}).
\end{align}

Using the formula for $v^{(E)}$ in \eqref{v-E} and the asymptotic expansions \eqref{expansion-A}-\eqref{expansion-B}, and applying the Cauchy's integral formula leads to
\begin{align}
\label{E-1-2 cauchy}
E(0) &=I + \dfrac{1}{\widetilde{z}_0   \sqrt{\dfrac{2t   }{ (1+\widetilde{z}_0^2)\widetilde{z}_0  } } } \tm^{(out)}(\widetilde{z}_0) \twomat{0}{i ( \delta^0_B)^2 {\beta}^B_{21}}{ -i ( \delta^0_B)^{-2}{\beta}^B_{12} }{0}  [\tm^{(out)}(\widetilde{z}_0)]^{-1}\\
\nonumber
    & \quad + \dfrac{1}{\widetilde{z}_0   \sqrt{\dfrac{2t   }{ (1+\widetilde{z}_0^2)\widetilde{z}_0  } } } \tm^{(out)}(-\widetilde{z}_0) \twomat{0}{- i ( \delta^0_A)^2 \overline{\beta}^A_{12} }{i ( \delta^0_A)^{-2}\overline{\beta}^A_{21} }{0} [\tm^{(out)}(-\widetilde{z}_0)]^{-1}\\
    \nonumber
     & \quad + \mathcal{O}(t^{-1}).
\end{align}
In order to reconstruct the solution, we also need the large $\lambda$ behavior of the solution of \ref{prob: E-2}. Geometrically expanding $(s-\lambda)^{-1}$ for large $\lambda$ in \eqref{Error-sol}, we have
\begin{align}
\label{large-expansion}
E(\lambda)=I+\frac {E_1}{\lambda}+ O(\lambda^{-2})
\end{align}
where
\begin{align*}
E_1=-\frac 1{2\pi i}\int_{C_A\cup C_B} (\tv^{(E)}(s)-I )  ds+ O(t^{-1})
\end{align*}
Similar to the asymptotic expansion at the origin, we have
\begin{align}\label{expansion-E1}
E_1&=\dfrac{1}{\sqrt{\dfrac{2t   }{ (1+\widetilde{z}_0^{-2})\widetilde{z}_0  } } }\tm^{(out)}(\widetilde{z}_0) \twomat{0}{-i ( \delta^0_B)^{-2} {\beta}^B_{21}}{ i ( \delta^0_B)^{2}{\beta}^B_{12} }{0}  [\tm^{(out)}(\widetilde{z}_0)]^{-1}\\
&+ \dfrac{1}{\sqrt{\dfrac{2t   }{ (1+\widetilde{z}_0^2)\widetilde{z}_0  } } } \tm^{(out)}(-\widetilde{z}_0) \twomat{0}{ i ( \delta^0_A)^{-2} \overline{\beta}^A_{12} }{-i ( \delta^0_A)^{2}\overline{\beta}^A_{21} }{0} [\tm^{(out)}(-\widetilde{z}_0)]^{-1}
\end{align}
Combining this with Proposition \ref{expo}, we obtain $\widetilde{M}^\RHP(\lambda)$ in \eqref{factor-LC}.
\section{The $\dbar$-Problem}
\label{sec:dbar}

From \eqref{factor-LC} we have the matrix-valued function
\begin{equation}
\label{N3}
\widetilde{M}^{(3)}(\lambda;x,t) = \widetilde{M}^{(2)}(\lambda;x,t) \left[\widetilde{M}^\RHP(\lambda; x,t)\right]^{-1}.
\end{equation}
The goal of this section is to show that $\widetilde{M}^{(3)}$ only results in an error term $E(x, t)$ with higher order decay rate than the leading order term of the asymptotic formula. The computations and proofs are standard.
Since $\widetilde{M}^\RHP(\lambda; x, t)$ is analytic in $\bbC \setminus \left( \Sigma'^{(2)} \cup \Gamma  \right)$, we may compute
\begin{align*}
\dbar \widetilde{M}^{(3)}(\lambda;x,t) 	&=	\dbar \widetilde{M}^{(2)}(\lambda;x,t) [\widetilde{M}^\RHP(\lambda; x,t)]^{-1}\\	
								&=	\widetilde{M}^{(2)}(\lambda;x,t) \, \dbar \calR^{(2)}(\lambda) \left[\widetilde{M}^\RHP(\lambda; x,t)\right]^{-1}	&\text{(by \eqref{N2.dbar})}\\
								&=	\widetilde{M}^{(3)}(\lambda;x,t) \widetilde{M}^{\RHP}(\lambda; x, t) \, \dbar \calR^{(2)}(\lambda) [\widetilde{M}^\RHP(\lambda; x,t)]^{-1}	& \text{(by \eqref{N3})}\\
								&=	\widetilde{M}^{(3)}(\lambda;x,t)  W(\lambda;x,t)
\end{align*}
where
 \begin{equation}
 \label{W-bound}
 W(\lambda;x,t) = \widetilde{M}^{\RHP}(\lambda;x,t ) \, \dbar \calR^{(2)}(\lambda) [ \widetilde{M}^\RHP(\lambda;x,t )]^{-1}.
 \end{equation}
 We thus arrive at the following pure $\dbar$-problem.
\begin{problem}
\label{prob:DNLS.dbar}
Give $\widetilde{r}(\lambda)\in H^{1,1}_0(\bbR)$, find a continuous matrix-valued function
$\widetilde{M}^{(3)}(\lambda;x,t)$ on $\bbC$ with the following properties:
\begin{enumerate}
\item		$\widetilde{M}^{(3)}(\lambda;x,t) \rarr I$ as $|\lambda| \rarr \infty$;
\smallskip
\item		$\dbar \widetilde{M}^{(3)}(\lambda;x,t) = \widetilde{M}^{(3)}(\lambda;x,t) W(\lambda;x,t)$.
\end{enumerate}
\end{problem}
It is well understood that the solution to this $\dbar$ problem is equivalent to the solution of a Fredholm-type integral equation involving the solid Cauchy transform
$$ (Pf)(\lambda) = \frac{1}{\pi} \int_\bbC \frac{1}{\zeta-\lambda} f(\zeta) \, d\zeta $$
where $d\zeta$ denotes Lebesgue measure on $\bbC$.
\begin{lemma}
A bounded and continuous matrix-valued function $\tm^{(3)}(\lambda;x,t)$ solves Problem \ref{prob:DNLS.dbar} if and only if
\begin{equation}
\label{DNLS.dbar.int}
\widetilde{M}^{(3)}(\lambda;x,t) =I+ \frac{1}{\pi} \int_\bbC \frac{1}{\zeta-\lambda} \widetilde{M}^{(3)}(\zeta;x,t) W(\zeta;x,t) \, d\zeta.
\end{equation}
\end{lemma}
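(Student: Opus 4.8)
The plan is to prove the two implications separately, in each case exploiting that the solid Cauchy transform $P$ is a right inverse of $\dbar$ together with the generalized Cauchy--Green (Pompeiu) formula. Two analytic facts will be used throughout. First, $W(\,\cdot\,;x,t)$ defined in \eqref{W-bound} is integrable on $\bbC$: indeed $\widetilde{M}^\RHP$ and its inverse are bounded on the support of $\dbar\calR^{(2)}$, and by Lemma \ref{dbar.Ri} this support is contained in the sectors $\Omega_i$ where $\dbar R_i$ is dominated by an $L^2$ term $|p_i'(\Real\lambda)|$, a locally integrable singularity $|\lambda\mp\tz_0|^{-1/2}$ (integrable in two dimensions), and the compactly supported term $\dbar(\Xi_\calZ)$; hence $W\in L^1(\bbC)\cap L^2(\bbC)$. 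Second, any bounded continuous $\widetilde{M}^{(3)}$ lies in a class on which the right-inverse identity $\dbar\,P[f]=f$ (in the normalization fixed by the definition of $P$) applies to $f=\widetilde{M}^{(3)}W$.

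\medskip
\noindent\emph{The direction ``solution $\Rightarrow$ integral equation''.} Suppose $\widetilde{M}^{(3)}$ solves Problem \ref{prob:DNLS.dbar}. Then $\widetilde{M}^{(3)}-I\to 0$ as $|\lambda|\to\infty$ and $\dbar\bigl(\widetilde{M}^{(3)}-I\bigr)=\widetilde{M}^{(3)}W$ in the distributional sense. I would apply the Cauchy--Green formula on the disk $D_R$ of radius $R$ centred at the origin,
\[
\widetilde{M}^{(3)}(\lambda)-I=\frac{1}{2\pi i}\oint_{\dee D_R}\frac{\widetilde{M}^{(3)}(\zeta)-I}{\zeta-\lambda}\,d\zeta-\frac{1}{\pi}\int_{D_R}\frac{\dbar\widetilde{M}^{(3)}(\zeta)}{\zeta-\lambda}\,d\zeta ,
\]
and let $R\to\infty$. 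The contour integral vanishes in the limit since $\widetilde{M}^{(3)}-I\to0$ uniformly on $\dee D_R$, while the area integral converges to $P[\widetilde{M}^{(3)}W]$ by dominated convergence, using the integrability of $W$ and the boundedness of $\widetilde{M}^{(3)}$. Matching orientations with the convention built into the definition of $P$, this is precisely \eqref{DNLS.dbar.int}.

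\medskip
\noindent\emph{The direction ``integral equation $\Rightarrow$ solution''.} Suppose now that $\widetilde{M}^{(3)}$ is bounded, continuous, and satisfies \eqref{DNLS.dbar.int}. Applying $\dbar$ in $\lambda$ to both sides and using the fundamental identity for the Cauchy kernel, which collapses the area integral onto the diagonal $\zeta=\lambda$, recovers $\dbar\widetilde{M}^{(3)}=\widetilde{M}^{(3)}W$, i.e.\ property (2). For the normalization (1), I would show $P[\widetilde{M}^{(3)}W](\lambda)\to0$ as $|\lambda|\to\infty$ by splitting the region of integration into $\{|\zeta|\le|\lambda|/2\}$, where the kernel is $O(|\lambda|^{-1})$ and $W\in L^1$, and its complement, where the tail of $W$ in $L^1$ is small; boundedness of $\widetilde{M}^{(3)}$ then gives $\widetilde{M}^{(3)}\to I$.

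\medskip
\noindent The step I expect to be the main obstacle is the rigorous justification underlying both directions rather than the algebra: namely that $\dbar\widetilde{M}^{(3)}=\widetilde{M}^{(3)}W$ holds distributionally and that the contour term at infinity genuinely vanishes. Both rest on the integrability of $W$ and the decay of $\widetilde{M}^{(3)}-I$, which in turn depend on the sharp pointwise control of $\dbar\calR^{(2)}$ from Lemma \ref{dbar.Ri}—in particular on checking that the non-integrable-looking factor $|\lambda\mp\tz_0|^{-1/2}$ is harmless because it is square-root integrable against two-dimensional Lebesgue measure, and that the unbounded sectors contribute only through the $L^2$ function $p_i'$. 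Once these integrability bounds are in hand, a standard mollification argument legitimizes the distributional computations and the passage $R\to\infty$, completing the equivalence.
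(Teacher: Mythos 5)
The paper gives no proof of this lemma at all---it is invoked as ``well understood,'' with the real analytic content residing in Lemmas \ref{lemma:dbar.R.bd}--\ref{lemma:KW}---and your Pompeiu-formula-plus-distributional-$\dbar$ strategy is indeed the standard route (cf.\ \cite{DM08}, \cite{BJM}, \cite{LPS}). But as written your proof has a sign inconsistency that cannot be dismissed by ``matching orientations.'' Your own Cauchy--Green display, after $R\to\infty$, yields
\begin{equation*}
\widetilde{M}^{(3)}(\lambda)=I-\frac{1}{\pi}\int_\bbC\frac{\widetilde{M}^{(3)}(\zeta)W(\zeta)}{\zeta-\lambda}\,dA(\zeta)
=I+\frac{1}{\pi}\int_\bbC\frac{\widetilde{M}^{(3)}(\zeta)W(\zeta)}{\lambda-\zeta}\,dA(\zeta),
\end{equation*}
i.e.\ \eqref{DNLS.dbar.int} with the kernel reversed. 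Consistently, since $\dbar_\lambda\bigl(\pi(\lambda-\zeta)\bigr)^{-1}=\delta_\zeta(\lambda)$, applying $\dbar$ to \eqref{DNLS.dbar.int} exactly as displayed (kernel $(\zeta-\lambda)^{-1}=-(\lambda-\zeta)^{-1}$) returns $\dbar\widetilde{M}^{(3)}=-\widetilde{M}^{(3)}W$, not property (2) of Problem \ref{prob:DNLS.dbar}. So your two directions, carried out carefully, establish the equivalence of the $\dbar$ equation with the integral equation whose kernel is $(\lambda-\zeta)^{-1}$ (the convention of the references above); a correct write-up must either use that kernel throughout or track the minus sign, and should note explicitly that the kernel displayed in \eqref{DNLS.dbar.int} carries a sign discrepancy. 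Asserting that the Pompeiu output ``is precisely'' \eqref{DNLS.dbar.int} is false as stated.

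The second gap is the integrability underpinning both directions, which you assert rather than prove. The claim $W\in L^1(\bbC)\cap L^2(\bbC)$ does not follow from Lemma \ref{dbar.Ri} alone: over the unbounded sectors (e.g.\ $\Omega_3$) the bound $|p_i'(\Real\zeta)|+|\zeta-\tz_0|^{-1/2}+|\dbar\Xi_\calZ|$ is not even square integrable at infinity (indeed $\int|\zeta-\tz_0|^{-1}\,dA=\infty$ there), so integrability of $W$ rests entirely on the phase decay $|e^{i\ttheta}|\lesssim e^{-|u||v|\tau}$ or $e^{-|v|\tau}$ recorded in Lemma \ref{lemma:dbar.R.bd}, which your argument never invokes. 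Moreover, even granting $W\in L^1\cap L^2$, the absolute convergence of $\int_\bbC |W(\zeta)|\,|\zeta-\lambda|^{-1}dA(\zeta)$---needed for your $R\to\infty$ limit, for differentiating under the integral, and for the near-diagonal region that your far/near splitting in the normalization step leaves uncontrolled---cannot be extracted from $W\in L^2$ by H\"older, because $|\zeta-\lambda|^{-1}$ lies in $L^q_{loc}(\bbR^2)$ only for $q<2$. What is actually required is the mixed-norm argument of Lemma \ref{lemma:KW}: Cauchy--Schwarz in $\Real\zeta$ (yielding the factor $|\Im\zeta-\Im\lambda|^{-1/2}$), integration in $\Im\zeta$ against the exponential factor, and H\"older with an exponent $p\in(2,4)$ for the square-root singularity. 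With the sign corrected and with Lemmas \ref{lemma:dbar.R.bd} and \ref{lemma:KW} (or equivalent estimates) invoked at these junctures, your skeleton does complete to a proof.
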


 Using the integral equation formulation \eqref{DNLS.dbar.int}, we will prove the following result:
\begin{proposition}
\label{prop:N3.est}
Suppose that $\widetilde{r} \in H^{1,1}_0(\bbR)$.
Then, for $t\gg 1$, there exists a unique solution $\widetilde{M}^{(3)}(\lambda;x,t)$ for Problem \ref{prob:DNLS.dbar} with the properties that
\begin{enumerate}
\item
\begin{equation}
\label{N3.exp}
\widetilde{M}^{(3)}(\lambda;x,t) =   \widetilde{M}^{(3),0}_1(x,t) +\widetilde{M}^{(3),0}_2(\lambda; x,t)
\end{equation}
with
\begin{equation}
\label{0-dbar}
\lim_{\lambda\to 0} \widetilde{M}^{(3),0}_2(\lambda; x,t)=0.
\end{equation}
Here
\begin{equation}
\label{N31.est}
\left| \widetilde{M}^{(3),0}_1(x,t) \right| \lesssim \tau^{-3/4}.
\end{equation}
\item
\begin{equation}
\label{M3.exp}
\widetilde{M}^{(3)}(\lambda;x,t) = I+\frac {\widetilde{M}^{(3),\infty}_1}{\lambda}+O(\lambda^{-2})
\end{equation}
with
\begin{equation}
\label{M31.est}
\left| \widetilde{M}^{(3),\infty}_1(x,t) \right| \lesssim \tau^{-3/4}
\end{equation}

where the implicit constants in \eqref{N31.est} and \eqref{M31.est} are uniform for
$\widetilde{r}$ in a bounded subset of $H^{1,1}_0(\bbR)$.
\end{enumerate}
\end{proposition}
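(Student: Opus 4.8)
\emph{Proof strategy.} The plan is to convert Problem \ref{prob:DNLS.dbar} into the integral equation \eqref{DNLS.dbar.int}, solve it by a \emph{Neumann} series once $t$ is large, and then read off the two expansions from the resulting solid-Cauchy representation. Writing \eqref{DNLS.dbar.int} as $(\mathbf{1}-\mathcal{P}_W)\widetilde{M}^{(3)}=I$ with $(\mathcal{P}_W f)(\lambda)=\frac1\pi\int_{\bbC}(\zeta-\lambda)^{-1}f(\zeta)W(\zeta)\,dA(\zeta)$, I would first show that $\mathcal{P}_W$ is a contraction on $L^\infty(\bbC)$. Since $\widetilde{M}^{\RHP}$ and $[\widetilde{M}^{\RHP}]^{-1}$ are uniformly bounded (from the local parametrix of Section \ref{sec:local} and the small-norm correction $E$), \eqref{W-bound} gives $|W|\lesssim|\dbar\calR^{(2)}|$, so the operator bound reduces to $\sup_\lambda\frac1\pi\int_{\bbC}|\dbar\calR^{(2)}(\zeta)|\,|\zeta-\lambda|^{-1}\,dA(\zeta)$. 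Splitting $|\zeta-\lambda|\lessgtr1$ and applying \emph{H\"older}'s inequality, together with Lemma \ref{dbar.Ri} for $|\dbar R_i|$ and the exponential decay of $e^{\pm i\ttheta}$ carried by $\dbar\calR^{(2)}$, produces $\norm{\mathcal{P}_W}{L^\infty\to L^\infty}\lesssim\tau^{-1/4}$. Hence $\mathbf{1}-\mathcal{P}_W$ is invertible for $t\gg1$, giving existence, uniqueness, and $\norm{\widetilde{M}^{(3)}}{L^\infty}\lesssim1$.

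The core estimate is $\int_{\bbC}|W|\,dA\lesssim\tau^{-3/4}$, with $\tau$ as in \eqref{tau}. Near each stationary point $\pm\tz_0$ the phase is quadratic, so on the relevant sectors $|e^{\pm i\ttheta}|\lesssim e^{-c\tau|\lambda\mp\tz_0|^2}$. Of the three contributions in Lemma \ref{dbar.Ri}, the non-analytic term is dominant: in a fixed sector, writing $\lambda\mp\tz_0=\rho e^{i\phi}$,
\[
\int_0^\infty\rho^{-1/2}\,e^{-c\tau\rho^2}\,\rho\,d\rho=C\,\tau^{-3/4},
\]
which is exactly the claimed order. The term $|p_i'(\Real\lambda)|$ is treated by \emph{Cauchy--Schwarz} against the same \emph{Gaussian}, using $p_i'\in L^2$ (a consequence of $\widetilde r\in H^{1,1}_0(\bbR)$, Proposition \ref{prop:r}), which again yields $\mathcal{O}(\tau^{-3/4})$; the smooth, compactly supported cutoff-derivative term $\dbar(\Xi_\calZ)$, supported on a fixed annulus around $\calZ\cup\calZ^*$ away from the real axis, is estimated directly and is of no larger order. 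In the inner sectors $\Omega_4,\Omega_5,\Omega_8,\Omega_9$ the bound of Lemma \ref{dbar.Ri} carries instead the singularity $|\lambda|^{-1/2}$; there the essential singularity of $\ttheta$ at the origin (the $\lambda^{-1}$ term in \eqref{phasefunction}) makes $|e^{\mp i\ttheta}|$ decay faster than any power of $|\lambda|$ as $\lambda\to0$, so that part of the integral is in fact exponentially small.

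With these bounds the two expansions follow from \eqref{DNLS.dbar.int}. Geometrically expanding $(\zeta-\lambda)^{-1}$ at $\lambda=\infty$ gives \eqref{M3.exp} with $\widetilde{M}^{(3),\infty}_1=-\frac1\pi\int_{\bbC}\widetilde{M}^{(3)}(\zeta)W(\zeta)\,dA(\zeta)$, whence $|\widetilde{M}^{(3),\infty}_1|\le\norm{\widetilde{M}^{(3)}}{L^\infty}\frac1\pi\int_{\bbC}|W|\,dA\lesssim\tau^{-3/4}$, which is \eqref{M31.est}. For the origin, the super-exponential decay of $e^{\mp i\ttheta}$ noted above forces $W(\lambda)\to0$ as $\lambda\to0$, so $\lambda\mapsto\frac1\pi\int_{\bbC}(\zeta-\lambda)^{-1}\widetilde{M}^{(3)}W\,dA$ extends continuously to $\lambda=0$ by dominated convergence; this gives the splitting \eqref{N3.exp} with $\widetilde{M}^{(3),0}_2(\lambda):=\widetilde{M}^{(3)}(\lambda)-\widetilde{M}^{(3)}(0)$ (so \eqref{0-dbar} holds) and $|\widetilde{M}^{(3),0}_1-I|\lesssim\frac1\pi\int_{\bbC}|\zeta|^{-1}|W|\,dA\lesssim\tau^{-3/4}$, which is \eqref{N31.est}. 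Uniformity in $\widetilde r$ over bounded subsets of $H^{1,1}_0(\bbR)$ is inherited from that of the constants in Lemma \ref{dbar.Ri} and Lemma \ref{delta}.

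\emph{Main obstacle.} The delicate point is the second paragraph: the region-by-region bookkeeping that matches the \emph{two} different singular factors---$|\lambda\mp\tz_0|^{-1/2}$ near the stationary points and $|\lambda|^{-1/2}$ near the origin---against the respectively quadratic and essentially-singular phase decay, so as to produce a \emph{uniform} exponent $\tau^{-3/4}$ without the inner interpolation regions $\Omega_4^\pm,\Omega_5^\pm,\Omega_8^\pm,\Omega_9^\pm$ or the cutoff term spoiling it. This is precisely where the separation-of-singularities structure emphasized in the introduction does the work.
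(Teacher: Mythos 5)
Your skeleton matches the paper's proof: Neumann series for existence via $\norm{K_W}{L^\infty\to L^\infty}\lesssim\tau^{-1/4}$, then the bounds $\int_\bbC|W|\,dA\lesssim\tau^{-3/4}$ and $\int_\bbC|\zeta|^{-1}|W|\,dA\lesssim\tau^{-3/4}$ feeding the expansions at $\infty$ and at $0$ (and your reading of \eqref{N31.est} as a bound on $\widetilde{M}^{(3),0}_1-I$ is the correct one). However, the two pointwise phase estimates on which your second paragraph rests are false, and they are exactly the ``main obstacle'' you flag. First, near $\pm\tz_0$ the decay of $|e^{i\ttheta}|$ in the $\dbar$ regions is \emph{not} Gaussian in $|\lambda\mp\tz_0|$: writing $\lambda=(u+\tz_0)+iv$, the correct bound is $\Real(i\ttheta)\lesssim-|u||v|\tau$ (the paper's Lemma \ref{lemma:dbar.R.bd}), and $e^{-c|u||v|\tau}$ is far larger than $e^{-c\tau(u^2+v^2)}$ near the real-axis edge of the sector (take $v=1/u$ with $u$ large: $uv=1$ while $u^2+v^2\to\infty$). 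Since the sectors touch $\bbR$, where $\Real(i\ttheta)=0$, no bound $e^{-c\tau|\lambda\mp\tz_0|^2}$ uniform in the angle can hold, so your radial computation $\int_0^\infty\rho^{1/2}e^{-c\tau\rho^2}\,d\rho\simeq\tau^{-3/4}$ does not apply as written. The conclusion is recoverable—either by keeping the angular factor $(\sin\phi\cos\phi)^{-3/4}$, which is integrable on the sector, or, as the paper does following \cite{BJM}, by iterated integration: first in $u$ by Cauchy--Schwarz/H\"older against the kernel bound $\norm{1/(\lambda-\zeta)}{L^2_u}\lesssim|v-\beta|^{-1/2}$, then in $v$—but the inequality you state as the core estimate is false.

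Second, your claim that the inner regions (those adjacent to the origin) contribute only exponentially small terms because $e^{\mp i\ttheta}$ decays super-polynomially as $\lambda\to0$ is also wrong. The actual bound there is $|e^{-i\ttheta}|\lesssim e^{-v\tau/u^2}$, which tends to $1$ as $v\downarrow0$ with $u$ fixed: the phase provides no decay on the real-axis boundary of these regions, so their contribution is only algebraically small. The paper computes it to be $\mathcal{O}(\tau^{-1/2})$ for the operator-norm integral and $\mathcal{O}(\tau^{-1})$ for the $|\zeta|^{-1}$-weighted integral, and the latter crucially uses $\lim_{\lambda\to0}\tdr(\lambda)/\lambda=0$ from Proposition \ref{prop:r} to control $p_6(u)/u$ on $(0,\tz_0/2)$—the very point where the weight $|\zeta|^{-1}$ would otherwise be dangerous, and one your argument never invokes. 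So while the exponents $\tau^{-1/4}$ and $\tau^{-3/4}$ you report are correct, both of the estimates you offer in their support would fail as stated, and the region-by-region analysis of Lemmas \ref{lemma:KW} and \ref{lemma:N31.est} (with the hyperbolic decay $e^{-|u||v|\tau}$, $e^{-|v|\tau}$, $e^{-v\tau/u^2}$ from Lemma \ref{lemma:dbar.R.bd}) is genuinely needed, not a bookkeeping refinement.
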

\begin{proof} Assuming the Lemmas \ref{lemma:dbar.R.bd}--\ref{lemma:N31.est} hold,
 as in \cite{LPS}, we first show that, for large $t$, the integral operator $K_W$
defined by
\begin{equation*}
\left( K_W f \right)(\lambda) = \frac{1}{\pi} \int_\bbC \frac{1}{\zeta-\lambda} f(\zeta) W(\zeta) \, d \zeta
\end{equation*}
is bounded by
\begin{equation}
\label{dbar.int.est1}
\norm{K_W}{L^\infty \rarr L^\infty} \lesssim \tau^{-1/4},
\end{equation}
where the implied constant depends only on $\norm{r}{H^{1,1}}$ (Lemma \ref{lemma:KW}).
It implies that
\begin{equation}
\label{N3.sol}
\widetilde{M}^{(3)} = ( \mathbf{1} -K_W)^{-1}I
\end{equation}
exists as an $L^\infty$ solution of \eqref{DNLS.dbar.int}.
In Lemma \ref{lemma:N31.est} we eventually estimate \eqref{N31.est}
where the constants are uniform in $r$ belonging to a bounded subset of $H_0^{1,1}(\bbR)$.
The estimates on \eqref{N3.exp} from \eqref{N31.est}, and \eqref{dbar.int.est1} are results of the  bounds obtained in the next four lemmas. For \eqref{N3.exp}, we can decompose $\tm^{(3)}(\lambda;x,t)$ into:
\begin{align*}
\widetilde{M}^{(3)}(\lambda;x,t) &=I+ \frac{1}{\pi} \int_\bbC \frac{1}{\zeta-\lambda} \widetilde{M}^{(3)}(\lambda;x,t) W(\zeta;x,t) \, d\zeta\\
                   &= \underbrace{ \left( I+  \frac{1}{\pi} \int_\bbC \frac{1}{\zeta}\widetilde{M}^{(3)}(\zeta;x,t) W(\zeta;x,t) \, d\zeta \right)}_{\widetilde{M}^{(3)}_1} \\
                   &+\underbrace{ \left( \frac{1}{\pi} \int_\bbC \frac{1}{\zeta-\lambda} \widetilde{M}^{(3)}(\zeta;x,t) W(\zeta;x,t) \, d\zeta - \frac{1}{\pi} \int_\bbC \frac{1}{\zeta} m^{(3)}(\zeta;x,t) W(\zeta;x,t) \, d\zeta\right) }_{\widetilde{M}^{(3)}_2}\\
                   &= : \widetilde{M}^{(3)}_1(x,t) +\widetilde{M}^{(3)}_2(\lambda; x,t).
\end{align*}
We set $\lambda=i\sigma$ and let $\sigma\to 0$. Then near the origin, one has
\begin{align*}
\left\vert \dfrac{1}{s-\lambda} \right\vert &= \left\vert \dfrac{s}{s-\lambda} \right\vert \left\vert \dfrac{1}{s} \right\vert \\
                                                    &= \left\vert \dfrac{u^2+v^2}{u^2+(v-\sigma)^2 } \right\vert^{1/2} \left\vert \dfrac{1}{u^2+v^2} \right\vert^{1/2}\\
                                                    &\leq  \dfrac{2}{\sqrt{3}}\left\vert \dfrac{1}{s} \right\vert.
\end{align*}
 Note that by the construction of $W(\lambda; x, t)$ in \eqref{W-bound}, $W$ is bounded in $\bbC$ and in particular near the origin. Therefore the dominated convergence theorem will lead to \eqref{0-dbar}.
\end{proof}
For simplicity we only work with regions $\Omega_8$ and $\Omega_3$. For technical reasons we further divide $\Omega_3$ into two parts. See Figure \ref{fig:three reigons} below.

\begin{figure}[H]
\caption{Four regions }
\vskip 15pt
\begin{tikzpicture}[scale=1.1]

\draw 	[thick] 	(3.564, 0)-- (5.296, 0);	
\draw 	[->,thick,>=stealth] 	(7.028,0 )-- (5.296, 0);
\draw [dashed]  (5.296, 0)--(5.296, 1);
\node[below] at   (5.296, 0) {$\tz_0+1$};


\draw 	[->,thick,>=stealth] 	(0, 0)-- (1.732, 0);	
\draw 	[thick] 	 (1.732, 0)--(3.564,0 );

\draw[thick] 			(3.564 ,0) -- (5.296, 1);								
\draw[->,thick,>=stealth] 	(   7.028 ,2  )--	(5.296, 1);

\draw[->,thick,>=stealth] 			(1.732 ,1) -- (3.564 ,0) ;
\draw[->,thick,>=stealth] 			(0,0)--(1.732 ,1);

%

\draw	[fill]							(3.564, 0)		circle[radius=0.05];
\draw							(0,0)		circle[radius=0.1];

\node[below] at (3.564,-0.1)			{$\tz_0$};

\node[right] at (0.5,1)					{$\Sigma_6^-$};
\node[right] at (2.4, 1)					{$\Sigma_6^+$};
\node[right] at (5, 1.6)					{$\Sigma_1$};

\node[right] at (2, 0.4)				{$\Omega_8^+$};

\node[right] at (4.5, 0.2)				{$\Omega_{3, 1}$};
\node[right] at (6, 0.2)				{$\Omega_{3,2}$};

\node[right] at (1, 0.4)				{$\Omega_8^-$};
\node[right] at (5, 1.6)					{$\Sigma_1$};

\draw [dashed]  (1.732, 0)--(1.732, 1);

\end{tikzpicture}
\label{fig:three reigons}
\end{figure}
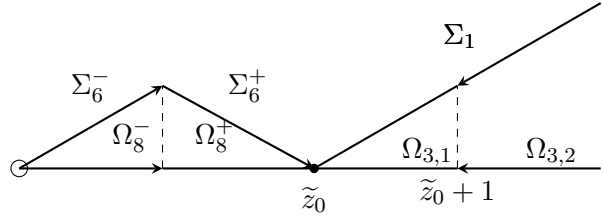
\begin{lemma}
\label{lemma:dbar.R.bd}
\begin{equation}
\label{dbar.R_1.bd}
\left| \dbar R_1 e^{ i\widetilde{\theta}}  \right|
\lesssim		
\begin{cases}
 	\left( |p_1'(\Real (\lambda))| + |\lambda - \widetilde{z}_0 |^{-1/2} + \left\vert \dbar \left( \Xi_\calZ(\lambda) \right)  \right\vert \right) e^{-|u||v|\tau };  \lambda\in \Omega_{3,1} \\
 	\\
 	\left( |p_1'(\Real (\lambda))| + |\lambda - \widetilde{z}_0 |^{-1/2} + \left\vert \dbar \left( \Xi_\calZ(\lambda) \right)  \right\vert \right) e^{-|v|\tau };  \lambda\in \Omega_{3,2},
		\end{cases}
\end{equation}
\begin{equation}
\label{dbar.R_6.bd}
\left| \dbar R_6 e^{- i\widetilde{\theta}}  \right|
\lesssim		
\begin{cases}
 	\left( |p_6'(\Real (\lambda))| +\dfrac{p_6(\Real (\lambda))}{ |\lambda  | }+ \left\vert \dbar \left( \Xi_\calZ(\lambda) \right)  \right\vert \right)e^{-|v| \tau / u^2 }; \, \lambda \in \Omega_8^-\\
 	\\
 	\left( |p_6'(\Real (\lambda))| + |\lambda - \widetilde{z}_0 |^{-1/2} + \left\vert \dbar \left( \Xi_\calZ(\lambda) \right)  \right\vert \right) e^{-|u |  | v|\tau }; \, \lambda \in \Omega_8^+.
		\end{cases}
\end{equation}
\end{lemma}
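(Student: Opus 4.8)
The plan is to upgrade the purely algebraic bounds of Lemma \ref{dbar.Ri} by tracking the oscillatory factor $e^{\pm i\ttheta}$ explicitly. Since $e^{\pm i\ttheta}$ is analytic in $\lambda$ away from the origin, one has $\dbar(R_i e^{\pm i\ttheta}) = (\dbar R_i)\,e^{\pm i\ttheta}$ and $|e^{\pm i\ttheta}| = e^{\Real(\pm i\ttheta)}$, so the whole content of the lemma is a region-dependent lower bound for $|\Real(i\ttheta)|$ with the correct sign, expressed through $\tau$ from \eqref{tau}. First I would record the exact identity derived in the Conjugation section,
\[
\Real(i\ttheta(\lambda;x,t)) = \frac{t\,\Imag\lambda}{1+\tz_0^2}\left(\frac{\tz_0^2}{(\Real\lambda)^2 + (\Imag\lambda)^2} - 1\right),
\]
and note that on the supports appearing in \eqref{R1}--\eqref{R8-} the factors $\widetilde{\delta}(\lambda)^{\pm 2}$ are bounded above and below (their poles are excised by $1-\Xi_\calZ$, and $\chi$ is bounded by Lemma \ref{delta}). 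Hence $\widetilde{\delta}^{\pm2}$ plays no role in the exponential estimate, and $|\dbar R_i\, e^{\pm i\ttheta}|$ inherits the region-appropriate algebraic bound of Lemma \ref{dbar.Ri} multiplied by $e^{\Real(\pm i\ttheta)}$.

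The core of the argument is then an elementary region-by-region estimate of the exponent. For $R_1 e^{i\ttheta}$ on $\Omega_3$ I would write $\lambda = \tz_0 + u + iv$ with $u,v>0$ in the wedge below $\Sigma_1$, so that $(\Real\lambda)^2 + (\Imag\lambda)^2 - \tz_0^2 = 2\tz_0 u + u^2 + v^2$ and, using $t/(1+\tz_0^2) = \tau/\tz_0$,
\[
\Real(i\ttheta) = -\frac{\tau v}{\tz_0}\cdot\frac{2\tz_0 u + u^2 + v^2}{(\tz_0+u)^2 + v^2}.
\]
On the bounded piece $\Omega_{3,1}$ (where $0<u<1$) the denominator is comparable to a constant and the numerator is bounded below by $2\tz_0 u$, which yields $\Real(i\ttheta) \lesssim -uv\,\tau$ and the first line of \eqref{dbar.R_1.bd}; on the far piece $\Omega_{3,2}$ (where $u>1$) the displayed ratio is bounded below by a constant depending only on the compact range of $\tz_0$ (i.e.\ the characteristic speed strictly inside the light cone), so the decay degrades to $\Real(i\ttheta) \lesssim -v\,\tau$, giving the second line. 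This is precisely why $\Omega_3$ must be split at $\Real\lambda = \tz_0 + 1$: the quadratic vanishing of the phase at the stationary point produces the product $uv$ near $\tz_0$, while in the far field only the linear rate in $v$ survives.

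For $R_6 e^{-i\ttheta}$ on $\Omega_8$ the sign of $\Real(i\ttheta)$ is reversed and two distinct mechanisms appear. On $\Omega_8^+$, near $\tz_0$, the same stationary-point computation gives $-\Real(i\ttheta)\lesssim -uv\,\tau$, which together with the $|\lambda-\tz_0|^{-1/2}$ algebraic term is the second line of \eqref{dbar.R_6.bd}. On $\Omega_8^-$, which abuts the origin, I would instead parametrize $\lambda = u+iv$ and use that the singular term $\tz_0^2/((\Real\lambda)^2 + (\Imag\lambda)^2)$ dominates: here $\Real(i\ttheta)\approx \tau\tz_0\, v/(u^2+v^2)$, and since $v\lesssim u$ inside the wedge one has $u^2+v^2\lesssim u^2$, whence $-\Real(i\ttheta)\lesssim -v\,\tau/u^2$, matching the $\Omega_8^-$ line together with the near-origin algebraic term $p_6(\Real\lambda)/|\lambda|$ of Lemma \ref{dbar.Ri}. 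The remaining regions ($\Omega_6,\Omega_7,\Omega_{10}$ and the $-\tz_0$ wedges) follow verbatim by the Schwarz symmetry of the contour.

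The main obstacle is not any single computation but the bookkeeping of uniformity of the implied constants across the transition between the quadratic (stationary-point) regime and the singular (near-origin) regime, together with ensuring the split at $\tz_0\pm 1$ is compatible with the cutoff $\mathcal{K}(\phi)$ so that no spurious growth enters through $\dbar\mathcal{K}$. Once the elementary inequality $2\tz_0 u + u^2 + v^2 \geq \max\{2\tz_0 u,\,u^2\}$ is used to separate the two regimes and $\tz_0$ is restricted to a compact subset of $(0,\infty)$, all estimates reduce to the bounds already established, and the proof is routine.
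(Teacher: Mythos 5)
Your proposal is correct and follows essentially the same route as the paper: combine the algebraic bounds of Lemma \ref{dbar.Ri} with a region-by-region lower bound on $|\Real(i\ttheta)|$ in the coordinates $\lambda=(u+\tz_0)+iv$ (resp.\ $\lambda=u+iv$ on $\Omega_8^-$), yielding $-uv\tau$ near the stationary point, $-v\tau$ on the far piece $\Omega_{3,2}$, and $-v\tau/u^2$ near the origin. Your explicit remark that the constants require $\tz_0$ to stay in a compact subset of $(0,\infty)$ is a point the paper leaves implicit (it works along a fixed characteristic $x=\mathrm{v}t$, $|\mathrm{v}|<1$), but otherwise the two arguments coincide.
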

\begin{proof}
 In $\Omega_3$, set $\lambda=(u + \tz_0)+iv$ and $0\leq v<u$.
\begin{align*}
\text{Re}(i\ttheta)  &= \dfrac{1}{2}( \text{Im} \lambda ) \left( \dfrac{1}{1+\tz_0^2}  \right) \left(   \dfrac{\tz_0^2}{(\text{Re}\lambda )^2 + (\text{Im}\lambda )^2 } -1\right)t \\ 
                            & =\dfrac{1}{2} v \left( \dfrac{1}{1+\tz_0^2}  \right) \left(   \dfrac{ -u^2-2\tz_0 u -v^2}{ ( u+\tz_0)^2+ v^2 } \right)t\\
                            &\leq -\dfrac{1}{2}v \dfrac{1}{1+\tz_0^2}\dfrac{u^2+2\tz_0 u}{( u+\tz_0)^2+v^2}t
                           \end{align*}
 Near $\tz_0$ we obtain
 \begin{equation}
 \label{omega8}
 \text{Re}(i\ttheta) \leq - \dfrac{1}{1+\tz_0^2}\dfrac{\tz_0 uv}{( u+\tz_0)^2+v^2}t \lesssim -|u||v|\tau
 \end{equation}
and away from $\tz_0$, we take
\begin{equation}
 \label{omega8'}
 \text{Re}(i\ttheta) \leq -\dfrac{1}{2}v \dfrac{1}{1+\tz_0^2}\dfrac{u^2 }{( u+\tz_0)^2+v^2}t \lesssim -|v|\tau.
 \end{equation}
In $\Omega_8^-$, setting $\lambda=u+iv$ and using the facts that $u\geq 0$, $v\geq 0$ and $u^2+v^2\leq \tz_0^2/3$ we deduce
\begin{align*}
-\text{Re}(i\ttheta)  &=- \dfrac{1}{2}( \text{Im} \lambda ) \left( \dfrac{1}{1+\tz_0^2}  \right) \left(   \dfrac{\tz_0^2}{(\text{Re}\lambda )^2 + (\text{Im}\lambda )^2 } -1\right)t \\ 
                            & =-\dfrac{1}{2} v \left( \dfrac{1}{1+\tz_0^2}  \right) \left(   \dfrac{\tz_0^2 }{ u^2+ v^2 } -1\right)t\\
                            &\leq -\dfrac{\tz_0^2vt}{6(1+\tz_0^2)u^2} .\\
                            \end{align*}
Finally, in $\Omega_8^+$, we set $\lambda=(u + \tz_0)+iv$ and notice that $- \tz_0/2 <u<0$ and $|u|>|v|$
\begin{align*}
-\text{Re}(i\ttheta)  &=- \dfrac{1}{2}( \text{Im} \lambda ) \left( \dfrac{1}{1+\tz_0^2}  \right) \left(   \dfrac{\tz_0^2}{(\text{Re}\lambda )^2 + (\text{Im}\lambda )^2 } -1\right)t \\ 
                            & =\dfrac{1}{2} v \left( \dfrac{1}{1+\tz_0^2}  \right) \left(   \dfrac{ u^2 + 2\tz_0 u + v^2}{ ( u+\tz_0)^2+ v^2 } \right)t\\
                            &\leq \dfrac{3\sqrt{3}-1}{4\sqrt{3}} \dfrac{\tz_0}{1+\tz_0^2}\dfrac{ uv }{( u+\tz_0)^2+v^2}t\\
                            & \lesssim - |u||v|\tau.
                           \end{align*}
Estimates \eqref{dbar.R_1.bd} and  \eqref{dbar.R_6.bd}  then follow from Lemma \ref{dbar.Ri}. The quantities $p_1'(\Real \lambda)$  and $p_6'(\Real \lambda)$   are all bounded uniformly for  $\tdr$ in a bounded subset of $H^{1,1}_0(\bbR)$.  

\end{proof}

\begin{lemma}
\label{lemma:RHP.bd}For the localized Riemann-Hilbert problem from Problem \ref{MTM.RHP.local}, we have
\begin{align}
\label{RHP.bd1}
\norm{\tm^\RHP(\dotarg; x, t)}{\infty}	&	\lesssim		1,\\[5pt]
\label{RHP.bd2}
\norm{\tm^\RHP(\dotarg; x,t )^{-1}}{\infty}	&	\lesssim	1.
\end{align}
All implied constants are uniform  for  $\tdr$ in a bounded subset of $H^{1,1}_0(\bbR)$.
\end{lemma}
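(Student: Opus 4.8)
The plan is to read off the bounds directly from the explicit factorization \eqref{parametrix}. In each of the three regions $\tm^\RHP$ is a finite product of the factors $E(\lambda)$, $\tm^{(out)}(\lambda)$, and, on the two disks, $\tm^{A'}(\lambda)$ or $\tm^{B'}(\lambda)$. Each of these factors has determinant one (their jump and residue data are unimodular), so a uniform $L^\infty$ bound on the factors yields, via the cofactor formula, the matching bound \eqref{RHP.bd2} on the inverse. I would therefore prove \eqref{RHP.bd1} by bounding the three types of factors separately and uniformly for $\tdr$ in a bounded subset of $H^{1,1}_0(\bbR)$.

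First, for $E(\lambda)$: the small-norm estimate \eqref{norm-vE} gives $\norm{C_{\tv^{(E)}}}{L^2 \to L^2} \lesssim t^{-1/2}$, so for $t$ large the resolvent $(\mathbf{1}-C_{\tv^{(E)}})^{-1}$ exists with operator norm bounded by a constant. Combining the integral representation \eqref{Error-sol} with $\norm{\eta}{L^2} \lesssim t^{-1/2}$ from \eqref{norm-eta} and $\norm{\tv^{(E)}-I}{L^\infty} \lesssim t^{-1/2}$, a Cauchy--Schwarz estimate on the singular integral gives $\norm{E-I}{L^\infty} \lesssim t^{-1/2}$; in particular $E$ is uniformly bounded, and $\det E \equiv 1$ gives the bound on $E^{-1}$.

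Next, $\tm^{(out)}$ solves the reflectionless version of Problem \ref{modifiedRHP} (with $\tdr \equiv 0$), which has an explicit rational solution in the discrete data $\{\lambda_j,\tc_j\}$. Since the contour and the disks in Figure \ref{fig:contour-E} are arranged to keep $\lambda$ away from every pole $\lambda_j,\lambda_j^*$ (each surrounded by a circle $\Gamma_\ell$), the denominators stay bounded below and $\norm{\tm^{(out)}}{\infty} \lesssim 1$; again $\det \tm^{(out)} \equiv 1$ controls the inverse, with constants depending only on the fixed discrete data. Finally, the local parametrices are handled through Proposition \ref{solution-A-B}: the expansions \eqref{expansion-A}--\eqref{expansion-B} exhibit $\tm^{A'},\tm^{B'} = I + O(\zeta^{-1})$ built from the bounded parabolic-cylinder models, which are uniformly bounded on the compact disks $|\lambda \pm \tz_0| \le \rho$ and have unit determinant.

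I expect the main obstacle to be the \emph{uniformity} of all implied constants in the parameter $\tz_0 = \sqrt{(t-x)/(t+x)}$ along the characteristic family. For $x/t$ in a compact subset of $(-1,1)$, $\tz_0$ stays in a compact subset of $(0,\infty)$ bounded away from $0$ and $\infty$, and the rescaling $\zeta = \sqrt{2t/((1+\tz_0^2)\tz_0)}\,(\lambda \mp \tz_0)$ keeps the model problems well controlled. The delicate point is to track the dependence of the constants through $\kappa^\pm$ and $\widetilde{\delta}_0^\pm$, using Lemma \ref{delta}(3), and to verify that they do not degenerate as $\tz_0$ approaches the endpoints of its range; this is where the $H^{1,1}_0$-bound on $\tdr$ and the separation hypotheses of Definition \ref{def:generic'} are used.
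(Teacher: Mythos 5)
Your proposal is correct and follows essentially the same route as the paper: the paper's entire proof is the remark that the lemma is ``a consequence of the previous section,'' i.e.\ of the factorization \eqref{parametrix}, and your argument simply makes this explicit by bounding each factor ($E$ via the small-norm estimates \eqref{norm-vE}--\eqref{norm-eta}, $\tm^{(out)}$ via the explicit reflectionless solution with poles shielded by the circles $\Gamma_\ell$, and $\tm^{A'},\tm^{B'}$ via Proposition \ref{solution-A-B}) and invoking unit determinant for the inverse. The one point you gloss over (as does the paper) is that Cauchy--Schwarz on \eqref{Error-sol} degrades as $\lambda$ approaches $\Sigma_E$, so the uniform bound there requires the standard extra step of deforming the contour using the analyticity of $\tv^{(E)}-I$ in a neighborhood; this is routine and does not affect the conclusion.
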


The proof of this lemma is a consequence of the previous section.

\begin{lemma}
\label{lemma:KW}
Suppose that $r\in H^{1,1}_0(\bbR)$.
Then, the estimate \eqref{dbar.int.est1}
holds, where the implied constant is uniform  for  $\tdr$ in a bounded subset of $H_0^{1,1}(\bbR)$.
\end{lemma}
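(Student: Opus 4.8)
The plan is to bound the operator norm of $K_W$ by the elementary $L^\infty$ mechanism and to transfer all the $\tau$-decay onto the area integral of $\dbar\calR^{(2)}$. First I would observe that for $f\in L^\infty$,
\begin{equation*}
\norm{K_W f}{L^\infty}\leq \norm{f}{L^\infty}\,\sup_{\lambda\in\bbC}\frac{1}{\pi}\int_\bbC \frac{|W(\zeta;x,t)|}{|\zeta-\lambda|}\,dA(\zeta),
\end{equation*}
so \eqref{dbar.int.est1} reduces to an estimate on a scalar kernel. By \eqref{W-bound} together with the uniform bounds \eqref{RHP.bd1}--\eqref{RHP.bd2} of Lemma \ref{lemma:RHP.bd}, we have $|W|\lesssim |\dbar\calR^{(2)}|$, and since $\dbar\calR^{(2)}$ is supported in $\bigcup_i\Omega_i$, it suffices to prove
\begin{equation*}
\sup_{\lambda\in\bbC}\int_{\Omega_i}\frac{|\dbar\calR^{(2)}(\zeta)|}{|\zeta-\lambda|}\,dA(\zeta)\lesssim \tau^{-1/4}
\end{equation*}
for each region. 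By the symmetry of the contour I would treat only the two representative regions $\Omega_3$ and $\Omega_8$ adjacent to $\tz_0$; the regions near $-\tz_0$ and the remaining octants are identical after reflection.

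Next I would insert the pointwise bounds of Lemma \ref{dbar.Ri} and Lemma \ref{lemma:dbar.R.bd}, which split $|\dbar R_i\,e^{i\ttheta}|$ into three pieces dominated respectively by $|p_i'(\Real\zeta)|\,e^{\Real(i\ttheta)}$, by $|\zeta-\tz_0|^{-1/2}e^{\Real(i\ttheta)}$ (replaced by $|\zeta|^{-1/2}e^{\Real(i\ttheta)}$ in the regions abutting the origin), and by $|\dbar\Xi_\calZ(\zeta)|\,e^{\Real(i\ttheta)}$. The cutoff piece is the easiest: $\dbar\Xi_\calZ$ is supported in thin annuli around $\calZ\cup\calZ^*$, hence is uniformly bounded and vanishes near $\pm\tz_0$ and near $\bbR$, where $\Real(i\ttheta)\leq -c\tau$; this piece therefore contributes $\lesssim e^{-c\tau}$, which is negligible.

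For the two surviving pieces I would parametrize $\zeta=\tz_0+u+iv$ and use the phase estimate $\Real(i\ttheta)\lesssim -|u||v|\tau$ established in \eqref{omega8}. For the $|p_i'|$ piece I apply Cauchy--Schwarz in $u$ at fixed $v$: using $\norm{p_i'}{L^2}\lesssim \norm{\tdr}{H^{1,1}}$, the inequality $e^{-c|u|v\tau}\leq e^{-cv^2\tau}$ valid on $u\geq v$, and $\norm{|\zeta-\lambda|^{-1}}{L^2_u}\lesssim |v-\Imag\lambda|^{-1/2}$, the inner integral is $\lesssim e^{-cv^2\tau}|v-\Imag\lambda|^{-1/2}$; the outer integral $\int_0^\delta e^{-cv^2\tau}|v-\Imag\lambda|^{-1/2}\,dv$, after the rescaling $v\mapsto v/\sqrt{\tau}$, equals $\tau^{-1/4}$ times a constant uniform in $\Imag\lambda$. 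For the $|\zeta-\tz_0|^{-1/2}$ piece I use H\"older in $u$ with exponents $p=4/3$ and $q=4$: the $u$-integral of $(u^2+v^2)^{-p/4}e^{\Real(i\ttheta)}$ against the Gaussian weight already produces a factor $(v\tau)^{-1/4}$, the kernel contributes $\norm{|\zeta-\lambda|^{-1}}{L^4_u}\lesssim|v-\Imag\lambda|^{-3/4}$, and the remaining integral $\int_0^\delta v^{-1/4}|v-\Imag\lambda|^{-3/4}\,dv$ converges to a constant uniform in $\Imag\lambda$, leaving the overall factor $\tau^{-1/4}$. The regions $\Omega_8^-$ and its partners near the origin carry instead the singularity $|\zeta|^{-1/2}$ and the phase bound $\Real(i\ttheta)\lesssim -v\tau/u^2$ from \eqref{dbar.R_6.bd}, and are handled by the identical Cauchy--Schwarz/H\"older scheme.

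The hard part will be the non-$L^2$ singular piece $|\zeta-\tz_0|^{-1/2}$ (and its analogue $|\zeta|^{-1/2}$ near the origin): it is this term, and not the smooth $p_i'$ term, that dictates the rate $\tau^{-1/4}$, so the H\"older exponents and the order in which the $u$- and $v$-integrals are performed must be chosen so that the planar singularity of the Cauchy kernel is absorbed while the Gaussian in $u$ still yields the correct power of $\tau$. One must also verify that every constant produced is uniform in $\Imag\lambda$ (equivalently in $\lambda$), in the point $\tz_0$ along the characteristic cone, and over bounded subsets of $H^{1,1}_0(\bbR)$ for $\tdr$; the first follows because all surviving $v$-integrals have only integrable singularities whose location merely shifts with $\Imag\lambda$, and the last follows from the embedding $H^{1,1}_0\subset H^1$, which controls $\norm{p_i'}{L^2}$.
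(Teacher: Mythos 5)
Your overall architecture is the same as the paper's: you reduce \eqref{dbar.int.est1} to the scalar bound $\sup_{\lambda\in\bbC}\int_{\Omega_i}|W(\zeta)|\,|\zeta-\lambda|^{-1}dA(\zeta)$ via \eqref{W-bound} and Lemma \ref{lemma:RHP.bd}, and you use the three-term splitting of $|\dbar R_i|$ from Lemma \ref{lemma:dbar.R.bd}. Your treatment of the cutoff piece (support of $\dbar\Xi_\calZ$ away from $\bbR$ and from $\pm\tz_0$, hence an $e^{-c\tau}$ contribution) and of the $|p_i'|$ piece (Cauchy--Schwarz in $u$, keeping $e^{-cv^2\tau}$, then rescaling $v\mapsto v/\sqrt{\tau}$ to get $\tau^{-1/4}$ uniformly in $\Imag\lambda$) are correct; these reproduce the $I_3$ and $I_1$ estimates of \cite[Proposition D.1]{BJM}, which the paper simply cites for the sectors at $\pm\tz_0$ and supplements with explicit computations in $\Omega_8^-$ and $\Omega_{3,2}$.

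However, the piece you yourself single out as the crux --- the $|\zeta-\tz_0|^{-1/2}$ singularity --- is handled incorrectly, and this is a genuine gap. With your H\"older exponents $p=4/3$, $q=4$ the inner $u$-integral indeed yields $(v\tau)^{-1/4}$ and the kernel yields $|v-\Imag\lambda|^{-3/4}$, but the resulting $v$-integral $\int_0^\delta v^{-1/4}|v-\Imag\lambda|^{-3/4}\,dv$ is \emph{not} uniformly bounded: at $\Imag\lambda=0$ the integrand is exactly $v^{-1}$, so the integral diverges, and it blows up logarithmically as $\Imag\lambda\downarrow 0$. Since the operator bound requires the supremum over all $\lambda\in\bbC$, including real $\lambda$ (precisely where the Cauchy kernel sits on the set where the phase degenerates), the estimate fails as written; retaining a Gaussian factor does not help, since $e^{-cv^2\tau}=1$ at $v=0$. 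The failure is structural: your two $v$-exponents sum to $\tfrac14+\tfrac34=1$, the borderline case. The fix is to run H\"older in the opposite regime, as in \cite[Proposition D.1]{BJM}: on $u\geq v$ write $e^{-cuv\tau}\leq e^{-cv^2\tau/2}e^{-cuv\tau/2}$, put the singular factor in $L^p_u$ with $p>4$, so that $\norm{(u^2+v^2)^{-1/4}e^{-cuv\tau/2}}{L^p_u(v,\infty)}\lesssim v^{-1/2}(v\tau)^{-1/p}$, and put the kernel in $L^q_u$ with $q=p/(p-1)<4/3$, contributing $|v-\Imag\lambda|^{-1/p}$. The worst-case $v$-singularity is then $v^{-1/2-2/p}$ with exponent sum $\tfrac12+\tfrac2p<1$, so the rescaled integral $\int_0^\infty s^{-1/2-1/p}|s-\Imag\lambda\sqrt{\tau}|^{-1/p}e^{-cs^2/2}ds$ is bounded uniformly, and the powers of $\tau$ combine to exactly $\tau^{-1/p}\cdot\tau^{1/p-1/4}=\tau^{-1/4}$. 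Finally, your remark that $\Omega_8^-$ is handled by the ``identical scheme'' also needs repair: there the phase bound is $e^{-v\tau/u^2}$, the $u$-integration consumes no power of $v$, and the paper's explicit choice $p>2$ gives the (better) rate $\tau^{-1/2}$; your $p=4/3$ computation does not transfer.
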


\begin{proof}
To prove \eqref{dbar.int.est1}, first note that
\begin{align}
 \norm{K_W f}{\infty} &\leq \norm{f}{\infty} \int_\bbC \frac{1}{|\lambda-\zeta|}|W(\zeta)| \, dm(\zeta) 
                                \end{align}
so that we need only estimate the right-hand integral. We will prove the estimate in the region $ \lambda\in\Omega_3$ first. From \eqref{W-bound}, it follows
$$ |W(\zeta)| \leq \norm{\tm^{\RHP}}{\infty} \norm{(\tm^{\RHP})^{-1}}{\infty} \left| \dbar R_1\right| |e^{i\ttheta}|.$$
Setting $\lambda=\alpha+i\beta$ and $\zeta=(u+\tz_0)+iv$, the region $\Omega_{3,1}$ corresponds to $u\geq \sqrt{3}v \geq 0 $. We then have from \eqref{dbar.R_1.bd} \eqref{RHP.bd1}, and \eqref{RHP.bd2} that
$$
 \int_{\Omega_{3,1}}  \frac{1}{|\lambda-\zeta|} |W(\zeta)| \, d\zeta  \lesssim  I_1 + I_2 +I_3
$$
where
\begin{align*}
I_1 	&=	\int_0^{1/\sqrt{3}} \int_{\sqrt{3} v}^1 \frac{1}{|\lambda-\zeta|} |p_1'(u)| e^{- uv\tau } \, du \, dv, \\[5pt]
I_2	&=	\int_0^{1/\sqrt{3}} \int_{\sqrt{3} v}^1 \frac{1}{|\lambda-\zeta|} \left| u+iv \right|^{-1/2} e^{- uv\tau } \, du \, dv,\\
I_3	&=	\int_0^{1/\sqrt{3}} \int_{\sqrt{3}v}^1 \frac{1}{|\lambda-\zeta|} \left|  \dbar (\Xi_\calZ(\zeta  ) ) \right| e^{- uv\tau } \, du \, dv.
\end{align*}
It now follows from \cite[proof of Proposition D.1]{BJM} that
$$
|I_1|, \, |I_2|, \, |I_3| \lesssim (\tau)^{-1/4}.
$$
It then follows that
\begin{equation}
\label{omega31}
\int_{\Omega_{3,1}} \frac{1}{|\lambda-\zeta|} |W(\zeta)| \, d\zeta \lesssim (\tau)^{-1/4}
\end{equation}
Similar estimates hold for the integrations over the remaining $\Omega_8^+$:
\begin{equation}
\label{omega8+}
 \int_{\Omega_{8}^+} \frac{1}{|\lambda-\zeta|} |W(\zeta)| \, d\zeta \lesssim (\tau)^{-1/4}. 
\end{equation}
Now we turn to region $\Omega_8^-$ and write
$$
\int_{\Omega_{8}^-}  \frac{1}{|\lambda-\zeta|} |W(\zeta)| \, d\zeta  \lesssim  I_1 + I_2 +I_3
$$
where
\begin{align*}
I_1 	&=	\int_0^{\tz_0/(2\sqrt{3} )} \int_{\sqrt{3} v}^{\tz_0/2} \frac{1}{|\lambda-\zeta|} |p_6'(u)| e^{-v \tau / u^2} \, du \, dv, \\[5pt]
I_2	&=	\int_0^{\tz_0/(2\sqrt{3})} \int_{\sqrt{3} v}^{\tz_0/2}  \frac{1}{|\lambda-\zeta|} \left| u+iv \right|^{-1/2} e^{-v \tau / u^2}\, du \, dv,\\
I_3	&=	\int_0^{\tz_0 / (2\sqrt{3}) } \int_{\sqrt{3}v}^{\tz_0/2}  \frac{1}{|\lambda-\zeta|} \left|  \dbar (\Xi_\calZ(\zeta  ) ) \right|e^{-v \tau / u^2} \, du \, dv.
\end{align*}
Setting $\lambda=\alpha+i\beta$ and using the fact that
$$\norm{\dfrac{1}{\lambda-\zeta}}{L^2_u(v, \infty)}^2\leq \dfrac{\pi}{|v-\beta|}$$
we have the estimate
\begin{align*}
|I_1| &\lesssim \int_0^\infty e^{-v\tau} \int_{\sqrt{3} v}^\infty \dfrac{p'_6(u)}{|\zeta-\lambda |}dudv\\
&\lesssim \norm{r}{H^1}\int_0^\infty \dfrac{e^{-v\tau}}{|v-\beta|^{1/2}} dv\\
&\lesssim \tau^{-1/2}.
\end{align*}
For $I_2$, we choose $p>2$ and use \textit{H\"older}'s inequality to bound
\begin{align*}
|I_2|  & \lesssim \int_0^\infty e^{-v\tau} v^{1/p-1/2}|v-\beta|^{1/q-1}dv\\
        &=  \int_0^\beta e^{-v\tau} v^{1/p-1/2}|v-\beta|^{1/q-1}dv +  \int_\beta^\infty e^{-v\tau} v^{1/p-1/2}|v-\beta|^{1/q-1}dv\\
        &\leq \int_0^1\beta^{1/2}e^{-\beta w\tau}w^{1/p-1/2}(1-w)^{1/q-1}dw +\int_\beta^\infty e^{-v\tau} v^{1/p-1/2}|v-\beta|^{1/q-1}dv\\
        &\lesssim \tau^{-1/2} \int_{0}^1 w^{1/p-1}(1-w)^{1/q-1}dw+\int_0^\infty e^{-\tau w}w^{-1/2}dw\\
        &\lesssim \tau^{-1/2}.
\end{align*}
The estimate on $I_3$ is similar to that of $I_1$.
So we arrive at
\begin{equation}
\label{omega8-}
\int_{\Omega_{8}^-}  \frac{1}{|\lambda-\zeta|} |W(\zeta)| \, d\zeta  \lesssim \tau^{-1/2}.
\end{equation}

Similar procedure gives the following estimate on $\Omega_{3,2}$
\begin{equation}
\label{omega32}
\int_{\Omega_{3,2}}  \frac{1}{|\lambda-\zeta|} |W(\zeta)| \, d\zeta  \lesssim \tau^{-1/2}.
\end{equation}
Combining \eqref{omega31}-\eqref{omega32} leads to \eqref{dbar.int.est1}.
\end{proof}

\begin{lemma}
\label{lemma:N31.est}
The estimate   \eqref{N31.est} 
 holds with constants uniform in $\tdr$ in a bounded subset of $H_0^{1,1}(\bbR)$.
\end{lemma}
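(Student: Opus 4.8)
The plan is to read off the estimate directly from the solid Cauchy-transform representation \eqref{DNLS.dbar.int}. Writing $\widetilde{M}^{(3),0}_1 = I + \frac{1}{\pi}\int_\bbC \zeta^{-1}\,\widetilde{M}^{(3)}(\zeta;x,t)\,W(\zeta;x,t)\,d\zeta$ as in the proof of Proposition \ref{prop:N3.est}, the content of \eqref{N31.est} is the bound $\tau^{-3/4}$ on the integral term. First I would invoke the $L^\infty$-boundedness of the $\dbar$-solution: since $\norm{K_W}{L^\infty \rarr L^\infty}\lesssim \tau^{-1/4}$ by Lemma \ref{lemma:KW} (estimate \eqref{dbar.int.est1}), the Neumann series \eqref{N3.sol} gives $\norm{\widetilde{M}^{(3)}}{\infty}\lesssim 1$ uniformly for $t\gg 1$. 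Combining this with the uniform bounds $\norm{\tm^\RHP}{\infty},\,\norm{(\tm^\RHP)^{-1}}{\infty}\lesssim 1$ of Lemma \ref{lemma:RHP.bd} and the definition \eqref{W-bound} of $W$, the task reduces to controlling $\int_\bbC |\zeta|^{-1}\,|\dbar \calR^{(2)}(\zeta)|\,d\zeta$ by $\tau^{-3/4}$.

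Next I would localize to the support of $\dbar\calR^{(2)}$, which by Problem \ref{prob:MTM.RHP.dbar} is contained in the sectors $\Omega_3,\dots,\Omega_{10}$; by the symmetry of the construction it suffices to treat $\Omega_3$ (split as $\Omega_{3,1}\cup\Omega_{3,2}$) and $\Omega_8^\pm$, precisely the regions of Lemma \ref{lemma:dbar.R.bd}. In the sectors abutting $\pm\tz_0$ (that is, $\Omega_{3,1},\Omega_{3,2}$ and $\Omega_8^+$) the weight $|\zeta|^{-1}$ is bounded above by $2/\tz_0$, so I may discard it and insert the pointwise bounds \eqref{dbar.R_1.bd}–\eqref{dbar.R_6.bd}. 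Writing $\zeta=(u\pm\tz_0)+iv$ and integrating in $u$ first, the term $|p_i'(\Real\zeta)|$ is handled by Cauchy–Schwarz against $\norm{p_i'}{L^2}\lesssim\norm{\tdr}{H^{1,1}}$, and the term $|\zeta\mp\tz_0|^{-1/2}$ by explicit computation; both produce a factor $(v\tau)^{-1/2}$ \emph{together with} the Gaussian remainder $e^{-cv^2\tau}$ obtained by evaluating $e^{-|u||v|\tau}$ at the lower endpoint $u=\sqrt3\,v$. The crucial point, and the reason the exponent improves from the $\tau^{-1/4}$ of the operator bound to $\tau^{-3/4}$, is that here there is no singular kernel $|\zeta-\lambda|^{-1}$ to absorb, so the remaining integral $\int_0^\infty v^{-1/2}e^{-cv^2\tau}\,dv$ may be evaluated in full; the rescaling $v\mapsto v\tau^{1/2}$ then supplies the extra $\tau^{-1/4}$ and hence the desired $\tau^{-3/4}$.

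The main obstacle is the sector $\Omega_8^-$ adjacent to the origin, where $|\zeta|^{-1}$ is genuinely singular and \eqref{dbar.R_6.bd} furnishes only the weaker decay $e^{-|v|\tau/u^2}$. Here I would not discard the weight; instead I would note that $|\zeta|^{-1}\,d\zeta$ is locally integrable in the plane, and mimic the Hölder-inequality argument used for $\Omega_8^-$ in the proof of Lemma \ref{lemma:KW}, splitting the $v$-integral at $v=\beta$ and using an $L^2_u$-bound for $|\zeta|^{-1}$, to show that this contribution is also $O(\tau^{-3/4})$ once the $|\zeta|^{-1}$ singularity is absorbed by the area element. Finally, since every constant entering these estimates depends on $\tdr$ only through $\norm{\tdr}{H^{1,1}}$ and the uniform bounds of Lemmas \ref{lemma:RHP.bd} and \ref{lemma:KW}, the resulting estimate \eqref{N31.est} is uniform over bounded subsets of $H^{1,1}_0(\bbR)$, as asserted.
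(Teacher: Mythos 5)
Your proposal follows the paper's proof in outline: the same reduction via \eqref{DNLS.dbar.int}, \eqref{N3.sol}, Lemma \ref{lemma:RHP.bd} and \eqref{W-bound} to the integral $\int_\bbC |\zeta|^{-1}\,|\dbar\calR^{(2)}(\zeta)|\,d\zeta$, and the same regional decomposition into $\Omega_{3,1},\Omega_{3,2},\Omega_8^{\pm}$. Your handling of the sectors at distance $\geq\tz_0/2$ from the origin is correct, and is in fact an explicit rendition of \cite[Proposition D.2]{BJM}, which the paper merely cites: Cauchy--Schwarz (resp.\ H\"older) in $u$ produces a factor $(v\tau)^{-1/2}$ together with the Gaussian tail $e^{-cv^2\tau}$, and the $v$-integration after the substitution $v=s\tau^{-1/2}$ supplies the extra $\tau^{-1/4}$, giving the dominant $\tau^{-3/4}$.

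The gap is in $\Omega_8^-$, which is the one region where a specific structural input is required, and your proposal omits it. By \eqref{dbar.R_6.bd}, the bound on $|W|$ there contains the term $p_6(\Real\zeta)/|\zeta|$; multiplied by the Cauchy weight $|\zeta|^{-1}$ this produces $|p_6(u)|\,|\zeta|^{-2}e^{-v\tau/u^2}$, an apparent singularity of order $|\zeta|^{-2}$ at the origin. Your justification --- that $|\zeta|^{-1}$ is locally integrable so the singularity is ``absorbed by the area element'' --- does not apply to this term, since $|\zeta|^{-2}$ is \emph{not} locally integrable in the plane. What rescues it is precisely the subscript $0$ in $H^{1,1}_0$: by Proposition \ref{prop:r} (Corollary \ref{r-infty}), $\tdr(\lambda)/\lambda$ is bounded near the origin, hence $p_6(u)/u$ is bounded on $(0,\tz_0/2)$ and the singularity drops to an integrable $O(|\zeta|^{-1})$; this is exactly the paper's estimate \eqref{est-r-0}, and its proof pauses to point this out. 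Without invoking this vanishing (or at least $\tdr(0)=0$ together with $\norm{\tdr'}{L^2}$, which gives $|p_6(u)|\lesssim u^{1/2}$ and an integrable $|\zeta|^{-3/2}$), your $\Omega_8^-$ estimate does not close. Separately, the device you propose to import from Lemma \ref{lemma:KW} --- splitting the $v$-integral at $v=\beta$ --- is vacuous here: the kernel is centered at $\lambda=0$, so $\beta=0$ and there is nothing to split; the paper instead treats the $|p_6'|$ term by Cauchy--Schwarz in $u$ followed by the change of variables $w=1/u$, which turns $e^{-2v\tau/u^2}/u^2$ into a Gaussian in $w$ and yields $\tau^{-1}$ for that region.
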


\begin{proof}
Recall from \eqref{N3.1} that
\begin{equation}
\label{N3.1}
\tm^{(3)}_1(x,t) = I+\dfrac{1}{\pi} \int_{\bbC}  \dfrac{  \tm^{(3)}(\zeta;x,t) W(\zeta;x,t)}{\zeta} \, d\zeta . 
\end{equation}
This combined with Lemma \ref{lemma:KW} implies
$$ \left|\tm^{(3)}_1(x,t) \right| \lesssim I+ \int_\bbC \dfrac{ |W(\zeta;x,t)|}{|\zeta|} \, d\zeta. $$
We will bound this integral by $\tau^{-3/4}$ modulo constants with the required uniformities. Again we only work with $\Omega_8$ and $\Omega_3$ as shown in Figure
\ref{fig:three reigons}. In $\Omega_8^-$ we have
$$
\int_{\Omega_8^-} \dfrac{ |W(\zeta;x,t)| }{|\zeta|}\, d \zeta	\lesssim  I_1+I_2 +I_3
$$
where
\begin{align*}
I_1 	&=	\int_0^{\tz_0/(2\sqrt{3} )} \int_{\sqrt{3} v}^{\tz_0/2} \dfrac{1}{|\zeta|} |p_6'(u)| e^{-v \tau / u^2} \, du \, dv, \\[5pt]
I_2	&=	\int_0^{\tz_0/(2\sqrt{3})} \int_{\sqrt{3} v}^{\tz_0/2}  \dfrac{1}{|\zeta|^2 }|p_6(u)|  e^{-v \tau / u^2} \, du \, dv,\\
I_3	&=	\int_0^{\tz_0 / (2\sqrt{3}) } \int_{\sqrt{3}v}^{\tz_0/2}  \dfrac{1}{|\zeta|} \left|  \dbar (\Xi_\calZ(\zeta  ) ) \right| e^{-v \tau / u^2} \, du \, dv.
\end{align*}
For $I_1$ applying the Cauchy-Schwarz inequality, we get
\begin{align*}
I_1 	&=	\int_0^{\tz_0/(2\sqrt{3} )} \int_{\sqrt{3} v}^{\tz_0/2} \dfrac{1}{|\zeta|} |p_6'(u)| e^{-v \tau / u^2} \, du \, dv\\
        &\leq \int_0^{\infty}  \left( \int_0^{\tz_0/2} {|{p}'_6(u)|^2}du  \right)^{1/2} \left( \int_{\sqrt{3} v}^{\tz_0/2} \dfrac{ e^{-2v \tau / u^2} }{ u^2  } du  \right)^{1/2} dv\\
        &\lesssim  \int_0^{\infty}  \norm{\tdr}{H^{1}(\bbR)}  \left( \int_{{2/\tz_0} }^{\infty} {e^{-4vw^2\tau}} dw  \right)^{1/2} dv\\
        &  \lesssim  \int_0^{\infty}   \left( \int_{{2/\tz_0} }^{\infty} {e^{-4vw\tau}} dw  \right)^{1/2} dv\\
        & \leq \tau^{-1}.
\end{align*}
Similarly for $I_2$, we have:
\begin{align}
\label{est-r-0}
I_2	 &= \int_0^{\tz_0/(2\sqrt{3} )}  \int_{\sqrt{3} v}^{\tz_0/2} \dfrac{ |{p}_6(\zeta)|}{ |u^2+v^2|^{1/2}  }\dfrac{e^{-v\tau/u^2}}{| u^2+v^2  |^{1/2}  } \, du \, dv\\
\nonumber
   &\lesssim \norm{\tdr}{H^{1}(\bbR)} \int_0^{\tz_0/2} \dfrac{1}{u^2} \int_{ 0}^{u/\sqrt{3}} e^{-v\tau/u^2} dv   du\\
   \nonumber
    & \lesssim \tau^{-1}.
 \end{align}
 Notice that we have shown that $\tdr(\lambda)/\lambda$ is bounded near the origin in proposition \ref{prop:r}. Thus $p_6(u)/u$ is bounded on the interval $(0, \tz_0/2)$. And $I_3\lesssim \tau^{-1}$
 follows from the same argument as that of $I_1$. So we establish
 \begin{equation}
 \label{omega8-'}
 \int_{\Omega_8^-} \dfrac{ |W(\zeta;x,t)| }{|\zeta|}\, d \zeta	 \lesssim \tau^{-1}.
 \end{equation}
 Given the fact that $|\zeta|\geq \tz_0/2 $, it then follows from \cite[Proposition D.2]{BJM} that
\begin{align}
 \int_{\Omega_8^+} \dfrac{ |W(\zeta;x,t)| }{|\zeta|}\, d \zeta	  &\lesssim \tau^{-1}, \\
  \int_{\Omega_{3,1}} \dfrac{ |W(\zeta;x,t)| }{|\zeta|}\, d \zeta	 &\lesssim \tau^{-3/4}.
\end{align}
 We finally turn to region $\Omega_{3,2}$. In this region,
$$
\int_{\Omega_{3,2}} \dfrac{ |W(\zeta;x,t)| }{|\zeta|}\, d \zeta	\lesssim  I_1+I_2 +I_3
$$
where
\begin{align*}
I_1 	&=	\int_0^{\infty} \int_{\tz_0+1}^{\infty} \dfrac{1}{|\zeta|} |p_6'(u)| e^{-v\tau} \, du \, dv, \\[5pt]
I_2	&=	\int_0^{\infty} \int_{\tz_0+1}^{\infty}\dfrac{1}{|\zeta-\tz_0|^{-1/2}} \dfrac{1}{|\zeta|} e^{- v\tau } \, du \, dv,\\
I_3	&=	\int_0^{\infty} \int_{\tz_0+1}^{\infty} \dfrac{1}{|\zeta|} \left|  \dbar (\Xi_\calZ(\zeta  ) ) \right| e^{- v\tau} \, du \, dv.
\end{align*}
For $I_1$ applying the Cauchy-Schwarz inequality, we obtain
\begin{align*}
I_1 &=\int_0^{\infty} \int_{\tz_0+1}^{\infty} \dfrac{1}{\sqrt{u^2+v^2}} |p_6'(u)| e^{-v\tau} \, du \, dv,\\
     &\lesssim  \int_0^{\infty} e^{-v\tau}  dv\\
     & \lesssim \tau^{-1}.
 \end{align*}
 Similarly,for $I_2$ we have
 \begin{align*}
I_2 &=\int_0^{\infty} \int_{\tz_0+1}^{\infty} \dfrac{1}{\sqrt[4]{(u-\tz_0)^2+v^2}}\dfrac{1}{\sqrt{u^2+v^2}} e^{-v\tau} \, du \, dv,\\
     &\lesssim  \int_0^{\infty} e^{-v\tau} \left( \int_1^\infty \dfrac{1}{u^{3/2}} du \right) dv\\
     & \lesssim \tau^{-1}.
 \end{align*}
 And $I_3\lesssim \tau^{-1}$
 follows from the same argument as that of $I_1$. So we establish
 \begin{equation}
 \label{omega32'}
 \int_{\Omega_{3,2}} \dfrac{ |W(\zeta;x,t)| }{|\zeta|}\, d \zeta	 \lesssim \tau^{-1}.
 \end{equation}
 Combining \eqref{omega8-'}-\eqref{omega32'} we arrive at the inequality in \eqref{N31.est}.
\end{proof}

\section{Long-Time Asymptotics inside the Light Cone}
\label{sec:large-time}
We now put together our previous results and formulate the long-time asymptotics of $u(x,t)$ and $v(x,t)$  inside the light cone.  Undoing all transformations we carried out previously,  we get back $\widetilde{M}$:
\begin{equation}
\label{N3.to.N}
\tM(\lambda;x,t) = \tm^{(3)}(\lambda;x,t) \tm^\RHP(\lambda; \tz_0) \left( \calR^{(2)}(\lambda)\right)^{-1} \delta(\lambda)^{\sigma_3}.
\end{equation}
 In the previous sections, we have proven the following results.
\begin{lemma}
\label{lemma:N.to.NRHP.asy}
For $\lambda=i\sigma$ and $\sigma \rarr 0^+$, the following asymptotic relations hold
\begin{align}
\label{N.asy}
\tm^{(3)}(0; x,t) 				&=	I +\mathcal{O}\left( \tau^{-3/4}\right) \\
\label{expan-mLC}
\tm^\RHP (0; x, t)  &= E (0; x, t)\tm^{(sol)}(0; x, t)\\
 \nonumber
             &= \left( I+\mathcal{O} \left( \tau^{-1/2}\right) \right) \tm^{(sol)}(0; x, t)\\
             \label{delta.sigma.asy} 
\delta(0)^{\sigma_3}  &= \twomat{(-1)^l}{0}{0}{ (-1)^{-l} }     \\
\nonumber
          \calR^{(2)}(\lambda) &=I.
\end{align}

\end{lemma}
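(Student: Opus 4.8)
The four relations collected in the statement are all specializations to $\lambda=0$ of estimates already obtained in Sections \ref{sec:local}--\ref{sec:dbar}, so the plan is simply to send $\lambda=i\sigma\to 0$ non-tangentially in each and quote the relevant bound. A preliminary remark I would record is that, for $x/t$ in a compact subset of $(-1,1)$, the stationary point $\tz_0=\sqrt{(t-x)/(x+t)}$ stays bounded away from $0$ and $\infty$, so by \eqref{tau} one has $\tau\simeq t$; this lets me pass between powers of $t$ and of $\tau$ without comment.

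For \eqref{N.asy} I would take $\lambda\to 0$ in the decomposition \eqref{N3.exp} of Proposition \ref{prop:N3.est}: the piece $\widetilde{M}^{(3),0}_2$ vanishes in the limit by \eqref{0-dbar}, and \eqref{N31.est} bounds what remains, giving $\tm^{(3)}(0;x,t)=I+\mathcal{O}(\tau^{-3/4})$. For \eqref{expan-mLC} I would evaluate the parametrix \eqref{parametrix} at the origin. Because $0$ sits at distance $\tz_0>\rho$ from each of $\pm\tz_0$, it lies in the outer region, where $\tm^\RHP=E\,\tm^{(out)}$; since $\tm^{(out)}$ solves the discrete problem (Problem \ref{modifiedRHP} with $\tdr\equiv 0$), it coincides with the solution $\tm^{(sol)}$ of Problem \ref{prob:MTM.sol}, so $\tm^\RHP(0)=E(0)\,\tm^{(sol)}(0)$. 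The factor $E(0)$ is then controlled by the small-norm analysis: the operator bound \eqref{norm-vE}, the resolvent bound \eqref{norm-eta}, and the Cauchy--Schwarz step leading to \eqref{bound-E-1} show that the leading correction is of order $t^{-1/2}$, i.e. $E(0)=I+\mathcal{O}(\tau^{-1/2})$.

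The remaining relation \eqref{delta.sigma.asy} is the only one that calls for genuine computation, and I expect the determination of its \emph{sign} to be the main obstacle. Using the closed form of Lemma \ref{delta}(1), I would write $\delta(0)=\widetilde{\delta}(0)=\big(\prod_{\lambda_j\in B_\ell}\bar\lambda_j/\lambda_j\big)e^{\chi(0)}$, where $\chi(0)=\frac{1}{2\pi i}\int_{-\tz_0}^{\tz_0}s^{-1}\log\!\big(1+s|\tdr(s)|^2\big)\,ds$. This integral converges because the membership $\tdr\in H^{1,1}_0(\bbR)$ forces $\tdr(\lambda)/\lambda\to 0$, hence the integrand vanishes to high order at $s=0$, and it is purely imaginary since its integrand is real; thus $e^{\chi(0)}$ is unimodular, as is each Blaschke factor. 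The delicate point is to show that their product collapses to the real value $(-1)^l$; here I would exploit the positivity of the jump $1+\lambda|\tdr|^2$ together with the symmetry of the discrete spectrum about the origin, following \cite[Proposition 4.2]{Saalmann-1}. With $\delta(0)=(-1)^l$ in hand, raising to the power $\sigma_3$ yields $\delta(0)^{\sigma_3}=\diag\big((-1)^l,(-1)^{-l}\big)$. Finally $\calR^{(2)}(\lambda)=I$ is immediate, since along $\lambda=i\sigma$ the point stays in $\Omega_1\cup\Omega_2$, where no triangular factor $R_i$ is inserted and $\calR^{(2)}\equiv I$ by construction (see \eqref{R1}--\eqref{R8-} and Problem \ref{prob:MTM.RHP.dbar}). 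These four facts are exactly the asserted relations.
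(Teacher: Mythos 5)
Your handling of \eqref{N.asy}, of \eqref{expan-mLC}, and of the relation $\calR^{(2)}\equiv I$ matches what the paper actually does: the lemma is presented there with no separate proof, as a summary of Sections \ref{sec:local} and \ref{sec:dbar}, and the ingredients you cite are the right ones — Proposition \ref{prop:N3.est} (through \eqref{N3.exp}, \eqref{0-dbar}, \eqref{N31.est}) for the $\dbar$ part; the parametrix \eqref{parametrix}, the identification of $\tm^{(out)}$ with $\tm^{(sol)}$, and the small-norm bounds \eqref{norm-vE}, \eqref{norm-eta}, \eqref{bound-E-1} for $\tm^\RHP(0)=E(0)\tm^{(sol)}(0)=\left(I+\mathcal{O}(\tau^{-1/2})\right)\tm^{(sol)}(0)$; and the fact that $\calR^{(2)}$ is the identity on $\Omega_1\cup\Omega_2$, which contains the imaginary axis, for the last relation. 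The preliminary observation $\tau\simeq t$ on compact subsets of $|x/t|<1$ is also the correct thing to record.

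The genuine gap is in \eqref{delta.sigma.asy}. You correctly reduce to $\delta(0)=\bigl(\prod_{\lambda_j\in B_\ell}\bar{\lambda}_j/\lambda_j\bigr)e^{\chi(0)}$ and correctly show both factors are unimodular (the determinant relation gives $1+s|\tdr(s)|^2=|\alpha(s)|^{-2}>0$, so $\chi(0)$ is purely imaginary). But the step you defer — collapsing this product to the sign $(-1)^{l}$ via ``the symmetry of the discrete spectrum about the origin'' — is not available for the MTM. The $\zeta\mapsto-\zeta$ symmetry of the Lax pair is precisely what the substitution $\lambda=\zeta^2$ quotients out: the eigenvalue pair $\pm\zeta_j$ becomes the single point $\lambda_j=\zeta_j^2$, and the $\lambda_j=\rho_je^{i\omega_j}$ are unconstrained points of $\bbC^+$ (Definition \ref{def: data} imposes only $\rho_j>0$, $0<\omega_j<\pi$). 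Consequently $\prod_{\lambda_j\in B_\ell}\bar{\lambda}_j/\lambda_j=e^{-2i\sum_j\omega_j}$, which equals a sign only when every $\omega_j=\pi/2$, and $e^{\chi(0)}$ is likewise a generic point on the unit circle. The collapse to $\pm 1$ is a sine-Gordon phenomenon (in \cite{CLL} the symmetry of the reflection coefficient forces eigenvalues either onto the positive imaginary axis or into pairs $\{\lambda_j,-\bar{\lambda}_j\}$, whose Blaschke factors at $0$ contribute $-1$ and $+1$ respectively); it does not transfer to the MTM, and \cite[Proposition 4.2]{Saalmann-1}, which you invoke, concerns the boundary behaviour of $\widetilde{\delta}$ near $\pm\tz_0$, not its value at the origin. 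What your computation actually establishes is $\delta(0)^{\sigma_3}=\diag\bigl(\delta_0,\delta_0^{-1}\bigr)$ with $\delta_0=\bigl(\prod_{\lambda_j\in B_\ell}\bar{\lambda}_j/\lambda_j\bigr)e^{\chi(0)}$, $|\delta_0|=1$ — which, note, is also the only version consistent with the paper's own later formulas \eqref{cos-asym} and \eqref{v-asym}, where the factors $e^{\pm\chi(0)}$ are kept explicit. So you should stop at the unimodular formula; the literal sign $(-1)^{l}$ cannot be derived from Lemma \ref{delta} by your proposed route, nor by any route that does not add hypotheses on the location of the eigenvalues in $B_\ell$.
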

\begin{lemma}
\label{lemma:N.to.NRHP.asy}
For $\lambda=i\sigma$ and $\sigma \rarr +\infty$, the asymptotic relations
\begin{align}
\label{N.asy}
\tm(\lambda;x,t) 				&=	I + \frac{1}{\lambda} \tm_1(x,t) + o\left(\frac{1}{\lambda}\right)\\
\label{N.RHP.asy}
\tm^\RHP(\lambda;x,t)		&=	I + \frac{1}{\lambda} \tm^\RHP_1(x,t) + o\left(\frac{1}{\lambda}\right)
\end{align}
hold. Moreover,
\begin{equation}
\label{N.to.NRHP.asy} 
\left(\tm_1(x,t)\right)_{12} = \left(\tm^\RHP_1(x,t)\right)_{12} + \bigO{\tau^{-3/4}}.
\end{equation}
\end{lemma}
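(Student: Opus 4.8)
The plan is to read both expansions directly off the chain of transformations recorded in \eqref{N3.to.N}, namely
\begin{equation*}
\tM(\lambda;x,t) = \tm^{(3)}(\lambda;x,t)\,\tm^\RHP(\lambda;x,t)\,\left(\calR^{(2)}(\lambda)\right)^{-1}\widetilde\delta(\lambda)^{\sigma_3}.
\end{equation*}
Since we let $\lambda=i\sigma$ with $\sigma\to+\infty$, the point $\lambda$ eventually lies in the region $\Omega_1$, where by Problem \ref{prob:MTM.RHP.dbar} the $\dbar$-derivative of $\calR^{(2)}$ vanishes and $\calR^{(2)}(\lambda)\equiv I$. Hence along this ray the factorization collapses to $\tM = \tm^{(3)}\,\tm^\RHP\,\widetilde\delta^{\sigma_3}$, and it suffices to expand the three surviving factors to order $\lambda^{-1}$ and multiply.

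First I would record the three expansions. For $\tm^{(3)}$, Proposition \ref{prop:N3.est}, specifically \eqref{M3.exp}--\eqref{M31.est}, already provides $\tm^{(3)}(\lambda;x,t)=I+\lambda^{-1}\tm^{(3),\infty}_1(x,t)+\bigO{\lambda^{-2}}$ with $|\tm^{(3),\infty}_1|\lesssim\tau^{-3/4}$ uniformly in $\widetilde r$ on bounded subsets of $H^{1,1}_0(\bbR)$. For the scalar factor, Lemma \ref{delta}~(2) gives $\widetilde\delta(\lambda)=1+O(\lambda^{-1})$, so $\widetilde\delta^{\sigma_3}=\diag(\widetilde\delta,\widetilde\delta^{-1})=I+\lambda^{-1}\widetilde\delta_1\sigma_3+\bigO{\lambda^{-2}}$; the crucial point is that this correction is \emph{diagonal}. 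For $\tm^\RHP$, in the region $|\lambda\pm\tz_0|>\rho$ the parametrix \eqref{parametrix} reads $\tm^\RHP=E\,\tm^{(out)}$, and both factors are analytic at infinity: $E(\lambda)=I+\lambda^{-1}E_1+\bigO{\lambda^{-2}}$ by \eqref{large-expansion}--\eqref{expansion-E1}, while $\tm^{(out)}$, being the explicit solution of the discrete RHP, has a convergent expansion $I+\lambda^{-1}(\tm^{(out)})_1+\bigO{\lambda^{-2}}$. This yields \eqref{N.RHP.asy} with $\tm^\RHP_1=E_1+(\tm^{(out)})_1$, and similarly \eqref{N.asy} holds with a genuine $o(\lambda^{-1})$ remainder.

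Multiplying the three expansions and collecting the coefficient of $\lambda^{-1}$ gives
\begin{equation*}
\tm_1 = \tm^{(3),\infty}_1 + \tm^\RHP_1 + \widetilde\delta_1\sigma_3 .
\end{equation*}
Passing to the $(1,2)$-entry, the diagonal term $\widetilde\delta_1\sigma_3$ contributes nothing, so $(\tm_1)_{12}=(\tm^\RHP_1)_{12}+(\tm^{(3),\infty}_1)_{12}$, and the bound $|\tm^{(3),\infty}_1|\lesssim\tau^{-3/4}$ from \eqref{M31.est} turns the last term into $\bigO{\tau^{-3/4}}$, which is precisely \eqref{N.to.NRHP.asy}. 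The computation is routine once the three expansions are in hand; the only point requiring genuine care --- and the step I expect to be the main obstacle --- is justifying that the $o(\lambda^{-1})$ remainders are uniform and that the off-diagonal $\lambda^{-1}$ coefficient of $\tm^{(3)}$ truly inherits the $\tau^{-3/4}$ rate rather than merely being $o(1)$, but this is exactly the content of Proposition \ref{prop:N3.est} established in the previous section.
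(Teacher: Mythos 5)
Your proposal is correct and follows essentially the same route as the paper: expand the factorization $\tM = \tm^{(3)}\tm^\RHP(\calR^{(2)})^{-1}\widetilde\delta^{\sigma_3}$ along the ray (where $\calR^{(2)}\equiv I$), observe that the $\widetilde\delta^{\sigma_3}$ correction is diagonal and hence invisible in the $(1,2)$-entry, and absorb $(\tm^{(3),\infty}_1)_{12}$ into the error via the bound \eqref{M31.est} from Proposition \ref{prop:N3.est}. The only differences are cosmetic — you spell out $\tm^\RHP = E\,\tm^{(out)}$ explicitly and correctly place the ray in $\Omega_1$ (the paper writes $\Omega_2$, immaterial since $\calR^{(2)}\equiv I$ on $\Omega_1\cup\Omega_2$).
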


\begin{proof}
By Lemma \ref{delta} (3), 
the expansion
\begin{equation}
\label{delta.sigma.asy} 
\delta(\lambda)^{\sigma_3} = \twomat{1}{0}{0}{1} + \frac{1}{\lambda} \twomat{\delta_1}{0}{0}{\delta_1^{-1}} + \bigO{\lambda^{-2}} 
\end{equation}
holds, with the remainders in \eqref{delta.sigma.asy}
 uniform in $\tdr$ in a bounded subset of $H^{1}$. 
\eqref{N.asy} follows from \eqref{N3.to.N},  \eqref{N.RHP.asy}, the fact that $\calR^{(2)} \equiv I$ in $\Omega_2$,
and \eqref{delta.sigma.asy}.
Notice the fact that the 
diagonal matrix in \eqref{delta.sigma.asy} does not affect the $12$-component of $\tm$. Hence, for 
$\lambda=i\sigma$, the equality
$$ 
\left(\tm(\lambda;x,t)\right)_{12} = 
		\frac{1}{\lambda}\left(\tm^{(3)}_1(x,t)\right)_{12} + 
		\frac{1}{\lambda}\left(\tm^\RHP_1(x,t)\right)_{12} + o\left(\frac{1}{\lambda}\right)
$$
holds and result now follows from \eqref{N31.est}. The error term results from similar estimates in section\ref{sec:dbar}.
\end{proof}

With the two lemmas above,  we arrive at the asymptotic formula under the reference frame of a given soliton:
$$\mathrm{v}_\ell=\dfrac{x}{t}=\dfrac{1-\rho_\ell^2}{1+\rho_\ell^2}$$
where $\rho_\ell$ corresponds to the $\ell$th soliton (see definition \eqref{data}).
\begin{proposition}
\label{lemma br.RHP.asy}
The function 
\begin{equation}
\label{cos.recon.bis}
u(x,t) = \overline{\left[ \tm (0 ;x,t)\right]}_{12}
\end{equation}
takes the form 
$$ u(x,t)= u^{sol}(x,t)+  u_{as}(x,t)+\mathcal{O}\left( \tau^{-3/4}\right), $$
where  $u^{sol}(x,t)$ is given by \eqref{u-sol} and
\begin{align}
\label{cos-asym}
\overline{u_{as}(x,t)}&=    \dfrac{(-1)^{\ell}e^{-\chi(0)}}{   \sqrt{\dfrac{t  \tz_0 }{ 1+\tz_0^2 } } }\left\lbrace -\left[\tm_{11}^{(sol)}( -\tz_0)\right]^2 \left( i ( \delta^0_A)^{-2} \overline{\beta}_{12} \right) - \left[\tm_{12}^{(sol)} (-\tz_0)\right]^2 \left(  i \left( \delta^0_A\right)^{2}\overline{\beta}_{21} \right)  \right. \\
\nonumber
&\quad   \left. \left[ \tm_{11}^{(sol)} (\tz_0)\right]^2 \left( i ( \delta^0_B)^{-2} {\beta}_{12} \right)  + \left[ \tm_{12}^{(sol)}(\tz_0)\right]^2 \left( i ( \delta^0_B)^{2}{\beta}_{21} \right)\right \rbrace \tm_{22}^{(sol)} (0; x,t)\\
\nonumber
& \quad +\dfrac{(-1)^{\ell} e^{\chi(0)}}{   \sqrt{\dfrac{t  \tz_0 }{ 1+\tz_0^2 } } }\left\lbrace \left[\tm_{12}^{(sol)}( -\tz_0)\tm_{22}^{(sol)}( -\tz_0)\right] \left( i ( \delta^0_A)^{-2} \overline{\beta}_{12} \right) + \left[\tm_{11}^{(sol)} (-\tz_0)\tm_{21}^{(sol)} (-\tz_0)\right] \left(  i \left( \delta^0_A\right)^{2}\overline{\beta}_{21} \right)  \right. \\
\nonumber
&\quad   \left.- \left[\tm_{12}^{(sol)}( \tz_0)\tm_{22}^{(sol)} (\tz_0)\right] \left( i ( \delta^0_B)^{-2} {\beta}_{12} \right) - \left[ \tm_{11}^{(sol)}( \tz_0)\tm_{21}^{(sol)} (\tz_0)\right]^2 \left( i ( \delta^0_B)^{2}{\beta}_{21} \right)\right \rbrace \tm_{12}^{(sol)} (0; x,t)
\end{align}
Also recall that
\begin{align}
 \overline{v(x)} e^{\frac{i}{2} \int_{x}^{+\infty}\left(|u|^{2}+|v|^{2}\right) d y} &=\lim _{|\lambda| \rightarrow \infty} \lambda[\widetilde{M}(x ; \lambda)]_{12}.
\end{align}
so 
\begin{equation}
\label{sin.recon.bis}
v(x,t) =  \lim _{|\lambda| \rightarrow \infty} \overline{\lambda[\widetilde{M}(x ; \lambda)]_{12}}\left[\tm(x,0)  \right]_{11}.
\end{equation}
takes the form 
$$ v(x,t)= v^{sol}( x,t)+  v_{as}(x,t)+\mathcal{O}\left( \tau^{-3/4}\right) $$
with $v^{sol}(x,t)$ given by \eqref{v-sol}. And
\begin{align}
\label{v-asym}
\overline{v_{as}(x, t)}&= \dfrac{\overline{\tm_{12}^{sol}(x,t)}}{   \sqrt{\dfrac{t  \tz_0 }{ 1+ \tz_0^2 } }  }\left\lbrace -\left[ \tm_{11}^{(sol)}( -z_0)\right]^2 \left( i ( \delta^0_A)^{-2} \overline{\beta}_{12} \right) - \left[\tm_{12}^{( sol )} (-z_0)\right]^2 \left(  i ( \delta^0_A)^{2}\overline{\beta}_{21} \right)  \right. \\
\nonumber
&\quad   \left.+ \left[ \tm_{12}^{(sol)} (z_0)\right]^2 \left( i ( \delta^0_B)^2 {\beta}_{12} \right)  +\left[ \tm_{11}^{( sol )}(z_0)\right]^2 \left( i ( \delta^0_B)^{-2}{\beta}_{21} \right) \right\rbrace {\tm_{21}^{( sol )}(0; x, t)} (-1)^{\ell}e^{\chi(0)} \\
\nonumber
  &\quad +  \dfrac{1}{   \sqrt{\dfrac{t  }{ (1+\tz_0^2)\tz_0  } } }\left\lbrace \left[\tm_{11}^{(sol)}( -\tz_0)\right]^2 \left( i ( \delta^0_A)^{-2} \overline{\beta}_{12} \right) + \left[\tm_{12}^{(sol)} (-\tz_0)\right]^2 \left(  i \left( \delta^0_A\right)^{2}\overline{\beta}_{21} \right)  \right. \\
\nonumber
&\quad   \left.- \left[ \tm_{11}^{(sol)} (\tz_0)\right]^2 \left( i ( \delta^0_B)^{-2} {\beta}_{12} \right) - \left[ \tm_{12}^{(sol)}(\tz_0)\right]^2 \left( i ( \delta^0_B)^{2}{\beta}_{21} \right)\right \rbrace \overline{\tm_{11}^{(sol)} (0; x,t)}(-1)^{\ell}e^{\chi(0)}\\
\nonumber
& \quad +\dfrac{\overline{\tm_{12}^{sol}(x,t)}}{   \sqrt{\dfrac{t  \tz_0 }{ 1+\tz_0^2 } } }\left\lbrace \left[\tm_{12}^{(sol)}( -\tz_0)\tm_{22}^{(sol)}( -\tz_0)\right] \left( i ( \delta^0_A)^{-2} \overline{\beta}_{12} \right)+ \left[\tm_{11}^{(sol)} (-\tz_0)\tm_{21}^{(sol)} (-\tz_0)\right] \left(  i \left( \delta^0_A\right)^{2}\overline{\beta}_{21} \right)  \right. \\
\nonumber
&\quad   \left. -\left[\tm_{12}^{(sol)}( \tz_0)\tm_{22}^{(sol)} (\tz_0)\right] \left( i ( \delta^0_B)^{-2} {\beta}_{12} \right)  - \left[ \tm_{11}^{(sol)}( \tz_0)\tm_{21}^{(sol)} (\tz_0)\right] \left( i ( \delta^0_B)^{2}{\beta}_{21} \right)\right \rbrace {\tm_{11}^{(sol)} (0,x,t)}(-1)^{\ell}e^{\chi(0)}
\end{align}
All the terms above have been obtained in Section \ref{sec:local}.

\end{proposition}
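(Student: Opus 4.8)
The plan is to feed the reconstruction formulas \eqref{cos.recon.bis} and \eqref{sin.recon.bis} into the master relation \eqref{N3.to.N}, which expresses $\tM$ as the ordered product $\tm^{(3)}\,\tm^\RHP\,(\calR^{(2)})^{-1}\,\delta^{\sigma_3}$, and then to substitute the asymptotic building blocks established in the previous sections while tracking powers of $\tau$. For the $u$-component I would evaluate \eqref{N3.to.N} at $\lambda=0$ along the nontangential ray $\lambda=i\sigma$, $\sigma\downarrow 0$. Near the origin $\calR^{(2)}\equiv I$ (we sit in $\Omega_1\cup\Omega_2$), so the only ingredients needed are $\delta(0)^{\sigma_3}=\diag((-1)^\ell,(-1)^{-\ell})$ from Lemma \ref{delta}, the $\dbar$-bound $\tm^{(3)}(0)=I+\bigO{\tau^{-3/4}}$ furnished by \eqref{N31.est} and \eqref{0-dbar}, and the factorization $\tm^\RHP(0)=E(0)\,\tm^{(sol)}(0)$ of \eqref{expan-mLC} together with the identification $\tm^{(out)}=\tm^{(sol)}$.

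Next I would insert the explicit expansion \eqref{E-1-2 cauchy}, namely $E(0)=I+\bigO{t^{-1/2}}+\bigO{t^{-1}}$. Writing $E(0)=I+\mathcal E$ and extracting the $(1,2)$ entry of $(I+\bigO{\tau^{-3/4}})\,E(0)\,\tm^{(sol)}(0)\,\delta(0)^{\sigma_3}$, the piece with $E(0)\to I$ reconstructs the soliton $u^{sol}$ of \eqref{u-sol}, while the conjugation structure $\tm^{(sol)}(\pm\tz_0)(\,\cdot\,)[\tm^{(sol)}(\pm\tz_0)]^{-1}$ carried by $\mathcal E$, after multiplication by the soliton entries $[\tm^{(sol)}(0)]_{12}$ and $[\tm^{(sol)}(0)]_{22}$ and the complex conjugation demanded by \eqref{recon-1}, produces precisely the explicit radiation term \eqref{cos-asym}. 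Every cross product of two $\bigO{t^{-1/2}}$ factors, as well as the $\bigO{\tau^{-3/4}}\cdot\bigO{t^{-1/2}}$ terms, is $\bigO{\tau^{-3/4}}$ because $\tau\simeq t$ along a fixed characteristic inside the light cone, so these are absorbed into the stated error.

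For $v$ I would combine the two limits, the large-$\lambda$ behavior and the origin value. From the infinity version of Lemma \ref{lemma:N.to.NRHP.asy} together with \eqref{N.to.NRHP.asy}, $\lim_{|\lambda|\to\infty}\lambda[\tM]_{12}=(\tm^\RHP_1)_{12}+\bigO{\tau^{-3/4}}$; since outside the parametrix disks $\tm^\RHP=E\,\tm^{(out)}$, I would split $(\tm^\RHP_1)_{12}=(E_1)_{12}+(\tm^{(sol)}_1)_{12}$ using \eqref{expansion-E1}. The origin factor $[\tm(x,0)]_{11}$ is computed exactly as in the $u$-case. Forming the conjugate product in \eqref{sin.recon.bis}, the product of the two leading (soliton) pieces yields $v^{sol}$ of \eqref{v-sol}, whereas the first-order terms — the $E_1$ correction paired with the soliton origin value, plus the soliton $1/\lambda$ coefficient paired with the $E(0)$ correction at the origin — assemble into the three bracketed groups of \eqref{v-asym}.

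The main obstacle is not any single estimate but the disciplined bookkeeping: one must verify that the matrix-entry algebra produced by the conjugations in \eqref{E-1-2 cauchy} and \eqref{expansion-E1} reproduces the exact combinations of $\delta^0_{A},\delta^0_{B}$, $\beta^A_{12},\beta^A_{21},\beta^B_{12},\beta^B_{21}$ and $\tm^{(sol)}_{ij}(\pm\tz_0)$ appearing in \eqref{cos-asym} and \eqref{v-asym}, and that every remainder — from the $\dbar$-problem via \eqref{N31.est} and \eqref{M31.est}, from the $\bigO{t^{-1}}$ tails in $E$, and from the exponentially small contributions on $\Sigma_E\setminus(C_A\cup C_B)$ — is dominated by $\tau^{-3/4}$. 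Since the radiation amplitude is genuinely of size $t^{-1/2}\gg\tau^{-3/4}$, the decomposition soliton $+$ radiation $+$ error is internally consistent, and no cancellation beyond the explicit first order is required.
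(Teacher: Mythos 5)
Your proposal is correct and follows essentially the same route as the paper: undo the transformations via \eqref{N3.to.N}, insert the origin and infinity asymptotics of $\tm^{(3)}$, $\tm^\RHP=E\,\tm^{(sol)}$, $\calR^{(2)}$ and $\delta^{\sigma_3}$ from Lemmas \ref{lemma:N.to.NRHP.asy}, and read off the soliton and radiation terms from the explicit expansions \eqref{E-1-2 cauchy} and \eqref{expansion-E1}, absorbing all cross terms into $\mathcal{O}(\tau^{-3/4})$. This is precisely the computation the paper summarizes by citing Section \ref{sec:local}.
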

Finally in the solitonless region, all the solitons decay exponentially, so we have:
\begin{proposition}
\label{asy pure}
If we choose the frame  $x=\mathrm{v} t$ with $| \mathrm{v} |< 1$ and $\mathrm{v} \neq ( 1-\rho_j^2)/(1+\rho_j^2) $ for all $1\leq j \leq N_1$, then we have that
\begin{align*}
u(x,t)&=u_{as}(x,t)+ \mathcal{O}\left(\tau^{-3/4}\right)\\
v(x, t)&=v_{as}(x,t)+\mathcal{O}\left(\tau^{-3/4}\right)
\end{align*}
with
\begin{align}
\label{u-as-pure}
\overline{u_{as}(x,t)}&=    \dfrac{(-1)^{\ell}}{   \sqrt{\dfrac{t  \tz_0 }{ 1+\tz_0^2 } } }\left\lbrace - \left( i ( \delta^0_A)^{-2} \overline{\beta}_{12} \right) + \left( i ( \delta^0_B)^{-2} {\beta}_{12} \right)  \right \rbrace \\
\label{v-as-pure}
\overline{v_{as}(x,t)}&=    \dfrac{(-1)^{\ell}}{   \sqrt{\dfrac{t   }{ (1+\tz_0^2)\tz_0 } } }\left\lbrace\left( i ( \delta^0_A)^{-2} \overline{\beta}_{12} \right) - \left( i ( \delta^0_B)^{-2} {\beta}_{12} \right)  \right \rbrace 
\end{align}

\end{proposition}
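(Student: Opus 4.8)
The plan is to run the identical nonlinear steepest descent pipeline developed in Sections~\ref{sec:local}--\ref{sec:dbar}, but now along a frame $x=\mathrm{v}t$ whose velocity matches no soliton. The two stationary points of the phase \eqref{phasefunction} sit at $\pm\tz_0$ with $\tz_0=\sqrt{(1-\mathrm v)/(1+\mathrm v)}$, and since $\mathrm v\neq(1-\rho_j^2)/(1+\rho_j^2)$ is exactly the condition $\rho_j\neq\tz_0$, no discrete eigenvalue $\lambda_j$ lies on the critical circle $|\lambda|=\tz_0$. Consequently the conjugation, contour deformation, local parametrix construction at $\pm\tz_0$, and the $\dbar$-analysis all go through unchanged, and the error bounds \eqref{N31.est} and \eqref{N.to.NRHP.asy} continue to furnish the $\mathcal O(\tau^{-3/4})$ remainder; the genericity hypothesis $|\alpha|\ge A>0$ guarantees that the scalar factors $\tilde\delta_0^{\pm}$, the Blaschke product in $\tilde\delta$, and the parametrix constants $\beta^{A,B}$, $\delta^0_{A,B}$ remain finite.

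First I would quantify the effect of separating the eigenvalues from the critical circle. Because every $\lambda_j$ lies strictly inside or strictly outside $\{|\lambda|=\tz_0\}$, the real part $\Real\!\big(i\ttheta(\lambda_j;x,t)\big)$ is bounded away from $0$ and grows linearly in $t$ along the frame, at a rate controlled by $\min_j|\rho_j-\tz_0|$ and $\Upsilon$. Feeding this into the explicitly solvable soliton RHP (Problem~\ref{prob:MTM.sol}), whose closed form is \eqref{BC-solitons}, the residue terms carry factors $e^{\pm i\ttheta(\lambda_j)}$ that are $\mathcal O(e^{-ct})$, so $\tm^{(sol)}(\lambda;x,t)=I+\mathcal O(e^{-ct})$ uniformly on $\lambda$-sets bounded away from $\{\lambda_j,\lambda_j^*\}$. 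In particular the profiles \eqref{u-sol}--\eqref{v-sol} decay like $\mathcal O(e^{-ct})$, since the argument of each $\sech$ contains $\ttheta_{\mathbb I}(\lambda_j)$ plus a logarithmic shift and hence grows linearly in $t$; this is the precise sense in which all solitons decay exponentially.

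Next I would substitute $\tm^{(sol)}=I+\mathcal O(e^{-ct})$ into the general radiation formulas \eqref{cos-asym} and \eqref{v-asym} of Proposition~\ref{lemma br.RHP.asy}. In the limit one has $\tm^{(sol)}_{11}(\,\cdot\,)\to1$, $\tm^{(sol)}_{12},\tm^{(sol)}_{21}\to0$, and $\tm^{(sol)}_{22}(0),\tm^{(sol)}_{11}(0)\to1$, so every summand containing an off-diagonal $\tm^{(sol)}$ entry, and in particular the entire block multiplied by $\tm^{(sol)}_{12}(0)$, is exponentially small and drops. What survives in \eqref{cos-asym} are exactly the two terms with coefficients $i(\delta^0_A)^{-2}\overline\beta_{12}$ and $i(\delta^0_B)^{-2}\beta_{12}$, while evaluating $\delta(0)$ through \eqref{delta.sigma.asy} supplies the global sign $(-1)^\ell$. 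This collapses \eqref{cos-asym} to \eqref{u-as-pure} and, after the same reduction of the three blocks of \eqref{v-asym}, collapses the latter to \eqref{v-as-pure}; the discrepancy between $u$ (resp. $v$) and $u_{as}$ (resp. $v_{as}$) is then $\mathcal O(\tau^{-3/4})$ together with the $\mathcal O(e^{-ct})$ soliton corrections, which is absorbed into the stated error.

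The main obstacle is to make the exponential decay of $\tm^{(sol)}$ and of the soliton profiles genuinely uniform over the admissible range of frames: as $\mathrm v$ approaches a forbidden velocity $v_j$ the separation $|\rho_j-\tz_0|$ shrinks and the decay rate $c$ degenerates, so the constants must be tracked in terms of $\min_j|\rho_j-\tz_0|$ and the genericity gap $A$. Once this uniformity is secured, the remaining work is the bookkeeping check that the surviving coefficients and the sign produced by $\delta(0)^{\sigma_3}$ assemble precisely into \eqref{u-as-pure}--\eqref{v-as-pure}, which is routine given Proposition~\ref{lemma br.RHP.asy}.
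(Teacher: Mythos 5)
Your proposal is correct and follows the same route as the paper: the paper obtains Proposition \ref{asy pure} from the frame-matched formulas \eqref{cos-asym} and \eqref{v-asym} with only the one-line remark that in the solitonless region all solitons decay exponentially, which is exactly the substitution $\tm^{(sol)}=I+\mathcal{O}(e^{-ct})$ (off-diagonal entries vanish, diagonal entries tend to $1$) that you carry out. Your extra bookkeeping — the decay rate controlled by the separation of the $\rho_j$ from $\tz_0$ after the conjugation flips the inside-circle residues, and the uniformity degeneration as $\mathrm{v}$ approaches a forbidden velocity — merely fills in details the paper leaves implicit.
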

\begin{figure}[h]
\caption{The Augmented Contour $\Sigma$}
\vspace{.5cm}
\label{figure-Sigma1}
\begin{tikzpicture}[scale=0.9]
 \draw[ thick] (0,0) -- (-3,0);
\draw[ thick] (-3,0) -- (-5,0);
\draw[thick,->,>=stealth] (0,0) -- (3,0);
\draw[ thick] (3,0) -- (5,0);
\node[above] at 		(2.5,0) {$+$};
\node[below] at 		(2.5,0) {$-$};
\node[right] at (3.5 , 2) {$\Gamma_j$};
\node[right] at (3.5 , -2) {$\Gamma_j^*$};
\draw[->,>=stealth] (3.4,2) arc(360:0:0.4);
\draw[->,>=stealth] (3.4,-2) arc(0:360:0.4);
\draw [red, fill=red] (3,2) circle [radius=0.05];
\draw [red, fill=red] (3,-2) circle [radius=0.05];
\node[right] at (5 , 0) {$\bbR$};

\draw (0,0) [dashed] circle [ radius=2.9 ];

\draw [red, fill=red] (0, 1) circle [radius=0.05];
\draw[->,>=stealth] (0.3, 1) arc(360:0:0.3);
\draw [red, fill=red] (0, -1) circle [radius=0.05];
\draw[->,>=stealth] (0.3, -1) arc(0:360:0.3);
\node[above] at 		(0, 1.25) {$\Gamma_k$};
\node[below] at 		(0, -1.25) {$\Gamma_k^*$};
 \node [below] at (2.9,0) {\footnotesize $\tz_0$};
    \node [below] at (-2.9,0) {\footnotesize $-\tz_0$};
     \draw	[fill, red]  (1.529, 1.006)		circle[radius=0.05];	  
     \draw[->,>=stealth] (1.729, 1.006) arc(360:0:0.2);
     \draw	[fill, red]  (1.529, -1.006)		circle[radius=0.05];	  
     \draw[->,>=stealth] (1.729, -1.006) arc(0:360:0.2);
    \node[above]  at (0, 0) {\footnotesize $\text{Re}(i\theta)>0$};
    
      \node[below]  at (0, 0) {\footnotesize $\text{Re}(i\theta)<0$};
    
     \node[above]  at (0, 3) {\footnotesize $\text{Re}(i\theta)<0$};
     \node[below]  at (0, -3) {\footnotesize $\text{Re}(i\theta)>0$};
     \node[above] at 		(1.9, 1.0) {$\Gamma_\ell$};
\node[below] at 		(2.1, -1.0) {$\Gamma_\ell^*$};
\end{tikzpicture}
\begin{center}
\begin{tabular}{ccc}
	
soliton ({\color{red} $\bullet$} ) 
\end{tabular}
\end{center}
\end{figure}
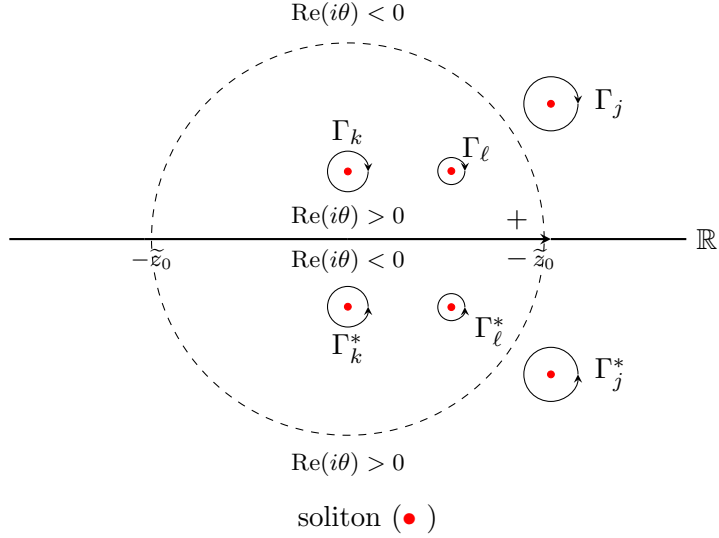

\section{Long-Time Asymptotics Outside the light cone}
\label{sec:outside}
We now turn to the study of the asymptotic behavior when $|x/t|>1$. We first deal with the case $x/t>1$. Our starting point is RHP Problem \ref{RHP2-v}. As it will become clear later, outside the light cone there are only higher order decay terms comparing to the interior of the light cone. For the purpose of brevity we are only going to display the calculations directly related to error terms. 
\subsection{$x/t > 1$}
\label{subsec: out}
Setting $\lambda=u+iv$, we have for $v>0$, 
\begin{equation}
\label{theta-out}
2\text{Re}i\ttheta(\lambda; x, t)=-\left(   1+\dfrac{x}{t} \right)v t +  \left( 1-\dfrac{x}{t} \right)\dfrac{v t}{u^2+v^2}  \leq -\left(   1+\dfrac{x}{t} \right)v t <0
\end{equation}
Similarly for the case $v<0$,
\begin{equation}
\label{theta-out'}
2\text{Re}\left[-i\ttheta(\lambda; x, t)  \right] =\left(   1+\dfrac{x}{t} \right)v t +  \left( \dfrac{x}{t}-1 \right)\dfrac{v t}{u^2+v^2}  \leq \left(   1+\dfrac{x}{t} \right)v t <0.
\end{equation}
Since all the pole conditions  have desired decay properties, we only need the  following upper/lower factorization on $\bbR$:
\begin{equation}
\label{v-ul}
e^{-(i/2)\ttheta\ad\sigma_3}\tv(\lambda)	= \Twomat{1}{0}{-\lambda\overline{\tdr(\lambda)}  e^{-i\ttheta}}{1}\Twomat{1}{-\tdr(\lambda)   e^{i\ttheta}}{0}{1}.
						\quad \
						\lambda \in\bbR 
\end{equation}
and the contour deformation:
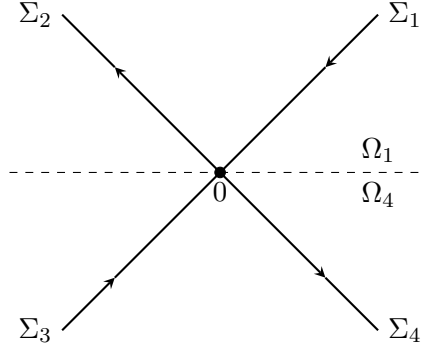
\begin{figure}[H]
\caption{$\Sigma$-outside }
\vskip 15pt
\begin{tikzpicture}[scale=0.7]
\draw[->,thick,>=stealth] 	(3, 3) -- (2,2);						
\draw[thick]		(0,0) -- (2,2);		
\draw[->,thick,>=stealth]  		(0,0) -- (-2,2); 				
\draw[thick]	(-3,3) -- (-2,2);	
\draw[->,thick,>=stealth]		(-3,-3) -- (-2,-2);							
\draw[thick]						(-2,-2) -- (0,0);
\draw[thick,->,>=stealth]		(0,0) -- (2,-2);								
\draw[thick]						(2,-2) -- (3,-3);
\draw	[fill]							(0,0)		circle[radius=0.1];	
\node [below] at (0,0) {0};
\draw [dashed] (0,0)--(4,0);
\draw [dashed] (-4,0)--(0,0);
\node[right] at (3,3)					{$\Sigma_1$};
\node[left] at (-3,3)					{$\Sigma_2$};
\node[left] at (-3,-3)					{$\Sigma_3$};
\node[right] at (3,-3)				{$\Sigma_4$};
\node[above] at (3,0)					{$\Omega_1$};
\node[below] at (3,0)					{$\Omega_4$};
\end{tikzpicture}
\label{fig:Painleve}
\end{figure}
For brevity, we only discuss the situation in $\Omega_1$. In $\Omega_1$, we define
\begin{align*}
	R_1	&=	\begin{cases}
						\twomat{0}{-\tdr(\lambda)  e^{i\ttheta}  }{0}{0}		
								&	\zeta \in (0,\infty)\\[10pt]
								\\
						\twomat{0}{-\tdr( 0 ) e^{i\ttheta (z )  } (1-\Xi_\calZ)  }{0}{0}	
								&	\zeta	\in \Sigma_1
					\end{cases}
	\end{align*}
and the interpolation is given by 
$$ \left( \tdr(0)+ \left( \tdr\left( \text{Re} \lambda \right) -\tdr( 0)  \right) \cos 2\phi  \right)    (1-\Xi_\calZ)=\tdr\left( \text{Re} \lambda \right)   \cos 2\phi     (1-\Xi_\calZ)$$
given $\tdr(0)=0$.
So we arrive at the $\dbar$-derivative in $\Omega_1$ :
\begin{align}
\dbar R_1&= \left[ \left(  \tdr'\left( u\right) \cos 2\phi- 2\dfrac{ \tdr(u  )    }{  \left\vert \lambda \right\vert   } e^{i\phi} \sin 2\phi  \right)  (1-\Xi_\calZ) - \tdr\left( u \right)   \cos 2\phi\, \dbar (\Xi_\calZ( \lambda ) )\right] e^{i\ttheta}
\end{align}
\begin{equation}
\label{R1.bd1}
|W|=\left| \dbar R_1  \right| 	\lesssim\left( |   \tdr'\left( u  \right) | +\dfrac{|\tdr(u)| }{  |\lambda| } + \dbar (\Xi_\calZ( \lambda ) )  \right) e^{\text{Re}i\ttheta(\lambda; x, t) }.
\end{equation}
We proceed as in the previous section and study the integral equation related to the $\dbar$ problem. Setting $\lambda=\alpha+i\beta$ and $\zeta=u+iv$, the region $\Omega_1$ corresponds to $u\geq v \geq 0 $. We decompose the integral operator into three parts:
$$
 \int_{\Omega_1}  \dfrac{1}{|\lambda -\zeta|} |W(\zeta)| \, d\zeta  \lesssim  I_1 + I_2 +I_3
$$
where
\begin{align*}
I_1 	&=	\int_0^\infty \int_v^\infty \dfrac{1}{|\lambda-\zeta|} \left\vert  \tdr'\left( u  \right)   \right\vert e^{-vt} \, du \, dv, \\[5pt]
I_2	&=	\int_0^\infty \int_v^\infty \frac{1}{|\lambda-\zeta|}  \dfrac {1} {  \left| u  +iv \right|^{1/2} } e^{-vt} \, du \, dv,\\[5pt]
I_3 	&=	\int_0^\infty \int_v^\infty \dfrac{1}{|\lambda-\zeta|} \left\vert  \dbar (\Xi_\calZ(\zeta) )   \right\vert e^{-v t} \, du \, dv. 
\end{align*}
Following the argument of proving \eqref{omega8-}, we obtain
$$
 \int_{\Omega_1}  \dfrac{1}{|\lambda-\zeta|} |W(\zeta)| \, d\zeta  \lesssim  t^{-1/2}.
$$
We now  calculate the decay rate of the integral
$$
 \int_{\Omega_1} \dfrac{ |W(\zeta)|}{|\zeta|} \, d\zeta
.$$
Again we decompose the integral above into three parts
\begin{align*}
I_1 	&=	\int_0^{ \infty } \int_{ v}^{\infty} \dfrac{1}{|\zeta|} |\tdr'(u)| e^{\text{Re}i\ttheta(\zeta; x, t)} \, du \, dv, \\[5pt]
I_2	&=	\int_0^{ \infty } \int_{ v}^{\infty} \dfrac{1}{|\zeta|^2 }|\tdr(u)|  e^{\text{Re}i\ttheta(\zeta; x, t) } \, du \, dv,\\
I_3	&=	\int_0^{ \infty } \int_{ v}^{\infty} \dfrac{1}{|\zeta|} \left|  \dbar (\Xi_\calZ(\zeta  ) ) \right| e^{\text{Re}i\ttheta(\zeta; x, t) } \, du \, dv.
\end{align*}

For $I_1$ one has
\begin{align*}
I_1 	&\lesssim \int_0^{ \infty } \int_{ 1}^{\infty} \dfrac{1}{|\zeta|} |{\tdr'}(u) | e^{-v t} \, du \, dv +  \int_0^{ 1 } \int_{ v}^{1} \dfrac{1}{|\zeta|} |{\tdr}(u) | e^{-vt/u^2 } \, du \, dv\\
       &\lesssim t^{-1}.
\end{align*}
Similarly for $I_2$:
\begin{align}
\label{est-r-out}
I_2	 &\lesssim \int_0^{1}  \int_{v}^{1} \dfrac{ |{\tdr}(u)|}{ |u^2+v^2|  }{e^{-v\tau/u^2}}\, du \, dv + \int_0^{\infty}   e^{-v t}   \int_{ 1}^{\infty} \dfrac{ |{\tdr}(u)|}{ |u^2+v^2|  }\, du \, dv\\
\nonumber
   &\lesssim \norm{\tdr}{H^{1}(\bbR)} \int_0^{1} \dfrac{1}{u^2} \int_{ 0}^{u} e^{-v\tau/u^2} dv   du + t^{-1}\\
   \nonumber
    & \lesssim t^{-1}.
 \end{align}
 The estimate on $I_3$ is similar to that of $I_1$.
Following the same procedure given in Section \ref{sec:large-time} we obtain the following asymptotic formulas:
\begin{align}
\label{outside-cos}
u(x,t)  &= \mathcal{O}\left( t^{-1}\right)\\
\label{outside-sin}
v(x,t)    &=\mathcal{O}\left( t^{-1}\right).
\end{align}

\subsection{$x/t<-1$}
First notice that for $v>0$, 
\begin{equation}
\label{-theta-out}
2\text{Re}i\ttheta(\lambda; x, t)=-\left(   1+\dfrac{x}{t} \right)v t +  \left( 1-\dfrac{x}{t} \right)\dfrac{v t}{u^2+v^2}  \geq -\left(   1+\dfrac{x}{t} \right)v t >0
\end{equation}
Similarly for $v<0$,
\begin{equation}
\label{-theta-out'}
2\text{Re}\left[-i\ttheta(\lambda; x, t)  \right] =\left(   1+\dfrac{x}{t} \right)v t +  \left( \dfrac{x}{t}-1 \right)\dfrac{v t}{u^2+v^2}  \geq \left(   1+\dfrac{x}{t} \right)v t >0.
\end{equation}
 Define the scalar function:
\begin{equation}
\label{psi-0}
\psi(\lambda) = \left(\prod_{j=1}^{N_1}\dfrac{\lambda-\overline{\lambda_j}}{\lambda-\lambda_j}\right) \exp \left(  \dfrac{1}{2\pi i}\int_{-\infty}^{\infty}\dfrac{\log( 1+|\tdr(s)|^2)}{s-\lambda} {d\zeta}  \right) .
\end{equation}
It is straightforward to check that if $\tM(z;x,t)$ solves Problem \ref{RHP2-v}, then the new matrix-valued function $\tm^{(1)}(\lambda;x,t)=\tM(\lambda ;x,t)\psi(\lambda)^{\sigma_3}$ has the following jump matrices:
\begin{equation}
e^{(i/2)\ttheta\ad\sigma_3}\tv^{(1)}(\lambda) 	=\Twomat{1}{\dfrac{-\psi_+^2 {\tdr}}{1+\lambda|\tdr|^2} e^{i\ttheta}}{0}{1}\Twomat{1}{0}{\dfrac{-\psi_-^{-2}  \lambda\overline{\tdr}}{1+\lambda|\tdr|^2}  e^{-i\ttheta}}{1},
						\quad \lambda \in\bbR 
\end{equation}

\begin{align}
\label{v-br+}
e^{(i/2)\theta\ad\sigma_3}\tv^{(1)}(\lambda)= 	\begin{cases}
						\twomat{1}{\dfrac{\left[ (1/\psi )'(\lambda)\right]^{-2} }{c_j  (\lambda-\lambda_j)}e^{i\ttheta} }{0}{1}	&	\lambda\in \Gamma_j, \\
						\\
						\twomat{1}{0}{\dfrac{ \left[\psi'( \overline{\lambda_j } )\right]^{-2}}{\overline{c_j}  (\lambda -\overline{\lambda_j })} e^{-i\ttheta}}{1}
							&	\lambda \in \Gamma_j^*.
					\end{cases}
\end{align}

We now see that all entries in \eqref{v-br+} decay exponentially as $t\to\infty$, so we are allowed to reduce the RHP to a problem on $\bbR$ and perform the same contour deformation of the case when $x/t>1$ in Figure \ref{fig:Painleve}. Again for brevity, we only discuss the situation in $\Omega_4$. In $\Omega_4$, we define
\begin{align*}
	R_1	&=	\begin{cases}
						\twomat{0}{ \dfrac{\psi_+^2 \lambda\tdr}{1+\lambda|\tdr|^2} e^{i\ttheta} }{ 0 }{0}		
								&	\lambda \in (0,\infty)\\[10pt]
								\\
						\twomat{0}{ \dfrac{\psi_+^2(0) \overline{\tdr(0)}}{1+|\tdr(0)|^2} e^{i\ttheta}\left(1-\Xi_\calZ\right) }	{0}{0}
								&	\lambda	\in \Sigma_4
					\end{cases}
	\end{align*}
and the interpolation is given by 
$$  \dfrac{ {\tdr(u)}}{1+u|\tdr(u)|^2}  (  \cos 2\phi)  \psi^2(z)    (1-\Xi_\calZ)  e^{-i\ttheta} $$
and consequently
\begin{align}
\dbar R_1&= \left[ \left( \left( \dfrac{ {\tdr(u)}}{1+u|\tdr(u)|^2}  \right)'\cos 2\phi- 2\dfrac{ \tdr(u)}{(1+u|\tdr(u)|^2)|\lambda|} e^{i\phi} \sin 2\phi  \right)  (1-\Xi_\calZ) - \tdr\left( u \right)   \cos 2\phi \dbar (\Xi_\calZ( \lambda ) )\right] e^{-i\ttheta}
\end{align}
\begin{equation}
\label{R1.bd1}
|W|=\left| \dbar R_1  \right| 	\lesssim\left(  \left\vert  \left(  \dfrac{ {\tdr(u)}}{1+u|\tdr(u)|^2} \right)'\right\vert +\dfrac{| {\tdr(u)}| }{  |\lambda|(   1+u|\tdr(u)|^2) } + \dbar (\Xi_\calZ( z) )  \right) e^{- v t}.
\end{equation}
The asymptotic formulas are the same as \eqref{outside-cos}-\eqref{outside-sin}.

\section{Soliton Resolution and Full Asymptotic Stability}\label{sec:summary}
In this section, we put together the results using inverse scattering transform from all the previous sections, and deduce the soliton resolution for MTM with  generic data. Then we use the long-time asymptotics to obtain the full asymptotic stability of reflectionless nonlinear structures.

\subsection{Soliton resolution}
\begin{theorem}
	\label{thm:maindetail}Given the generic initial data $\left(u_{0},v_{0}\right)\in \mathcal{I}\times \mathcal{I}$ in the sense of Definition \ref{def:generic'}. Let $u(x,t)$ and $v(x,t)$ be the solution to MTM \eqref{eq: MTM}, then the solution can be written as the superposition of solitons and  radiation:
	\[
	u(x,t)=\sum_{\ell=1}^{N_{1}} u^{sol}_{\ell }(x,t)+R_{u}(x,t)+\text{Err}(x,t)
	\]
	and
	\[
	v(x,t)=\sum_{\ell=1}^{N_{1}} v^{sol}_{\ell }(x,t)+R_{v}(x,t)+ \text{Err}(x,t)
	\]
	where $u^{sol}_{\ell }(x,t)$, $v^{sol}_{\ell }(x,t)$
are given by \eqref{u-sol} and \eqref{v-sol} respectively. As for the radiation terms, $R_{u}(x,t)$ and $R_{v}(x,t)$ are in different regions given by the following formulas. Set $x=\mathtt{v}t$,
	we obtain that
	\begin{itemize}
		\item[1.] For $\mathtt{v}=\mathtt{v}_{\ell}^{sol}$,  the velocity of the $\ell $th soliton, $R_u(x,t)$ and $R_v(x,t)$ are given by \eqref{cos-asym} and \eqref{v-asym} and $\text{Err}(x,t)=\mathcal{O}(\tau^{-3/4})$.
		\item[2.]  For $\mathtt{v}\neq \mathtt{v}_{\ell}^{sol}$ for any $\ell$, and $|\mathtt{v}|<1$ , $R_u(x,t)$ and $R_v(x,t)$ are given by \eqref{u-as-pure} and \eqref{v-as-pure} and $\text{Err}(x,t)=\mathcal{O}(\tau^{-3/4})$.
		
		\item[3.] For  $\left|\mathtt{v}\right|>1$,  $R_u(x,t)$ and $R_v(x,t)$ are given by 
		\eqref{outside-cos} and \eqref{outside-sin}.
		\item [4.] For $ x/t \to 1^-$ as $t \to \infty $  $R_u(x,t)$ and $R_v(x,t)$ are given by  \eqref{app-cos} and \eqref{app-sin}.
		\item[5.] For $ x/t \to -1^+$ as $t \to \infty $  $R_u(x,t)$ and $R_v(x,t)$ are given by  \eqref{app-cos-} and \eqref{app-sin-}.
	\end{itemize}
\end{theorem}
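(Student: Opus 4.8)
The plan is to assemble Theorem \ref{thm:maindetail} directly from the asymptotic analysis of Sections \ref{sec:local}--\ref{sec:outside}, since each of the five regimes has already been reduced to an explicitly solvable model. The common starting point is the unwinding identity \eqref{N3.to.N}, which expresses the original normalized solution $\widetilde{M}$ as the product $\widetilde{M}^{(3)}\,\widetilde{M}^\RHP\,(\calR^{(2)})^{-1}\,\widetilde{\delta}^{\sigma_3}$. First I would feed this factorization into the reconstruction formulas \eqref{cos.recon.bis} and \eqref{sin.recon.bis}, evaluating the first at $\lambda=0$ and extracting the $\mathcal{O}(\lambda^{-1})$ coefficient of the second as $|\lambda|\to\infty$. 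Because $\calR^{(2)}\equiv I$ in the sectors $\Omega_1,\Omega_2$ containing the imaginary axis along which these non-tangential limits are taken, the factor $(\calR^{(2)})^{-1}$ drops out, leaving only the boundary and infinity values of $\widetilde{\delta}^{\sigma_3}$ from Lemma \ref{delta}, of $\widetilde{M}^\RHP$ from the parametrix construction, and of $\widetilde{M}^{(3)}$ from Proposition \ref{prop:N3.est}.

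The decisive input is that the pure $\dbar$-correction is genuinely lower order: Proposition \ref{prop:N3.est} supplies $\widetilde{M}^{(3)}(0)=I+\mathcal{O}(\tau^{-3/4})$ together with $\widetilde{M}^{(3),\infty}_1=\mathcal{O}(\tau^{-3/4})$, so $\widetilde{M}^{(3)}$ feeds only into the error term $\mathrm{Err}(x,t)$. I would then organize the interior cases accordingly. For $\mathtt{v}=\mathtt{v}^{sol}_\ell$ (Case 1) the localized problem factors as $\widetilde{M}^\RHP=E\cdot\widetilde{M}^{(sol)}$, where $\widetilde{M}^{(sol)}$ yields exactly the $\ell$th soliton $u^{sol}_\ell,v^{sol}_\ell$ of \eqref{u-sol}--\eqref{v-sol} (the solitons with $\rho_j<\rho_\ell$ having been conjugated into the Blaschke factor of $\widetilde{\delta}$ via the set $\mathcal{B}_\ell$ in \eqref{B-set}), while the small-norm correction $E=I+\mathcal{O}(\tau^{-1/2})$ contributes the radiation through the parabolic-cylinder data $\beta^A,\beta^B$ and $\delta^0_A,\delta^0_B$; this is precisely Proposition \ref{lemma br.RHP.asy}, producing \eqref{cos-asym}--\eqref{v-asym}. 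For $\mathtt{v}\neq\mathtt{v}^{sol}_\ell$ with $|\mathtt{v}|<1$ (Case 2) no eigenvalue sits on the stationary-phase circle, so $\widetilde{M}^{(sol)}\to I$ and only the $E_1$ term of \eqref{expansion-E1} survives, giving the reflectionless formulas \eqref{u-as-pure}--\eqref{v-as-pure} of Proposition \ref{asy pure}. For $|\mathtt{v}|>1$ (Case 3) the sign table \eqref{theta-out}--\eqref{theta-out'} forces exponential decay of every jump and residue, so the whole contribution collapses into the $\dbar$-error and we recover \eqref{outside-cos}--\eqref{outside-sin} from Section \ref{sec:outside}.

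The hard part will be the transition regimes $x/t\to 1^-$ and $x/t\to -1^+$ (Cases 4 and 5), which I would treat separately. Here the two stationary points $\pm\widetilde{z}_0$ either coalesce or escape to infinity as $\mathtt{v}\to\pm 1$, so $\tau=t\widetilde{z}_0/(1+\widetilde{z}_0^2)$ no longer grows linearly in $t$ and the uniform bounds \eqref{N31.est} and \eqref{norm-eta} that drive Cases 1--2 degenerate. I expect to need an $\mathtt{v}$-dependent rescaling that tracks how $\widetilde{z}_0$ and all implied constants depend on $x/t$, yielding the modified-scattering formulas \eqref{app-cos}--\eqref{app-sin} and \eqref{app-cos-}--\eqref{app-sin-} with a decay rate interpolating between the interior rate $\tau^{-1/2}$ and the exterior rate $t^{-1}$; controlling the uniformity of the $\dbar$- and small-norm estimates as $\widetilde{z}_0\to 0,\infty$ is the delicate point. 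Once these five local statements are established, combining them with the reconstruction formulas and collecting the $\mathcal{O}(\tau^{-3/4})$ remainders termwise completes the proof of Theorem \ref{thm:maindetail}.
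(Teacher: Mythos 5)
Your assembly of Cases 1--3 is, in substance, exactly the paper's own proof: Theorem \ref{thm:maindetail} is a compilation theorem, and you have correctly identified every ingredient it compiles --- the unwinding \eqref{N3.to.N}, the reconstruction formulas \eqref{cos.recon.bis} and \eqref{sin.recon.bis} evaluated at $\lambda=0$ and as $|\lambda|\to\infty$, the fact that $\calR^{(2)}\equiv I$ along the imaginary axis so it drops out of both limits, the $\mathcal{O}(\tau^{-3/4})$ bounds on $\widetilde{M}^{(3)}$ from Proposition \ref{prop:N3.est}, the factorization $\widetilde{M}^{\RHP}=E\,\widetilde{M}^{(sol)}$ with the solitons indexed by $\mathcal{B}_\ell$ in \eqref{B-set} absorbed into the Blaschke factor of $\widetilde{\delta}$, and Propositions \ref{lemma br.RHP.asy}, \ref{asy pure} together with Section \ref{sec:outside} supplying the three regimes. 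Had the theorem contained only Cases 1--3, your proposal would be complete and identical in route to the paper.

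For Cases 4 and 5, however, your proposal is a statement of intent rather than an argument, and the mechanism you anticipate --- a $\mathtt{v}$-dependent rescaling with uniform control of the \emph{existing} $\dbar$ and small-norm estimates as $\tz_0\to 0$ or $\tz_0\to\infty$ --- is not how these regimes close, nor what the paper does. As $\tz_0\to 0$ the two stationary points coalesce and the fixed-radius disks $C_A$, $C_B$ carrying the parabolic-cylinder parametrices collide, so the Section \ref{sec:local} machinery does not survive uniformly; chasing constants in it cannot work. The paper instead makes a \emph{fresh} deformation whose corner points sit at the square-root scale $\pm\sqrt{\tz_0}=\pm\sqrt[4]{(t-x)/(t+x)}$, and then invokes the defining property of $H^{1,1}_0(\bbR)$, namely $\lim_{\lambda\to 0}\tdr(\lambda)/\lambda=0$, to get $\tdr(\sqrt{\tz_0})=o(\sqrt{\tz_0})$; consequently the residual localized RHP has jump $I+o(\sqrt{\tz_0})$, no parabolic-cylinder model is needed at all, and the remaining $\dbar$-estimates produce the $\mathcal{O}(t^{-1/p})$, $1<p<2$, of \eqref{app-cos}--\eqref{app-sin}. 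For $x/t\to-1^+$ the stationary points escape to infinity, where the RHP is normalized, so no rescaling in $\lambda$ can help; the paper's fix is the inversion $\lambda\mapsto 1/\lambda$ --- equivalently, switching to the RHP built from the spectral problem \eqref{eq: n-s} under the gauge $T$ rather than $\widetilde{T}$ --- which sends $\pm\tz_0$ to $\pm\breve{z}_0\to 0$ and reduces Case 5 to Case 4. These two ideas, the square-root-scale deformation combined with the vanishing of $\tdr$ at the origin, and the inversion trading the singularity at infinity for the one at the origin, are the actual content of Cases 4 and 5 (indeed they are the reason the paper keeps two gauge transformations throughout), and they are missing from your proposal; without them your plan stalls precisely at the point you flag as delicate.
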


\subsection{Full asymptotic stability}\label{subsec:fullasym}
To study the asymptotic stability, we first construct the general  reflectionless solution. Suppose we have the following
discrete scattering data
\begin{equation}
S_{D}=\left\{ 0, \left\{ \lambda_{0,j},c_{0,j}\right\} _{j=1}^{N_{1}}\right\} \in H_0^{1,1}(\bbR)\otimes\mathbb{C}^{2N_{1}}.\label{eq:discrete}
\end{equation}
Assume that $\lambda_{0,j}:=\rho_{0,j}e^{i\omega_{0,j}}$ we denote
\[
\mathtt{v}_{0,j}^{sol}=\frac{1-\rho_{0,j}^2}{1+\rho_{0,j}^2}
\]
Moreover, for genericity, we assume that all $v_{0,j}^{sol}$
are different. Then one can construct two pure soliton solutions $\text{U}\left(x,t\right)$, $\text{V}\left(x,t\right)$
using the discrete scattering data \eqref{eq:discrete} via Problem \ref{RHP2-v}
. We consider the asymptotic stability of the sum 
\begin{equation}
\text{U}(x,t)+\text{V}(x,t)=\sum_{j=1}^{N_{1}}u_{j}^{sol}(x,t)+\sum_{j=1}^{N_{1}} v^{sol}_{j}\left(x,t\right)
\end{equation}
provided the solitons in these sums are sufficiently separated. 

\smallskip
With preparations above and Theorem \ref{thm:maindetail}, we state a corollary regarding the asymptotic stability of $\text{U}\left(x,t\right)$ and $\text{V}(x,t)$.
\begin{corollary}
	\label{cor:stabN}Consider the reflectionless solution $\text{U}\left(x,t\right)$ and $\text{V}\left(x,t\right)$
	to the massive Thirring model \eqref{eq: MTM}. Suppose
	\[
	\left\Vert (u_0, v_0)-\left(U(x, 0), V(x, 0)\right)\right\Vert _{\mathcal{I}\times \mathcal{I}}<\epsilon
	\]
	for $0< \epsilon\ll1$ small enough. Let $(u, v)$ be the solution
	to the MTM with the initial data $(u_0, v_0)$, then
	there exist scattering data
	\begin{equation}
	\mathcal{S}=\left\{ \tdr\left(\lambda\right),\left\{ \lambda_{1,j},\tc_{1,j}\right\} _{j=1}^{N_{1}}\right\} \in H^{1,1}_0(\bbR) \otimes\mathbb{C}^{2N_{1}}\label{eq:newscatt}
	\end{equation}
	computed in terms of $(u_0, v_0)$ such that
	\[
	\left\Vert \tdr\right\Vert _{H^{1,1}}+\sum_{j=1}^{N_{1}}\left(\left|\lambda_{0,j}-\lambda_{1,j}\right|+\left|\tc_{0,j}-\tc_{1,j}\right|\right)\lesssim\epsilon.
	\]
	Moreover, with the scattering data $\mathcal{S}$ given by \eqref{eq:newscatt}, one can write the solution
	
	\[
	u\left(x,t\right)=\sum_{j=1}^{N_{1}}u_j^{sol}(x,t)+u_{as}(x,t)
	\]
	and
	\[
	v\left(x,t\right)=\sum_{j=1}^{N_{1}}v_j^{sol}(x,t)+v_{as}(x,t)
	\]
	where $u_j^{sol}(x,t)$, $v_j^{sol}(x,t)$ are reconstructed via solving Problem \ref{prob:MTM.sol} using scattering
	data \eqref{eq:newscatt} respectively. Here again the radiation
	terms $u_{as}\left(x,t\right)$ and $v_{as}\left(x,t\right)$ have the
	asymptotics given by Theorem \ref{thm:maindetail} with scattering data \eqref{eq:newscatt}.
\end{corollary}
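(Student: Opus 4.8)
The plan is to obtain Corollary \ref{cor:stabN} as a direct consequence of two ingredients: the Lipschitz continuity of the direct scattering map in the $\mathcal{I}\times\mathcal{I}$ topology, and the soliton resolution of Theorem \ref{thm:maindetail}. The reflectionless profile $(U,V)$ corresponds, through Problem \ref{RHP2-v}, to the scattering data $S_D=\{0,\{\lambda_{0,j},c_{0,j}\}_{j=1}^{N_1}\}$ with identically vanishing reflection coefficient. The strategy is to show that the scattering data $\mathcal{S}=\{\tdr,\{\lambda_{1,j},\tc_{1,j}\}_{j=1}^{N_1}\}$ attached to the perturbed initial condition $(u_0,v_0)$ stays within $\mathcal{O}(\epsilon)$ of $S_D$, and then to invoke Theorem \ref{thm:maindetail} verbatim for $\mathcal{S}$.

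First I would establish the continuous dependence of $\tdr$ on the potential. This amounts to rerunning the proof of Proposition \ref{prop:r} for the difference of two potentials. The Volterra equations \eqref{eq:n-l} and \eqref{eq: n-s} depend affinely on the coefficient matrices $\widetilde{Q}_1,\widetilde{Q}_2$ and $Q_1,Q_2$; subtracting the associated resolvent identities and using the uniform resolvent bound \eqref{resolvent bound} reduces every estimate to controlling the differences $\widetilde{Q}_1-\widetilde{Q}_1'$ and $\widetilde{Q}_2-\widetilde{Q}_2'$ (together with their small-$\lambda$ analogues) in the relevant mixed norms. Since the entries of the $\widetilde{Q}_i$ are polynomial in $(u,v,u_x,v_x)$, these differences are dominated by $\|(u_0,v_0)-(U(\cdot,0),V(\cdot,0))\|_{\mathcal{I}\times\mathcal{I}}$. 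Because $(U,V)$ is reflectionless its reflection coefficient vanishes, so the difference estimate collapses to $\|\tdr\|_{H^{1,1}}\lesssim\epsilon$.

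Next I would track the discrete data. Genericity of $(U,V)$ (Definition \ref{def:generic'}) ensures that $\widetilde{\alpha}$ has exactly $N_1$ simple zeros $\lambda_{0,j}\in\mathbb{C}^{+}$, bounded away from $\bbR$, with $|\alpha(\lambda)|\ge A>0$ on $\bbR$. Since $\widetilde{\alpha}$ depends continuously on the potential through the same Volterra analysis (now for $\lambda$ ranging over compact subsets of $\mathbb{C}^{+}$), for $\epsilon$ small a Rouché argument on small disks about each $\lambda_{0,j}$ shows that the perturbed $\widetilde{\alpha}$ has exactly one simple zero $\lambda_{1,j}$ in each disk, with $|\lambda_{0,j}-\lambda_{1,j}|\lesssim\epsilon$; no zeros migrate in or out, and the bound $|\alpha|\ge A/2$ on $\bbR$ persists, so $\mathcal{S}$ is again generic with the same $N_1$. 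The norming constants $\tc_{1,j}=2/(b_j a'(\zeta_j))$ are continuous functionals of the Jost solutions and of $\widetilde{\alpha}'$, whence $|\tc_{0,j}-\tc_{1,j}|\lesssim\epsilon$, completing the closeness bound
\[
\|\tdr\|_{H^{1,1}}+\sum_{j=1}^{N_1}\left(|\lambda_{0,j}-\lambda_{1,j}|+|\tc_{0,j}-\tc_{1,j}|\right)\lesssim\epsilon.
\]

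The hard part will be this spectral-stability step: one needs continuity of $\widetilde{\alpha}$ not merely on $\bbR$ but on a complex neighborhood of the eigenvalues, where the Jost functions must be analytically continued and estimated. The rational spectral dependence with singularities at $0$ and $\infty$ forces one to work with both gauge transformations $T$ and $\widetilde{T}$ and to patch the resulting analytic continuations, exactly as in the direct scattering analysis of the preceding sections. Once $\mathcal{S}$ is known to be generic with the above bounds, the decomposition of $u$ and $v$ into a sum of solitons plus radiation with error $\mathcal{O}(\tau^{-3/4})$ follows immediately from Theorem \ref{thm:maindetail} applied to $\mathcal{S}$: the solitons $u^{sol}_j,v^{sol}_j$ are reconstructed by solving Problem \ref{prob:MTM.sol} with the perturbed data $\{\lambda_{1,j},\tc_{1,j}\}$, and the radiation terms $u_{as},v_{as}$ inherit the explicit expressions recorded in Proposition \ref{lemma br.RHP.asy} and Proposition \ref{asy pure}.
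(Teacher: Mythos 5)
Your proposal is correct and follows the same route the paper intends: the paper gives no explicit proof of Corollary \ref{cor:stabN}, presenting it as an immediate consequence of Theorem \ref{thm:maindetail} once the scattering data of the perturbed initial condition is known to be an $\mathcal{O}(\epsilon)$-perturbation of the reflectionless data $S_D$. Your two supplementary ingredients --- Lipschitz continuity of the direct map $(u_0,v_0)\mapsto \tdr$ obtained by rerunning the Volterra estimates of Proposition \ref{prop:r} for differences of potentials, and a Rouch\'e argument (using analytic continuation of the Jost functions via both gauges $T$ and $\widetilde{T}$) for the persistence and simplicity of the $N_1$ eigenvalues and the closeness of the norming constants --- are exactly the details the paper leaves implicit, and they are sound.
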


\section{well-posedness}
\label{sec:global}
In the direct scattering process, it requires that initial data to be in $\mathcal{I}\times \mathcal{I}$. Here we provide a sketch of the proof of the global well-posedness in this space for the sake of completeness. First of all, we record the well-posedness of the MTM in the energy space.
\begin{theorem}\cite{Candy}
Let $s\geq 0$ and $u_0,v_0\in H^s(\mathbb R)$. There exists a global solution $(u,v)\in C(\mathbb R,H^s(\mathbb R))$ to \eqref{eq: MTM} such that the charge is conserved, so
\begin{eqnarray}
\begin{aligned}
\|u_0\|_{L_{x}^{2}}^{2}+\|v_0\|_{L_{x}^{2}}^{2}=\|u\|_{L_{x}^{2}}^{2}+\|v\|_{L_{x}^{2}}^{2}
\end{aligned}
\end{eqnarray}
for every $t\in \mathbb R$. Moreover, the solution is unique in a subsequence of $C(\mathbb R, L^2_{loc}(\mathbb R))$ and we have continuous dependence on initial data.
\end{theorem}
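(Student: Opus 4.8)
The plan is to first establish local well-posedness in $H^s(\bbR)$ by a contraction-mapping argument adapted to the transport structure of the linear part, and then to upgrade this to a global statement using the conservation of charge. The linear operators $i(\partial_t+\partial_x)$ and $i(\partial_t-\partial_x)$ propagate $u$ and $v$ along the two transverse families of characteristics $x\mp t=\text{const}$, so the natural resolution spaces are the Bourgain-type spaces $X^{s,b}_{\pm}$ (or, equivalently, the $U^p$--$V^p$ spaces) associated with these two transport flows. First I would recast the system in Duhamel form, grouping the mass terms $v$, $u$ together with the cubic terms $|v|^2u$, $|u|^2v$ as forcing, and set up the fixed point in $X^{s,b}_+\times X^{s,b}_-$ for a suitable $b>1/2$.

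The heart of the matter is the trilinear estimate bounding $\norm{|v|^2u}{X^{s,b-1}_+}$ and $\norm{|u|^2v}{X^{s,b-1}_-}$ by products of the factors' norms. The crucial ingredient is the \emph{null structure} of the nonlinearity: since $u$-type and $v$-type factors travel along transverse characteristics, their interaction carries a transversality gain that compensates for the $L^2$-criticality of a cubic nonlinearity in one space dimension. I would prove these bilinear/trilinear bounds by reducing to $L^2$-based product estimates on the Fourier side and extracting the gain from the resonance function of the two flows. This is precisely the step that renders $s=0$ accessible; without it the local existence time would depend on the full profile rather than on the $H^s$ norm alone.

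With local well-posedness in hand, the charge conservation
\begin{equation}
\norm{u(t)}{L^2}^2+\norm{v(t)}{L^2}^2=\norm{u_0}{L^2}^2+\norm{v_0}{L^2}^2
\end{equation}
which follows formally by multiplying the first equation by $\bar u$ and the second by $\bar v$, taking imaginary parts and adding, so that the quartic contributions cancel and the surviving terms assemble into the local conservation law $\partial_t(|u|^2+|v|^2)+\partial_x(|u|^2-|v|^2)=0$, supplies an a priori bound uniform in time at the $L^2$ level. Because the null-form estimates make the local existence time depend only on the conserved charge when $s=0$, one iterates the local theory on successive intervals of fixed length to obtain a global solution in $C(\bbR;H^s(\bbR))$, with persistence of higher regularity for $s>0$ following from the same estimates applied to derivatives.

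Uniqueness in the larger class $C(\bbR;L^2_{loc})$ and continuous dependence on the data I would obtain from the difference equation: estimating $u_1-u_2$ and $v_1-v_2$ in the same resolution spaces yields, via the trilinear null-form bounds, a Lipschitz estimate for the solution map on bounded sets, after which spatial localization delivers the $L^2_{loc}$ statement. The main obstacle throughout is the $L^2$-criticality at the endpoint $s=0$: the entire scheme rests on the transversality gain in the trilinear estimate, and any weakening there would force the local existence time to degenerate and break the passage from local to global existence.
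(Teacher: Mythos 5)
First, a point of comparison: the paper does not prove this statement at all — it is quoted from Candy's work, and the paper's entire ``proof'' is the remark that the theorem ``can be established by decomposing the solution into an approximately linear component and a component with improved integrability,'' with the reader referred to \cite{Candy}. So your proposal must be measured against Candy's actual method, and there it has a genuine gap, located exactly at the endpoint $s=0$ — which is the whole content of the theorem (Candy's paper is explicitly about the \emph{$L^2$-critical} case). You assert that the trilinear null-form estimates ``make the local existence time depend only on the conserved charge when $s=0$,'' and your globalization-by-iteration rests entirely on this claim. That claim is not obtainable by the contraction scheme you describe. The space $L^2$ is scaling-critical for the cubic part of \eqref{eq: MTM}: under $u_\lambda(x,t)=\lambda^{1/2}u(\lambda x,\lambda t)$, $v_\lambda(x,t)=\lambda^{1/2}v(\lambda x,\lambda t)$ the transport operators and the cubic terms scale identically, the $L^2_x$ norm is invariant, and the mass terms are lower order. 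Consequently any trilinear estimate in scale-invariant resolution spaces ($X^{0,b}$ or $U^2$/$V^2$ adapted to the two characteristic flows) is itself scale-invariant and cannot carry a factor $T^{\theta}$ with $\theta>0$; transversality makes the null-form constants finite and uniform in $T$, but it does not produce a positive power of $T$. A fixed-point argument therefore closes only for small charge. For large data the existence time obtained this way depends on the profile of the data, not on its norm, and the iteration on intervals of fixed length collapses. Indeed, if a contraction gave $T=T(\Vert u_0\Vert_{L^2}+\Vert v_0\Vert_{L^2})$, global existence for arbitrary $L^2$ data would follow in one line — precisely the statement that cannot be reached this way at critical regularity, and the reason Candy's proof has the structure it does: he writes the solution as an approximately linear (free) component plus a remainder with improved integrability, and closes a bootstrap using the null structure and the conservation laws along characteristics, rather than a norm-based fixed point.

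Two secondary problems. Even in the subcritical range $s>0$, your globalization step is incomplete: charge conservation controls only the $L^2$ norm, while your local time depends on the $H^s$ norm, so iterating requires an a priori $H^s$ bound (persistence of regularity); the standard Gr\"onwall argument for that needs time-integrable $L^\infty_x$ control of the solution, which does not follow from the conserved charge alone and must be extracted from the transport structure. Finally, the uniqueness asserted in the theorem is conditional — uniqueness in a \emph{subset} of $C(\bbR, L^2_{loc}(\bbR))$. Your Lipschitz estimates in the resolution spaces would give exactly such conditional uniqueness, but the stronger unconditional uniqueness in all of $C(\bbR, L^2_{loc}(\bbR))$ that your last paragraph suggests does not follow from difference estimates performed in $X^{s,b}$-type spaces.
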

This theorem can be established by decomposing the solution into an approximately linear component and a component with improved integrability. For detailed proof , see Timothy Candy\cite{Candy}. Next, the following theorem will show the global well-posedness of the massive Thirring model in the weighted energy space.
\begin{theorem}
Given $(u_0,v_0)\in \mathcal{I}\times \mathcal{I}$, there exists a unique solution
\begin{eqnarray}
\begin{aligned}
(u,v)\in C\left(\mathbb R, \mathcal{I}\times \mathcal{I}\right)
\end{aligned}
\end{eqnarray}
to the massive Thirring model with initial data $(u_0,v_0)$.
\end{theorem}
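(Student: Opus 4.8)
The plan is to combine a standard local existence argument in $\mathcal{I}\times\mathcal{I}$ with global-in-time weighted a priori estimates, using the $H^2$ global theory of Candy to control every quantity of lower order. First I would rewrite \eqref{eq: MTM} in Duhamel form $\mathbf{w}(t)=e^{t\mathcal{A}}\mathbf{w}_0+\int_0^t e^{(t-s)\mathcal{A}}\mathbf{N}(\mathbf{w}(s))\,ds$, where $\mathbf{w}=(u,v)^{\T}$, $\mathcal{A}=\left(\begin{smallmatrix}-\partial_x & i\\ i & \partial_x\end{smallmatrix}\right)$ is the constant-coefficient Dirac operator coming from the linear part, and $\mathbf{N}(\mathbf{w})=(i|v|^2u,\,i|u|^2v)^{\T}$. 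Since $\mathcal{A}$ is skew-adjoint and has constant coefficients, $e^{t\mathcal{A}}$ is a unitary group on $L^2$ that commutes with $\partial_x$, hence an isometry on every $H^k$. The weight is propagated through $[x,e^{t\mathcal{A}}]=\int_0^t e^{(t-s)\mathcal{A}}[x,\mathcal{A}]e^{s\mathcal{A}}\,ds$ with $[x,\mathcal{A}]=\sigma_3$ bounded, which gives $\|e^{t\mathcal{A}}\|_{\mathcal{I}\to\mathcal{I}}\lesssim 1+|t|$.

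Next I would establish local well-posedness. The key point is that $\mathcal{I}$ is a Banach algebra: $\mathcal{I}\hookrightarrow H^2(\mathbb{R})\hookrightarrow W^{1,\infty}(\mathbb{R})$, so in $(fg)''=f''g+2f'g'+fg''$ every summand pairs an $L^\infty$ factor with a weighted $L^2$ factor. Hence $\mathbf{N}\colon\mathcal{I}\times\mathcal{I}\to\mathcal{I}\times\mathcal{I}$ is locally Lipschitz, and the Duhamel map is a contraction on a ball of $C([0,T];\mathcal{I}\times\mathcal{I})$ for $T$ small depending only on the initial norm. This yields a unique local solution with continuous dependence, and the standard blow-up alternative reduces global existence to an a priori bound on $\|(u,v)(t)\|_{\mathcal{I}\times\mathcal{I}}$ over finite intervals. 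By uniqueness this local solution coincides with Candy's $H^2$ solution, so $\sup_{t\in[0,T]}\|(u,v)(t)\|_{H^2}<\infty$; in particular $\|u\|_{L^\infty},\|v\|_{L^\infty},\|u_x\|_{L^\infty},\|v_x\|_{L^\infty}$ and $\|u_{xx}\|_{L^2},\|v_{xx}\|_{L^2}$ are all controlled on $[0,T]$.

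With the $H^2$ bound in hand I would run weighted energy estimates on $\Phi_j(t)=\|x\,\partial_x^j u\|_{L^2}^2+\|x\,\partial_x^j v\|_{L^2}^2$ for $j=0,1,2$ in increasing order. Differentiating in $t$ and using the equations, the transport terms integrate by parts to leave only the lower-order contribution $\int x\,|\partial_x^j u|^2$, the mass terms give cross terms bounded by $\Phi_j$, and the leading cubic contribution vanishes, $\Real\big(i\int x^2|v|^2|\partial_x^j u|^2\big)=0$, thanks to the phase structure of $\mathbf{N}$. Every remaining cubic term carries at most two derivatives and, after assigning the weight $x$ to its highest-order factor, is dominated by an $L^\infty$-norm (finite from the $H^2$ bound) times a weighted quantity of order $\le j$. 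This produces $\tfrac{d}{dt}\Phi_j\lesssim_T \Phi_j+\sum_{k<j}\Phi_k+1$, so Gronwall closes $\Phi_0$, then $\Phi_1$, then $\Phi_2$ on each finite interval.

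The main obstacle is the top-order estimate $\Phi_2$: one must verify that no factor $\|u_{xx}\|_{L^\infty}$ or $\|v_{xx}\|_{L^\infty}$, which would demand $H^3$ control we do not possess, is ever generated. This is precisely where the real/imaginary structure of the nonlinearity (annihilating the diagonal quartic term) together with the careful placement of the weight on the single highest-derivative factor is decisive; once this bookkeeping is carried out, the estimate closes using only the $H^2$ norm. Combining the local theory with the resulting global-in-time bounds on $\Phi_0,\Phi_1,\Phi_2$ and on $\|(u,v)\|_{H^2}$ extends the solution to all $t\in\mathbb{R}$, while uniqueness and $C(\mathbb{R};\mathcal{I}\times\mathcal{I})$ continuity follow directly from the contraction estimate.
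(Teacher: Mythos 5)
Your proposal is correct, and its engine is the same as the paper's: reduce everything to Candy's global $H^2$ theory, then propagate the weights by energy estimates on the quantities $\|x\,\partial_x^j u\|_{L^2}^2+\|x\,\partial_x^j v\|_{L^2}^2$, $j=0,1,2$, closed by Gronwall, using the observation that the diagonal cubic terms are purely imaginary and that every remaining term can be arranged so that no $L^\infty$ norm of a second derivative (i.e.\ no $H^3$ control) is ever needed; this is precisely the content of the paper's estimates on $E_k^{\epsilon}$. The scaffolding, however, is genuinely different. The paper never establishes local well-posedness in $\mathcal{I}\times\mathcal{I}$: it works directly on Candy's $H^2$ solution, replaces the weight $x^2$ by the bounded regularized weight $x^2e^{-\epsilon|x|}$ so that each energy $E_k^{\epsilon}$ is manifestly finite and its time differentiation legitimate, proves Gronwall bounds uniform in $\epsilon$, and concludes by monotone convergence as $\epsilon\to0$. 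You instead build a contraction-mapping local theory in $\mathcal{I}\times\mathcal{I}$ (unitarity of the Dirac group on $H^k$, the commutator bound $[x,\mathcal{A}]=\sigma_3$ giving $\|e^{t\mathcal{A}}\|_{\mathcal{I}\to\mathcal{I}}\lesssim 1+|t|$, the algebra property of $\mathcal{I}$) and invoke the blow-up alternative, so that the unregularized $\Phi_j$ are already finite on the local existence interval and your a priori bounds extend the solution globally. Each route buys something: yours directly delivers continuity in time with values in $\mathcal{I}$ and continuous dependence, which is part of the stated theorem but is only implicit (at best) in the paper's boundedness argument; the paper's regularization is shorter, needs no semigroup or algebra lemmas, and sidesteps the (standard but real) issue of justifying $\frac{d}{dt}\Phi_2$ for a solution known only to lie in $C([0,T];\mathcal{I}\times\mathcal{I})$ --- a technical point you would still need to settle, e.g.\ by mollification or by running the estimate through the Duhamel formula, and which the $\epsilon$-weight handles automatically.
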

\begin{proof}
Given $u,v\in H^{2}(\mathbb R)$, following the result in Timothy Candy \cite{Candy}, there exists a unique solution $(u,v)\in C(\mathbb R, H^{2}(\mathbb R)\times H^{2}(\mathbb R))$.\\
With unweighted results above, one can pass to the weighted space directly. Given $(u_0,v_0)\in \mathcal{I}\times\mathcal{I}$, following notations above, one can find the solution $(u,v)$ such that $u,v \in H^2(\mathbb R)$. Our goal is to show that $(u,v)$ is in the weighted Sobolev space.
We use the equation and then directing differentiation leads to
\begin{eqnarray}
\begin{aligned}
\dfrac {d}{dt}E_2^{\epsilon}(u,v) \lesssim E_2(u,v)+E_2^{\epsilon}(u,v)+E_1^{\epsilon}(u,v),
\end{aligned}
\end{eqnarray}
where $E_2(u,v)$ and $E_2^{\epsilon}(u,v)$ are defined via
\begin{eqnarray}
\begin{aligned}
E_k^{\epsilon}(u,v)=\int_{\mathbb R}\left(|\partial^k_{x} u|^2+|\partial^k_{x} v|^2\right)x^2e^{-\epsilon |x|}dx
\end{aligned}
\end{eqnarray}
and
\begin{eqnarray}
\begin{aligned}
E_k(u,v)=\int_{\mathbb R}\left(|\partial^k_{x} u|^2+|\partial^k_{x} v|^2\right)dx.
\end{aligned}
\end{eqnarray}
If we differentiate the weighed energy directly and integrate by parts, then we will obtain these first order weighted energies, which can not be controlled by $E_2^{\epsilon}(u,v)$. Thus, the first step is to show
\begin{eqnarray}
\begin{aligned}
\dfrac {d}{dt} E_0^{\epsilon}(u,v)\lesssim E_0(u,v)+ E_0^{\epsilon}(u,v)
\end{aligned}
\end{eqnarray}
Indeed, differentiating the weighed energy and integrating by parts, one has
\begin{eqnarray}
\begin{aligned}
\dfrac {d}{dt} E_0^{\epsilon}(u,v)=&\int_{\mathbb R}\left((|v|^2)_x-(|u|^2)_x\right)x^2e^{-\epsilon|x|}dx\\
=&\int_{\mathbb R}\left(|u|^2-|v|^2\right)\partial_x(x^2e^{-\epsilon|x|})dx\\
\leq&\int_{\mathbb R}\left(|u_0|^2+|v_0|^2\right)dx+\int_{\mathbb R}\left(|u|^2+|v|^2\right)x^2e^{-\epsilon |x|}dx
\end{aligned}
\end{eqnarray}
Gronwall's inequality, we get
\begin{eqnarray}
\begin{aligned}
E_0^{\epsilon}(u,v)\leq C_T E_0^{\epsilon}(u_0,v_0).
\end{aligned}
\end{eqnarray}
The same argument above results in
\begin{eqnarray}\label{E1}
\begin{aligned}
E_1^{\epsilon}(u,v)\leq C_T E_1^{\epsilon}(u_0,v_0).
\end{aligned}
\end{eqnarray}
The next step is to estimate the weighted energy $E_2^{\epsilon}$. Differentiating the weighed modified norm and integrating by parts, one has
\begin{eqnarray}\label{E2}
\begin{aligned}
\dfrac {d}{dt}E_2^{\epsilon}(u,v)&=\int_{\mathbb R}\left( u_{xx}\bar{u}_{xxt}+u_{xxt}\bar{u}_{xx}+v_{xx}\bar{v}_{xxt}+v_{xxt}\bar{v}_{xx}\right)x^2e^{-\epsilon |x|}dx\\
&=\int_{\mathbb R}\left(|u_{xx}|^2-|v_{xx}|^2\right)\partial_x(x^2e^{-\epsilon|x|})dx+2\Im\int_{\mathbb R}\left(u_{xx}(|v|^2\bar{u})_{xx}+v_{xx}(|u|^2\bar{v})_{xx}\right)x^2e^{-\epsilon|x|}dx\\
&\lesssim E_2^{\epsilon}(u,v)+E_2(u,v)+E_1^{\epsilon}(u,v)
\end{aligned}
\end{eqnarray}
Finally, combining \eqref{E1} and \eqref{E2} together, Gronwall's inequality gives
\begin{eqnarray}
\begin{aligned}
E_2^{\epsilon}(u,v)\leq C_T E_2^{\epsilon}(u_0,v_0).
\end{aligned}
\end{eqnarray}
Passing $\epsilon$ to $0$ by the monotone convergence theorem, we get the following result,
\begin{eqnarray}
\begin{aligned}
\int_{\mathbb R}\left(|u_{xx}|^2+|v_{xx}|^2\right)x^2dx\leq C_T \int_{\mathbb R}\left(|u_{0,xx}|^2+|v_{0,xx}|^2\right)x^2dx <\infty,
\end{aligned}
\end{eqnarray}
for $t\in [-T,T]$ for all $T\in \mathbb R$.
\end{proof}

\section{Approaching the light cone}
In this section we discuss the situation when $|x/t|\to 1$ as $t\to \infty$. When $|x/t|$ is close to $1$ enough, we will not have any solitons in this space time region. So in the following section we omit all discrete scattering data for brevity.
\subsection{$0<x/t<1$} 
\label{subsec: app}
We write $\lambda=u+iv$. Notice that for $v>0$ and $u>\sqrt{\tz_0}= \sqrt[4]{(t-x)/(t+x)}$
\begin{align*}
2\text{Re}i\ttheta(\lambda; x, t) &=-\left(   1+\dfrac{x}{t} \right)v t +  \left( 1-\dfrac{x}{t} \right)\dfrac{v t}{u^2+v^2}\\
&  \leq -\left(   1+\dfrac{x}{t} \right)v t  +\left(   1-\dfrac{x}{t} \right) \dfrac{v}{\tz_0} t \\
& \leq -\left(   1+\dfrac{x}{t} \right)v t +\sqrt{1-\dfrac{x^2}{t^2}}vt\\
&\leq -vt.
\end{align*}
Given $\frac{x}{t}\rightarrow1$ which implies
$\tz_0 \to 0$,  \text{as} $t \to \infty$ we only need the  following upper/lower factorization on $\bbR$:
\begin{equation}
\label{v-ul}
e^{(i/2)\ttheta\ad\sigma_3}\tv(\lambda)	= \Twomat{1}{0}{-\lambda\overline{\tdr(\lambda)}  e^{2i\ttheta}}{1}\Twomat{1}{ -\tdr(\lambda)  e^{-2i\ttheta}}{0}{1}
						\quad \lambda \in\bbR .
\end{equation}
We will again perform the contour deformation and write the solution as a product of solutions to a $\dbar$-problem and a ``localized" Riemann-Hilbert problem.
\begin{figure}[H]
\caption{$\Sigma^{(1)}:\text{near the light cone}$}
\vskip 15pt
\begin{tikzpicture}[scale=0.7]
\draw[thick]		(5, 3) -- (4,2);						
\draw[->,thick,>=stealth] 		(2,0) -- (4,2);		
\draw[thick] 			(-2,0) -- (-4,2); 				
\draw[->,thick,>=stealth]  	(-5,3) -- (-4,2);	
\draw[->,thick,>=stealth]		(-5,-3) -- (-4,-2);							
\draw[thick]						(-4,-2) -- (-2,0);
\draw[thick,->,>=stealth]		(2,0) -- (4,-2);								
\draw[thick]						(4,-2) -- (5,-3);
\draw[thick]		(0,0)--(2,0);
\draw[thick,->,>=stealth] (-2,0) -- (0, 0);
\draw	[fill]							(-2,0)		circle[radius=0.1];	
\draw	[fill]							(2,0)		circle[radius=0.1];
\draw [dashed] (2,0)--(6,0);
\draw [dashed] (-6,0)--(-2,0);
\node[below] at (-2,-0.25)			{$-\sqrt{\tz_0}$};
\node[below] at (2,-0.25)			{$\sqrt{\tz_0}$};
\node[right] at (5,3)					{$\Sigma^{(1)}_1$};
\node[left] at (-5,3)					{$\Sigma^{(1)}_2$};
\node[left] at (-5,-3)					{$\Sigma^{(1)}_3$};
\node[right] at (5,-3)				{$\Sigma^{(1)}_4$};
\node[above] at (4.5,0)           {$\Omega_1$};
\node[above] at (-4.5,0)           {$\Omega_2$};
\node[below] at (-4.5,0)           {$\Omega_3$};
\node[below] at (4.5,0)           {$\Omega_4$};
\end{tikzpicture}
\label{fig:contour-scale-1}
\end{figure}
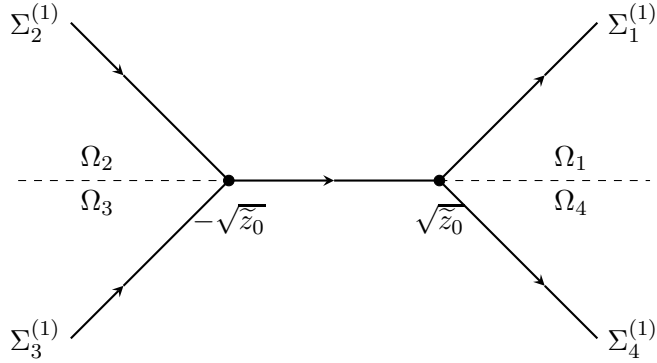
For brevity, we only discuss the $\dbar$-problem in $\Omega_1$.  In $\Omega_1$, we define
\begin{align*}
	R_1	&=	\begin{cases}
						\twomat{0}{-\tdr(\lambda)  e^{i\ttheta}  }{0}{0}		
								&	\lambda \in (\sqrt{\tz_0} ,\infty)\\[10pt]
								\\
						\twomat{0}{-\tdr( \sqrt{\tz_0} ) e^{2i\theta (z )  } }{0}{0}	
								&	\lambda	\in \Sigma_1^{(1)}
					\end{cases}
	\end{align*}
and the interpolation is given by
$$R_1=  \tdr( \sqrt{\tz_0}  )+ \left( {\tdr}\left( \lambda \right) -\tdr\left( \sqrt{\tz_0} \right)  \right) \cos 2\phi  . $$
So we arrive at the $\dbar$-derivative in $\Omega_1$ :
\begin{align}
\dbar R_1&=  \left(  {\tdr}'\left( u\right) \cos 2\phi- 2\dfrac{ {\tdr}\left( u\right)-   \tdr( \sqrt{\tz_0}  )  }{  \left\vert \lambda -\sqrt{\tz_0} \right\vert   } e^{i\phi} \sin 2\phi  \right) e^{i\ttheta}.
\end{align}
\begin{equation}
\label{R1.bd app}
|W|=\left| \dbar R_1  \right| 	\lesssim\left( |   {\tdr}'\left( u\right)| +\dfrac{|{\tdr}\left( u\right)- \tdr( \sqrt{\tz_0}  ) | }{  |z-\sqrt{\tz_0}| }  \right) e^{- v t}.
\end{equation}
From the fact that $\tdr\in H^{1,1}_0$ we deduce that
$$\tdr(\sqrt{\tz_0})={o}(\sqrt{\tz_0})$$
so the solution to model problem on the deformed contour will take the form
\begin{equation}
\label{model-app}
I+o(\sqrt{\tz_0})
\end{equation}
Then we proceed as in the previous section and study the integral equation related to the $\dbar$-problem. The estimates will follow from the same arguments in section \ref{sec:outside}. All we need is a uniform estimate of the following integrals near $\tz_0$ as $\tz_0\to 0$:

{\small \begin{align*}
\int_0^{ \infty } \int_{ v +\sqrt{\tz_0} }^{\infty} \dfrac{|{\tdr}(u)- \tdr( \sqrt{\tz_0}  ) | }{  |(u-\sqrt{\tz_0})^2+v^2 |^{1/2} |u^2+v^2|^{1/2} }  e^{- v t } \, du \, dv & \leq \int_0^{ \infty } \int_{ v +\sqrt{\tz_0} }^{\infty} \dfrac{|{\tdr}(u)| }{  |(u-\sqrt{\tz_0})^2+v^2 |^{1/2} |u^2+v^2|^{1/2} }  e^{- v t } \, du \, dv \\
&+\int_0^{ \infty } \int_{ v  +\sqrt{\tz_0}  }^{\infty} \dfrac{|r( \sqrt{\tz_0}  ) | }{  |(u-\sqrt{\tz_0})^2+v^2 |^{1/2} |u^2+v^2|^{1/2} }  e^{- v t } \, du \, dv\\
 &=:\tilde{I}_1+\tilde{I}_2.
\end{align*}}
Using the same argument as \eqref{est-r-out}
\begin{align*}
\tilde{I}_1 \lesssim t^{-1}.
\end{align*}
For $\tilde{I}_2$, again using \textit{H\"older}'s inequality, for $1<p<2$ and $1/p+1/q=1$
\begin{align*}
\tilde{I}_2 \leq \int_0^{\infty}  e^{-vt}  \left( \int_{ \sqrt{\tz_0} }^\infty \dfrac{| \tdr(\sqrt{\tz_0})  |^q}{|u|^{q}}du  \right)^{1/q} \left( \int_{ v}^{\infty} \dfrac{1}{ |v|^{p} |1+(u^2/v^2)|^{p/2}  } du  \right)^{1/p} dv.
\end{align*}
Using the fact that $\lim_{\lambda\to 0} \tdr(\lambda)/\lambda $ is bounded near the origin, a simple calculation gives 
$$ \left( \int_{ \sqrt{\tz_0} }^\infty \dfrac{| \tdr(\sqrt{\tz_0})  |^q}{|u|^{q}}du  \right)^{1/q} <\infty.$$
It follows as $\tz_0 \to 0$ that
\begin{align*}
\tilde{I}_2 \lesssim t^{-1/p}.
\end{align*}
Similarly:
{ \begin{align*}
\int_0^{ \infty } \int_{ v +\sqrt{\tz_0} }^{\infty} \dfrac{\left|{\tdr}'\left( u\right)\right| }{ |u^2+v^2|^{1/2} }  e^{- v t } \, du \, dv & \leq \int_0^{ 1 } \int_{ v }^{1} \dfrac{\left|{\tdr}'\left( u\right) \right| }{ |u^2+v^2|^{1/2} }  e^{- v t } \, du \, dv \\
&\quad+\int_0^{ \infty } \int_{ 1 }^{\infty} \dfrac{\left|{\tdr}'\left( u\right) \right| }{  |u^2+v^2|^{1/2}  } e^{- v t } \, du \, dv\\
 &\lesssim t^{-1/p}+t^{-1}.
\end{align*}}
Similarly to Section \ref{sec:outside} we obtain the following asymptotic formulas as $x/t\to 1$ for $0<x/t<1$:
\begin{align}
\label{app-cos}
u(x,t)  &= o(\sqrt{z_0}) +\mathcal{O}\left(  t^{-1/p}\right)\\
\label{app-sin}
v(x,t)    &=o(\sqrt{z_0}) + \mathcal{O}\left( t^{-1/p }\right).
\end{align}
\subsection{$x/t \to -1$} Finally we give a brief discussion on the asymptotic formula of \eqref{eq: MTM} in the region where $x/t \to -1$ as $t\to \infty$.  We instead work with the spectral problem \eqref{eq: n-s} and establish a parallel version of Proposition \ref{prop:r} and  RHP Problem \ref{RHP2-v}. We then repeat the nonlinear steepest descent method on this new RHP. To further facilitate computation, we make the following change of variable:
$$\lambda \mapsto \dfrac{1}{\lambda}.$$
Thus we have 
\begin{align*}
\breve{r}(\lambda) &=\tdr\left( \dfrac{1}{\lambda} \right)\\
\breve{\theta}(\lambda; x, t)&= \dfrac{1}{2}\left(  \left( \dfrac{1}{\lambda}-\lambda  \right)\dfrac{x}{t} +\left( \lambda+\dfrac{1}{\lambda}  \right) \right)t\\
\breve{z_0} &= \sqrt{ \dfrac{t+x}{t-x}}.
\end{align*}
It is easy to check that 
\begin{itemize}
\item[I.] $x/t>-1$ implies that 
for $v>0$, 
\begin{equation}
\label{theta-out-}
4\text{Re}i\breve{\theta}(\lambda; x, t)=-\left(   1 -\dfrac{x}{t} \right)v t +  \left( 1+\dfrac{x}{t} \right)\dfrac{v t}{u^2+v^2};
\end{equation}
\item[II.] $x/t <-1$ implies that 
for $v>0$, 
\begin{equation}
\label{theta-out-}
4\text{Re}i\breve{\theta}(\lambda; x, t)=-\left(   1 -\dfrac{x}{t} \right)v t +  \left( 1+\dfrac{x}{t} \right)\dfrac{v t}{u^2+v^2}<-\left(   1 -\dfrac{x}{t} \right)v t  .
\end{equation}
\end{itemize}
For case I above we will follow the same argument in subsection \ref{subsec: app} to obtain 
\begin{align}
\label{app-cos-}
u(x,t) &= o( \breve{z_0}) +\mathcal{O}\left(  t^{-1/p}\right)\\
\label{app-sin-}
v(x,t)    &=o(\sqrt{ \breve{ z_0} }) + \mathcal{O}\left( t^{-1/p }\right).
\end{align}
Case II is similar to the situation in subsection \ref{subsec: out} so the formulas are 
\begin{align}
\label{app-cos-'}
u(x,t)  &= \mathcal{O}\left(  t^{-1}\right)\\
\label{app-sin-'}
v(x,t)    &= \mathcal{O}\left( t^{-1}\right).
\end{align}
{ All the implicit constants above only depend on the Sobolev norm of the reflection coefficient  $\tdr(\lambda)\in H^{1,1}_0(\bbR)$.}

\end{document}